\documentclass[10pt]{amsart}
\usepackage[all,arc]{xy}
\usepackage{tikz-cd}
\usepackage{tikz}
\usetikzlibrary{arrows,decorations}

\usepackage{adjustbox}
\usepackage{enumitem}
\usepackage{mathrsfs}
\usepackage{multicol}
\usepackage{xcolor,colortbl}
\usepackage{color,soul}
\usepackage{amssymb,amsfonts,amsmath,nccmath,tabularx,multirow}

\setlist[enumerate]{leftmargin=2.5em}
\setlist[itemize]{leftmargin=2.5em}

\newcommand{\nocontentsline}[3]{}
\newcommand{\tocless}[2]{\bgroup\let\addcontentsline=\nocontentsline#1{#2}\egroup}

\usepackage{xr-hyper}
\usepackage{wrapfig}
\makeatletter
\newcommand*{\addFileDependency}[1]{
  \typeout{(#1)}
  \@addtofilelist{#1}
  \IfFileExists{#1}{}{\typeout{No file #1.}}
}
\makeatother

\newcommand*{\myexternaldocument}[1]{%
    \externaldocument{#1}%
    \addFileDependency{#1.tex}%
    \addFileDependency{#1.aux}%
}

\myexternaldocument{MTarXivV2_forMC}



\usepackage
[pdfauthor={Catherine Hsu, Preston Wake, and Carl Wang-Erickson},
 pdftitle={Explicit non-Gorenstein R=T via rank bounds II: Computation},
 bookmarks=false,hidelinks]
{hyperref}

\newcounter{qcounter}

\newcommand\define{\newcommand}

\define\rk{\mathrm{rk}}

\define\isoto{\xrightarrow{\sim}}
\define\onto{\twoheadrightarrow}

\newcommand{\ttmat}[4]{\left( \begin{array}{cc}
#1 & #2 \\
#3 & #4
\end{array}
\right)}

\newcommand{\Z}{\mathbb{Z}}
\newcommand{\Q}{\mathbb{Q}}

\newcommand{\F}{\mathbb{F}}

\newcommand{\p}{\mathfrak{p}}
\newcommand{\m}{\mathfrak{m}}

\newcommand{\cL}{\mathcal{L}}

\newcommand{\Hom}{\mathrm{Hom}}
\newcommand{\Gal}{\mathrm{Gal}}
\newcommand{\Nm}{\mathrm{Nm}}

\newcommand{\Ext}{\mathrm{Ext}}

\newcommand{\Fr}{\mathrm{Fr}}

\newcommand{\ab}{\mathrm{ab}}

\define\ord{{\mathrm{ord}}}

\define\GL{{\mathrm{GL}}}

\define{\Fitt}{\mathrm{Fitt}}
\define{\Ann}{\mathrm{Ann}}

\define{\Fpgpring}{\mathbb{F}_p[\langle\sigma\rangle]}

\newtheorem{thm}{Theorem}[subsection] 

\newtheorem*{thm*}{Theorem}

\newtheorem{cor}[thm]{Corollary}
\newtheorem{prop}[thm]{Proposition}
\newtheorem{lem}[thm]{Lemma}

\theoremstyle{definition}
\newtheorem{defn}[thm]{Definition}

\newtheorem{eg}[thm]{Example}

\newtheorem{assump}[thm]{Assumption}

\theoremstyle{remark}
\newtheorem{rem}[thm]{Remark}

\newcommand{\ra}{\rightarrow}

\newcommand{\risom}{\buildrel\sim\over\ra}
\newcommand{\rinj}{\hookrightarrow}

\newcommand{\rsurj}{\twoheadrightarrow}

\newcommand{\bT}{\mathbb{T}}

\newcommand{\cO}{\mathcal{O}}

\newcommand{\fl}{{\mathrm{flat}}}


\newcommand{\Qz}{\mathbb{Q}(\zeta_p)}

\usepackage{color}

\newcommand{\bro}{{\bar\rho}}

\newcommand{\sm}[4]{\ensuremath{\big(\begin{smallmatrix}#1 & #2 \\ #3 & #4\end{smallmatrix}\big)}}

\newcommand{\up}[1]{^{(#1)}}
\newcommand{\upp}[1]{^{(#1)'}}

\usepackage{bbm}
\newcommand{\Bone}{\mathbbm{1}}

\newcommand{\cand}{_\mathrm{cand}}
\newcommand{\adj}{_\mathrm{adj}}

\newcommand{\oQ}{\overline{\Q}}

\newcommand{\ep}{\epsilon}

\makeatletter
\let\c@equation\c@thm
\makeatother

\numberwithin{equation}{subsection}

\usepackage[ruled,vlined,linesnumbered]{algorithm2e}
\SetKw{KwNotIn}{not in}
\SetKw{Kwin}{in}
\SetKw{KwBreak}{break}
\SetKw{KwAssert}{assert}
\SetKw{KwOr}{or}
\SetKw{KwForEvery}{for every}
\SetKw{KwAnd}{and}

\title[$R=\bT$ via rank bounds II]{Explicit non-Gorenstein $R=\bT$ via rank bounds II: Computational aspects}

\author{Catherine Hsu}
\address{Department of Mathematics \& Statistics, Swarthmore College,  Swarthmore, PA 19081, USA}
\email{chsu2@swarthmore.edu}

\author{Preston Wake}
\address{Department of Mathematics, Michigan State University, East Lansing, MI 48824, USA}
\email{wakepres@msu.edu}

\author{Carl Wang-Erickson}
\address{Department of Mathematics, University of Pittsburgh, Pittsburgh, PA 15260, USA}
\email{carl.wang-erickson@pitt.edu}


\setcounter{tocdepth}{1}

\begin{document}

\begin{abstract}
This is the second in a pair of papers about residually reducible Galois deformation rings with non-optimal level. In the first paper, we proved a Galois-theoretic criterion for the deformation ring to be as small as possible. This paper focuses on the computations needed to verify this criterion. We adapt a technique developed by Sharifi to compute number fields with twisted-Heisenberg Galois group and prescribed ramification, and compute the splitting behavior of primes in these extensions.
\end{abstract}

\maketitle

\tableofcontents

\section{Introduction}

\subsection{Summary}
\label{subsec: summary2}
In this series of two papers, we prove, under some hypotheses, an integral $R=\bT$ theorem for the mod-$p$ Galois representation $\bro = 1 \oplus \omega$, where $\bT$ is the Hecke algebra acting on weight $2$ modular forms of level $N=\ell_0\ell_1$, $p\geq 5$ is a prime number, $\omega$ is the mod-$p$ cyclotomic character, and $R$ is a universal pseudodeformation ring for $\overline{\rho}$. We are concerned with the case where
\begin{itemize}
    \item $\ell_0$ is a prime with $\ell_0 \equiv 1 \pmod{p}$, and
    \item $\ell_1$ is a prime with $\ell_1 \not \equiv \pm 1\pmod{p}$ such that $\ell_1$ is a $p$th power modulo $\ell_0$.
    \item There is a unique weight 2 cusp form of level $\ell_0$ that is congruent to the Eisenstein series modulo $p$.
\end{itemize}
See the introduction of Part I (\cite{part1}) for a discussion of why this particular setup is interesting from the point of view of Galois representations.
This second paper is focused on the computations needed to verify the hypotheses of the $R=\bT$ theorem proven in Part I.

The method we use to prove $R=\bT$ is new. As with the standard approach, we start with a surjection $R \onto \bT$ and show that if $R$ is ``small enough," this surjection must be an isomorphism. Standard methods use tangent space computations to show that $R$ is ``small enough," but these techniques are not enough in our setting because of the failure of the Gorenstein property. Instead, we show that $R$ is ``small enough" by bounding the dimension of $R/pR$ (as a vector space) under certain conditions.

In Part I of this pair of papers, we prove that 
\[
\dim_{\F_p}R/pR \le 3\Longleftrightarrow \dim_{\F_p}R/pR = 3\Longleftrightarrow R=\mathbb{T},
\] 
and then prove that $\dim_{\F_p}R/pR > 3$ if and only if certain Galois cochains exist and satisfy specific local conditions about their restrictions to decomposition groups at $\ell_0$, $\ell_1$, and $p$. Broadly, the main steps of this paper are as follows. 
\begin{itemize}
    \item Translate the cochain existence problem into a problem about the existence of extensions of $\Q(\zeta_p)$ with prescribed (nilpotent $p$-group) Galois group, and translate the local conditions into conditions on splitting behavior of primes in these extensions.
    \item Describe extensions with the desired Galois groups explicitly as iterated Kummer extensions, following Sharifi's construction of generalized Heisenberg extensions \cite{sharifithesis,sharifi2007,LLSWW2020}. We refer to these as \emph{twisted-Heisenberg extensions}. 
    \item Adjust the extensions constructed in the previous step so that they have the desired local properties. This involves understanding the local behavior of certain global cochains, which we achieve using a tame analog of the Gross-Stark conjecture, developed in \cite{WWE3,wake2020eisenstein}.
    \item Use Kummer theory to express the splitting behavior of primes in these extensions in terms of conditions on the Kummer generators.
    \item Perform computations in Sage \cite{SAGE} using the unit/S-unit interface to the unit/S-unit groups computed in Pari/GP \cite{PARI2}.
    \item Establish number-theoretic characterizations of these extension fields.
\end{itemize}
Using these computations, we find many explicit examples where the conditions for $\dim_{\F_p}R/pR \le 3$ are satisfied and conclude that $R=\bT$ in these cases. Moreover, we find examples where $\dim_{\F_p}R/pR >3$, and we compute that $\mathrm{rank}_{\Z_p}\bT>3$ in each of these cases, which is consistent with our $R=\bT$ conjecture.

Although the focus of this paper is on computing bounds for $\dim_{\F_p}R/pR$, we expect that some of the techniques developed here will be of independent interest. We expect that the same methods can be used to compute bounds on dimensions of residually-reducible deformation rings in other contexts. We also hope that this paper can serve as a guide for further computation and exploration in generalized Heisenberg extensions of number fields.

\subsection{Main results} 
\label{subsec: P2 main results}

We begin by formulating our main results in terms of splitting conditions in certain unipotent $p$-extensions of $\Q(\zeta_p)$ determined by $N=\ell_0\ell_1$. In the description below, we use $C_p$ to denote a cyclic order $p$ group. 

Let $K =\Q(\zeta_p,\ell_1^{1/p})$, and let $L/\Q(\zeta_p)$ be the $\omega^{-1}$-isotypic $C_p$-extension such that $(1-\zeta_p)$ splits and only the primes over $\ell_0$ ramify. (For the existence and uniqueness of $L/\Q(\zeta_p)$, see, e.g., \cite[Lem.\ 3.9]{CE2005}.) 
To state the main result of this paper, we require two special $C_p$-extensions of $K$ defined in \S\ref{subsec: P2 deducing main}, which we denote by $K'/K$ and $K''/K$. These $C_p$-extensions are characterized by certain Galois-theoretic and splitting conditions, including the ``$\omega^i$-isotypic'' condition that is defined in Definition \ref{defn: Delta isotypic}. In particular, the Galois-theoretic conditions also involve a tower of $C_p$-extensions of $L$ in which each extension in the tower is constructed by composing the previous subextension with an extension of $K$.

Diagrammatically, letting $M = KL$, we have: 
\[
\begin{tikzcd}[every arrow/.append style={-}]
&&M''\arrow{d}\arrow{ddll}\\
&&M'\arrow{d}\arrow{dl}\\
K''\arrow{dr}&K'\arrow{d}&M\arrow{d}\arrow{dl}\\
& K\arrow{d}&L\arrow{dl}\\
&\Q(\zeta_p)&
\end{tikzcd}
\]

In this setup, $M'/M$ is the unique $C_p$-extension such that 
\begin{itemize}
\item $M'/\Q$ is Galois and $M'/M$ is $\omega^0$-isotypic
\item $M'/M$ is unramified
\item the primes of $M$ over $\ell_0$ split in $M'/M$.
\end{itemize}
Then $K'/K$ is characterized up to isomorphism by being a $C_p$-extension of $K$ contained in $M'$ but not equal to $M$. See Proposition \ref{prop: characterize M'} for details. 

Likewise, assuming that the primes over $\ell_1$ split in $K'/K$ (equivalently, in $M'/M$), we can construct and identify another $C_p$-extension $M''/M'$. It is the unique $C_p$-extension of $M'$ such that
\begin{itemize}
\item $M''/\Q$ is Galois and $M''/M'$ is $\omega$-isotypic
\item the conductor of $M''/M'$ divides (resp.\ is equal to) $\m^\fl := \prod_{v \mid p} v^2$; that is, the product of the squares of the primes of $M'$ over $p$
\item primes of $M'$ over $\ell_0$ split in $M''/M'$. 
\end{itemize}
Then $K''/K$ is characterized up to isomorphism by being a $C_p$-extension of $K$ contained in $M''$, not contained in $M'$, and being a member of an isomorphism class (of subfields of $M''$) of cardinality $p$. See Proposition \ref{prop: characterize M''} for details.

The first main result of this paper gives splitting conditions in the $C_p$-extensions $K'/K$ and $K''/K$ for when $\dim_{\F_p}R/pR>3$. 
\begin{thm}[{Theorem \ref{thm: main}}]
\label{thm: main intro} 
We have $\dim_{\F_p}R/pR>3$ if and only if the following conditions hold:
    \begin{enumerate}[label=(\roman*)]
        \item all primes of $K$ over $\ell_1$ split in $K'/K$;
        \item there exists some prime of $K$ over $\ell_0$ that splits in both $K'/K$ and $K''/K$.
    \end{enumerate}
In particular, when $\dim_{\F_p}R/pR=3$, we have $R=\mathbb{T}$.
\end{thm}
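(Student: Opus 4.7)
The plan is to combine the Galois-cochain criterion from Part I with the explicit Kummer-theoretic construction of twisted-Heisenberg extensions of $\Q(\zeta_p)$. By \cite{part1}, $\dim_{\F_p} R/pR > 3$ is equivalent to the existence of a tower of Galois cochains for the absolute Galois group of $\Q(\zeta_p)$ whose restrictions to decomposition groups at $\ell_0$, $\ell_1$, and $p$ satisfy prescribed local conditions. The first step is to reinterpret the existence of these cochains as the existence of a tower of $C_p$-extensions in the form of Sharifi's twisted-Heisenberg extensions: the ground level is $M = KL$, where $K$ encodes the $\omega$-isotypic $\ell_1$-datum and $L$ encodes the $\omega^{-1}$-isotypic $\ell_0$-datum, and the higher layers $M'/M$ and $M''/M'$ correspond to the second and third cochains in the tower.

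The second step is to match the local conditions on the cochains to properties of the tower. For the first layer, the conditions at $\ell_0$ and $p$ (together with the global isotypy) force $M'/M$ to satisfy the three properties defining it before the theorem, while the condition at $\ell_1$ forces the primes of $M$ above $\ell_1$ to split in $M'/M$. For the second layer, the condition at $p$ becomes the flat-type conductor bound ``divides $\m^\fl = \prod_{v \mid p}v^2$'' on $M''/M'$, and the conditions at $\ell_0$ and $\ell_1$ become the splitting of the corresponding primes of $M'$ in $M''/M'$. The uniqueness statements in Propositions \ref{prop: characterize M'} and \ref{prop: characterize M''} then identify the resulting tower with the canonical $M \subset M' \subset M''$ appearing in the theorem.

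The third step is to descend to $K'/K$ and $K''/K$. Because $K'$ is the $C_p$-subextension of $M'$ over $K$ distinct from $M$, and $K''$ is a $C_p$-subextension of $M''$ over $K$ not contained in $M'$, standard compositum arguments yield $M' = M \cdot K'$ and $M'' = M' \cdot K''$ with trivial intersections over the respective base, and hence restriction gives isomorphisms $\Gal(M'/M) \cong \Gal(K'/K)$ and $\Gal(M''/M') \cong \Gal(K''/K)$ under which decomposition groups at compatible primes correspond. Transferring the splitting statements above through these isomorphisms yields exactly the conditions (i) and (ii) of the theorem. The ``in particular'' assertion is then immediate from the equivalence $\dim_{\F_p} R/pR \le 3 \Longleftrightarrow R = \bT$ established in Part I.

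The main obstacle I anticipate is arranging the second-layer local behavior at $p$ so that $M''/M'$ realizes the flat conductor condition $\m^\fl$. Producing global cochains with this precise local behavior requires fine local-global control, for which the tame analog of the Gross-Stark conjecture from \cite{WWE3,wake2020eisenstein} is the essential input; this matches the ``adjust extensions to have the desired local properties'' step in the outline of the introduction. A secondary subtlety is ensuring that the Heisenberg commutator structure and the $\omega^i$-isotypic decompositions make the correspondence between cochains and tower layers unambiguous, so that the splitting conditions one reads off the tower are precisely those imposed on the cochains in Part I.
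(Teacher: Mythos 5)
Your overall frame (reduce to the Part I criterion ``$a\up1\vert_{\ell_1}=0$ and $\alpha^2+\beta=0$'', realize the cochains as a twisted-Heisenberg tower $M\subset M'\subset M''$, and use the characterizations of Propositions \ref{prop: characterize M'} and \ref{prop: characterize M''} to match the intrinsic description of $K'$ and $K''$) is the same as the paper's, but there is a genuine gap at the crux: the translation of the condition $\alpha^2+\beta=0$ into condition (ii). The invariants $\alpha$ and $\beta$ are defined via restriction to the \emph{distinguished} decomposition group at $\ell_0$ fixed by the pinning data, whereas condition (ii) only asks that \emph{some} prime of $K$ over $\ell_0$ split in both $K'/K$ and $K''/K$. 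Your step of ``transferring splitting statements through the isomorphisms $\Gal(M'/M)\cong\Gal(K'/K)$ and $\Gal(M''/M')\cong\Gal(K''/K)$'' cannot produce this equivalence: the $\ell_0$-splitting of primes of $M'$ in $M''/M'$ is \emph{automatic} (it is built into the local condition $b\up2\vert_{\ell_0}=\beta\smile c\up1\vert_{\ell_0}$ and into the characterization of $M''$), while condition (ii) concerns the position of the order-$p$ decomposition group $D_{\ell_0}$ as a line inside the three-dimensional group $\Gal(M''/K)$, i.e.\ whether it lies in $\Gal(M''/K'')$, which is exactly what detects $\beta$. The paper handles this by (a) invoking the pinning-independence of $\alpha^2+\beta$ from Part I to change the pinning so that $\alpha=0$ (Lemma \ref{lem: choice pinning data}), (b) showing that with $\alpha=0$ the primes over $\ell_0$ split in $K'/K$ are precisely the $\Delta$-orbit of the distinguished prime, and (c) running a $\Delta$-conjugation computation on the $4$-dimensional representation $\nu$ of \eqref{eq: 4d abcd} to show that if the conjugate prime splits in $K''/K$, then the conjugated $b\up2$-coordinate, which equals $\omega(\sigma^{-1})\cdot b\up2$, vanishes at $\ell_0$, hence $b\up2\vert_{\ell_0}=0$ and $\beta=0$ (Lemma \ref{lem: main conditions equivalent}). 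None of this appears in your proposal, and without it the ``some prime'' formulation of (ii) is not justified.

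A secondary point: you identify the main difficulty as arranging the flat conductor behavior at $p$ via the tame Gross--Stark input. That input is needed for the \emph{computation} of $a\up1$ and $b\up2$ (the adjustment algorithms of \S\ref{sec:adjustments}), not for the proof of the equivalence in Theorem \ref{thm: main intro} itself; for the theorem, the existence of $a\up1$ and $b\up2$ with the required local properties is already supplied by Propositions \ref{prop: produce a1} and \ref{prop: produce b2} from Part I. Also note that in the paper condition (i) is the cleaner half: it is the equivalence $a\up1\vert_{\ell_1}=0\Leftrightarrow$ all primes over $\ell_1$ split in $K'/K$, which is standard Kummer-theoretic bookkeeping (using that $a\up1\vert_{\ell_1}$ is a genuine homomorphism since $c\up1\vert_{\ell_1}=0$, and that $M'/\Q$ is Galois); your write-up of that half is essentially fine.
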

The second main result of this paper is an algorithm that computes whether conditions $(i)$ and $(ii)$ in Theorem \ref{thm: main} hold. Indeed, since $K'/K$ and $K''/K$ are both $C_p$-extensions, each can be constructed by adjoining the $p$th root of an $S$-unit in $K$, where we have taken $S$ to be the set of primes of $K$ dividing $Np$. In particular, Kummer theory provides a computationally feasible way to check conditions $(i)$ and $(ii)$ even when the degrees of $K'/\Q$ and $K''/\Q$ are large, i.e., of degree $p^2(p-1)\geq 100$. The main components of our algorithm are given in \S\ref{sec:adjustments}, and the entire program, implemented using Sage \cite{SAGE}, can be found online at \url{https://github.com/cmhsu2012/RR3}. Our program is efficient enough that we have run it for some small values of $p$ and many values of $N$. 

Here is a sample result of our calculations. For a detailed discussion of all computed examples, see \S\ref{sec:data}.

\begin{thm}
\label{thm: main computations}
Let $p=5$ and $\ell_0=11$. Then for 
\[
\ell_1 =23,67,263,307,373,397,593,857,967,1013,
\] 
condition $(i)$ of Theorem \ref{thm: main} holds, but condition $(ii)$ does not. In particular, for these values of $\ell_1$, the $\F_p$-dimension of $R/pR$ equals $3$ and $R\cong\bT$. 

For $\ell_1 =43,197,683,727$, conditions $(i)$ and $(ii)$ of Theorem \ref{thm: main} both hold. Consequently, the $\F_p$-dimension of $R/pR$ exceeds $3$ for these values of $\ell_1$. 
\end{thm}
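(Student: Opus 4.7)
The plan is to recognize that this theorem is essentially a computational verification of the criterion from Theorem \ref{thm: main intro} for each listed $\ell_1$, so the proof proceeds by running the algorithm outlined in \S\ref{sec:adjustments} on each input and reporting the outcome. The conclusion about $R/pR$ and $R\cong\bT$ then follows immediately from Theorem \ref{thm: main intro}. What remains is to describe the computation in enough detail to trust it.

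First, for each $\ell_1$ in the list, I would set up the base field $K=\Q(\zeta_5,\ell_1^{1/5})$ of degree $20$ and compute the $S$-unit group $\cO_{K,S}^\times$ in Pari/GP, where $S$ is the set of primes of $K$ above $N\cdot p = 5\cdot 11\cdot\ell_1$. Because $K'/K$ and $K''/K$ are $C_p$-extensions, Kummer theory tells us they are each obtained by adjoining the $p$th root of an $S$-unit in $K$, so the first computational task is to locate the correct Kummer classes inside $\cO_{K,S}^\times/(\cO_{K,S}^\times)^p$. Using the explicit Galois-theoretic characterizations from Propositions \ref{prop: characterize M'} and \ref{prop: characterize M''}, together with the auxiliary field $L/\Q(\zeta_p)$ (which is easy to construct since it is the $\omega^{-1}$-isotypic $C_p$-extension ramified only at primes over $\ell_0=11$), one can isolate the correct Kummer elements $\alpha'$ and $\alpha''$ generating $K'/K$ and $K''/K$ by matching the isotypic, conductor, and splitting constraints on $M'/M$ and $M''/M'$.

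Once $\alpha'$ is in hand, condition (i) is checked as follows: factor $\ell_1\cO_K$ into primes and, for each prime $\mathfrak{l}_1\mid\ell_1$, test whether the image of $\alpha'$ in the residue field $\kappa(\mathfrak{l}_1)^\times$ is a $p$th power; this is the Kummer splitting criterion. The list $\ell_1=23,67,263,307,373,397,593,857,967,1013$ is obtained by running this test and collecting those $\ell_1$ (with $\ell_1\not\equiv\pm 1\pmod 5$ and an appropriate $p$th-power condition modulo $\ell_0$) for which every prime above $\ell_1$ passes. For those $\ell_1$, one then performs the analogous computation with $\alpha''$ at the primes above $\ell_0=11$ to test condition (ii): one factors $11\cO_K$ and asks whether some single prime $\mathfrak{l}_0\mid 11$ sees both $\alpha'$ and $\alpha''$ as local $p$th powers. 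For the ten values listed first this fails, so only (i) holds, and Theorem \ref{thm: main intro} yields $\dim_{\F_p}R/pR=3$ and $R\cong\bT$. For $\ell_1=43,197,683,727$, both tests succeed, so $\dim_{\F_p}R/pR>3$.

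The main obstacle is not the Kummer splitting check itself (which is a routine residue-field computation) but the construction of the correct Kummer generators $\alpha'$ and $\alpha''$: the characterizations in Propositions \ref{prop: characterize M'} and \ref{prop: characterize M''} pin down $K'$ and $K''$ only via conditions involving the auxiliary tower $M\subset M'\subset M''$, with prescribed Galois-module structure, conductor dividing $\prod_{v\mid p}v^2$, and splitting at $\ell_0$. Extracting, from the large $\F_p$-vector space $\cO_{K,S}^\times/(\cO_{K,S}^\times)^p$, the specific one-dimensional subspaces cut out by these conditions is where the algorithmic work of \S\ref{sec:adjustments}, including the use of the tame Gross--Stark machinery of \cite{WWE3,wake2020eisenstein} to control local behavior, is essential; the implementation in Sage is documented at the repository cited above, and its output on the specified inputs furnishes the theorem.
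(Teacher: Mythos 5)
Your overall skeleton---treat the statement as a computational verification of the criterion of Theorem \ref{thm: main} for each listed $\ell_1$, then invoke that theorem to conclude $\dim_{\F_p}R/pR=3$ and $R\cong\bT$ (resp.\ $\dim_{\F_p}R/pR>3$)---matches the paper. The gap is precisely in the step you flag as the main obstacle: how the Kummer generators of $K'/K$ and $K''/K$ are actually produced. You propose to isolate them inside $\cO_{K,S}^\times/(\cO_{K,S}^\times)^p$ by matching the characterizations of Propositions \ref{prop: characterize M'} and \ref{prop: characterize M''}. But those characterizations are established in \S\ref{sec: P2 ANT} only a posteriori, and they are not computationally usable as a search criterion: they impose isotypicity, $\ell_0$-splitting, and conductor-dividing-$\m^\fl$ conditions on the extensions $M'/M$ and $M''/M'$, where $M'$ and $M''$ have degrees $p^3(p-1)=500$ and $p^4(p-1)=2500$ over $\Q$, and the ambient space $\cO_{K,S}^\times\otimes_\Z\F_5$ has dimension about $30$, so neither a brute-force search nor a direct verification of those constraints is feasible (the only brute-force searches in the paper are over $\Q(\zeta_5)$, i.e.\ in $\bP^5(\F_5)$).

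What the paper does instead is \emph{construct} the generators: $a\up1\cand|_{G_K}=D^1_\sigma(\gamma)$ and $b\up2\cand|_{G_K}$ arise as first and second Kolyvagin derivatives of a $\gamma\in K^\times\otimes_\Z\F_p$ with $\Nm_{K/\Q(\zeta_p)}(\gamma)=c$ (Sharifi's theorem, via Theorem \ref{thm:comp of cands}), and the local conditions are then imposed by one-parameter adjustments $\zeta_p^i a_0^j$ and $p^k\ell_0^m$ (Theorems \ref{thm: a1 adjustments} and \ref{thm: b2 adjustments}), with the $\ell_0$-adjustments computed purely locally from the Mazur--Tate derivative / tame Gross--Stark slope formula (Theorem \ref{thm:slope}) and the pinning normalized so that $\alpha=0$ and the distinguished prime over $\ell_0$ is identified (Lemmas \ref{lem: choice pinning data} and \ref{lem: splitting condition for a1}). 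Your final residue-field $p$th-power tests at the primes over $\ell_1$ and over $\ell_0$ are indeed the right checks (Algorithms \ref{alg:finalcheck1} and \ref{alg:finalcheck}), but without the derivative-plus-adjustment construction (or some other feasible way of pinning down $a\up1|_{G_K}$ and $b\up2|_{G_K}$) your plan never produces the elements those tests are applied to; deferring at that point to ``the implementation at the repository'' is citing the computation rather than describing one that works.
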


\begin{rem}
For the values of $p$ and $N$ where we found $\dim_{\F_p}R/pR > 3$, we also computed $\dim_{\F_p}\bT/p\bT>3$. This is consistent with our conjecture that $R\cong\bT$. 
\end{rem}

For the remainder of this introduction, we outline how conditions $(i)$ and $(ii)$ in Theorem \ref{thm: main} arise. Specifically, we explain how the $C_p$-extensions $K'/K$ and $K''/K$ are the splitting fields of certain Galois cochains and then show how to compute explicit $S$-units corresponding to these cochains using Kolyvagin derivative operators.  To describe this precisely, we summarize the necessary Galois cohomological framework established in Part I.

\subsection{Differential equations and the rank of $R$}
\label{subsec:diffeq and R}

Let $R$ be the ring representing deformations of the pseudorepresentation $\omega \oplus 1$ that have determinant equal to the cyclotomic character and that are finite-flat at $p$, unramified-or-Steinberg at the primes $\ell_0, \ell_1$ that divide $N$, and unramified outside $Np$. Note that $R$ only plays a motivational role in this paper, so we do not discuss this definition further -- see Part I for more information.

As we explain in the introduction of Part I, there is a particular first-order deformation $\rho_1$ of $\bro$ such that $\dim_{\F_p}R/pR \le 3$ unless a deformation $\rho_2$ of $\rho_1$ satisfying certain local conditions exists. 

There is a helpful analogy between this problem and boundary value problems in the theory of differential equations. The existence of $\rho_2$ is analogous to the existence of a general solution to the system of differential equations, and satisfying the local conditions is analogous to the existence of a solution to the boundary value problem. We will now use this analogy to frame our main results from Part I.

\subsubsection{The system of equations defining $\rho_1$}
The starting point for our study of $\dim_{\F_p}R/pR$ is the representation $\rho_1$ of $G_\Q := \Gal(\oQ/\Q)$. As an input, we start with two cocycles:
\begin{itemize}
    \item $b\up1$ represents the Kummer class of $\ell_1$ in $H^1(\Z[1/Np],\F_p(1))$, and
    \item $c\up1$ represents a nontrivial class in $H^1(\Z[1/Np],\F_p(-1))$ that is ramified only at $\ell_0$ (such a class is unique up to scaling).
\end{itemize}
If we wanted to represent $\rho_1$ as a matrix with values in $\GL_2(\F_p[\ep]/(\ep^2))$, there are two choices:
\[
\ttmat{\omega(1+a\up1 \epsilon)}{\ep b\up1}{\omega (c\up1+\ep c\up2)}{1+d\up1 \epsilon}
\mbox{ or }
\ttmat{\omega(1+a\up1 \epsilon)}{ b\up1+\ep b\up2}{\omega \ep c\up1}{1+d\up1 \epsilon}.
\]
In other words, we have to choose either the upper-right or lower-left entry to be a multiple of $\ep$. From the point of view of pseudorepresentations, both choices give the same answer because they have the same trace and determinant. To obviate this choice, we write $\rho_1$ as
\[
\rho_1 = \ttmat{\omega(1+a\up1 \epsilon)}{b\up1}{\omega c\up1}{1+d\up1 \epsilon}
\]
to refer to the pseudorepresentation obtained by either of these choices. (This ad hoc definition can be made more formal using the theory of \emph{Generalized Matrix Algebras (GMA)}---see [Part I, \S\ref{sssec: CH reps and GMA reps}] for the definition of GMAs or [Part I, \S\ref{subsec: 1-reducible}] for the 1-reducible GMAs relevant here.)

The determinant of $\rho_1$ is $\omega(1+(a\up1+d\up1-b\up1c\up1)\ep)$.
Since we require our deformations to have cyclotomic determinant, we must have $d\up1=b\up1c\up1-a\up1$. Since $b\up1$ and $c\up1$ are fixed, the data of $\rho_1$ is equivalent to the data of a cochain $a\up1$. For $\rho_1$ to be a homomorphism, it is equivalent that $a\up1$ satisfy the differential equation
\begin{equation}
    \label{eq:diffeq of a1}
    -\hspace{-.075cm}da\up1 = b\up1 \smile c\up1.
\end{equation}
In order for this equation to have a solution, it is equivalent that $\ell_1$ be a $p$th power modulo $\ell_0$ (see [Part I, Lem.\ \ref{lem: log ell1 is zero}]), which we assume. Hence \eqref{eq:diffeq of a1} has a solution.

Note that the solution $a\up1$ to \eqref{eq:diffeq of a1} is not unique, but any two solutions differ by a cocycle. In order for $\rho_1$ to satisfy the local conditions defining $R$, we must impose a condition on the ramification of $a\up1$ at $p$ (it can be shown that $\rho_1$ satisfies the conditions at $\ell_0$ and $\ell_1$ for all choices of $a\up1$---see [Part I, Lem.\ \ref{lem: D1 construction}]). Still, the solution with this condition is not unique: any two solutions differ by a cocycle that is unramified at $p$.

\subsubsection{The system of equations defining $\rho_2$}
\label{subsub:rho2}
Next we want to deform $\rho_1$ to a pseudorepresentation $\rho_2$ with coefficients in $\F_p[\ep]/(\ep^3)$.
We write our desired deformation as
\begin{equation}
\label{eq:rho2}
    \rho_2 = \ttmat{\omega(1+a\up1 \epsilon+a\up2 \epsilon^2)}{b\up1+b\up2\epsilon}{\omega (c\up1+c\up2 \epsilon)}{1+d\up1 \epsilon+d\up2 \epsilon^2}
\end{equation}
with the same convention as for $\rho_1$ that one or the other of the upper-right or lower-left entries should be multiplied by $\ep$ (or using the 1-reducible GMAs of [Part I, \S4.1]).

Just as with $d\up1$, in order that $\det(\rho_2)=\omega$ we must have $d\up2 =b\up1 c\up2+b\up2 c\up1 -a\up1 d\up1 -a\up2$. The data of the deformation $\rho_2$ is the remaining cochains $a\up2$, $b\up2$, and $c\up2$. These cochains must satisfy the following system of equations in order for $\rho_2$ to be a homomorphism:
\begin{align}
\label{eq:diffeq for a2}
    -d a\up2 &= a\up1 \smile a\up1 + b\up1 \smile c\up2 +b\up2 \smile c\up1 \\
\label{eq:diffeq for b2}
    -d b \up2 &= a\up1 \smile b\up1 + b\up1\smile d\up1 \\
\label{eq:diffeq for c2}
    -d c\up2 &= c\up1 \smile a\up1 + d\up1 \smile c\up1.
\end{align}
This system is much more complex than \eqref{eq:diffeq of a1}, due to the coupling of equations. However, we find that:
\begin{itemize}
    \item Equation \eqref{eq:diffeq for c2} has a solution for a unique value of $a\up1$ (among those that satisfy \eqref{eq:diffeq of a1} and the condition on ramification at $p$). This value is characterized by the condition that $a\up1|_{\ell_0}$ be in the image of the cup product 
    \[
    H^0(\Q_{\ell_0},\F_p(1)) \xrightarrow{\smile c\up1|_{\ell_0}} H^1(\Q_{\ell_0},\F_p).
    \]
    From now on, we fix $a\up1$ to be that solution, and we define $\alpha \in H^0(\Q_{\ell_0},\F_p(1))$ such that $\alpha \smile c\up1|_{\ell_0} = a\up1|_{\ell_0}$.
    \item Equation \eqref{eq:diffeq for b2} has a solution if and only if $a\up1|_{\ell_1}=0$.
    \item Supposing that \eqref{eq:diffeq for b2} has a solution, equation \eqref{eq:diffeq for a2} has a solution only for certain values of $b\up2$. These values are characterized by the condition that $b\up2|_{\ell_0}$ be in the image of the cup product 
    \[
    H^0(\Q_{\ell_0},\F_p(2)) \xrightarrow{\smile c\up1|_{\ell_0}} H^1(\Q_{\ell_0},\F_p(1)).
    \]
     For such a choice of $b\up2$, we define $\beta\in H^0(\Q_{\ell_0},\F_p(2))$ such that $\beta \smile c\up1|_{\ell_0} = b\up2|_{\ell_0}$.
\end{itemize}
In summary, we see that $\rho_2$ exists if and only if $a\up1|_{\ell_1}=0$. Moreover, when this is the case, there are invariants $\alpha \in H^0(\Q_{\ell_0},\F_p(1))$ and $\beta\in H^0(\Q_{\ell_0},\F_p(2))$.

Now assume that $\rho_2$ exists. In order for $\rho_2$ to satisfy the local conditions defining $R$, there are conditions that $a\up2$, $b\up2$, and $c\up2$ be finite-flat at $p$, and an additional condition at $\ell_0$ (No additional condition at $\ell_1$ is necessary---we show that the condition on $\rho_2$ at $\ell_1$ is satisfied for all choices of $a\up2$, $b\up2$, and $c\up2$.) We show that the finite-flat conditions can always be satisfied, and from now on we fix $b\up2$ to be a solution satisfying the finite-flat condition and fix $\beta$ to satisfy $\beta \smile c\up1|_{\ell_0} = b\up2|_{\ell_0}$ for this choice. 
The extra condition at $\ell_0$ can be expressed in terms of $\alpha$ and $\beta$:
\begin{itemize}
    \item $\alpha^2+\beta=0$ in $H^0(\Q_{\ell_0},\F_p(2))$.
\end{itemize}

\begin{rem}
In the theory of differential equations, an \emph{inverse problem} is one where the system of equations and the solution are given, and the unknown is the boundary values. This is analogous to our situation, in that we use the equation \eqref{eq:diffeq for c2} to find the correct local conditions for $a\up1$. 
\end{rem}

We summarize the important information as follows:
\begin{itemize}
    \item $a\up1$ is the unique solution to \eqref{eq:diffeq of a1} such that
    \begin{itemize}
        \item $a\up1$ satisfies a finite-flat condition at $p$, and
        \item $a\up1|_{\ell_0} =\alpha \smile c\up1|_{\ell_0}$ for some unique $\alpha \in H^0(\Q_{\ell_0},\F_p(1))$.
    \end{itemize}
    \item $b\up2$ is a solution to \eqref{eq:diffeq for b2} (if it exists) such that
    \begin{itemize}
        \item $b\up2$ satisfies a finite-flat condition at $p$, and
        \item $b\up2|_{\ell_0} =\beta \smile c\up1|_{\ell_0}$ for some unique $\beta \in H^0(\Q_{\ell_0},\F_p(2))$.
    \end{itemize}
\end{itemize}
Moreover, we know that $b\up2$ exists if and only if $a\up1|_{\ell_1}=0$. The following theorem is the main result of Part I. 
\begin{thm}[Part I, Theorem \ref{P1: thm: main with a1}]
\label{thm: main of part 1}
We have $\dim_{\F_p}R/pR>3$ if and only if the following conditions hold:
\begin{enumerate}[label = (\roman*)]
    \item $a\up1|_{\ell_1}=0$
    \item $\alpha^2+\beta=0$ in $H^0(\Q_{\ell_0},\F_p(2))$.
\end{enumerate}
\end{thm}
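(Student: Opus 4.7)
The plan is to use the tangent-obstruction theory of $R$ to translate the dimension inequality into the existence of a second-order pseudodeformation with prescribed local behaviour. Let $\m \subset R$ denote the maximal ideal. Part I establishes $\dim_{\F_p}R/pR \geq 3$ by exhibiting an explicit 2-dimensional contribution from the first-order tangent space (on top of the constant $1$), with all extra first-order deformations already absorbed into $\rho_1$. Consequently $\dim_{\F_p}R/pR>3$ is equivalent to the existence of a pseudodeformation $\rho_2$ to $\F_p[\ep]/\ep^3$ extending the specific $\rho_1$ singled out in the discussion above and satisfying all the local conditions defining $R$.

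Writing $\rho_2$ as in \eqref{eq:rho2} and using the cyclotomic-determinant condition to eliminate $d^{(2)}$, the pseudodeformation property of $\rho_2$ is equivalent to the system \eqref{eq:diffeq for a2}--\eqref{eq:diffeq for c2} for the cochains $a^{(2)},b^{(2)},c^{(2)}$. My plan is to analyse these equations in the order \eqref{eq:diffeq for c2}, \eqref{eq:diffeq for b2}, \eqref{eq:diffeq for a2}, at each step matching cochain-level solvability with the local conditions imposed by $R$. Equation \eqref{eq:diffeq for c2} involves only data already present in $\rho_1$, so its solvability together with the finite-flat condition on $c^{(2)}$ at $p$ constrains $a^{(1)}$; a Poitou--Tate analysis of the obstruction class in $H^2(\Z[1/Np],\F_p(-1))$ will show that the resulting constraint is exactly that $a^{(1)}|_{\ell_0}$ lies in the image of $\smile c^{(1)}|_{\ell_0}$, pinning down $a^{(1)}$ uniquely and defining $\alpha$.

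Turning to \eqref{eq:diffeq for b2}, the obstruction to its solvability is a class in $H^2(\Z[1/Np],\F_p(1))$, which via Poitou--Tate duality and the explicit shape of the right-hand side is controlled by the local restriction $a^{(1)}|_{\ell_1}$. Hence $b^{(2)}$ exists if and only if condition (i) holds, and we then fix $b^{(2)}$ by imposing the finite-flat condition at $p$, which defines $\beta$. With $a^{(1)}$ and $b^{(2)}$ so fixed, I would then solve \eqref{eq:diffeq for a2} for $a^{(2)}$. The global obstruction can be shown to vanish, and the local conditions at $p$ and $\ell_1$ can be arranged; the remaining local condition at $\ell_0$ defining $R$ translates, after expanding the right-hand side of \eqref{eq:diffeq for a2} at $\ell_0$, into a relation in $H^0(\Q_{\ell_0},\F_p(2))$. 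The $a^{(1)}\smile a^{(1)}$ contribution becomes $\alpha^2\smile c^{(1)}|_{\ell_0}$ and the $b^{(1)}\smile c^{(2)}+b^{(2)}\smile c^{(1)}$ contributions collapse, after using the chosen forms of $c^{(2)}$ and $b^{(2)}$, to $\beta\smile c^{(1)}|_{\ell_0}$; since $\smile c^{(1)}|_{\ell_0}$ is injective on $H^0(\Q_{\ell_0},\F_p(2))$, this is precisely condition (ii).

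The hardest step will be this last one: faithfully translating the ring-theoretic local condition at $\ell_0$ defining $R$ into the cochain-level identity $\alpha^2+\beta=0$. This requires careful use of the GMA formalism from Part I to unwind the local pseudodeformation condition at $\ell_0$, together with explicit local Galois cohomology at $\ell_0$ to identify the image of $\smile c^{(1)}|_{\ell_0}$ and verify injectivity there. A secondary obstacle is confirming that the global obstructions for \eqref{eq:diffeq for b2} and \eqref{eq:diffeq for a2} vanish once the relevant local obstructions do, which is where Poitou--Tate duality and the global hypothesis on the Eisenstein-congruent cusp form of level $\ell_0$ enter the argument.
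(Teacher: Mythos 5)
Bear in mind that this theorem is not proved in the present paper: it is quoted from Part~I (Theorem~\ref{P1: thm: main with a1}), and what Part~II supplies is only the motivational overview in \S\ref{subsec:diffeq and R} together with references back to the Part~I lemmas. There is therefore no local proof to compare your attempt against; the best available benchmark is that overview.

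At the level of the overview your structure is the right one: trace through \eqref{eq:diffeq for c2}, \eqref{eq:diffeq for b2}, \eqref{eq:diffeq for a2} in that order, use \eqref{eq:diffeq for c2} to pin down $a^{(1)}$ and define $\alpha$, use \eqref{eq:diffeq for b2} to extract condition $(i)$, and use the residual local condition at $\ell_0$ to extract condition $(ii)$. Invoking Poitou--Tate duality for the solvability criteria is a reasonable technical choice.

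Where your sketch has a genuine gap is in the definition of $\beta$. You write that you ``fix $b^{(2)}$ by imposing the finite-flat condition at $p$, which defines $\beta$,'' but finite-flatness is a condition at $p$ and says nothing about $b^{(2)}|_{\ell_0}$. In the paper's account (and in Part~I, Proposition~\ref{P1: prop: exists unique beta}), $\beta$ is only well-defined after one first shows that \emph{solvability of} \eqref{eq:diffeq for a2} forces $b^{(2)}|_{\ell_0}$ to lie on the line spanned by $c^{(1)}|_{\ell_0}$ inside $H^1(\Q_{\ell_0},\F_p(1))$; that is the step which produces a scalar $\beta$ with $b^{(2)}|_{\ell_0} = \beta\smile c^{(1)}|_{\ell_0}$, and only then does the finite-flat condition at $p$ single out its value. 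As written, your argument invokes a $\beta$ that has not been constructed. Once that is repaired, your final step --- expanding \eqref{eq:diffeq for a2} at $\ell_0$ and reading off $\alpha^2+\beta=0$ from the residual unramified-or-Steinberg condition --- is a plausible account of what Part~I does, though one would still need to unwind the GMA-level condition at $\ell_0$ carefully, as you flag yourself in your closing paragraph.
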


By realizing $K'/K$ as the splitting field of $a\up1|_{G_K}$ and $K''/K$ as the splitting field of $b\up2|_{G_K}$, we can translate Theorem \ref{thm: main of part 1} into the language of Theorem \ref{thm: main intro}. It remains to compute the $S$-units corresponding to  $a\up1|_{G_K}$ and $b\up2|_{G_K}$.

\subsection{Computing $S$-units to solve cup product and Massey product equations}
In the theory of differential equations, one technique used for solving boundary value problems is to first find a particular solution to the equation, then adjust that solution so it satisfies the boundary conditions. We take a similar two-step approach to computing $a\up1$ and $b\up2$:
\begin{description}
\item[Step 1] Find cochains that solve \eqref{eq:diffeq of a1} and \eqref{eq:diffeq for b2}. We denote these by $a\up1\cand$ and $b\up2\cand$, for \emph{candidate solutions}.
\item[Step 2] Find \emph{local adjustments} needed to make the candidate solutions satisfy the necessary local conditions. These local adjustments are global cocycles $a\adj$ and $b\adj$ such that $a\up1=a\up1\cand+a\adj$ and $b\up2=b\up2\cand+b\adj$ satisfy the desired local properties. 
\end{description}
As we described above, we do not compute with cochains directly, but rather with their $S$-units associated by Kummer theory. For the purposes of this introduction, we will abuse notation and conflate these two.

\subsubsection{Computing candidate solutions}
To compute our candidate solutions, we start with an alternate interpretation of the equations \eqref{eq:diffeq of a1} and \eqref{eq:diffeq for b2}. A solution $a\up1\cand$ to \eqref{eq:diffeq of a1} gives a twisted-Heisenberg extension of $\Q$, which is cut out by the following upper-triangular 3-dimensional representation of $G_\Q$: 
\begin{equation}
    \label{eq: 3d abc}
\begin{pmatrix}
\omega & b\up1 & \omega a\up1\cand \\
0 & 1 & \omega c\up1 \\
0 & 0 & \omega
\end{pmatrix}.
\end{equation}
In his thesis, Sharifi gave a way to find such extensions using Kummer theory \cite{sharifithesis}. The idea is to start by interpreting $c\up1$ as a unit in $\Q(\zeta_p)$. Since $b\up1 \cup c\up1$ vanishes in cohomology, the Hasse norm theorem implies that $c\up1$ is a norm from $K$; let $\gamma \in K^\times$ be such that $N_{K/\Q(\zeta_p)}\gamma=c\up1$. Then Sharifi proves that $a\up1\cand$ can be obtained as a kind of Kolyvagin derivative of $\gamma$ \cite[Proposition 2.6]{sharifithesis}.

There is a similar interpretation of \eqref{eq:diffeq for b2}. Namely, a solution to \eqref{eq:diffeq for b2} gives a twisted-Heisenberg extension of $\Q$ one dimension greater, cut out by
\begin{equation}
\label{eq: 4d abcd}
\begin{pmatrix}
\omega & b\up1 & \omega a\up1\cand & b\up2\cand\\
0 & 1 & \omega c\up1 &  d\up1\cand\\
0 & 0 & \omega & b\up1 \\
0 & 0 & 0 & 1
\end{pmatrix},
\end{equation}
where we define $d\up1\cand=b\up1 c\up1-a\up1\cand$. The obstruction to the existence of a cochain $b\up2\cand$ is a generalization of the cup product called the \emph{triple Massey product} $(b\up1,c\up1,b\up1)$. With this notation, \eqref{eq:diffeq for b2} can be rewritten as 
\[
-d b\up2 = (b\up1,c\up1,b\up1).
\]
Sharifi generalized his results from cup products to higher \emph{cyclic} Massey products \cite{sharifi2007,LLSWW2020}, which include triple Massey products of the form $(b\up1,b\up1,c\up1)$, but not $(b\up1,c\up1,b\up1)$. The result is that a solution to the equation
\[
-dZ = (b\up1,b\up1,c\up1)
\]
can be obtained as a second Kolyvagin derivative of $\gamma$ \cite[Thm.\ 4.3]{sharifi2007} (where, as above, $\gamma \in K^\times$ satisfies $N_{K/\Q(\zeta_p)}\gamma=c\up1$). 
We show, using commutativity relations for Massey products, that a solution $b\up2\cand$ can be derived from such a $Z$.

\subsubsection{Computing local adjustments}
There are two types of local adjustments that need to be made:
\begin{itemize}
    \item local adjustments at $p$ to ensure finite-flatness,
    \item local adjustments at $\ell_0$, used in defining $\alpha$ and $\beta$.
\end{itemize}
For $a\up1$, the finite-flat condition translates to the extension $K'/K$
being unramified at $p$. This can be achieved by multiplying $a\up1\cand$ by an appropriate $p$th root of unity. For $b\up2$, the finite-flat condition boils down to a \emph{peu ramife\'e} condition as in \cite[\S2.4]{serre1987}. If $K/\Q$ is tamely ramified at $p$, this amounts to $b\up2\cand$ being prime-to-$p$ (as an $S$-unit), which is automatic by our Kolyvagin derivative construction. If $K/\Q$ is wildly ramified, the condition is slightly more involved, but can be achieved by multiplying $b\up2\cand$ by an appropriate power of $p$.

The local adjustment at $\ell_0$ is more interesting because it involves the cocycle $c\up1$. Unlike $b\up1$, the splitting field of $c\up1$ is not easy to write down. Even if we do compute it, checking the condition globally would involve working with the compositum of $K$ and the splitting field of $c\up1$. Instead, we take advantage of the fact that the local condition only involves the local restriction $c\up1|_{\ell_0}$, not the global cocycle. The structure of this local restriction is known by a tame version of the Gross--Stark conjecture \cite{wake2020eisenstein}. This result computes slope of $c\up1|_{\ell_0}$ (with respect to a canonical basis of $H^1(\Q_{\ell_0},\F_p(-1))$) in terms of an analytic invariant called the \emph{Mazur--Tate derivative} $\zeta_\mathrm{MT}'$ (of a family of Dirichlet $L$-functions). The quantity $\zeta_\mathrm{MT}'$ is completely explicit and easily computed, so this allows us to explicitly compute $c\up1|_{\ell_0}$ up to scalar, and find the adjustments purely locally.

\subsection{Organization of the paper} 
In Section \ref{sec: recollect part 1}, we summarize the relevant constructions from Part I; this section also contains a subsection (\S\ref{subsec: finite-flat}) of new material that focuses on formulating the finite-flat condition for $b\up2$ in language that is explicit enough for computations. In Section \ref{sec: sharifi}, we construct our candidate solutions, including the background material required to apply Sharifi's methods in our setting. In Section \ref{sec:adjustments}, we prove the main result of this paper (Theorem \ref{thm: main}) and give explicit algorithms for checking whether the splitting conditions in Theorem \ref{thm: main} hold. In Section \ref{sec:data}, we present a broad selection of computed examples that illustrate our main result. Lastly, in Section \ref{sec: P2 ANT}, we establish the characterizations of $K'/K$ and $K''/K$ that appear in \S\ref{subsec: P2 main results} as part of this paper's main result; we also use this content to state a terminal result (Theorem \ref{thm: HL P2}) of the pair of papers. 

\subsection{Acknowledgements}
The first-named author thanks the University of Bristol and the Heilbronn Institute for Mathematical Research for its partial support of this project. The second-named author was supported in part by NSF grant DMS-1901867, and would like to thank his coauthors on the paper \cite{LLSWW2020}; that paper inspired many of the ideas used here about how to compute Massey products. The third-named author was supported in part by Simons Foundation award 846912, and thanks the Department of Mathematics of Imperial College London for its partial support of this project from its Mathematics Platform Grant. We also thank John Cremona for several insightful conversations about computational aspects of this project as well as the referee for their helpful comments and suggestions. This research was supported in part by the University of Pittsburgh Center for Research Computing and Swarthmore College through the computing resources provided.

\section{Key Galois cochains and their local properties}\label{sec: recollect part 1}

The purpose of this section is to recall the constructions of Part I as a point of departure. In \S\ref{subsec: finite-flat}, there will also be some new content that makes these constructions more amenable to computation. We begin with notation and conventions to make the Galois cochains featured in \S\ref{subsec:diffeq and R} precise. 

\subsection{Assumptions, notation, and conventions}

The following statement of assumptions, which are in force throughout this paper, recapitulates the assumptions given at the outset of \S\ref{subsec: summary2} in a slightly more precise form. We write 
\[
\bT \rsurj \bT_{\ell_0} 
\]
for the surjection of Hecke algebras, from level $N$ to $\ell_0$, as described in Part I, \S\ref{P1: subsec: modular forms}.

\begin{assump}
\label{assump: main}
Assume $p\geq 5$ is prime. Throughout the paper, we specialize to level $N=\ell_0\ell_1$, where $\ell_0$ and $\ell_1$ are primes such that
\begin{enumerate}
    \item $\ell_0 \equiv 1 \pmod{p}$
    \item  $\ell_1 \not \equiv 0, \pm 1 \pmod{p}$ 
    \item $\ell_1$ is a $p$th power modulo $\ell_0$
    \item $\rk_{\Z_p}\bT_{\ell_0} = 2$, which is easily checked via a criterion Merel \cite{merel1996}, c.f., Part I, Remark \ref{P1: rem: Merel}
\end{enumerate}
\end{assump}

The assumption that $\rk_{\Z_p} \bT_{\ell_0} = 2$ is equivalent to the assumption that there is a unique Eisenstein-congruent cusp form at level $\ell_0$, which is the assumption we used in the introduction (\S\ref{subsec: summary2}). 

Actions of and functions on profinite groups are presumed continuous without further comment. This includes cochains on Galois groups, which are thought of as functions on a finite self-product of the group, which we will establish notation for shortly. 

For $q = \ell_0, \ell_1, p$, we fix embeddings of algebraic closures $\oQ \rinj \oQ_q$, inducing inclusions of decomposition groups $G_q := \Gal(\oQ_q/\Q_q) \rinj G_{\Q,Np}$, where $G_{\Q,Np}$ is the Galois group of the maximal algebraic extension of $\Q$ ramified only at places dividing $Np\infty$. Write $I_q \subset G_q$ for the inertia subgroup, and $\Fr_q \in G_q$ for a lift of the arithmetic Frobenius element of $G_q/I_q$ to $G_q$.

A primitive $p$th root of unity $\zeta \in \oQ$ plays an important role by inducing an isomorphism between the group of $p$th roots of unity $\mu_p \subset \oQ^\times$ and $\F_p(1)$, which we write for the representation of $G_{\Q,Np}$ on the 1-dimensional $\F_p$-vector space $\F_p$ with action by the modulo $p$ cyclotomic character $\omega$. Likewise, $\zeta$ induces isomorphisms $\mu_p^{\otimes i} \risom \F_p(i)$ for all $i \in \Z$, where $\F_p(i)$ denotes $\F_p(1)^{\otimes i}$. 

\begin{defn}
Let $i \in \Z$ and $j \in \Z_{\geq 0}$. We use $C^j(\Z[1/Np], \F_p(i))$ to denote $j$-cochains of $G_{\Q,Np}$ valued in $\F_p(i)$. Likewise, when $Z^j$, $B^j$, or $H^j$ replaces ``$C^j$'' in this notation, we are referring to cocycles, coboundaries, and cohomology, respectively. When $Y \in Z^j(-)$ is a cocycle, we write $[Y] \in H^j(-)$ for its associated cohomology class. 

Similarly, when $q$ is a prime, we write $C^j(\Q_q, \F_p(i))$ for local cochain groups. We have restriction maps 
\[
(-)\vert_q : H^j(\Z[1/Np], \F_p(i)) \to H^j(\Q_q, \F_p(i)), \quad 
C^j(\Z[1/Np], \F_p(i)) \to C^j(\Q_q, \F_p(i))
\]
for $q = \ell_0, \ell_1, p$. We say that a cohomology class (resp.\ cochain) is 
\begin{itemize}
    \item \emph{unramified at $q$} when its further restriction to $H^j(\Q^\mathrm{ur}_q, \F_p(i))$ \\(resp.\ $C^j(\Q^\mathrm{ur}_q, \F_p(i))$) vanishes, and
    \item \emph{splits at $q$} when it vanishes under these map.
\end{itemize} 

We have cup product maps
\begin{equation}\label{eq: cup products}
\begin{split}
\smile &: C^j(-, \F_p(i)) \times C^{j'}(-, \F_p(i')) \to C^{j+j'}(-, \F_p(i+i')),\\
\cup &: H^j(-, \F_p(i)) \times H^{j'}(-, \F_p(i')) \to H^{j+j'}(-, \F_p(i+i')).
\end{split}
\end{equation}
\end{defn}

\subsection{Pinning data}
\label{subsec: cocycles} Because many of the constructions in Part I depend in subtle ways on additional choices that we refer to as \emph{pinning data}, we recall this information here.

\begin{defn}
    \label{defn: pinning} 
    We refer to the following choices a \emph{pinning data}. 
    \begin{itemize}
        \item For each $q \in \{\ell_0, \ell_1, p\}$, an embedding $\oQ \rinj \oQ_q$
        \item a primitive $p$th root of unity $\zeta_p \in \oQ$
        \item for $i=0,1$, a $p$th root $\ell_i^{1/p} \in \oQ$ of $\ell_i$, such that, if possible, the image of $\ell_1^{1/p}$ in $\oQ_p$, under the fixed embedding, is in $\Q_p$. (See Lemma \ref{lem: b1 ramification at p} for when this is possible.)
    \end{itemize}
\end{defn}

 We fix a choice of pinning data for the entire paper. Notice that our choice defines a decomposition subgroup of $q$ in $G_{\Q,Np}$ for each prime $q$ dividing $Np$ as well as an isomorphism between this subgroup and $G_q$. Likewise, for any number field $F \subset \oQ$, the induced embedding $F \rinj \oQ_q$ singles out a prime of $F$ lying over $q$, which we call the \emph{distinguished prime of $F$ over $q$}. 

We are now ready to write down precise descriptions of the key Galois cochains from Part I, overviewed in \S\ref{subsec:diffeq and R}. Indeed, recall the canonical isomorphism 
\begin{equation}
    \label{eq: kummer}
    \Z[1/Np]^\times \otimes_\Z \F_p \isoto H^1(\Z[1/Np],\mu_p)
\end{equation}
of Kummer theory, which sends an element $n \in \Z[1/Np]^\times \otimes_\Z \F_p$ to the class of the cocycle $G_{\Q,Np} \ni \sigma \mapsto \frac{\sigma (n^{1/p})}{n^{1/p}}$ for a choice $n^{1/p} \in \oQ$ of $p$th root of $n$. We call this element of $H^1(\Z[1/Np],\mu_p)$ the \emph{Kummer class of $n$} and call any cocycle in this class a \emph{Kummer cocycle of $n$}. Because $\zeta_p \not\in \Q$, each Kummer cocycle of $n$ is given by $\sigma \mapsto \frac{\sigma (n^{1/p})}{n^{1/p}}$ for a unique choice $n^{1/p} \in \oQ$ of $p$th root of $n$. We use the isomorphism $\F_p(1) \cong \mu_p$ induced by the choice of $\zeta_p$ in the pinning data (Definition \ref{defn: pinning}) to think of Kummer classes and cocycles as having coefficients in $\F_p(1)$.

We fix the following cohomology classes and cocycles throughout the paper:
\begin{defn} \hfill
    \label{defn: pinned cocycles}
    \begin{itemize}
        \item Let 
        \[
         b_0 , b_1, b_p \in H^1(\Z[1/Np], \F_p(1))
        \]
        be the Kummer classes of $\ell_0$, $\ell_1$, and $p$, respectively.
        \item For $i=0,1$, let
        \[
        \gamma_0 \in I_{\ell_0}, \ \gamma_1 \in I_{\ell_1}
        \]
        be a lift of a generator of the maximal pro-$p$ quotient of the tame quotient of $I_{\ell_i}$ and fixed such that $b_i(\gamma_i)=1 \in \F_p(1)$.\footnote{Note that $b_i|_{I_{\ell_i}}: I_{\ell_i} \to \F_p(1)$ is a well-defined homomorphism because $I_{\ell_i}$ acts trivially on $\F_p(1))$.}
        \item Let
        \[
        b_0\up1, b_1\up1 \in Z^1(\Z[1/Np],\F_p(1))
        \]
        be the Kummer cocycles associated to $p$th roots $\ell_0^{1/p}$ and $\ell_1^{1/p}$, respectively, chosen in our pinning data (Definition \ref{defn: pinning}). Let $b\up1=b_1\up1$.

        \item Let 
        \[
        c\up1 \in Z^1(\Z[1/Np], \F_p(-1))
        \]
        with cohomology class $c_0=[c\up1]$ be the unique cocycle such that
    \begin{enumerate}[label=(\roman*)]
        \item $c_0$ is ramified exactly at $\ell_0$,
        \item $c_0|_p=0$,
        \item $c\up1(\gamma_0) = 1$, and
        \item $c\up1\vert_{\ell_1} = 0$.
    \end{enumerate}
    Note that properties (i)-(iii) specify $c_0$ uniquely, and property (iv) specifies $c\up1$ uniquely. See Part I, Definition \ref{P1: defn: pinned cocycles} for more details.
    \item Let 
    \[
    a_0, a_p \in Z^1(\Z[1/Np],\F_p)
    \]
    be non-zero homomorphisms ramified exactly at $\ell_0$ and at $p$, respectively, and such that $a_0(\gamma_0)=1$. This determines $a_0$ uniquely and determines $a_p$ up to $\F_p^\times$-scaling (which is sufficient for our purposes). 
    \end{itemize}
\end{defn}

We note that the choices of $b\up1$, $c\up1$, and $a_0$ depend only on the pinning data of Definition \ref{defn: pinning}. Additionally, while we define all of the cohomology classes and cocycles appearing in the contructions of Part I for the sake of completeness, the content of this paper primarily requires the cocycles $b\up1,c\up1$, and $a_0$.

\subsection{The solution $a\up1$ to differential equation \eqref{eq:diffeq of a1} and the invariant $\alpha$}

We move on to Galois cochains that are not cocyles, but satisfy differential equations whose origin was discussed in \S\ref{subsec:diffeq and R}. Then, the crucial numerical invariant $\alpha^2 + \beta \in H^0(\Q_{\ell_0},\F_p(2))$ will be derived from them, which is proven to be canonical -- that is, independent of the pinning data and therefore only dependent on a choice of $p$ and $N$ satisfying Assumption \ref{assump: main} -- in Part I, Theorem \ref{thm: main invariant}.

By definition of $c\up1$, its associated cohomology class $c_0$ splits at $p$. This means that there exists some $x_{c} \in \F_p(-1)$ such that
\[
c\up1\vert_p = dx_{c}.
\]
Concretely, for any $\tau \in G_p$, we have $c_0\up1(\tau) = (\omega^{-1}(\tau)-1)x_{c}$. This $x_{c}$ is determined by the pinning data. 

We will use $x_{c}$ to normalize the solution to our first differential equation \eqref{eq:diffeq of a1}, which is 
\[
-dX = b\up1 \smile c\up1,
\]
where $X$ is an unknown in $C^1(\Z[1/Np], \F_p)$. 

In the statement of Proposition \ref{prop: produce a1} below, we use the fact that a primitive $p$th root of unity in $\Q_{\ell_0}$ exists becuase $\ell_0 \equiv 1 \pmod{p}$ and supplies a $\F_p$-basis for $H^0(\Q_{\ell_0}, \F_p(1))$ under the natural isomorphism $\mu_p(\Q_{\ell_0}) \cong H^0(\Q_{\ell_0}, \F_p(1))$. 
\begin{prop}[Part I, Lemma \ref{P1: lem: produce a1}]
\label{prop: produce a1}
There exists a unique solution $a\up1 \in C^1(\Z[1/Np],\F_p)$ of the differential equation \eqref{eq:diffeq of a1} that satisfies the local conditions
\begin{enumerate}[label=(\alph*)]
\item $(a\up1 +b_1\up1 \smile x_{c})|_{I_p} = 0$ in $Z^1(\Q_p^\mathrm{nr}, \F_p)$
\item $a\up1|_{\ell_0}$ is on the line in $Z^1(\Q_{\ell_0}, \F_p) \cong H^1(\Q_{\ell_0}, \F_p)$ spanned by $\zeta \cup c_0|_{\ell_0}$ under the cup product
\[
H^0(\Q_{\ell_0}, \F_p(1)) \times H^1(\Q_{\ell_0}, \F_p(-1)) \to H^1(\Q_{\ell_0}, \F_p),
\] 
where $\zeta$ is a choice of $\F_p$-basis of $H^0(\Q_{\ell_0}, \F_p(1))$.
\end{enumerate}
This cochain $a\up1$ is uniquely determined by the pinning data. 
\end{prop}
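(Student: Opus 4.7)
The plan is to exhibit any particular solution to \eqref{eq:diffeq of a1}, parameterize the full solution set as an affine translate of the cocycle group $H^1(\Z[1/Np], \F_p)$, and then show that conditions (a) and (b) pin down a unique element via a non-degenerate $2 \times 2$ linear system.

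For existence, I would use Assumption \ref{assump: main}(3) (that $\ell_1$ is a $p$-th power modulo $\ell_0$) to deduce, via a cup product / Hasse norm computation, that $b_1 \cup c_0 = 0$ in $H^2(\Z[1/Np], \F_p)$. This yields some $a_\mathrm{part} \in C^1(\Z[1/Np], \F_p)$ with $-d a_\mathrm{part} = b\up1 \smile c\up1$. Since $G_{\Q,Np}$ acts trivially on $\F_p$, we have $Z^1 = H^1$ in this degree; combined with the hypotheses $\ell_0 \equiv 1 \pmod p$ and $\ell_1 \not\equiv 1 \pmod p$, class field theory (ruling out unramified extensions of $\Q$ and $\Z/p$-extensions ramified only at $\ell_1$) gives that $H^1(\Z[1/Np], \F_p)$ is two-dimensional over $\F_p$ with basis $\{a_0, a_p\}$. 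Therefore every solution is uniquely of the form $a_\mathrm{part} + \mu a_0 + \nu a_p$ for some $(\mu, \nu) \in \F_p^2$.

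Each local condition then becomes a single linear equation in $(\mu, \nu)$. For (a): $Z^1(\Q_p^{\mathrm{nr}}, \F_p) = \Hom(I_p, \F_p)$ is one-dimensional; $a_0|_{I_p} = 0$ (since $a_0$ is unramified at $p$), while $a_p|_{I_p}$ is a generator. So (a) fixes $\nu$ independently of $\mu$, absorbing the correction term $b_1\up1 \smile x_c$ into the constant. For (b): the condition is equivalent to the vanishing of the image of $a\up1|_{\ell_0}$ in the one-dimensional quotient $H^1(\Q_{\ell_0}, \F_p)/\F_p(\zeta \cup c_0)|_{\ell_0}$. The main obstacle is to show non-degeneracy of the full system: once $\nu$ is determined by (a), condition (b) must fix $\mu$, which reduces to the key assertion
\[
a_0|_{\ell_0} \notin \F_p \cdot (\zeta \cup c_0)|_{\ell_0} \text{ in } H^1(\Q_{\ell_0}, \F_p).
\]
Both of these classes are ramified at $\ell_0$ with non-zero inertial restriction (indeed both send $\gamma_0 \mapsto 1$), so they coincide up to a non-zero scalar modulo the one-dimensional unramified subspace $H^1_\mathrm{nr}(\Q_{\ell_0}, \F_p)$; the claim is that their unramified parts differ. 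This is the heart of the argument, and follows from comparing values on a chosen Frobenius lift at $\ell_0$, using the independent normalizations of $a_0$ (which cuts out the degree-$p$ subfield of $\Q(\zeta_{\ell_0})$) and of $c\up1$ (which is pinned down globally by $c_0|_p = 0$ and $c\up1|_{\ell_1} = 0$).

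Uniqueness with respect to the pinning data then follows by tracking inputs: $b\up1 = b_1\up1$ depends only on $\ell_1^{1/p}$; $c\up1$ is uniquely specified by Definition \ref{defn: pinned cocycles}; $x_c$ is uniquely determined by $c\up1|_p = d x_c$ because $H^0(\Q_p, \F_p(-1)) = 0$ for $p \geq 5$; and $\zeta$ is induced by $\zeta_p$ in the pinning data. Hence $a\up1$ depends only on the pinning data.
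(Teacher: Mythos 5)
Your overall skeleton is the natural one: a particular solution (using Assumption \ref{assump: main}(3) so that $b_1\cup c_0=0$), the two-dimensional ambiguity space $\Hom(G_{\Q,Np},\F_p)=\langle a_0,a_p\rangle$, and the two local conditions as a nondegenerate linear system in the two coefficients; note that the paper does not reprove this proposition here but imports it from Part I. You also correctly isolate the decisive assertion, namely $a_0|_{\ell_0}\notin\F_p\cdot(\zeta\cup c_0|_{\ell_0})$. The genuine gap is your justification of that assertion. It cannot be established by ``comparing values on a chosen Frobenius lift'' from the normalizations of $a_0$ and $c\up1$: the pinning conditions $c\up1(\gamma_0)=1$, $c_0|_p=0$, $c\up1|_{\ell_1}=0$ fix the inertial component of $c_0|_{\ell_0}$ but give no elementary handle on its unramified (Frobenius) component, and it is precisely that component which decides whether the line spanned by $\zeta\cup c_0|_{\ell_0}$ equals $\F_p\,a_0|_{\ell_0}$. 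In this paper that slope is only accessible through the tame Gross--Stark formula (Theorem \ref{thm:slope}): the line contains $\zeta_\mathrm{MT}'\lambda+\tfrac16 a_0|_{\ell_0}$, so the non-proportionality you need is equivalent to $\zeta_\mathrm{MT}'\neq 0$, which in turn is equivalent (by \cite{WWE3}) to Assumption \ref{assump: main}(4) that $\rk_{\Z_p}\bT_{\ell_0}=2$. Your argument never invokes that assumption, and without it the assertion --- and with it both existence and uniqueness of the $a_0$-coefficient --- is simply false. This step requires the global/analytic input (or whatever equivalent argument Part I uses), not local bookkeeping.

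A secondary, fixable error: $Z^1(\Q_p^{\mathrm{nr}},\F_p)=\Hom(I_p,\F_p)$ is not one-dimensional (abelian degree-$p$ extensions of $\Q_p^{\mathrm{nr}}$ are plentiful, since the residue field is infinite), so your dimension count at $p$ does not by itself show that condition (a) can be satisfied for some $\nu$. The repair is short: since $c\up1|_p=dx_c$ and $db\up1=0$, the cochain $(a\up1+b\up1\smile x_c)|_p$ is a homomorphism on all of $G_p$, and the image of the restriction map $\Hom(G_p,\F_p)\to\Hom(I_p,\F_p)$ is the single line spanned by $a_p|_{I_p}$ (the kernel is the unramified line and $\dim_{\F_p}\Hom(G_p,\F_p)=2$); combined with $a_0|_{I_p}=0$ this gives existence and uniqueness of $\nu$, as you intended.
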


Condition (a) is a finite-flat condition at $p$, as discussed in Part I, \S\ref{P1: subsubsec: finite-flat}. 

\begin{rem}
    We point out a phenomenon that will frequently appear in various forms. Even though $a\up1$ is a global cochain that is not a global cocycle, its restriction $a\up1\vert_{\ell_0}$ to $G_{\ell_0}$ is a cocycle because the factor $b\up1$ of the differential equation splits at $\ell_0$. We will often encounter situations where a global non-cocycle is a local cocycle, allowing us to impose arithmetic conditions on it. 
\end{rem}

We now define an important numerical invariant.
\begin{defn}
\label{defn: alpha}
Let $\alpha \in \mu_p(\Q_{\ell_0}) \cong H^0(\Q_{\ell_0}, \F_p(1))$ be the unique solution to
\[
[a\up1|_{\ell_0}] = \alpha \cup c_0|_{\ell_0}.
\]
By Proposition \ref{prop: produce a1}, $\alpha$ depends only on the pinning data of Definition \ref{defn: pinning}. 
\end{defn}

\subsection{The solution $b\up2$ to differential equation \eqref{eq:diffeq for b2} and the invariant $\beta$}

In preparation to discuss the differential equation solved by $b\up2$, we recall the following well-known analogue, for general odd primes $p$, of the ramification behavior of the prime $2$ in quadratic (degree $2$) number fields. As far as notation, recall that $b_1$ denotes the Kummer class of $\ell_1$, which contains the Kummer cocycle $b\up1$.
\begin{lem}[Part I, Lemma \ref{P1: lem: ram of ell1 at p}]
    \label{lem: b1 ramification at p}
    The following conditions are equivalent. 
    \begin{enumerate}
        \item $\ell_1^{p-1} \equiv 1 \pmod{p^2}$
        \item $b_1$ is unramified at $p$
        \item $p$ is tamely ramified in $\Q(\ell_1^{1/p})/\Q$.
    \end{enumerate}
\end{lem}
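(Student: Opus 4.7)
The plan is to show that each of (1), (2), (3) is equivalent to the single statement $\ell_1 \in (\Z_p^\times)^p$, which (since $\ell_1$ is a $p$-adic unit) is the same as $\ell_1 \in (\Q_p^\times)^p$.

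For (1), I would compute directly in $\Z_p^\times$. Decomposing $\Z_p^\times \cong \mu_{p-1} \times (1+p\Z_p)$ via the Teichm\"uller lift and using the $p$-adic logarithm (valid for odd $p$), one obtains $(\Z_p^\times)^p = \mu_{p-1} \times (1+p^2\Z_p)$, using $\mu_{p-1}^p = \mu_{p-1}$ because $p$ is coprime to $p-1$. Writing $\ell_1 = \tau \cdot u$ with $\tau \in \mu_{p-1}$ and $u \in 1+p\Z_p$, we have $\ell_1^{p-1} = u^{p-1}$, and since raising to the $(p-1)$st power is an automorphism of $1+p\Z_p$ stabilising $1+p^2\Z_p$, condition (1) is equivalent to $u \in 1+p^2\Z_p$, i.e.\ to $\ell_1 \in (\Z_p^\times)^p$.

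For (2), I would invoke Kummer theory: the isomorphism $H^1(\Q_p, \F_p(1)) \cong \Q_p^\times/(\Q_p^\times)^p$ identifies $b_1|_p$ with the class of $\ell_1$, so $b_1|_p = 0$ iff $\ell_1 \in (\Q_p^\times)^p$. To connect this with the unramified condition, apply inflation--restriction:
\[
0 \to H^1(G_p/I_p,\, \F_p(1)^{I_p}) \to H^1(\Q_p, \F_p(1)) \to H^1(I_p, \F_p(1)).
\]
Since $p \geq 3$, the mod-$p$ cyclotomic character $\omega$ is nontrivial on $I_p$ (as $\Q_p(\zeta_p)/\Q_p$ is ramified), so $\F_p(1)^{I_p} = 0$ and the restriction to $I_p$ is injective on $H^1$. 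Hence $b_1$ is unramified at $p$ iff $b_1|_p = 0$ iff $\ell_1 \in (\Q_p^\times)^p$.

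For (3), I would analyze the local extension $\Q_p(\ell_1^{1/p})/\Q_p$, which has degree $1$ or $p$. In the degree-$p$ case I claim it is totally (wildly) ramified: setting $\alpha = \ell_1^{1/p}$, the residue $\bar\alpha$ satisfies $\bar\alpha^p = \bar\ell_1 \in \F_p^\times$, but since Frobenius is the identity on $\F_p$ the unique $p$th root in $\overline{\F_p}$ of $\bar\ell_1$ lies in $\F_p$, forcing $\bar\alpha \in \F_p$. Thus $f=1$ and $e=p$. Therefore $p$ is tamely ramified in $\Q(\ell_1^{1/p})/\Q$ iff the extension is trivial iff $\ell_1 \in (\Q_p^\times)^p$. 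The argument presents no deep obstacle; the only mildly subtle ingredient is the vanishing $\F_p(1)^{I_p} = 0$ in step (2), which relies on the hypothesis $p \geq 3$ built into Assumption \ref{assump: main}.
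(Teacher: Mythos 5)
The paper does not reprove this lemma here (it is quoted from Part I), so there is no in-text proof to compare against; your reduction of all three conditions to the single pivot $\ell_1\in(\Z_p^\times)^p$ is the standard route one expects. Steps (1) and (2) are fine: $(\Z_p^\times)^p=\mu_{p-1}\times(1+p^2\Z_p)$ together with the fact that the $(p-1)$-power map is an automorphism of $1+p\Z_p$ preserving $1+p^2\Z_p$ gives (1), and the inflation--restriction argument with $\F_p(1)^{I_p}=0$ (valid since $\Q_p(\zeta_p)/\Q_p$ is ramified, $p\geq 3$) correctly turns ``unramified at $p$'' into ``locally trivial,'' hence into $\ell_1\in(\Q_p^\times)^p$ by local Kummer theory.

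Step (3), however, has a genuine gap: from $\bar\alpha\in\F_p$ you conclude ``thus $f=1$,'' but the residue degree of $\Q_p(\alpha)/\Q_p$ is not controlled by the residue of one chosen generator. The residue field is generated by the residues of \emph{all} integers of $\Q_p(\alpha)$, and $\Z_p[\alpha]$ need not be the maximal order (indeed $\mathrm{disc}(x^p-\ell_1)=\pm p^p\ell_1^{p-1}$ is divisible by $p$); in general a field generator of an unramified extension can have residue in $\F_p$ --- e.g.\ $1+p\zeta$ generates $\Q_p(\zeta)$ for $\zeta$ a root of unity of order prime to $p$, yet its residue is $1$. So the inference fails as stated, even though the conclusion is true. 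A clean repair: if $\Q_p(\ell_1^{1/p})/\Q_p$ had degree $p$ and were unramified, it would be cyclic Galois, hence normal, hence would contain $\zeta_p=\sigma(\alpha)/\alpha$ for some conjugate $\sigma(\alpha)\neq\alpha$; but $\Q_p(\zeta_p)/\Q_p$ is ramified of degree $p-1>1$, a contradiction (and if every conjugate equalled $\alpha$ then $\alpha\in\Q_p$, contradicting the degree). Since the degree is prime, nontriviality then forces total, hence wild, ramification, and your equivalence (3)$\iff\ell_1\in(\Q_p^\times)^p$ goes through; you should also record, as you implicitly use, that in the split case the remaining primes of $\Q(\ell_1^{1/p})$ above $p$ have completions $\Q_p(\zeta_p)$ with $e=p-1$, so $p$ is indeed tamely ramified globally in that case.
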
 

We recall differential equation \eqref{eq:diffeq for b2}, which has the form
\[
-dY = a\up1 \smile b\up1 + b\up1 \smile d\up1,
\]
where we have let $d\up1 = b\up1c\up1 - a\up1$. 

\begin{prop}[Part I, Lemma \ref{P1: lem: exists 2nd-order 1-reducible} and Proposition \ref{P1: prop: exists unique beta}]
\label{prop: produce b2}
There exists a solution $b\up2 \in C^1(\Z[1/Np],\F_p(1))$ of the differential equation \eqref{eq:diffeq for b2} if and only if $a\up1\vert_{\ell_1} = 0$ in $Z^1(\Q_{\ell_1}, \F_p)$. If any such solution exists, then there also exists a solution $b\up2$ satisfying the local conditions that 
\begin{enumerate}[label=(\alph*)]
\item $b\up2|_{\ell_0}$ is a cocycle on the line in $Z^1(\Q_{\ell_0}, \F_p(1)) \cong H^1(\Q_{\ell_0}, \F_p(1))$ spanned by $\zeta' \cup c_0|_{\ell_0}$ under the cup product
\[
H^0(\Q_{\ell_0}, \F_p(2)) \times H^1(\Q_{\ell_0}, \F_p(-1)) \to H^1(\Q_{\ell_0}, \F_p(1)),
\] 
where $\zeta'$ is a choice of $\F_p$-basis of $H^0(\Q_{\ell_0}, \F_p(2))$
\item there exists some $\rho_2$ as in \eqref{eq:rho2} such that $\rho_2\vert_p$ is finite-flat and $b\up2$ is a coordinate of $\rho_2$ (as in \eqref{eq:rho2}).
\end{enumerate}
There exists a single $\beta \in H^0(\Q_{\ell_0}, \F_p(2)) \cong \F_p(2)$ such that, for any of the $b\up2$ satisfying these local conditions, 
\[
b\up2|_{\ell_0} = \beta \smile c\up1|_{\ell_0}.
\]
Moreover, the set of solutions of \eqref{eq:diffeq for b2} satisfying these two local conditions is contained in a torsor under the subspace of $Z^1(\Z[1/Np], \F_p(1))$ spanned by coboundaries $B^1(\Z[1/Np], \F_p(1))$ and the cocycle $b\up1$. 
\end{prop}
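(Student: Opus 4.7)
The plan is to split the proposition into four sub-claims --- the solvability criterion, the existence of a solution satisfying conditions (a) and (b), the well-definedness of $\beta$, and the torsor description --- and handle them in sequence.

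For the solvability criterion, I would analyze the obstruction class
\[
[a\up1 \smile b\up1 + b\up1 \smile d\up1] \in H^2(\Z[1/Np], \F_p(1)).
\]
Via Poitou--Tate, global vanishing of this class reduces to vanishing of local restrictions at $\ell_0, \ell_1, p$. At $p$, the finite-flat condition on $a\up1$ from Proposition \ref{prop: produce a1}(a) combined with $c_0\vert_p = 0$ kills the local obstruction. At $\ell_0$, the standing assumption that $\ell_1$ is a $p$th power modulo $\ell_0$ implies $\ell_1 \in (\Z_{\ell_0}^\times)^p$ (since $\ell_0 \neq p$), so $b\up1\vert_{\ell_0}$ is a coboundary and both cup products vanish. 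At $\ell_1$, using $d\up1 = b\up1 c\up1 - a\up1$ with $c\up1\vert_{\ell_1} = 0$ and graded commutativity, the local obstruction becomes a nonzero multiple of $[a\up1\vert_{\ell_1}] \cup [b\up1\vert_{\ell_1}]$. Since $[b\up1\vert_{\ell_1}]$ is nonzero in the one-dimensional space $H^1(\Q_{\ell_1}, \F_p(1))$, Tate local duality shows this cup product vanishes iff $[a\up1\vert_{\ell_1}] = 0$, which is equivalent to $a\up1\vert_{\ell_1} = 0$ as a cocycle because $B^1(\Q_{\ell_1}, \F_p) = 0$ (trivial action on $\F_p$).

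Assuming a candidate solution $b\up2\cand$ exists, the set of all solutions is a torsor under $Z^1(\Z[1/Np], \F_p(1))$, which Kummer theory identifies with an extension of $\Z[1/Np]^\times \otimes \F_p$ by coboundaries. To achieve (a) and (b), I would modify $b\up2\cand$ by a suitable global cocycle. Condition (b), the existence of a finite-flat $\rho_2$ containing $b\up2$, is a \emph{peu ramifi\'ee}-type constraint at $p$ that can be adjusted by a multiple of the Kummer cocycle of $p$ (cf.\ Lemma \ref{lem: b1 ramification at p}). Condition (a), that $b\up2\vert_{\ell_0}$ land on the prescribed line, is a linear constraint modulo coboundaries; I would verify surjectivity of the localization map from global cocycles onto the relevant local quotient at $\ell_0$ via a Poitou--Tate dimension count.

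The invariant $\beta$ is then defined by $b\up2\vert_{\ell_0} = \beta \smile c\up1\vert_{\ell_0}$. Uniqueness of $\beta$ follows from injectivity of $- \cup [c\up1\vert_{\ell_0}]$ on $H^0(\Q_{\ell_0}, \F_p(2))$, which holds because $[c\up1\vert_{\ell_0}]$ is nonzero and the Tate pairing is perfect. Independence of $\beta$ from the choice of $b\up2$ (among solutions satisfying (a) and (b)) follows from the torsor description: coboundaries trivially preserve both conditions, and $b\up1$ preserves them because $b\up1\vert_{\ell_0}$ is a coboundary (so condition (a) is unaffected) and $b\up1$ is already an entry of the finite-flat $\rho_1$ (so the corresponding deformation direction preserves finite-flatness at $p$). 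For the reverse inclusion characterizing the modifying subspace, I would show via Kummer theory that the $S$-units whose associated cocycle restricts at $\ell_0$ to a coboundary are exactly the $p$th powers together with $\ell_1$, giving the span claimed.

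The main obstacle is the simultaneous adjustment step: a single global cocycle $z$ must cancel both the $\ell_0$-defect for (a) and the $p$-defect for (b). This requires a careful dimension count using Poitou--Tate, with the condition $\rk_{\Z_p} \bT_{\ell_0} = 2$ from Assumption \ref{assump: main} ultimately ensuring that the obstruction to simultaneous adjustment vanishes.
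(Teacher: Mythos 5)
You are attempting a self-contained proof of a statement that the paper itself establishes purely by citation to Part I, so the comparison here is with the content your sketch would have to supply. The first half of your plan, the equivalence between solvability of \eqref{eq:diffeq for b2} and $a\up1\vert_{\ell_1}=0$, is essentially sound: the localization map on $H^2(\Z[1/Np],\F_p(1))$ is injective, the local obstruction at $p$ dies because $(a\up1+b\up1\smile x_c)\vert_p$ is unramified and $b\up1\vert_p$ is the class of a unit, at $\ell_0$ the restricted cochain $b\up1\vert_{\ell_0}$ is in fact identically zero under the pinning (not merely a coboundary --- this is what makes $a\up1\vert_{\ell_0}$ and $d\up1\vert_{\ell_0}$ cocycles and kills the obstruction on the nose), and at $\ell_1$ the obstruction is $2[a\up1\vert_{\ell_1}]\cup[b\up1\vert_{\ell_1}]$ with both groups one-dimensional and the Tate pairing perfect.

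The genuine gap is your treatment of condition (b). As stated, (b) asserts the existence of an entire deformation $\rho_2$ as in \eqref{eq:rho2} --- hence of cochains $a\up2$ and $c\up2$ solving the coupled equations \eqref{eq:diffeq for a2} and \eqref{eq:diffeq for c2} --- with $\rho_2\vert_p$ finite-flat and $b\up2$ among its coordinates. You treat (b) as a stand-alone \emph{peu ramifi\'ee} constraint on $b\up2\vert_p$ fixable by a power of $p$; that presupposes exactly the reduction the paper has to import from Part I (Proposition \ref{P2: prop: ref flat rho2 step 2} and \S\ref{subsec: finite-flat} exist precisely because (b) is not a priori a condition on $b\up2$ alone), and it ignores that (a) and (b) are coupled: the line condition (a) at $\ell_0$ is exactly the solvability condition for \eqref{eq:diffeq for a2}, i.e.\ for the $\rho_2$ that (b) demands. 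Since you never solve \eqref{eq:diffeq for a2} or \eqref{eq:diffeq for c2}, (b) is not established for your adjusted $b\up2$. Two further concrete problems: your characterization of the modifying subspace (``the $S$-units whose cocycle restricts at $\ell_0$ to a coboundary are exactly the $p$th powers together with $\ell_1$'') is false in general, since the Kummer class of $p$ also dies at $\ell_0$ whenever $p$ is a $p$th power modulo $\ell_0$ (a case the paper explicitly notes after Algorithm \ref{algorithm_b2_ff}); the $b_p$-direction must be excluded by the finite-flat condition (b), the $b_0\up1$-direction by $\zeta_\mathrm{MT}'\neq 0$ (Theorem \ref{thm:slope}, i.e.\ Assumption \ref{assump: main}(4)), and the relevant requirement on a difference of solutions is that it land on the line $\langle\zeta'\cup c_0\vert_{\ell_0}\rangle$, not that it vanish at $\ell_0$. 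Finally, the rank-two hypothesis is not what decouples a ``simultaneous adjustment'' --- the adjustments decouple for elementary reasons (fix finite-flatness with $p^k$ first, then the $\ell_0$-condition with $\ell_0^m$, which is a unit at $p$); it is needed to know $[\ell_0]\notin\langle\zeta'\cup c_0\vert_{\ell_0}\rangle$, which is what makes the $\ell_0$-adjustment possible and the torsor claim (and hence the well-definedness of $\beta$) go through.
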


\begin{proof}
The first claim of the proposition is exactly Lemma \ref{lem: a1 vanish at ell1 when S large} of Part I. 

Applying that first claim, the existence of a solution $b\up2$ of \eqref{eq:diffeq for b2} is sufficient, by Proposition \ref{P1: prop: exists unique beta}, to deduce that 
\begin{itemize}
    \item there exists a $\rho_2$ as in \eqref{eq:rho2} that is finite-flat at $p$, meaning that its $b\up2$-coordinate satisfies (b); and 
    \item its $b\up2$-coordinate also satisfies (a), according to Lemma \ref{P1: lem: exists 2nd-order 1-reducible}(2) of Part I.
\end{itemize}
Finally, the claim about containment in a torsor is in Part I, Proposition \ref{P1: prop: exists unique beta}. 
\end{proof}

\begin{defn}
    \label{defn: beta} Whenever a deformation $\rho_2$ of $\rho_1$ exists, i.e., when $a\up1\vert_{\ell_1} = 0$ by Proposition \ref{prop: produce b2}, we define $\beta \in H^0(\Q_{\ell_0},\F_p(2))$ to be the unique element (depending only on the pinning data) that satisfies 
    \[
    b\up2|_{\ell_0} = \beta \smile c\up1|_{\ell_0}.
    \]
    for any choice of $b\up2$ satisfying the local conditions (a) and (b) Proposition \ref{prop: produce b2}.
\end{defn}

\subsection{Local slope and a zeta-value}\label{subsec: local slope and a zeta-value}
The purpose of this section is to relate the class of the cocycle $c\up1$ to a Mazur--Tate $\zeta$-function $\xi$, following \cite[\S8.3]{wake2020eisenstein}.
This formula expresses the slope of the line spanned by the global class $c\up1$ in the 2-dimension local Galois cohomology group $H^1(\Q_{\ell_0},\F_p(-1))$ as a ratio of classical and tame $L$-values. This can be thought of as a tame analog of the ($p$-adic) Gross--Stark formula.

We consider the basis $\{a_0|_{\ell_0}, \lambda\}$ of $H^1(\Q_{\ell_0},\F_p)$, where $a_0$ is as in Defintion \ref{defn: pinned cocycles} and $\lambda$ is the unique unramified character sending $\Fr_{\ell_0}$ to 1. We also let $\zeta_\mathrm{MT}' \in \F_p$ denote the element
\begin{equation}\label{eq: mazur-tate derivative}
\zeta_\mathrm{MT}' = \frac{1}{2} \sum_{i=1}^{\ell_0-1} B_2(i)\log_{\ell_0}(i),
\end{equation}
where $B_2(x)$ is the second Bernoulli polynomial.
This element can be seen as the derivative of the Mazur-Tate type $L$-funciton $\chi \mapsto L(\chi,1)$ for Dirichlet characters $\chi$ of modulus $N$ and $p$-power order (see \cite[\S1.5]{WWE3} or \cite[\S4.1]{wake2020eisenstein}). Note that we have $\zeta_\mathrm{MT}' \ne 0$ due to our assumption that there is a unique cusp form of level $\ell_0$ that is congruent to the Eisenstein series modulo $p$, according to \cite[Thm.\ 1.5.2]{WWE3}. 

\begin{rem}
We remark that this is closely related to Merel's result \cite[Thm.\ 2]{merel1996} characterizing the uniqueness of the cusp form in terms of \emph{Merel's number}, defined in Part I, \S\ref{sssec: intro interpretation}, equation \eqref{eq: Merel number}. Indeed, $\zeta_\mathrm{MT}'$ vanishes if and only if \emph{Merel's number} does; a direct link between the two quantities was proved in \cite[Lem.\ 12.3.1]{WWE3} (see also \cite{lecouturier2018}). 
\end{rem}

We have the following description of the line spanned by $c\up1 |_{\ell_0}$.

\begin{thm}[{\cite[Prop.\ 8.3.2]{wake2020eisenstein}}]
\label{thm:slope}
Let $\zeta \in H^0(\Q_{\ell_0}, \F_p(1)) \cong \F_p(1)$ be a basis. The line in $H^1(\Q_{\ell_0},\F_p)$ spanned by $\zeta \smile c\up1|_{\ell_0}$ contains the element
\[
\zeta_\mathrm{MT}' \lambda + \frac{1}{6} a_0|_{\ell_0}.
\]
\end{thm}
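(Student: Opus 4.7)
The plan is to compute both sides of the equation in a common basis of the local cohomology $H^1(\Q_{\ell_0}, \F_p(-1))$, which is 2-dimensional because $\ell_0 \equiv 1 \pmod p$ makes $\F_p(-1)$ a trivial $G_{\ell_0}$-module. First I would fix a decomposition $H^1(\Q_{\ell_0}, \F_p(-1)) = H^1_\mathrm{ur} \oplus H^1_\mathrm{ram}$, where the ramified part is dual to $I_{\ell_0}^{\mathrm{ab},p}$ (generated by $\gamma_0$) and the unramified part is spanned by an unramified character. Similarly $H^1(\Q_{\ell_0}, \F_p)$ decomposes with basis $\{a_0|_{\ell_0}, \lambda\}$. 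The cup product with the basis element $\zeta \in H^0(\Q_{\ell_0}, \F_p(1))$ is an isomorphism of 2-dimensional spaces, and by a direct cocycle computation (or by the tame symbol formula) it interchanges ramified and unramified parts: if $c\up1|_{\ell_0}$ has ramified component normalized to $1$ (via $c\up1(\gamma_0) = 1$) and unramified component of size $\mu \in \F_p$, then $\zeta \smile c\up1|_{\ell_0}$ has an unramified component proportional to $1$ and a ramified component proportional to $\mu$.

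The crux is therefore to compute $\mu$, the unramified component of $c\up1|_{\ell_0}$, i.e.\ essentially $c\up1(\Fr_{\ell_0})$. The strategy is to use global reciprocity (the sum-of-local-invariants vanishing for the Brauer group) together with the constraints that pin down $c_0$ globally: it is ramified only at $\ell_0$, splits at $p$, and vanishes at $\ell_1$. Pairing $c_0$ against a suitably chosen global class whose local behavior at $\ell_0$ isolates the Frobenius value, and at $p$ and $\ell_1$ matches $c_0$'s splitting/vanishing, reduces $\mu$ to a computation involving tame symbols at auxiliary primes. To match this against the explicit formula $\zeta_\mathrm{MT}' = \frac{1}{2}\sum_{i=1}^{\ell_0-1} B_2(i)\log_{\ell_0}(i)$, I would interpret $c_0$ via Iwasawa/Stickelberger theory: the class $c_0$ corresponds modulo $p$ to the Stickelberger element for the $\omega^{-1}$-component of the class group of $\Q(\zeta_p)$, and the Mazur--Tate derivative $\zeta_\mathrm{MT}'$ is the degree-two coefficient of the Stickelberger element modulo $p$ along the tame $(\Z/\ell_0)^\times$-deformation.

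Concretely, I would expand the Mazur--Tate element in $\F_p[(\Z/\ell_0)^\times/\text{$p$-prime}]$ and read off its derivative along the canonical character of the tame quotient; the $B_2(i)$ coefficient arises from the standard formula for $L$-values of Dirichlet characters in terms of generalized Bernoulli numbers, and the second Bernoulli polynomial $B_2$ (rather than $B_1$) reflects that we are looking at the \emph{derivative} of the $L$-value near the trivial character. The constant $\tfrac{1}{6}$ in the statement is naturally $B_2(0) = \tfrac{1}{6}$ and records the degenerate contribution of the trivial summand when isolating the ramified part of $\zeta \smile c\up1|_{\ell_0}$ in the $\{a_0|_{\ell_0}, \lambda\}$ basis.

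The main obstacle is the identification in the last step: matching the tame-symbol/global-reciprocity expression for $\mu$ with the explicit Mazur--Tate derivative. This is the content of the tame Gross--Stark conjecture as proved in \cite{wake2020eisenstein}, and it rests on an explicit reciprocity calculation combined with the Iwasawa-theoretic interpretation of Stickelberger elements. Everything else—the decomposition of local cohomology, the behavior of cup products under it, and the normalization of $a_0$, $\lambda$, and $\gamma_0$—is linear-algebraic bookkeeping once the key formula for $\mu$ is in hand.
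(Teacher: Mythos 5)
The paper does not actually prove this statement: it is imported verbatim as \cite[Prop.\ 8.3.2]{wake2020eisenstein}, so your ultimate move of deferring the key identity (the tame Gross--Stark formula identifying the slope of $c\up1|_{\ell_0}$ with the Mazur--Tate derivative) to that reference is exactly what the paper does, and the overall reduction you describe -- trivialize the twists at $\ell_0$ using $\ell_0\equiv 1\pmod p$, split $H^1(\Q_{\ell_0},\F_p)$ into unramified and ramified directions, and observe that everything hinges on the single quantity $c\up1(\Fr_{\ell_0})$ relative to the normalization $c\up1(\gamma_0)=1$ -- is the right frame for that result.

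However, the one concrete step you do carry out is wrong. The cup product $H^0(\Q_{\ell_0},\F_p(1))\times H^1(\Q_{\ell_0},\F_p(-1))\to H^1(\Q_{\ell_0},\F_p)$ with a $0$-cocycle $\zeta$ is, on cochains, $(\zeta\smile c)(\sigma)=\zeta\cdot c(\sigma)$; it is nothing but the coefficient isomorphism $\F_p(-1)\xrightarrow{\otimes\zeta}\F_p$ (all these modules being trivial over $G_{\ell_0}$), so it \emph{preserves} the unramified/ramified decomposition rather than interchanging it -- there is no tame-symbol-type swap when one factor is an $H^0$ class. With your swap, the matching against $\zeta_\mathrm{MT}'\lambda+\tfrac16 a_0|_{\ell_0}$ comes out inverted: you would be asserting that the ramified component (evaluation at $\gamma_0$) carries $\zeta_\mathrm{MT}'$ and the unramified one carries $\tfrac16$, i.e.\ $c\up1(\Fr_{\ell_0})/c\up1(\gamma_0)$ proportional to $(1/6)/\zeta_\mathrm{MT}'$, whereas the statement says (up to the Frobenius value of $a_0|_{\ell_0}$) that this ratio is $6\zeta_\mathrm{MT}'$. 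So the sketch, taken literally, identifies the reciprocal line. Relatedly, your Iwasawa-theoretic gloss is off target: $c_0$ is ramified at $\ell_0$ and is governed by the \emph{tame} family of Dirichlet $L$-values for characters of conductor $\ell_0$ and $p$-power order (this is the Mazur--Tate element whose derivative is $\zeta_\mathrm{MT}'$), not by the Stickelberger element for the $\omega^{-1}$-part of the class group of $\Q(\zeta_p)$; but since you defer that computation to \cite{wake2020eisenstein} anyway, the substantive correction needed is the cup-product bookkeeping above.
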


This is computationally significant because $\zeta_\mathrm{MT}'$, $\lambda$, and $a_0|_{\ell_0}$ are easy to compute even though $c\up1$ is not.

\subsection{Explicit formulation of the finite-flat condition on $b\up2\vert_p$}\label{subsec: finite-flat}

In this section, we shift from recollections to new content. It is readily apparent that the finite-flat condition of Proposition \ref{prop: produce b2} is too inexplicit for computation; the goal of this section is to remedy that problem.

For orientation, we recall the ``basis change'' argument of [Part I, \S\ref{P1: subsec: finite-flat on rho2}], which is a first step in making the finite-flat condition testable using Kummer theory. We let $\rho'_2$ be a conjugate of $\rho_2$ (as in \eqref{eq:rho2}) by  generalized matrix such that $\rho'_2\vert_p$ is upper-triangular. This conjugation was achieved with a lower-triangular generalized matrix of the form
\[
\ttmat{1}{0}{x_c + \epsilon x'_c}{1},
\]
where $x_c$ was prescribed in Proposition \ref{prop: produce a1}. We write $\rho'_2$ using the coordinate system
\[
\rho_2'=\ttmat{\omega (1+a\upp1\epsilon + a\upp2\epsilon^2) }{b\upp1 + b\upp2 \epsilon}{\omega(c\upp1 + c\upp2 \epsilon)}{1+d\upp1 \epsilon + d\upp2\epsilon^2},
\]
and, from Proposition \ref{prop: produce a1}, we have an unramified homomorphism
\[
a\upp1\vert_p = (a\up1 + b\up1 \smile x_c\big)\vert_p : G_p \to \F_p.
\]
Also, we have $b\upp2 = b\up2$ and $b\upp1=b\up1$ because the conjugation is lower-triangular. 

We have the following simplified form of $\rho'_2\vert_p$, namely,
\[
\rho'_2\vert_p = \ttmat{\omega (1+a\upp1\epsilon + a\upp2\epsilon^2) }{b\up1 + b\up2 \epsilon}{0}{1-a\upp1 \epsilon + d\upp2\epsilon^2}\bigg\vert_p.
\]
The constant determinant property implies that
\[
d\upp1 = -a\upp1, \qquad d\upp2 = (a\upp1)^2-a\upp2. 
\]
We observe that the differential equation imposed on $b\upp2 = b\up2$ by the homomorphism property of $\rho'_2$ is
\[
-db\upp2 = a\upp1 \smile b\upp1 + b\upp1 \smile d\upp1,
\]
which simplifies upon restriction to $G_p$ as
\[
-db\up2\vert_p = a\upp1\vert_p \smile b\up1\vert_p + b\up1\vert_p \smile -a\upp1\vert_p. 
\]

In Part I, we translate the finite-flat property of the GMA representation $\rho'_2\vert_p$ to a typical (matrix-valued) representation. For convenience, we write 
\[
\chi_2 := (1 + \ep a\upp1 + \ep^2 a\upp2)\vert_p : G_p \to \F_p[\ep]/(\ep^3)^\times, \text{ and } \chi_1 = 1 + \ep a\upp1\vert_p : G_p \to \F_p[\ep_1]^\times,
\]
so that $\chi_1 = (\chi_2 \mod \ep^2)$ and 
\[
\rho_2'\vert_p = \ttmat{\omega \chi_2}{b\up1 + \ep b\up2}{0}{\chi_2^{-1}}. 
\]
We also establish notation for the homomorphism 
\begin{equation}
\label{eq: P2 form eta1}
\eta_1 = \ttmat{\omega(1 + \ep a\upp1)}{b\up1 + \ep b\up2}{\ep c\upp1}{1 + d\upp1} : G_{\Q,Np} \to \GL_2(\F_p[\ep]/(\ep^2))
\end{equation}
assembled from the coordinates of $\rho_2'$, as in [Part I, Def.\ \ref{defn: etas}]. Note that $\eta_1 \vert_p$ is upper-triangular, extending $\chi_1^{-1}$ by $\omega\chi_1$, since we have arranged that $c\upp1\vert_p = 0$. 

\begin{prop}[Part I, Lemma \ref{lem: flat rho2 step 2}]
    \label{P2: prop: ref flat rho2 step 2}
    Assume the deformation $\rho_2$ of $\rho_1$ exists. Then $\rho_2$ is finite-flat at $p$ if and only if both the following (a) and (b) hold. 
    \begin{enumerate}[label=(\alph*)]
        \item the homomorphism $\eta_1$ is finite-flat at $p$ 
    \item $\chi_2 : G_p \to \F_p[\ep_2]$ is unramified. 
    \end{enumerate}
    Moreover, if $\rho_2$ satisfies (a), then there exists some deformation $\rho_{2,\mathrm{new}}$ of $\rho_1$ that is finite-flat at $p$ and such that $\eta_1$ equals the $\eta_{1, \mathrm{new}}$ formed from $\rho_{2,\mathrm{new}}$ via \eqref{eq: P2 form eta1}. 
\end{prop}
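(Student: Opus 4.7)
The plan is to leverage the upper-triangular form of $\rho_2'\vert_p$ and separate finite-flatness into diagonal and off-diagonal conditions, matching them to (b) and (a) respectively.

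First, since $\rho_2'$ is a GMA-conjugate of $\rho_2$, finite-flatness of $\rho_2\vert_p$ is equivalent to that of $\rho_2'\vert_p$. For the upper-triangular $\F_p[\ep]/(\ep^3)$-valued representation
\[
\rho_2'\vert_p = \ttmat{\omega \chi_2}{b\up1 + \ep b\up2}{0}{\chi_2^{-1}},
\]
I would apply the standard criterion that finite-flatness splits into (i) finite-flatness of each diagonal character and (ii) a peu-ramifi\'ee/finite-flat condition on the extension class in the upper-right. Since $\omega$ is Barsotti--Tate and $\chi_2 \equiv 1 \pmod \ep$, condition (i) is equivalent to $\chi_2$ being unramified, which is condition (b).

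Next I would match (ii) with (a). Because $c\upp1\vert_p = 0$, the homomorphism $\eta_1\vert_p$ is also upper-triangular, with upper-right entry $b\up1 + \ep b\up2$ identical to that of $\rho_2'\vert_p$. Its diagonal characters $\omega \chi_1$ and $\chi_1^{-1}$ are unramified by Proposition \ref{prop: produce a1}(a), so finite-flatness of $\eta_1$ is entirely governed by whether its extension class is peu ramifi\'ee. Since the relevant peu-ramifi\'ee test is detected at the cochain level by the very cocycle $b\up1 + \ep b\up2$ appearing in both, the test for $\rho_2'\vert_p$ coincides with that for $\eta_1\vert_p$. I would flesh out this cochain-level identification using the explicit finite-flat cochain subspace from [Part I, \S\ref{P1: subsubsec: finite-flat}], citing [Part I, Lemma \ref{lem: flat rho2 step 2}] for the precise formulation.

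For the moreover statement, assume $\rho_2$ satisfies (a). Then the extension class at $p$ is already peu ramifi\'ee, and the only remaining obstruction is condition (b). Since $a\upp1\vert_{I_p} = 0$, the ramification of $\chi_2\vert_{I_p}$ is concentrated in the $\ep^2$-coefficient $a\upp2\vert_{I_p} : I_p \to \F_p$. My plan is to subtract from $a\up2$ an appropriate scalar multiple of the global cocycle $a_p$ from Definition \ref{defn: pinned cocycles}: adding a cocycle preserves the differential equation \eqref{eq:diffeq for a2}, leaves all $\ep^0$ and $\ep^1$ data (hence $\eta_1$) untouched, and a suitable choice of scalar kills $a\upp{2,\mathrm{new}}\vert_{I_p}$. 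The resulting $\rho_{2,\mathrm{new}}$ then satisfies both (a) and (b), so is finite-flat by the first part. The main obstacle will be verifying that $a\upp2\vert_{I_p}$ indeed lies in the $\F_p$-line spanned by $a_p\vert_{I_p}$, so that a scalar adjustment suffices; this follows from the fact that the $G_p/I_p$-invariant part of $\Hom(I_p, \F_p)$ is 1-dimensional, which is the same property that characterizes $a_p$ up to scaling.
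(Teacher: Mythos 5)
Your proposal is correct and follows essentially the same route as the paper: the paper proves the equivalence simply by citing Part I, Lemma \ref{lem: flat rho2 step 2} (whose content is the diagonal/extension-class splitting you sketch and which you also invoke for the precise formulation), and proves the moreover clause by citing the proof of Part I, Lemma \ref{lem: flat rho2 step 4}, which adjusts only the $a\up2$- and $d\up2$-coordinates exactly as you propose. Your explicit mechanism---subtracting a multiple of $a_p$ from $a\up2$ (with $d\up2$ compensating via the determinant condition, and $\eta_1$ untouched) and using that $a\upp2\vert_{I_p}$ is a Frobenius-invariant homomorphism lying in the one-dimensional space spanned by $a_p\vert_{I_p}$---is a faithful unpacking of that deferred Part I argument, so there is no gap.
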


\begin{proof}
The first claim is the equivalence $(1) \Rightarrow (3)$ of Part I, Lemma \ref{lem: flat rho2 step 2}. The second claim follows from the proof of Part I, Lemma \ref{lem: flat rho2 step 4}: the proof shows that once we have a $\rho_2$ whose induced $\eta_1$ is finite-flat at $p$, then it is possible to adjust at most the $a\up2$ and $d\up2$-coordinates to form $\rho_{2,\mathrm{new}}$ from $\rho_2$ such that $\rho_{2,\mathrm{new}}$ is finite-flat at $p$ and such that $\eta_{1,\mathrm{new}}$ (assembled from the coordinates of $\rho_{2,\mathrm{new}}$) equals $\eta_1$. 
\end{proof}

Next, we want to understand when $\eta_1\vert_p : G_p \to \GL_2(\F_p[\ep]/(\ep^2))$, or equivalently the extension class $B := [b\up1 + \ep b\up2] \in \Ext^1_{\F_p[\ep_1][G_p]}(\chi_1^{-1}, \omega\chi_1)$, is finite flat. Indeed, note that $\chi_1^{-1}$ and $\omega\chi_1$ are finite-flat because $\chi_1$ is unramified. 
This issue is especially straightforward when $a\upp1\vert_p = 0$, making $\chi_1$ trivial and $b\up2\vert_p$ a cocycle. Otherwise, $a\upp1\vert_p$ cuts out the unique unramified cyclic degree $p$ extension $\Q_{p^p}/\Q_p$, and $b\up2\vert_p$ is not a cocycle on $G_p$. In order to apply Kummer theory in the latter case, we restrict $b\up2$ to the absolute Galois group of $\Q_{p^p}$. We write
\[
b\up2\vert_{p^p} \in Z^1(\Q_{p^p}, \F_p(1)), \text{ so that } [b\up2\vert_{p^p}] \in H^1(\Q_{p^p}, \F_p(1)) \cong \Q_{p^p}^\times/(\Q_{p^p}^\times)^p,
\]
where the final isomorphism comes from Kummer theory.
\begin{prop}
    \label{prop: b2 f-f unramified test}
    The finite-flatness of $\eta_1\vert_p$ is characterized by Kummer theory.
    \begin{enumerate}[label = (\alph*)]
        \item When $\chi_1 = 1$, $\eta_1$ is finite-flat at $p$ if and only if the class 
        \[
        [b\up2\vert_p] \in H^1(\Q_p, \F_p(1)) \cong \Q_p^\times/(\Q_p^\times)^p
        \]
        of the cocycle $b\up2\vert_p$ is in the line spanned by the Kummer class of $1+p$. 
        \item When $\chi_1 \neq 1$, $\eta_1$ is finite-flat at $p$ if and only if the class
        \[
        [b\up2\vert_{p^p}] \in H^1(\Q_{p^p}, \F_p(1)) \cong \Q_{p^p}^\times/(\Q_{p^p}^\times)^p
        \]
        is in the subspace spanned by the subgroup of units in $\Z_{p^p}^\times$ that are $1 \pmod{p}$. 
    \end{enumerate}
\end{prop}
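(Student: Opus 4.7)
The plan is to translate the finite-flatness of the upper-triangular representation $\eta_1\vert_p$ into a Kummer-theoretic condition on its extension class. As noted in the setup, $\eta_1\vert_p$ has diagonal characters $\omega\chi_1$ and $\chi_1^{-1}$, both of which are finite-flat at $p$ (the former because $\omega$ corresponds to $\mu_p$ and $\chi_1$ is unramified, the latter because it is unramified), so finite-flatness of $\eta_1\vert_p$ is equivalent to the associated extension class lying in the finite-flat subspace of the appropriate $\mathrm{Ext}^1$-group, which in turn is identified with a Galois $H^1$ via twisting.

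First, in case (a) with $\chi_1 = 1$, the representation $\eta_1\vert_p$ is an extension of the trivial character by $\omega$ over $\F_p[\ep]/(\ep^2)$, so its class lies in
\[
H^1(G_p, (\F_p[\ep]/(\ep^2))(1)) \cong H^1(G_p, \F_p(1)) \otimes_{\F_p} \F_p[\ep]/(\ep^2) \cong \bigl(\Q_p^\times/(\Q_p^\times)^p\bigr) \otimes_{\F_p} \F_p[\ep]/(\ep^2),
\]
where the first isomorphism uses flatness of $\F_p[\ep]/(\ep^2)$ over $\F_p$ and the second is Kummer theory; under this identification, $b\up1 + \ep b\up2$ maps to $[b\up1\vert_p] + \ep [b\up2\vert_p]$. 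Next I would identify the finite-flat subspace of $H^1(G_p, \F_p(1))$ with $\Z_p^\times/(\Z_p^\times)^p$ via Raynaud's classification of finite-flat $p$-torsion group schemes, and observe that since $\Z_p^\times \cong \mu_{p-1}\times(1+p\Z_p)$ with $\mu_{p-1}$ of order prime to $p$, this subspace is the $\F_p$-line spanned by the Kummer class of $1+p$. Tensoring with $\F_p[\ep]/(\ep^2)$, the finite-flat subspace becomes the free rank-one $\F_p[\ep]/(\ep^2)$-module generated by $[1+p]$. Hence $\eta_1\vert_p$ is finite-flat iff both $[b\up1\vert_p]$ and $[b\up2\vert_p]$ lie in $\F_p\cdot[1+p]$; the condition on $b\up1\vert_p$ is automatic since $\ell_1 \in \Z_p^\times$, leaving the stated condition on $b\up2\vert_p$.

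For case (b) with $\chi_1 \neq 1$, I would reduce to case (a) by unramified descent. Since $\chi_1$ is a nontrivial unramified character of order $p$, it cuts out the unique unramified degree-$p$ extension $\Q_{p^p}/\Q_p$, and one shows that $\eta_1\vert_{G_p}$ is finite-flat iff $\eta_1\vert_{G_{\Q_{p^p}}}$ is, using that finite-flat group schemes descend along étale covers together with automatic Galois descent data. Restricting to $G_{\Q_{p^p}}$ trivializes $\chi_1$, so $b\up2\vert_{p^p}$ becomes a genuine cocycle (the defining equation for $b\up2\vert_p$, which has right-hand side $a\upp1\vert_p \smile b\up1\vert_p - b\up1\vert_p \smile a\upp1\vert_p$, restricts to zero on $G_{\Q_{p^p}}$), and the case (a) analysis applies with $\Q_p$ replaced by $\Q_{p^p}$. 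The resulting finite-flat subspace is $\bigl(\Z_{p^p}^\times/(\Z_{p^p}^\times)^p\bigr)\otimes_{\F_p}\F_p[\ep]/(\ep^2)$, which equals $\bigl((1+p\Z_{p^p})/(1+p\Z_{p^p})^p\bigr)\otimes_{\F_p}\F_p[\ep]/(\ep^2)$ because $\mu_{p^p-1}$ has order prime to $p$. The condition on $[b\up1\vert_{p^p}]$ is again automatic, isolating the stated condition on $[b\up2\vert_{p^p}]$.

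The main obstacle is justifying the unramified descent claim used in case (b), that finite-flatness is detectable after base change to $\Z_{p^p}$. The cleanest route is via the Raynaud/Fontaine--Laffaille categorical equivalence for finite-flat group schemes killed by $p$: base change along an unramified extension is fully faithful, and the $\Gal(\Q_{p^p}/\Q_p)$-equivariance built into the representation supplies the descent datum needed to land in its essential image. The identifications of $H^1_f$ with Kummer classes of units, and the structure-theoretic descriptions of $\Z_p^\times/(\Z_p^\times)^p$ and $\Z_{p^p}^\times/(\Z_{p^p}^\times)^p$, are standard and can be cited rather than reproved.
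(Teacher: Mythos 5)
Your argument is correct and follows essentially the same route as the paper: the paper's proof is a one-line appeal to the Part I lemma identifying the finite-flat subspace of $H^1(-,\F_p(1))$ over $\Q_p$ and its unramified extensions with the unit (peu ramifi\'ee) Kummer classes, and your proposal reconstructs exactly that lemma's content (Raynaud full faithfulness, descent along the unramified extension $\Q_{p^p}/\Q_p$, and the identification of $\Z_p^\times/(\Z_p^\times)^p$ with the line spanned by $1+p$). The one step you state without proof --- that the finite-flat subspace with $\F_p[\ep]/(\ep^2)$-coefficients is the $\ep$-linear extension of the one with $\F_p$-coefficients, so that both the $b\up1$- and $b\up2$-components must be unit classes --- is precisely what the cited Part I lemma packages, so nothing essential is missing.
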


\begin{proof}
    Simple application of Part I, Lemma \ref{P1: lem: local Kummer finite-flat}. 
\end{proof}

While the condition of Proposition \ref{prop: b2 f-f unramified test} is arguably explicit, our application requires the finite-flatness of $\eta_1$ to be tested using Kummer theory in the extension field of $\Q_p$ \emph{cut out} by $b\up1$, meaning that it is the minimal extension $A/\Q_p$ such that $b\up1\vert_{G_A} = 0$. It is important to understand how this condition behaves, which we describe in this lemma with proof omitted. 
\begin{lem}
Let $x \in Z^1(\Q_p, \F_p(i))$ and assume $(p-1) \nmid i$. If $x \in B^1(\Q_p, \F_p(i))$ is non-zero, $\Q_p(\zeta_p)/\Q_p$ is the extension cut out by $x$. Otherwise, letting 
\[
\rho_x = \ttmat{\omega^i}{x}{0}{1} : G_p \to \GL_2(\F_p), 
\]
the finite extension of $\Q_p$ cut out by $x$ is the subfield of $\oQ_p^{\ker \rho_x}$ that is fixed by the image of $\sm{\omega^i}{}{}{1}$ under the isomorphism $\Gal(\oQ_p^{\ker \rho_x}/\Q_p) \cong \mathrm{image}(\rho_x)$. 
\end{lem}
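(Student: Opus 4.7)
\emph{Proof proposal.} The plan is to take ``the extension of $\Q_p$ cut out by $x$'' at face value, namely the fixed field $A := \oQ_p^{\ker x}$ --- the smallest subextension of $\oQ_p/\Q_p$ on which $x$ vanishes --- and to identify $\ker x$ explicitly in each case.

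In case $(2)$, the matrix form $\rho_x(\sigma) = \sm{\omega^i(\sigma)}{x(\sigma)}{0}{1}$ makes $\rho_x(\sigma)$ diagonal if and only if $x(\sigma) = 0$. This immediately gives $\ker \rho_x \subseteq \ker x$ and shows that, under the isomorphism $\Gal(\oQ_p^{\ker \rho_x}/\Q_p) \cong \mathrm{image}(\rho_x)$, the subfield $A$ is fixed by the subgroup of diagonal matrices in $\mathrm{image}(\rho_x)$. To reconcile this with the lemma's formulation, I would show these diagonals coincide with the image of $\sm{\omega^i}{}{}{1}: G_p \to \GL_2(\F_p)$: for any $\sigma_0 \in G_p$, producing $\sigma \in \ker x$ with $\omega^i(\sigma) = \omega^i(\sigma_0)$ amounts, after writing $\sigma = \sigma_0 \tau$ with $\tau \in \ker \omega^i$ and applying the cocycle relation, to surjectivity of the homomorphism $x\vert_{\ker \omega^i}: \ker \omega^i \to \F_p$ (which is a genuine homomorphism because $\omega^i$ acts trivially on $\F_p(i)$ once restricted to $\ker \omega^i$). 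When $[x] \neq 0$ this surjectivity follows by inflation-restriction: if $x\vert_{\ker \omega^i}$ vanished, then $[x]$ would lift to $H^1(G_p/\ker \omega^i, \F_p(i))$, which is trivial since $G_p/\ker \omega^i$ embeds in $\F_p^\times$ and thus has order prime to $p$. When $x = 0$ the claim is immediate.

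Case $(1)$ is a one-line computation: writing $x = d\alpha$ with $\alpha \in \F_p(i) \setminus \{0\}$ gives $x(\sigma) = (\omega^i(\sigma) - 1)\alpha$, so $\ker x = \ker \omega^i$. The hypothesis $(p-1) \nmid i$ makes $\omega^i$ nontrivial, and since $\omega^i$ factors through $\Gal(\Q_p(\zeta_p)/\Q_p) \cong \F_p^\times$, the fixed field $A$ is a nontrivial subfield of $\Q_p(\zeta_p)$. It equals all of $\Q_p(\zeta_p)$ precisely when $\gcd(i,p-1) = 1$, which holds in the applications of this paper (where $i = 1$).

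The only non-structural step is the surjectivity of $x\vert_{\ker \omega^i}$ in case $(2)$, which is where the inflation-restriction input is used; everything else is bookkeeping with the upper-triangular shape of $\rho_x$.
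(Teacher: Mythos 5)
Your proposal is correct, and since the paper explicitly omits the proof of this lemma there is no argument of the authors' to compare against; what you give is the natural one. The key points are all in place: for a cocycle, the vanishing set $\ker x$ is a closed subgroup (it is the stabilizer of the vector $(0,1)^T$ under $\rho_x$), so the field cut out by $x$ is exactly $\oQ_p^{\ker x}$; the identification $\Gal(\oQ_p^{\ker\rho_x}/\oQ_p^{\ker x}) \cong \rho_x(\ker x)$ reduces the claim to showing $\rho_x(\ker x)$ is the full diagonal subgroup, i.e.\ the image of $\sm{\omega^i}{}{}{1}$; and your surjectivity argument via the cocycle relation together with inflation--restriction (using that $[x]\neq 0$ and that $G_p/\ker\omega^i$ has order prime to $p$) settles this, with the $x=0$ subcase trivial. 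Your observation on case (1) is also a worthwhile catch: under the stated hypothesis $(p-1)\nmid i$ alone, a non-zero coboundary cuts out only the subfield of $\Q_p(\zeta_p)$ of degree $(p-1)/\gcd(i,p-1)$, so the conclusion ``$\Q_p(\zeta_p)$'' as literally stated needs $\gcd(i,p-1)=1$; this is harmless for the paper, which applies the lemma to $b\up1\vert_p$ with $i=1$, but it is a genuine (if minor) imprecision in the statement that your proof correctly isolates rather than a gap in your argument.
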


\begin{lem}
The extension of $\Q_p$ cut out by $b\up1\vert_p$ falls into two isomorphism classes, 
\[
\Q_p(\ell_1^{1/p}) \simeq 
\left\{
\begin{array}{ll}
\Q_p & \text{ if } \ell_1^{p-1} \equiv 1 \pmod{p^2} \\
\Q_p((1+p)^{1/p}) & \text{ if } \ell_1^{p-1} \not\equiv 1 \pmod{p^2}
\end{array}
\right.
\]
where the latter is totally ramified over $\Q_p$.
\end{lem}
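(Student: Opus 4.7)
The cocycle $b\up1\vert_p$ is the Kummer cocycle associated to the $p$-th root $\ell_1^{1/p}$ in our pinning data, so by definition the extension of $\Q_p$ cut out by $b\up1\vert_p$ (i.e., the fixed field of $\ker b\up1\vert_p$) is $\Q_p(\ell_1^{1/p})$. The whole lemma is therefore a statement about Kummer extensions of $\Q_p$ by units. The plan is to reduce $\ell_1$ modulo $p$-th powers in $\Q_p^\times$, place the result in the two-dimensional $\F_p$-vector space $\Q_p^\times / (\Q_p^\times)^p$, and read off the isomorphism class of the cyclic extension.

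\textbf{Step 1: identify when $\ell_1$ is a $p$-th power in $\Q_p^\times$.} I would use the decomposition $\Z_p^\times \cong \mu_{p-1} \times (1 + p\Z_p)$. Since $\gcd(p, p-1) = 1$, the $\mu_{p-1}$-factor is already contained in $(\Z_p^\times)^p$, so the obstruction to $\ell_1 \in (\Z_p^\times)^p$ lies in the principal units. A direct computation with the binomial expansion of $(1+px)^{p-1}$ (valid for $p$ odd) shows that for $v = 1+px \in 1+p\Z_p$ one has $v^{p-1} \equiv 1 - px \pmod{p^2}$, so that $v$ is a $p$-th power in $1+p\Z_p$ if and only if $v \in 1+p^2\Z_p$, which in turn is equivalent to $v^{p-1} \equiv 1 \pmod{p^2}$. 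Since $\ell_1^{p-1} = v^{p-1}$, this gives the equivalence
\[
\ell_1 \in (\Q_p^\times)^p \;\Longleftrightarrow\; \ell_1^{p-1} \equiv 1 \pmod{p^2}.
\]
The first case of the lemma is immediate: if $\ell_1^{p-1} \equiv 1 \pmod{p^2}$, then $\ell_1^{1/p} \in \Q_p$, so $\Q_p(\ell_1^{1/p}) = \Q_p$.

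\textbf{Step 2: identify the isomorphism type in the non-trivial case.} Assume $\ell_1^{p-1} \not\equiv 1 \pmod{p^2}$. Since $(1+p\Z_p)/(1+p\Z_p)^p$ is one-dimensional over $\F_p$ (via the $p$-adic logarithm), any two elements of $1+p\Z_p$ that are not $p$-th powers generate the same $\F_p$-line. Applied to the image of $\ell_1$ and to $1+p$, this gives $a \in \F_p^\times$ and $w \in \Q_p^\times$ with $\ell_1 = (1+p)^a w^p$. Consequently the two extensions $\Q_p(\ell_1^{1/p})$ and $\Q_p((1+p)^{1/p})$ are cyclic degree $p$ extensions whose Kummer classes are $\F_p^\times$-proportional in $\Q_p^\times / (\Q_p^\times)^p$, and a short argument using $\ell_1^{1/p} = \zeta \cdot w \cdot ((1+p)^{1/p})^a$ for some $\zeta \in \mu_p$ identifies them as conjugate subfields of the common Galois closure $\Q_p((1+p)^{1/p}, \zeta_p)$; in particular they are isomorphic as abstract extensions.

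\textbf{Step 3: total ramification.} It remains to show $\Q_p((1+p)^{1/p})/\Q_p$ is totally ramified of degree $p$. Let $\pi = (1+p)^{1/p} - 1$, so that $\pi$ satisfies
\[
\pi^p + \binom{p}{p-1}\pi^{p-1} + \cdots + \binom{p}{2}\pi^2 + p\pi - p = 0.
\]
Every non-leading coefficient lies in $p\Z_p$ and the constant term is $-p$, which has $p$-adic valuation exactly $1$, so this is an Eisenstein polynomial. Hence the extension is totally ramified of degree $p$, and $\pi$ is a uniformizer.

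The only mildly subtle point is Step 2 — clarifying that ``$\simeq$'' in the lemma is to be read as an isomorphism of abstract extensions rather than as an equality of subfields of $\overline{\Q_p}$ (since the possible choices of $p$-th root produce $p$ distinct conjugate subfields). The rest reduces to standard facts about the unit filtration of $\Z_p^\times$ and Eisenstein polynomials.
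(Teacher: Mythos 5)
Your proof is correct, and its skeleton matches the paper's: split on whether $\ell_1^{p-1}\equiv 1\pmod{p^2}$, handle the trivial case via the pinning convention, and in the nontrivial case observe that the Kummer class of the unit $\ell_1$ lies on the line spanned by $1+p$ in $\Q_p^\times/(\Q_p^\times)^p$, so that the Galois closure is $\Q_p(\zeta_p,(1+p)^{1/p})$ and all of its degree-$p$ subfields over $\Q_p$ are mutually conjugate, hence isomorphic. The difference is in how the inputs are supplied: the paper cites Part I's ramification criterion for $b_1$ at $p$ and the fact that the finite-flat subspace $H^1(\Q_p,\F_p(1))^{\mathrm{flat}}$ is one-dimensional, spanned by the class of $1+p$ (and simply asserts total ramification), whereas you re-derive these facts elementarily from the unit filtration $\Z_p^\times\cong\mu_{p-1}\times(1+p\Z_p)$ with $(1+p\Z_p)^p=1+p^2\Z_p$, and you add an Eisenstein-polynomial verification that $\Q_p((1+p)^{1/p})/\Q_p$ is totally ramified, which the paper leaves unproved. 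Your version is thus self-contained at the cost of redoing material the paper imports by reference; the paper's phrasing keeps the finite-flat framing that is reused elsewhere in the argument. One small point to make explicit: in the case $\ell_1^{p-1}\equiv 1\pmod{p^2}$, the conclusion that the extension cut out is $\Q_p$ (rather than $\Q_p(\zeta_p)$, which is what a nonzero coboundary would cut out) uses the pinning-data convention that $\ell_1^{1/p}$ is chosen inside $\Q_p$ whenever possible, so that $b\up1\vert_p=0$ identically; you use this implicitly when writing $\ell_1^{1/p}\in\Q_p$, and the paper cites it explicitly.
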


\begin{proof}
To prove the claim, we apply the condition for the ramification of $b_1$ determined in Lemma \ref{lem: b1 ramification at p}, and note that $b_1$ is ramified at $p$ if and only if it is non-trivial at $p$.

First we address the ramified case. We recall from [Part I, Lem.\ \ref{P1: lem: local Kummer finite-flat}] that $H^1(\Q_p, \F_p(1))^\fl$ is 1-dimensional, being spanned by the Kummer class of $1+p$. Therefore, because $b_1\vert_p \in H^1(\Q_p, \F_p(1))$ is always contained in the finite-flat subspace, the extension of $\Q_p$ cut out by $\sm{\omega}{b\up1}{0}{1}$ is $\Q_p(\zeta_p, (1+p)^{1/p})$. By Galois theory, all of its subfields of degree $p$ over $\Q_p$ are mutually isomorphic, and therefore isomorphic to $\Q_p((1+p)^{1/p})$.

In the unramified case, our restriction on $b\up1$ in Definition \ref{defn: pinned cocycles} implies that $b\up1\vert_p = 0$, making it cut out the trivial extension of $\Q_p$. 
\end{proof}

When $\Q_p(\ell_1^{1/p}) = \Q_p$, the finite-flatness of $\eta_1$ can be tested just as easily as in Proposition \ref{prop: b2 f-f unramified test}. However, when $\Q_p(\ell_1^{1/p})/\Q_p$ is totally ramified, the test becomes more difficult: Lemma \ref{P1: lem: local Kummer finite-flat} of Part I does not apply over ramified extensions of $\Q_p$. This difficulty can be circumvented when $a\upp1\vert_p = 0$, because $b\up2\vert_p$ is a cocycle. Hence the remaining difficulty is concentrated in the case where
\[
\tag{$\star$} a\upp1\vert_p \neq 0 \text{ and } b\up1\vert_{I_p} \neq 0. 
\] 

The following proposition, which ends up being an exercise in Kummer theory, addresses this most difficult case $(\star)$. Let $F := \Q_p((1+p)^{1/p})$ and fix an isomorphism between $F$ and $\Q_p(\ell_1^{1/p})$. Let $\pi \in F$ denote a uniformizer. 
\begin{lem}\label{lem: b1 ramified f-f test}
In case $(\star)$, the following conditions are equivalent. 
\begin{enumerate}
        \item $[b\up2\vert_{\Q_{p^p}}] \in H^1(\Q_{p^p}, \F_p(1))$ corresponds to a $p$-unit under the isomorphism $H^1(\Q_{p^p}, \F_p(1)) \cong \Q_{p^p}^\times/(\Q_{p^p}^\times)^p$ of Kummer theory
        \item $[b\up2\vert_{F}] \in H^1(F, \F_p(1))$ corresponds to a $\pi$-unit that is $1 \pmod{\pi^2}$ under the isomorphism 
        \[
        H^1(F, \F_p(1)) \cong F^\times/(F^\times)^p
        \]
        of Kummer theory.
    \end{enumerate}
\end{lem}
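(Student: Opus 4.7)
The strategy is to pass to the compositum $L := \Q_{p^p} \cdot F$, a degree-$p^2$ extension of $\Q_p$ with ramification index $p$, and to recast both conditions as assertions about $[b\up2\vert_L] \in L^\times/(L^\times)^p$. In case $(\star)$, both $a\upp1$ and $b\up1$ vanish upon restriction to $G_L$, so $b\up2\vert_L$ is a cocycle whose Kummer class equals the common restriction of $[b\up2\vert_{\Q_{p^p}}]$ and $[b\up2\vert_F]$.

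First I would determine the behavior of the two restriction maps between Kummer groups. Since $\mu_p \not\subset L$ (as $[L:\Q_p]$ is a $p$-power, while $[\Q_p(\zeta_p):\Q_p] = p-1$), inflation--restriction for the Galois extension $L/F$ shows that $F^\times/(F^\times)^p \hookrightarrow L^\times/(L^\times)^p$ is injective. On the other hand, since $L = \Q_{p^p}(\ell_1^{1/p})$, the kernel of $\Q_{p^p}^\times/(\Q_{p^p}^\times)^p \to L^\times/(L^\times)^p$ is the line spanned by the Kummer class $[\ell_1]$; this class lies in the condition-(1) subspace $W_1 := (1+p\Z_{p^p})/(1+p\Z_{p^p})^p$, because the Teichm\"uller factor of $\ell_1 \in \Z_{p^p}^\times$ has order coprime to $p$.

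Next, I would translate both conditions into subspace-membership assertions on $[b\up2\vert_L]$. Condition (1) becomes $[b\up2\vert_L] \in V_1$, where $V_1 \subseteq L^\times/(L^\times)^p$ is the image of $W_1$; since $[\ell_1] \in W_1$ lies in the kernel of restriction, $\dim V_1 = p-1$. Condition (2) becomes $[b\up2\vert_L] \in V_2$, where $V_2$ is the image of $W_2 := (1+\pi^2\cO_F)(F^\times)^p/(F^\times)^p$; a direct computation in the principal-unit filtration $U^{(i)}_F := 1+\pi^i\cO_F$ --- using that $(U^{(1)}_F)^p \subseteq U^{(p)}_F \subseteq U^{(2)}_F$ (valid since $p \geq 5$), so that $(F^\times)^p \cap U^{(2)}_F = (U^{(1)}_F)^p$, together with the isomorphism $(U^{(1)}_F)^p/(U^{(2)}_F)^p \cong U^{(1)}_F/U^{(2)}_F \cong \F_p$ --- yields $\dim W_2 = p-1$, whence $\dim V_2 = p-1$ by injectivity.

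Finally, I would verify $V_1 = V_2$. Since the dimensions match, it suffices to check one containment. For $V_1 \subseteq V_2$, given $u = 1+p\alpha \in 1+p\Z_{p^p}$, viewed in $L$ one has $u \in 1+\pi_L^p\cO_L \subseteq 1+\pi^2\cO_L$ (using $v_L(p) = p$ and that $\pi_L = \pi$ since $L/F$ is unramified); the task is then to produce $w \in L^\times$ such that $u w^{-p} \in 1+\pi^2\cO_F$, using the binomial relation $(1+\pi)^p \equiv 1+\pi^p \pmod{p\pi\cO_F}$, the identity $\ell_1 = (\ell_1^{1/p})^p$ in $L$, and the surjectivity of Frobenius on the residue field $\F_{p^p}$ to iteratively cancel components in $\cO_L \setminus \cO_F$ relative to the $\cO_F$-decomposition $\cO_L = \bigoplus_{i=0}^{p-1}\cO_F\zeta^i$. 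The hard part will be this explicit iterative adjustment, which requires careful bookkeeping of the principal-unit filtration of $L$ against that of $F$ so that the adjusted representative lands in $\cO_F$ rather than merely in $\cO_L$.
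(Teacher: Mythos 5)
Your reduction of conditions (1) and (2) to membership of the common class $[b\up2\vert_L]$ in the subspaces $V_1,V_2\subset L^\times/(L^\times)^p$, and both dimension counts, are fine (your $L$ is the field the paper calls $M$, and the paper likewise passes to this compositum). The fatal step is the claimed containment $V_1\subseteq V_2$: it is false, so no amount of ``careful bookkeeping'' in your final iterative adjustment can produce the required $w$. Every class in $V_2$ is fixed by $\Gal(L/F)$, being the image of a class from $F^\times$; classes in $V_1$ generally are not. Concretely, let $\phi$ generate $\Gal(L/F)$ (it restricts to Frobenius on $\Q_{p^p}$) and take $u=1+p\alpha$ with $\alpha\in\Z_{p^p}$. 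Then $\phi(u)/u\in 1+p\Z_{p^p}$ has class $\bar\alpha^p-\bar\alpha$ in $(1+p\Z_{p^p})/(1+p\Z_{p^p})^p\cong\F_{p^p}$, and it becomes a $p$th power in $L$ only if this lies in $\F_p$, because the kernel of $\Q_{p^p}^\times/(\Q_{p^p}^\times)^p\to L^\times/(L^\times)^p$ is exactly the line of $1+p$ (equivalently of $\ell_1$; note this kernel computation itself needs a short descent through $L(\zeta_p)$, since $\mu_p\not\subset\Q_{p^p}$, not just the assertion $L=\Q_{p^p}(\ell_1^{1/p})$). Choosing $\bar\alpha$ with $\bar\alpha^p-\bar\alpha\notin\F_p$ thus gives an element of $V_1$ that is not $\Gal(L/F)$-invariant, hence not in $V_2$: if $1+p\alpha=vw^p$ with $v\in 1+\pi^2\cO_F$, applying $\phi$ and dividing would force $\phi(u)/u\in(L^\times)^p$, a contradiction. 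In fact $V_1\cong\F_{p^p}/\F_p$ as a Frobenius module, whose invariants are one-dimensional, so $V_1\cap V_2$ has dimension at most $1$ while both spaces have dimension $p-1\geq 4$; the two subspaces are nowhere near equal, and the dimension count cannot close the argument.

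What rescues the lemma---and what the paper's proof exploits---is that $[b\up2\vert_L]$ is not an arbitrary element of $V_1$: it is simultaneously the restriction of $z=[b\up2\vert_F]$, hence lies in the image of $F^\times/(F^\times)^p$, i.e.\ in the $\Gal(L/F)$-invariant locus, and the equivalence only needs to be checked there. Accordingly, the paper compares $W_2$ with the \emph{pullback to} $F^\times/(F^\times)^p$ of the image of $1+p\Z_{p^p}$ in $L^\times/(L^\times)^p$ (a logarithm computation inside $1+\pi^2\cO_L$ shows this pullback lies in the span of $1+\pi^2\cO_F$), and it rules out a nonzero $\langle p\rangle$-coordinate of $y$ via the key identity $p=\pi^p\epsilon$ with $\epsilon\in\cO_F^\times$ whose class is nontrivial modulo the span of $1+\pi^2\cO_F$. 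To repair your argument, replace the claim $V_1\subseteq V_2$ by a statement about $V_1\cap\mathrm{image}(F^\times/(F^\times)^p)$ (equivalently, about the pullback to $F$), using the Galois invariance of the class of $b\up2$ rather than a comparison of the full subspaces.
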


\begin{proof}
    Let $M$ denote the composite field of $F$ and $\Q_{p^p}$, $M := \Q_{p^p}((1+p)^{1/p})$. We choose a uniformizer of $F$ and $M$, 
    \[
    \pi := (1+p)^{1/p} - 1 \in F. 
    \]
    We have a commutative diagram with exact rows and columns
    \[
    \xymatrix{
    & 0 \ar[d] & 0 \ar[d] \\ 
     & \langle1+p\rangle \ar[d] \ar[r]^\sim & \langle 1 + p \rangle \ar[d] \\
    0 \ar[r] & \Q_p^\times/(\Q_p^\times)^p \ar[r] \ar[d] & \Q_{p^p}^\times/(\Q_{p^p}^\times)^p \ar[d] \\
    0 \ar[r] & F^\times/(F^\times)^p \ar[r] & M^\times/(M^\times)^p
    }
    \]
    where both horizontal inclusions arise from taking invariants of an equivariant Galois action of $\Gal(\Q_{p^p}/\Q_p) \cong \Gal(M/F)$. 
    
    In case $(\star)$, the fact that $b\up2\vert_p$ is a cocycle upon restriction to both $G_F$ and $G_{\Q_{p^p}}$ means that the classes induced by these cocycles are sent, via Kummer theory, to elements $y \in \Q_{p^p}^\times/(\Q_{p^p}^\times)^p$ and $z \in F^\times/(F^\times)^p$ that map to the same element of $M^\times/(M^\times)^p$. Thus the $\Gal(\Q_{p^p}/\Q_p)$-action on $y$ is trivial. 
    
    In order to apply Proposition \ref{prop: b2 f-f unramified test}(b), it will be useful to know what the coordinates of $p$ are in the decomposition
    \[
     F^\times/(F^\times)^p \cong \langle \pi \rangle \oplus (1 + \pi \cO_F)/(1 + \pi \cO_F)^p. 
    \]
    The key calculation is the equality
    \[
    \frac{\pi^p}{p} = 1 + (1+p)^{1/p} - \frac{1}{p}\sum_{i=2}^{p-1} (-1)^i (1+p)^{i/p} \in \cO_F^\times, 
    \]
    which means that $p$ has trivial $\langle \pi\rangle$-part in the decomposition of $F^\times/(F^\times)^p$. Moreover, its image in $(1 + \pi \cO_F)/(1 + \pi \cO_F)^p$ is non-trivial modulo the image of $(1 + \pi^2 \cO_F)$. 
    
    Using the natural isomorphism
    \[
    \Q_{p^p}^\times/(\Q_{p^p}^\times)^p \cong \langle p \rangle \oplus (1 + p \Z_{p^p})/(1 + p \Z_{p^p})^p
    \]
    and the Galois-equivariant exp-log isomorphism $(1 + p \Z_{p^p}, \cdot) \cong (p\Z_{p^p}, +)$, we find that $y \in \langle 1+p\rangle$ if and only if its $\langle p\rangle$-coordinate, under the summand, vanishes. 
    
    Putting the above two facts together, we observe that the equivalence of (1) and (2) follows from proving that the pullback to $F^\times/(F^\times)^p$ of the image of $(1 + p \Z_{p^p})/(1 + p \Z_{p^p})^p$ in $M^\times/(M^\times)^p$ lies in the span of $1+\pi^2 \cO_F$. This is easily verified by viewing $1 + p\Z_{p^p}$ and $1 + \pi^2 \cO_F$ within $1 + \pi^2 \cO_M$ and using the fact that we have a logarithm isomorphism $(1 + \pi^2\cO_M,\cdot) \isoto (\pi^2\cO_M, +)$.  
\end{proof}

\section{Sharifi's explicit solutions to Massey product differential equations}\label{sec: sharifi}
We now turn our focus to developing a method for computing whether the cohomological conditions $(i)$ and $(ii)$ in Theorem \ref{thm: main of part 1} hold. Central to our approach is work of Sharifi giving explicit solutions to cup product and certain Massey product equations \cite{sharifithesis,sharifi2007}. More specifically, from Propositions \ref{prop: produce a1} and \ref{prop: produce b2}, we know that there exist solutions of the differential equations given by
\begin{equation}
\label{eq:da1 and db2}
    \begin{split}
-da\up1\cand &= b\up 1\smile c\up 1,\\
-db\up2\cand &= a\up 1\smile b\up1 + b\up1\smile d\up 1,
\end{split}
\end{equation}
but it is unclear how to write down even a single such solution.

The key observations required to produce explict soltuions to these equations are, first, that we can derive our desired solutions from related differential equations involving a special type of Massey product, \textit{cyclic Massey products}, and, second, that cyclic Massey products for absolute Galois groups can be solved via Kolyvagin derivative operators using Sharifi's theory \cite{sharifi2007}.

\subsection{Cup products and triple Massey products} Let $G$ be a profinite group and let $\chi_1, \chi_2, \chi_3 \in H^1(G,\F_p)$. 
For $n>1$, let $U_n(\F_p) \subset \GL_n(\F_p)$ denote the subgroup of upper-triangular unipotent matrices, and let $Z_n(\F_p) \subset U_n(\F_p)$ denote the center. Since $U_2(\F_p) \cong \F_p$ we can think of $\chi_i$ as a representation $\sm{1}{\chi_i}{0}{1}$ of $G$ with values in $U_2(\F_p)$

The cup product $\chi_i \cup \chi_j$ obstructs the extension of the representations $(\chi_i,\chi_j)$, with values in $U_3(\F_p)/Z_3(\F_p) \cong U_2(\F_p) \times U_2(\F_p)$, to a representation with values in $U_3(\F_p)$. We can represent this in matrix form as 
\[
\begin{pmatrix}
1 & \chi_i & * \\
0 & 1 & \chi_j \\
0 & 0 & 1
\end{pmatrix} : G \to U_3(\F_p)/Z_3(\F_p).
\]
Lifting this to $U_3(\F_p)$ is equivalent to finding a cochain $\nu:G\to \F_p$ (to go in the ``$*$"-entry) such that
\[
-d\nu = \chi_i \smile \chi_j.
\]

\begin{eg}
\label{eg:cup square}
It is well known that the ring $H^*(G,\F_p)$ is graded-commutative. In particular, for $\chi \in H^1(G,\F_p)$, the cup-square of $\chi$ is zero: $\chi \cup \chi =0$. We can realize the vanishing explicitly, defining a 1-cochain ${\chi \choose 2} = \frac{1}{2}( \chi^2 -\chi)$ and calculating  
\[
-d {\chi \choose 2} = \chi \smile \chi.
\]
The corresponding representation $G \to U_3(\F_p)$ is given by the composition of $\chi: G \to \F_p$ with the map $\F_p \to U_3(\F_p)$ sending $1$ to  the standard Jordan block matrix 
\[
\begin{pmatrix}
1 & 1 & 0 \\
0 & 1 & 1 \\
0 & 0 & 1
\end{pmatrix}.
\]
\end{eg}

Triple Massey products can be interpreted as an obstruction to extending two ``overlapping" representations with values in $U_3(\F_p)$ to a single representation with values in $U_4(\F_p)$. In particular, the ``overlapping'' representations
\[
\begin{pmatrix}
1 & \chi_1 & \kappa_{1,3} \\
0 & 1 & \chi_2 \\
0 & 0 & 1
\end{pmatrix},
\begin{pmatrix}
1 & \chi_2 & \kappa_{2,4} \\
0 & 1 & \chi_3 \\
0 & 0 & 1
\end{pmatrix}
: G \to U_3(\F_p).
\]
constitute an \emph{extension problem}, which we may represent as a homomorphism
\[
\begin{pmatrix}
1 & \chi_1 & \kappa_{1,3} & * \\
0 & 1 & \chi_2 & \kappa_{2,4} \\
0 & 0 & 1 & \chi_3 \\
0 & 0 & 0 & 1
\end{pmatrix}: G \to U_4(\F_p)/Z_4(\F_p).
\]
 A \emph{solution to the extension problem} is a $U_4(\F_p)$-valued representation where the missing ``$\ast$'' is filled in by a 1-cochain $\nu : G \to \F_p$. One readily calculates the differential equation that $\nu$ must solve, namely, 
\[
-d\nu = \chi_1 \smile \kappa_{2,4} + \kappa_{1,3} \smile \chi_3.
\]
The quantity on the right-hand side is \emph{a priori} a $2$-cocycle, and its cohomology class is an instance of the triple Massey product of the triple $(\chi_1,\chi_2,\chi_3)$. 

We formalize the foregoing discussion in the following definition.

\begin{defn}
\label{defn: triple Massey}
A \emph{defining system for the triple Massey product} $(\chi_1,\chi_2,\chi_3)$ is a pair of cochains $\kappa_{1,3}, \kappa_{2,4}$ such that
\begin{align*}
    -d \kappa_{1,3} & = \chi_1 \smile \chi_2 \\
    -d\kappa_{2,4} & = \chi_2 \smile \chi_3.
\end{align*}
The \emph{triple Massey product} $(\chi_1,\chi_2,\chi_3) \in H^2(G,\F_p)$ with respect to the defining system $\kappa_{1,3}, \kappa_{2,4}$ is the class of the 2-cocycle
\[
\chi_1 \smile \kappa_{2,4} + \kappa_{1,3} \smile \chi_3.
\]
\end{defn}

\begin{rem}
    \label{rem: Massey discussion}
    Note that the triple Massey product $(\chi_1,\chi_2,\chi_3)$ depends on the defining system. The usual notion of a Massey product, which we are departing from in Definition \ref{defn: triple Massey}, is a \emph{multi-valued} product where all possible defining systems are used to produce the values. We hope the reader does not mind our non-standard use of terminology, which allows us to avoid grammatical contortions. 
\end{rem}

\begin{eg}
Restricting the cocycles $b\up1$ and $c\up1$ to $G_{\Q(\zeta_p)}$, we see that the equation \eqref{eq:da1 and db2} is expressing $-db\cand\up2$ as the triple Massey product $(b\up1,c\up1,b\up1)$ with respect to the defining system $a\up1,\,d\up1$.
\end{eg}

\subsection{Commutativity relations}
Cup products in $H^\bullet(G,\F_p)$ are known to be skew-symmetric. Similarly, Massey products, when considered as multivalued functions as in Remark \ref{rem: Massey discussion}, are known to satisfy certain commutativity relations, cf., \cite[\S3]{may1969} or \cite[\S2]{kraines1966}. For triple Massey products, these relations are
\begin{align*}
    &(\chi_1,\chi_2,\chi_3)- (\chi_3,\chi_2, \chi_1) =0 \\
    &(\chi_1,\chi_2,\chi_3)+(\chi_2,\chi_3,\chi_1)+(\chi_3,\chi_1,\chi_2)=0.
\end{align*}
When $\chi_2=\chi_1$, we can combine these relations to obtain:
\[
(\chi_1,\chi_3,\chi_1)+2(\chi_1,\chi_1,\chi_3)=0.
\]
In particular, if $(\chi_1,\chi_1,\chi_3)$ vanishes, then $(\chi_1,\chi_3,\chi_1)$ also vanishes. 

For our purposes, we require an ``enhanced version'' of these relations, which keeps track of the 1-cochains realizing vanishing Massey products as their coboundaries, and also relates the various defining systems. Recall from (\ref{eq: cup products}) that $\cup$ denotes a cup product on cohomology while $\smile$ denotes a cup product on cochains. We extract the following result from the proofs of the commutativity relations. 
\begin{lem}
\label{lem:commutativity}
Let $\chi_1, \chi_2 \in H^1(G,\F_p)$ and suppose $\chi_1 \cup \chi_2=0$. Let $\kappa:G \to \F_p$ be a cochain such that $-d\kappa = \chi_1 \smile \chi_2$, and let ${\chi_1 \choose 2}$ be as in Example \ref{eg:cup square}.

Define
\[
\kappa_{1,3}={\chi_1 \choose 2}, \quad \kappa_{2,4} =\kappa
\]
and
\[
\kappa'_{1,3}=\kappa, \quad \kappa'_{2,4} =\chi_1\chi_2-\kappa.
\]
Then
\begin{itemize}
    \item $(\kappa_{1,3}, \kappa_{2,4})$ is a defining system for the Massey product $(\chi_1,\chi_1,\chi_2)$, 
    \item $(\kappa'_{1,3}, \kappa'_{2,4})$ is a defining system for the Massey product $(\chi_1,\chi_2,\chi_1)$.
    \item If $\nu: G \to \F_p$ satisfies 
    \[
    -d\nu = \chi_1 \smile \kappa + {\chi_1 \choose 2} \smile \chi_2
    \]
    then $\nu' = \chi_1\kappa -2 \nu -\kappa$ satisfies
    \[
    -d\nu' = \chi_1 \smile \kappa'_{2,4} + \kappa'_{1,3} \smile \chi_1.
    \]
\end{itemize}
\end{lem}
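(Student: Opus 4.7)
The plan is to verify each of the three bullet points directly at the level of cochain formulas, using the conventions $\chi_i(\sigma\tau)=\chi_i(\sigma)+\chi_i(\tau)$ (since $\chi_i$ are homomorphisms) and $df(\sigma,\tau)=f(\tau)-f(\sigma\tau)+f(\sigma)$ for a 1-cochain $f$.

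First I would dispatch the two defining-system claims. For $(\kappa_{1,3},\kappa_{2,4})=\bigl(\binom{\chi_1}{2},\kappa\bigr)$, the identity $-d\binom{\chi_1}{2}=\chi_1\smile\chi_1$ is Example \ref{eg:cup square}, and $-d\kappa=\chi_1\smile\chi_2$ is the hypothesis, so this is a defining system for $(\chi_1,\chi_1,\chi_2)$ on the nose. For $(\kappa'_{1,3},\kappa'_{2,4})=(\kappa,\chi_1\chi_2-\kappa)$, the first coboundary identity is again the hypothesis. For the second, the key observation is the cochain-level skew-symmetry identity
\[
-d(\chi_1\chi_2)=\chi_1\smile\chi_2+\chi_2\smile\chi_1,
\]
which follows from expanding $d(\chi_1\chi_2)(\sigma,\tau)$ and using that both $\chi_i$ are homomorphisms; subtracting $-d\kappa=\chi_1\smile\chi_2$ gives $-d(\chi_1\chi_2-\kappa)=\chi_2\smile\chi_1$ as required.

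The main calculation, and the only real obstacle, is the third claim. I would first establish the cochain identity
\[
-d(\chi_1\kappa)=\chi_1\smile\kappa+\kappa\smile\chi_1+\chi_1^2\smile\chi_2+\chi_1\smile(\chi_1\chi_2),
\]
obtained by expanding $d(\chi_1\kappa)(\sigma,\tau)$ and substituting $\kappa(\sigma\tau)=\kappa(\sigma)+\kappa(\tau)+\chi_1(\sigma)\chi_2(\tau)$ (which encodes $-d\kappa=\chi_1\smile\chi_2$) together with additivity of $\chi_1$. Starting from $\nu'=\chi_1\kappa-2\nu-\kappa$, this gives
\[
-d\nu'=-d(\chi_1\kappa)+2d\nu+d\kappa.
\]
Substituting the hypothesis $-d\nu=\chi_1\smile\kappa+\binom{\chi_1}{2}\smile\chi_2$, the hypothesis on $\kappa$, and the identity above yields, after collecting the $\chi_1\smile\kappa$ terms,
\[
-d\nu'=-\chi_1\smile\kappa+\kappa\smile\chi_1+\bigl(\chi_1^2-2\tbinom{\chi_1}{2}\bigr)\smile\chi_2-\chi_1\smile\chi_2+\chi_1\smile(\chi_1\chi_2).
\]
Using $\chi_1^2-2\binom{\chi_1}{2}=\chi_1$, the two middle terms cancel, leaving
\[
-d\nu'=\chi_1\smile(\chi_1\chi_2-\kappa)+\kappa\smile\chi_1=\chi_1\smile\kappa'_{2,4}+\kappa'_{1,3}\smile\chi_1,
\]
which is exactly the defining-system equation for $\nu'$ as a representative of $(\chi_1,\chi_2,\chi_1)$.

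The only subtlety is bookkeeping: all three bullet points are purely cochain-theoretic identities, so no cohomological input beyond Example \ref{eg:cup square} and the hypothesis is needed. The main obstacle is simply carrying out the expansion of $-d(\chi_1\kappa)$ correctly, since $\chi_1\kappa$ is a pointwise product (not a cup product), and making sure the $\chi_1^2\smile\chi_2$ term is collapsed via $\chi_1^2=2\binom{\chi_1}{2}+\chi_1$ so that it cancels against the contributions from $d\nu$ and $d\kappa$.
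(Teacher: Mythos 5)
Your proposal is correct and matches the paper's proof, which likewise rests on the two cochain identities $-d(\chi_1\chi_2)=\chi_1\smile\chi_2+\chi_2\smile\chi_1$ and $-d(\chi_1\kappa)=\chi_1\smile\kappa+\kappa\smile\chi_1+\chi_1^2\smile\chi_2+\chi_1\smile(\chi_1\chi_2)$, followed by rearranging. You simply spell out the rearrangement (including the cancellation via $\chi_1^2-2\binom{\chi_1}{2}=\chi_1$) that the paper leaves implicit.
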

\begin{proof}
Direct computations show that
\[
-d(\chi_1\chi_2) = \chi_1 \smile \chi_2 + \chi_2 \smile \chi_1.
\]
and that
\[
-d(\chi_1\kappa) = \chi_1 \smile \kappa + \kappa \smile \chi_1 +\chi_1^2 \smile \chi_2 + \chi_1 \smile \chi_1\chi_2.
\]
The result follows by rearranging.
\end{proof}

\subsection{Cyclic triple Massey products} 
We now narrow our discussion to triple Massey products of the form $(\chi_1,\chi_1,\chi_2)$, which we will call \emph{cyclic} triple Massey products when we use only certain defining systems. In this special case, we will relate them to a connecting homomorphism in Galois cohomology. 

As motivation, first we recall the following well-known interpretation of the cup product $\chi_1 \cup \chi_2$, for fixed $\chi_1$, as a connecting homomorphism in Galois cohomology. 
\begin{eg}
Let $\chi_1\in H^1(G,\F_p)$ be non-trivial, let $H=G/\ker(\chi_1)$ be the coimage of $\chi_1$, and let $I \subset \F_p[H]$ be the augmentation ideal. Let $h \in H$ be the unique generator with $\chi_1(h)=1$ and let $X=[h]-1 \in I$. Taking $X$ as a generator of $I$ and writing $k_0+k_1X=(k_1,k_0)^T$, $G$ acts on $\F_p[H]/I^2$ via the matrix $\sm{1}{\chi_1}{0}{1}$. In particular, we can identify $I/I^2$ with $\F_p$ as $G$-modules (with trivial $G$-action) and have an exact sequence
\begin{equation}
\label{eq: mod I^2}
  0 \to \F_p \to \F_p[H]/I^2 \to \F_p \to 0
\end{equation}
of $G$-modules. The class of the extension \eqref{eq: mod I^2} in $\Ext^1_{\F_p[G]}(\F_p,\F_p) = H^1(G,\F_p)$ is $\chi_1$, so the connecting map
\[
H^1(G,\F_p) \to H^2(G,\F_p)
\]
sends $\chi_2$ to $\chi_1 \cup \chi_2$.
\end{eg}

We now consider an analogous result for cyclic triple Massey products. In this case, we have the homomorphism
\[
\begin{pmatrix}
1 & \chi_1 & {\chi_1 \choose 2} \\
0 & 1 & \chi_1 \\
0 & 0 & 1
\end{pmatrix}: G \to U_3(\F_p)
\]
given by composing $\chi_1: G \to \F_p$ with the homomorphism $\F_p \to U_3(\F_p)$ sending $1$ to the standard Jordan block matrix in $U_3(\F_p)$. This cochain ${\chi_1 \choose 2}$ gives part of the data for a defining system, and we call a defining system \emph{proper} if it includes ${\chi_1 \choose 2}$. In particular, a proper defining system is determined by a single cochain $\kappa: G \to \F_p$ such that
\begin{equation}
\label{eq:dkappa}
    -\hspace{-.075cm}d\kappa = \chi_1 \smile \chi_2.
\end{equation}

Just as for \eqref{eq: mod I^2}, the generator $X$ of $I$ identifies $I^2/I^3$ with $\F_p$ as $G$-modules, and we have an exact sequence
\begin{equation}
\label{eq: mod I^3}
  0 \to \F_p \to \F_p[H]/I^3 \to \F_p[H]/I^2 \to 0
\end{equation}
of $G$-modules.

\begin{lem}
\label{lem:proper defining}
The preimage of $\chi_2$ under the map
\[
H^1(G, \F_p[H]/I^2) \to H^1(G,\F_p)
\]
is in bijection with the set of proper defining systems of the Massey product $(\chi_1,\chi_1,\chi_2)$. Moreover, the connecting map 
\[
\partial :H^1(G, \F_p[H]/I^2) \to H^2(G,\F_p)
\]
for the sequence \eqref{eq: mod I^3} sends a proper defining system to the associated Massey product.
\end{lem}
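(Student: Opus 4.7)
The plan is to parametrize cochains valued in $\F_p[H]/I^2$ explicitly using the $\F_p$-basis $\{X, 1\}$, and then perform direct cocycle and coboundary computations driven by the expansion $[g] = (1+X)^{\chi_1(g)} \in \F_p[H]$. Writing any $1$-cochain $\kappa' \in C^1(G, \F_p[H]/I^2)$ as $\kappa' = \chi_2' + \kappa X$ with $\chi_2', \kappa : G \to \F_p$, and using that the $G$-action on $\F_p[H]/I^2$ is by the matrix $\sm{1}{\chi_1}{0}{1}$ in this basis, the cocycle condition $d\kappa' = 0$ unpacks into the pair of equations $\chi_2' \in Z^1(G, \F_p)$ and $-d\kappa = \chi_1 \smile \chi_2'$. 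Since the map $H^1(G, \F_p[H]/I^2) \to H^1(G, \F_p)$ sends $[\kappa'] \mapsto [\chi_2']$, cocycle lifts of a fixed homomorphism $\chi_2$ correspond exactly to cochains $\kappa$ with $-d\kappa = \chi_1 \smile \chi_2$, i.e., to proper defining systems for $(\chi_1, \chi_1, \chi_2)$. A short calculation giving $d(s_0 + s_1 X) \equiv s_0 \chi_1 X \pmod{I^2}$ identifies the remaining ambiguity $\kappa \mapsto \kappa + s\chi_1$ upon passing to cohomology, yielding the first assertion.

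For the connecting map $\partial$, I lift the cocycle $\kappa' = \chi_2 + \kappa X$ to the cochain $\tilde\kappa'(g) := \chi_2(g) + \kappa(g) X \in C^1(G, \F_p[H]/I^3)$ (with vanishing $X^2$-coordinate). By construction $\partial [\kappa'] = [d\tilde\kappa']$, a $2$-cocycle valued in $\F_p \cdot X^2 \cong \F_p$. The essential input is the enhanced expansion
\[
[g] \equiv 1 + \chi_1(g) X + \binom{\chi_1(g)}{2} X^2 \pmod{I^3},
\]
where the quadratic coefficient is exactly the cochain $\binom{\chi_1}{2} = \tfrac{1}{2}(\chi_1^2 - \chi_1)$ from Example \ref{eg:cup square}; here we use that $p$ is odd. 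Expanding $d\tilde\kappa'(g,h) = g \cdot \tilde\kappa'(h) - \tilde\kappa'(gh) + \tilde\kappa'(g)$ coordinate by coordinate, the $X^0$-coefficient vanishes because $\chi_2$ is a homomorphism, and the $X^1$-coefficient vanishes because $-d\kappa = \chi_1 \smile \chi_2$. The $X^2$-coefficient works out to $\binom{\chi_1(g)}{2} \chi_2(h) + \chi_1(g) \kappa(h)$, i.e., the $2$-cocycle $\binom{\chi_1}{2} \smile \chi_2 + \chi_1 \smile \kappa$, which by Definition \ref{defn: triple Massey} is precisely the Massey product $(\chi_1, \chi_1, \chi_2)$ with respect to the proper defining system $(\binom{\chi_1}{2}, \kappa)$.

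I do not anticipate a serious obstacle: the whole argument is a disciplined unwinding of definitions once the basis of $\F_p[H]/I^3$ and the induced $G$-action are fixed. The only subtlety is bookkeeping with the three $\F_p$-coordinates and matching sign conventions; the pleasant surprise is that the natural quadratic coefficient $\binom{\chi_1(g)}{2}$ arising from $[g] \in \F_p[H]/I^3$ is the very cochain that makes a defining system ``proper'' in the sense of the lemma, so the proof essentially forces itself once the framework is set up.
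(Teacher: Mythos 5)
Your proposal is correct and follows essentially the same route as the paper: the paper cites \cite[Theorem 3.3.4]{LLSWW2020} and sketches exactly this construction (write a lift of $\chi_2$ as $\chi_2+\kappa X$, observe the cocycle condition is \eqref{eq:dkappa}, and note the connecting map is a direct computation), which you carry out explicitly via the expansion $[g]\equiv 1+\chi_1(g)X+\binom{\chi_1(g)}{2}X^2 \pmod{I^3}$. The only caveat, inherited from the paper's own phrasing, is that the bijection is cleanest at the cocycle level, since passing to $H^1$ introduces the ambiguity $\kappa\mapsto\kappa+s\chi_1$ that you yourself note.
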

\begin{proof}
This is a special case of \cite[Theorem 3.3.4]{LLSWW2020}. We only review the construction here. A cocycle $H^1(G, \F_p[H]/I^2)$ mapping to $\chi_2$ in $H^1(G,\F_p)$ can be written as $\chi_2 + \kappa X$ for some cochain $\kappa$. The cocycle condition exactly amounts to \eqref{eq:dkappa}, so it is equivalent to a proper defining system. The statement about the connecting map is a straightforward computation.
\end{proof}

\begin{defn}
We call Massey products $(\chi_1,\chi_1,\chi_2)$ with proper defining systems \emph{cyclic (triple) Massey products} because of the appearence of the cyclic group $H$ in the lemma. (Other groups are considered in \cite{LLSWW2020}.) 
\end{defn}
Changing the defining system for a cyclic triple Massey product changes the value in a simple way. 

\begin{lem}
\label{lem:change proper def sys}
Let $\kappa, \kappa': G \to \F_p$ be two proper defining systems for the cyclic triple Massey product $(\chi_1,\chi_1,\chi_2)$, and let $(\chi_1,\chi_1,\chi_2)_{\kappa}, (\chi_1,\chi_1,\chi_2)_{\kappa'} \in H^2(G,\F_p)$ be the corresponding values. Then
\[
(\chi_1,\chi_1,\chi_2)_{\kappa'} = (\chi_1,\chi_1,\chi_2)_{\kappa} + \chi_1 \cup (\kappa' -\kappa).
\]
\end{lem}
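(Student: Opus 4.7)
The plan is a direct computation using the definition of the Massey product associated to a proper defining system. By definition, a proper defining system for $(\chi_1,\chi_1,\chi_2)$ consists of the pair $\bigl(\binom{\chi_1}{2},\,\kappa\bigr)$ (with first component fixed by ``properness''), where $\kappa : G \to \F_p$ satisfies $-d\kappa = \chi_1 \smile \chi_2$. According to Definition \ref{defn: triple Massey}, the value $(\chi_1,\chi_1,\chi_2)_\kappa \in H^2(G,\F_p)$ is the class of the $2$-cocycle
\[
\chi_1 \smile \kappa + \binom{\chi_1}{2} \smile \chi_2,
\]
and similarly for $\kappa'$.

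First, I would observe that subtracting the two cocycles kills the common summand $\binom{\chi_1}{2}\smile\chi_2$, leaving
\[
(\chi_1,\chi_1,\chi_2)_{\kappa'} - (\chi_1,\chi_1,\chi_2)_\kappa = \bigl[\chi_1 \smile (\kappa' - \kappa)\bigr] \in H^2(G,\F_p).
\]
Second, I would note that since both $\kappa$ and $\kappa'$ satisfy the same differential equation $-d(-) = \chi_1 \smile \chi_2$, the difference $\kappa' - \kappa$ lies in $Z^1(G,\F_p)$, hence defines an honest class $[\kappa'-\kappa] \in H^1(G,\F_p)$. By compatibility of the cochain-level cup product $\smile$ with the cohomology-level cup product $\cup$, the class of $\chi_1 \smile (\kappa'-\kappa)$ is exactly $\chi_1 \cup (\kappa'-\kappa)$, yielding the claimed formula.

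There is no real obstacle here: the proof is a two-line manipulation. The only subtlety worth a sentence of justification is the passage from the cochain-level identity to the cohomology-level statement, i.e., verifying that $\kappa'-\kappa$ is a cocycle and that the symbol $\chi_1 \cup (\kappa'-\kappa)$ on the right-hand side is interpreted via the induced class. As an alternative (and slightly more conceptual) route, one can invoke Lemma \ref{lem:proper defining}: under the bijection between proper defining systems and lifts of $\chi_2$ to $H^1(G,\F_p[H]/I^2)$, the difference of two lifts is a class in $\ker\bigl(H^1(G,\F_p[H]/I^2) \to H^1(G,\F_p)\bigr)$, and functoriality of the connecting homomorphism $\partial$ for the sequence \eqref{eq: mod I^3} identifies its image in $H^2(G,\F_p)$ with $\chi_1 \cup (\kappa'-\kappa)$. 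Either approach completes the proof.
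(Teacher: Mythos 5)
Your proof is correct and matches the paper, whose entire proof is ``Immediate from the definition'': the value attached to a proper defining system is the class of $\chi_1 \smile \kappa + \binom{\chi_1}{2}\smile \chi_2$, so the two values differ by the class of $\chi_1 \smile (\kappa'-\kappa)$, and since $\kappa'-\kappa$ is a cocycle this is exactly $\chi_1 \cup (\kappa'-\kappa)$. Your alternative route via Lemma \ref{lem:proper defining} and the connecting map is a fine consistency check but is not needed.
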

\begin{proof}
Immediate from the definition.
\end{proof}

\subsection{Vanishing of cyclic Massey products for absolute Galois groups} In this section, we recapitulate \cite[\S5.1]{LLSWW2020}.
From the previous section, we see the vanishing of the cyclic Massey product $(\chi_1,\chi_1,\chi_2)$ for \emph{some} proper defining system is equivalent to $\chi_2$ being in the image of the augmentation
\[
H^1(G,\F_p[H]/I^3) \xrightarrow{\iota} H^1(G,\F_p).
\]
Indeed, suppose $\iota(x)=\chi_2$ for some $x \in H^1(G,\F_p[H]/I^3)$ and let $x' \in H^1(G,\F_p[H]/I^2)$ be the image of $x$. By Lemma \ref{lem:proper defining}, $x'$ defines a proper defining system for the Massey product $(\chi_1,\chi_1,\chi_2)$ and the value is $\partial(x')$. Since $x$ is a lift of $x'$, it follows that $\partial(x')=0$ and the Massey product vanishes. Conversely, if $x' \in H^1(G,\F_p[H]/I^2)$ corresponds to a proper defining system such that the Massey product vanishes, then $\partial(x')=0$ so $x'$ lifts to $H^1(G,\F_p[H]/I^3)$.

Of course, a proper defining system for $(\chi_1,\chi_1,\chi_2)$ exists if and only if $\chi_1 \cup \chi_2 =0$. This discussion can be summarized in the following lemma (see also \cite[Proposition 5.1.3]{LLSWW2020}).

\begin{lem}
Let $\chi_1:G \to \F_p$ be a homomorphism and let $H=G/\ker(\chi_1)$ be the coimage. If the sequence
\begin{equation}
\label{eq:absolute gal type}
    H^1(G,\F_p[H]) \to H^1(G,\F_p) \xrightarrow{\cup \chi_1} H^2(G,\F_p)
\end{equation}
is exact, then for any $\chi_2 \in H^1(G,\F_p)$ such that $\chi_1 \cup \chi_2=0$, there is a proper defining system for which the cyclic Massey product $(\chi_1,\chi_1,\chi_2)$ vanishes.
\end{lem}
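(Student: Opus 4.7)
The plan is to combine the exactness hypothesis with the discussion that immediately precedes the lemma, which identifies vanishing (for some proper defining system) of the cyclic Massey product with the lifting of $\chi_2$ through the augmentation from $H^1(G,\F_p[H]/I^3)$.

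First, I would appeal to the exactness assumption. Since $\chi_1 \cup \chi_2 = 0$ by hypothesis, exactness of
\[
H^1(G,\F_p[H]) \to H^1(G,\F_p) \xrightarrow{\cup \chi_1} H^2(G,\F_p)
\]
at the middle term gives a class $\tilde{x} \in H^1(G, \F_p[H])$ whose image under the augmentation $\F_p[H] \to \F_p$ is $\chi_2$.

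Next, I would factor the augmentation through the successive truncations. The surjections of $G$-modules
\[
\F_p[H] \onto \F_p[H]/I^3 \onto \F_p[H]/I^2 \onto \F_p
\]
induce a commutative diagram on $H^1$. Pushing $\tilde{x}$ through $H^1(G, \F_p[H]) \to H^1(G, \F_p[H]/I^3)$ yields a class $x \in H^1(G, \F_p[H]/I^3)$ whose image under the further augmentation map $\iota\colon H^1(G,\F_p[H]/I^3) \to H^1(G,\F_p)$ equals $\chi_2$.

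Finally, I would invoke the description recalled just before the lemma: the image $x' \in H^1(G, \F_p[H]/I^2)$ of $x$ corresponds, via Lemma~\ref{lem:proper defining}, to a proper defining system $\kappa$ for the cyclic Massey product $(\chi_1, \chi_1, \chi_2)$, and its value is $\partial(x') \in H^2(G, \F_p)$, where $\partial$ is the connecting map for \eqref{eq: mod I^3}. Because $x'$ lifts to $x \in H^1(G, \F_p[H]/I^3)$, we have $\partial(x') = 0$, so the Massey product vanishes for this defining system. No step is a serious obstacle here: the whole content is that the hypothesized exactness sequence encodes exactly the lifting property that the preceding paragraph identified with vanishing of a cyclic Massey product, so this lemma is essentially a bookkeeping consequence of Lemma~\ref{lem:proper defining} and the diagram chase above.
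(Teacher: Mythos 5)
Your proposal is correct and follows the same route the paper takes: the paper treats the discussion preceding the lemma (lifting $\chi_2$ through $H^1(G,\F_p[H]/I^3)$ via Lemma \ref{lem:proper defining} and the connecting map for \eqref{eq: mod I^3}) as the proof, with exactness of \eqref{eq:absolute gal type} supplying the lift of $\chi_2$ to $H^1(G,\F_p[H])$ exactly as you argue. Your write-up simply makes the implicit factorization through the truncations $\F_p[H]\onto\F_p[H]/I^3\onto\F_p[H]/I^2$ explicit, which matches the paper's intent.
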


Now we specialize to a case where \eqref{eq:absolute gal type} is always exact: when $G$ is the absolute Galois group of a field (this is a standard property of local symbols---see \cite[\S XIV.1]{serre1979}, for example). This allows us not only to show that cyclic triple Massey product vanish, but also to compute explicit solutions, as in the following theorem, due to Sharifi (it is a special case of \cite[Theorem 4.3]{sharifi2007}). 

In order to state this theorem, we require Kolyvagin derivative operators. 
\begin{defn}[{\cite[p.~14]{sharifi2007}}]
Let $\sigma \in C_p$ denote a generator of a cyclic $p$-group $C_p$. The \emph{$i$th Kolyvagin derivative operator (with respect to $\sigma$)} is 
\[
D^i_\sigma = \sum_{j=0}^{p-1} {j \choose i} \sigma^{j} \in \F_p[C_p]. 
\]
\end{defn}

\begin{thm}[Sharifi]
\label{thm:sharifi} Let $F$ be a field containing a primitive $p$th root of unity and let $G_F$ be the absolute Galois group of $F$. Let $s,t \in F^\times$ and let $\chi_s,\chi_t: G_F \to \F_p$ be the associated Kummer characters. 
Suppose $\chi_s \cup \chi_t =0$, and let $\theta \in F(\sqrt[p]{s})^\times$ be such that $\mathrm{Nm}(\theta)=t$. 

Let $\sigma \in G_F$ satisfy $\chi_s(\sigma)=1$ so that the image of $\sigma$ in $\Gal(F(\sqrt[p]{s})/F)$ is a generator. Then there are cochains $c_1, c_2: G_F \to \F_p$ satisfying
\begin{align*}
    -dc_1 & = \chi_s \smile \chi_t \\
    -dc_2 & = \chi_s \smile c_1 + {\chi_s \choose 2} \smile \chi_t
\end{align*}
and such that, under the identification $H^1(F(\sqrt[p]{s}),\F_p) \cong F(\sqrt[p]{s})^\times \otimes \F_p$, we have
\begin{align*}
    c_1|_{G_{F(\sqrt[p]{s})}} & = D_\sigma^1(\theta) \\
    c_2|_{G_{F(\sqrt[p]{s})}} & = D_\sigma^2(\theta)
\end{align*}
where $D^i_\sigma \in \F_p[\Gal(F(\sqrt[p]{s})/F)]$ denotes the $i$th Kolyvagin derivative operator.
\end{thm}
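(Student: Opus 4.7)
My plan is to prove Sharifi's theorem by lifting a Kummer class through the filtration of the group ring $\F_p[H]$ by powers of its augmentation ideal, then unwinding Shapiro's lemma to identify the coordinates with Kolyvagin derivatives. Let $H = \Gal(F(\sqrt[p]{s})/F) = \langle h \rangle$ with $h$ the image of $\sigma$ (so $\chi_s(h)=1$), let $I \subset \F_p[H]$ be the augmentation ideal, and set $X = [h] - 1$. Using the expansion $[h]^j \equiv 1 + jX + \binom{j}{2}X^2 \pmod{I^3}$, the $G_F$-action on $\F_p[H]/I^3$ in the $\F_p$-basis $\{1, X, X^2\}$ is upper-unipotent with $\chi_s$ on the superdiagonal and $\binom{\chi_s}{2}$ in the upper-right corner.

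The first step is to lift the Kummer class $\chi_t \in H^1(G_F, \F_p)$ to a class $\bar\xi \in H^1(G_F, \F_p[H])$. Shapiro's lemma gives a canonical isomorphism
\[
H^1(G_F, \F_p[H]) \isoto H^1(G_{F(\sqrt[p]{s})}, \F_p) \cong F(\sqrt[p]{s})^\times \otimes_\Z \F_p,
\]
under which the augmentation map $H^1(G_F, \F_p[H]) \to H^1(G_F, \F_p)$ corresponds to the norm $\Nm: F(\sqrt[p]{s})^\times \to F^\times$. The hypothesis $\Nm(\theta) = t$ therefore provides the desired lift $\bar\xi$, represented by $\theta$. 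Projecting $\bar\xi$ to $H^1(G_F, \F_p[H]/I^3)$ and writing the resulting cocycle as $\xi(\tau) = \chi_t(\tau) + c_1(\tau)X + c_2(\tau)X^2$ for cochains $c_1, c_2: G_F \to \F_p$, direct coordinate-by-coordinate unpacking of the cocycle identity (using the action above) yields precisely
\[
-dc_1 = \chi_s \smile \chi_t, \qquad -dc_2 = \chi_s \smile c_1 + \binom{\chi_s}{2} \smile \chi_t,
\]
as desired.

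The final and most substantial step is to identify $c_i\vert_{G_{F(\sqrt[p]{s})}}$ with $D_\sigma^i(\theta)$. I would make Shapiro's isomorphism explicit: for $\tau \in \ker(\chi_s)$, expand $\bar\xi(\tau) = \sum_{j=0}^{p-1} \bar\xi_j(\tau)\,[h^j]$, and use conjugation by coset representatives $\tilde\sigma^j$ (with $\chi_s(\tilde\sigma) = 1$) to show that each $\bar\xi_j\vert_{\ker\chi_s}$ represents the Kummer class of $\sigma^j(\theta)$ in $F(\sqrt[p]{s})^\times \otimes \F_p$. Converting to the $\{X^i\}$-basis via $[h^j] = \sum_i \binom{j}{i} X^i \pmod{I^p}$ then reads off
\[
c_i\vert_{\ker\chi_s} = \sum_{j=0}^{p-1} \binom{j}{i}\sigma^j(\theta) = D_\sigma^i(\theta) \quad \text{in } F(\sqrt[p]{s})^\times \otimes_\Z \F_p,
\]
with the case $i = 0$ consistent since $\sum_j \sigma^j(\theta) = \Nm(\theta) = t$ recovers $\chi_t$. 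The main obstacle is precisely this identification: reconciling Shapiro's choice of coset representatives with the Kummer-theoretic presentation, so that the binomial coefficients, signs, and normalizations line up exactly with the Kolyvagin derivative $D_\sigma^i$. The cohomological framework in the earlier steps is formal, but this final explicit bookkeeping is where the real content lies.
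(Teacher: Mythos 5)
Your proposal is correct and follows essentially the same route as the paper's proof: lift $\chi_t$ through Shapiro's lemma using the norm compatibility $\mathrm{Nm}(\theta)=t$, expand the resulting class in powers of $X=[h]-1$, and identify the coefficients with $D_\sigma^i(\theta)$ by making the Shapiro map explicit and converting via the binomial expansion $[h^j]=\sum_i\binom{j}{i}X^i$ (the paper does exactly this bookkeeping via Lemma \ref{lem:Shapiro}). The only cosmetic difference is that you extract the two differential equations by unpacking the cocycle condition in $\F_p[H]/I^3$ directly, whereas the paper packages that step through the defining-system/connecting-map formalism of Lemma \ref{lem:proper defining}.
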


\begin{proof}
Let $H=\Gal(F(\sqrt[p]{s})/F)$ and let $\mathrm{Sh}:H^1(F(\sqrt[p]{s}),\F_p) \isoto H^1(F,\F_p[H])$ be the isomorphism of Shapiro's lemma. The composition
\[
F(\sqrt[p]{s})^\times \otimes \F_p \cong H^1(F(\sqrt[p]{s}),\F_p) \xrightarrow{\mathrm{Sh}} H^1(F,\F_p[H]) \to H^1(F,\F_p) \cong F^\times \otimes \F_p
\]
is the norm map, so
\[
\mathrm{Sh}(\chi_\theta) = \chi_t + c_1 X + c_2 X^2 + \dots c_{p-1}X^{p-1}
\]
for some cochains $c_i$. By Lemma \ref{lem:proper defining}, the cochain $c_1$ is a proper defining system for the Massey product $(\chi_s,\chi_s,\chi_t)$, and $dc_2$ is the Massey product for that defining system. The desired formulas for $dc_1$ and $dc_2$ then follow from the definition of defining system.

To complete the proof, it suffices to explicitly write down the isomorphism $\mathrm{Sh}$. 
For this, apply Lemma \ref{lem:Shapiro} below with $G=G_F$, $X=H$, and $Y=\{1,\sigma, \sigma^2,\dots,\sigma^{p-1}\}$, so that the function $y:G_F \to Y$ is $y(g)=\sigma^{[\chi_s(g)]}$ where $[x] \in \Z$ is the smallest non-negative representative of $x \in \F_p$.
Then Lemma \ref{lem:Shapiro} states that $\mathrm{Sh}(\chi_\theta)$ is the class of the cocycle $\Phi \in Z^1(G_F,\F_p[H])$ defined by
\[
\Phi_g(\sigma^i) = \chi_\theta(\sigma^ig\sigma^{-[\chi_s(\sigma^ig)]})
\]
for $g \in G_F$ and $i \in \Z$. It remains to write $\Phi_g$ in terms of the basis $1,X,\dots,X^{p-1}$ of $\F_p[H]$ for $g \in G_{F(\sqrt[p]{s})}$.

If $g \in G_{F(\sqrt[p]{s})}$ and $i \in \{0,\dots,p-1\}$, then $[\chi_s(\sigma^ig)]=i$, so we have
\[
\Phi_g(\sigma^i) = \chi_\theta(\sigma^ig\sigma^{-i}).
\]
Now, let $\Bone_{\sigma^i} \in \F_p[H]$ denote the indicator function of $\sigma^i$ so that $\Bone_{\sigma^i} = (X+1)^i$. Then we have
\begin{align*}
\Phi_{g} &= \sum_{i=0}^{p-1}  {\chi_\theta}(\sigma^ig\sigma^{-i}) \Bone_{\sigma^i} \\
&= \sum_{i=0}^{p-1}  {\chi_\theta}(\sigma^ig\sigma^{-i}) \left( \sum_{j=0}^{p-1} {i \choose j} X^j \right) \\
& = \sum_{j=0}^{p-1} \left( \sum_{i=0}^{p-1} {i \choose j}  {\chi_\theta}(\sigma^ig\sigma^{-i}) \right) X^j \\
&=  \sum_{j=0}^{p-1} (D^j{\chi_\theta})(g) X^j. 
\end{align*}
Since the class of $g \mapsto \Phi_g$ is $\mathrm{Sh}(\chi_\theta)$, we have $c_i(g)=(D^i{\chi_\theta})(g)=\chi_{D^i(\theta)}(g)$ for all $g \in G_{F(\sqrt[p]{s})}$. This completes the proof.
\end{proof}

\begin{lem}
\label{lem:Shapiro}
Let $G$ be a group, $G' < G$ be a finite-index subgroup, and $X=G' \backslash G$. Let $\F_p[X]$ denote the $G$-module of left-$G'$-invariant functions on $G$. The isomorphism of Shapiro's Lemma is given by
\[
\mathrm{Sh}^{-1}: H^1(G,\F_p[X])  \isoto H^1(G',\F_p), \ (g \mapsto f_g) \mapsto (g' \mapsto f_{g'}(1)).
\]
The inverse is given as follows. Let $Y \subset G$ be set of coset representatives such that $1 \in Y$ and let $y: G \to Y$ be the function satisfying $G'g=G'y(g)$ for all $g \in G$, so that $y$ induces an isomorphism $y: X \isoto Y$. For $g \in G$ let $\tilde{g}=gy(g)^{-1} \in G'$. 
For a cocycle $(g' \mapsto \phi_{g'}) \in Z^1(G,\F_p)$ and $g \in G$, define $\Phi_g \in \F_p[X]$ by
\[
\Phi_g(x) = \phi_{\widetilde{y(x)g}}.
\]
Then $\mathrm{Sh}(\phi) = \Phi$.
\end{lem}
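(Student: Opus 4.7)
The plan is to verify the two formulas directly, in two steps: first, that the forward map $\mathrm{Sh}^{-1}$ given by evaluation at $1$ is the standard Shapiro isomorphism; second, that the formula $\Phi_g(x) = \phi_{\widetilde{y(x)g}}$ produces a cocycle whose image under $\mathrm{Sh}^{-1}$ recovers $\phi$.

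For the first step, recall that since $G' < G$ has finite index, coinduction and induction agree and one may identify $\mathrm{coInd}_{G'}^G(\F_p)$ with $\F_p[X]$, where $X = G' \backslash G$ and $G$ acts by right translation. Under this identification the counit of the (restriction, coinduction)-adjunction is evaluation at $1$. Shapiro's lemma is the isomorphism $H^*(G, \F_p[X]) \isoto H^*(G', \F_p)$ induced on cohomology by this counit; on $1$-cocycles it is precisely the rule $(g \mapsto f_g) \mapsto (g' \mapsto f_{g'}(1))$ stated for $\mathrm{Sh}^{-1}$.

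For the second step, the assertion is that $\Phi$ defined by $\Phi_g(x) = \phi_{\widetilde{y(x)g}}$ lies in $Z^1(G, \F_p[X])$ and satisfies $\mathrm{Sh}^{-1}(\Phi) = \phi$. That $\Phi_g$ is well-defined as a function on $X$ is built into the formula, since we pre-compose with the bijection $y \colon X \isoto Y$. The compatibility check is immediate: $\mathrm{Sh}^{-1}(\Phi)(g') = \Phi_{g'}(1) = \phi_{\widetilde{y(1)g'}}$, and because $y(1) = 1$ and $g' \in G'$ forces $y(g') = 1$, we get $\widetilde{g'} = g' y(g')^{-1} = g'$, so $\mathrm{Sh}^{-1}(\Phi)(g') = \phi_{g'}$.

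The only substantive point is the cocycle identity for $\Phi$. The key algebraic fact is
\[
\widetilde{y(x)gh} \;=\; \widetilde{y(x)g} \cdot \widetilde{y(xg)h} \qquad \text{in } G'.
\]
This follows by direct computation, using that $y(y(x)g) = y(xg)$ (both elements represent the coset $G'xg$): expanding both sides gives $y(x)gh \cdot y(xgh)^{-1}$, after the interior factor $y(xg)^{-1}\cdot y(xg)$ cancels on the right. Granting this identity and using that $G'$ acts trivially on $\F_p$, the cocycle property of $\phi$ on $G'$ yields
\[
\Phi_{gh}(x) \;=\; \phi_{\widetilde{y(x)g} \cdot \widetilde{y(xg)h}} \;=\; \phi_{\widetilde{y(x)g}} + \phi_{\widetilde{y(xg)h}} \;=\; \Phi_g(x) + \Phi_h(xg),
\]
which is exactly the cocycle identity $\Phi_{gh} = \Phi_g + g \cdot \Phi_h$ for the right-translation action of $G$ on $\F_p[X]$. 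Since $\mathrm{Sh}^{-1}$ is an isomorphism and we have produced an explicit right inverse at the level of cocycles, the formula for $\mathrm{Sh}$ follows. The main (and essentially only) obstacle is the notational bookkeeping between $x \in X$, its representative $y(x) \in Y$, and the ``correction'' factor $\tilde{g} = g y(g)^{-1}$; the underlying computation is routine.
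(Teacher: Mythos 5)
Your proof is correct and follows essentially the same route as the paper's: check $\mathrm{Sh}^{-1}(\Phi)=\phi$ using $y(g')=1$, $\widetilde{g'}=g'$, and verify the cocycle identity via the relation $\widetilde{y(x)gh}=\widetilde{y(x)g}\cdot\widetilde{y(xg)h}$, which is exactly the paper's identity $\widetilde{ab}=\tilde a\,\widetilde{y(a)b}$ (with $a=y(x)g$, $b=h$, using $y(y(x)g)=y(xg)$). The only cosmetic difference is that you also sketch why evaluation at $1$ is the standard Shapiro map via the coinduction adjunction, which the paper simply takes as known.
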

\begin{proof}
Let $\phi \in Z^1(G',\F_p)$. Note that $y(g')=1$ and $\widetilde{g'}=g'$ for all $g' \in G'$. It follows that
\[
(\mathrm{Sh}^{-1}(\mathrm{Sh}(\phi)))_{g'}=\mathrm{Sh}(\phi)_{g'}(1) = \phi_{\widetilde{y(1)g'}} = \phi_{g'}.
\]
This shows that $\mathrm{Sh}$ and $\mathrm{Sh}^{-1}$ are inverse functions on the level of cocycles. It remains only to show that $\mathrm{Sh}(\phi)$ is indeed a cocycle. It suffices to show that, for all $x \in Y$ and all $s,t \in G$,
\[
\Phi_{st}(x) = \Phi_{s}(x) + \Phi_t(xs).
\]
First note that, for all $a,b \in G$, there are equalties of cosets,
\[
G'y(ab)=G'ab =G'y(a)b =G'y(y(a)b),
\]
so $y(ab)=y(y(a)b)$. Then a simple computation shows that
\[
\widetilde{ab}= \tilde{a}\widetilde{y(a)b}.
\]
Applying this identity to $a=xs$ and $b=t$ gives 
\begin{align*}
    \Phi_{st}(x) & = \phi_{\widetilde{xst}} \\
    & = \phi_{\widetilde{xs}\widetilde{y(xs)t}} \\
    &= \phi_{\widetilde{xs}} + \phi_{\widetilde{y(xs)t}} \\
    &= \Phi_s(x) + \Phi_t(xs). \qedhere
\end{align*}
\end{proof}

\subsection{Computing $a\cand\up1$ and $b\cand\up2$}\label{subsec: computing a1 and b2}
We now apply Theorem \ref{thm:sharifi} to compute $a\cand\up1$ and $b\cand\up2$, candidate solutions to the differential equations in \eqref{eq:da1 and db2}. 

Let $K=\Q(\ell_1^{1/p},\zeta_p)$ be the splitting field of $b\up1$ and let $\sigma \in \Gal(K/\Q(\zeta_p))$ be the generator satisfying $\sigma(\ell_1^{1/p}) = \zeta_p \ell_1^{1/p}$. Let $L$ be the splitting field of $c\up1$ and let $c \in \Q(\zeta_p)$ be such that $L=\Q(\zeta_p,\sqrt[p]{c})$.
\begin{thm}
\label{thm:comp of cands}
Let $\gamma \in K^\times \otimes_\Z \F_p$ be such that $\mathrm{Nm}_{K/\Q(\zeta_p)}(\gamma)=c$. Then there are cochains $a\cand\up1, b\cand\up2: G_{\Q(\zeta_p)} \to \F_p$ satisfying
\begin{equation}
    \begin{split}
-da\up1\cand &= b\up 1\smile c\up 1\\
-db\up2\cand &= a\up 1\cand\smile b\up1 + b\up1\smile d\up 1\cand,
\end{split}
\end{equation}
where $d\up1\cand = b\up1 c\up1 -a\up1\cand$, and such that, under the identification $H^1(K,\F_p) \cong K^\times \otimes \F_p$, we have
\begin{equation}
\label{eq:a1 and b2 as Ds}
\begin{split}
    -a\up1\cand|_{G_K} &= D_\sigma^1(\gamma) \\
    -b\up2\cand|_{G_K} &= D_\sigma^2(\gamma)^{-2} D_\sigma^1(\gamma)^{-1}.
\end{split}    
\end{equation}
\end{thm}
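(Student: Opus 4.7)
The strategy is to apply Sharifi's theorem (Theorem \ref{thm:sharifi}) over $F = \Q(\zeta_p)$ with $s = \ell_1$, $t = c$, and $\theta = \gamma$ to produce $a\up1\cand$ directly, and then to convert the second Sharifi cochain into $b\up2\cand$ via the commutativity relation for triple Massey products established in Lemma \ref{lem:commutativity}. The key qualitative observation is that the differential equation satisfied by Sharifi's $c_2$ encodes the \emph{cyclic} Massey product $(b\up1, b\up1, c\up1)$, whereas the equation we need for $b\up2\cand$ encodes the non-cyclic triple Massey product $(b\up1, c\up1, b\up1)$; Lemma \ref{lem:commutativity} is precisely the bridge between the two.

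\emph{Applying Sharifi.} Under the pinning data, the Kummer character $\chi_s$ associated to $\ell_1$ is identified with $b\up1|_{G_{\Q(\zeta_p)}}$, and the Kummer character $\chi_t$ associated to $c$ is identified with $c\up1|_{G_{\Q(\zeta_p)}}$ (the latter via the trivialization $\F_p(-1)|_{G_{\Q(\zeta_p)}} \cong \F_p$ induced by $\zeta_p$). The hypothesis $\chi_s \cup \chi_t = 0$ is satisfied since $[b\up1] \cup [c\up1] = 0$ (guaranteed by the existence of $a\up1$ in Proposition \ref{prop: produce a1}), and $\gamma$ with $\mathrm{Nm}_{K/\Q(\zeta_p)}(\gamma) = c$ is exactly the input data required by Sharifi. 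Theorem \ref{thm:sharifi} then produces cochains $c_1, c_2 : G_{\Q(\zeta_p)} \to \F_p$ with
\[
-dc_1 = b\up1 \smile c\up1, \qquad -dc_2 = b\up1 \smile c_1 + \binom{b\up1}{2} \smile c\up1,
\]
and with restrictions $c_1|_{G_K} = D^1_\sigma(\gamma)$ and $c_2|_{G_K} = D^2_\sigma(\gamma)$ in $K^\times \otimes \F_p$.

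\emph{Identifying $a\up1\cand$ and $b\up2\cand$.} Setting $a\up1\cand = c_1$ immediately solves the first equation of \eqref{eq:da1 and db2}, and restriction to $G_K$ yields the first formula of \eqref{eq:a1 and b2 as Ds} (the minus sign in $-a\up1\cand|_{G_K} = D^1_\sigma(\gamma)$ is accounted for by the sign/twist in the Kummer identification of $c\up1$ with $\chi_c$). For $b\up2\cand$: after substituting $d\up1\cand = b\up1 c\up1 - a\up1\cand$, the equation $-db\up2\cand = a\up1\cand \smile b\up1 + b\up1 \smile d\up1\cand$ is precisely $-d\nu' = \chi_1 \smile \kappa'_{2,4} + \kappa'_{1,3} \smile \chi_1$ of Lemma \ref{lem:commutativity} for the triple Massey product $(b\up1, c\up1, b\up1)$ with defining system $(a\up1\cand, b\up1 c\up1 - a\up1\cand)$. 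On the other hand, the equation for $c_2$ is $-d\nu = \chi_1 \smile \kappa + \binom{\chi_1}{2} \smile \chi_2$ with $\kappa = a\up1\cand$, which is exactly the input to Lemma \ref{lem:commutativity}. Applying that lemma with $\chi_1 = b\up1$, $\chi_2 = c\up1$, $\kappa = a\up1\cand$, and $\nu = c_2$, the cochain
\[
b\up2\cand := b\up1 \cdot a\up1\cand - 2c_2 - a\up1\cand
\]
satisfies the desired differential equation.

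\emph{Restriction to $G_K$ and main obstacle.} Because $K = \Q(\zeta_p, \ell_1^{1/p})$ is the splitting field of $b\up1$, we have $b\up1|_{G_K} = 0$. Consequently, the first summand in the definition of $b\up2\cand$ vanishes upon restriction, giving $b\up2\cand|_{G_K} = -2 c_2|_{G_K} - a\up1\cand|_{G_K}$; translated to the multiplicative notation of $K^\times \otimes \F_p$ via Kummer theory (where the additive $-$ becomes inversion), this becomes $D^2_\sigma(\gamma)^{-2} \cdot D^1_\sigma(\gamma)^{-1}$, as claimed. The principal technical effort, beyond invoking Sharifi and Lemma \ref{lem:commutativity}, lies in the careful bookkeeping of sign and twist conventions to match the characters $\chi_s, \chi_t$ to the cocycles $b\up1, c\up1$ of Definition \ref{defn: pinned cocycles}, and in verifying that the asymmetric reshuffled defining system produced by Lemma \ref{lem:commutativity} is indeed $(a\up1\cand, d\up1\cand)$ with the specific cochain $d\up1\cand = b\up1 c\up1 - a\up1\cand$ appearing in the system defining $\rho_1$.
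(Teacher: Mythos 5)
Your proposal is correct and follows essentially the same route as the paper: invoke Theorem \ref{thm:sharifi} over $\Q(\zeta_p)$ with the Kummer characters of $\ell_1$ and $c$ to obtain the two Kolyvagin-derivative cochains, then apply the commutativity relation of Lemma \ref{lem:commutativity} to pass from the cyclic Massey product $(b\up1,b\up1,c\up1)$ to the needed $(b\up1,c\up1,b\up1)$ equation via $b\up2\cand = b\up1 a\up1\cand - 2Z - a\up1\cand$, and finally restrict to $G_K$ where $b\up1$ vanishes. This matches the paper's proof step for step, including the final bookkeeping of signs and the multiplicative translation in $K^\times \otimes_\Z \F_p$.
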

\begin{proof}
Applying Theorem \ref{thm:sharifi}, we find that there are cochains \[a\up1\cand, Z: G_{\Q(\zeta_p)} \to \F_p\] satisfying
\[
\begin{split}
-da\up1\cand &= b\up 1\smile c\up 1\\
-dZ &= b\up1 \smile a \up1\cand + {b\up1 \choose 2} \smile c\up1
\end{split}
\]
and
\[
\begin{split}
    a\up1\cand|_{G_K} &= D_\sigma^1(\gamma) \\
    Z|_{G_K} &= D_\sigma^2(\gamma).
\end{split}
\]
By the commutativity relation (Lemma \ref{lem:commutativity}), if we define
\[
b\up2\cand = b\up1a\up1\cand -2Z - a\up1\cand
\]
then
\[
-db\up2\cand = a\up 1\cand\smile b\up1 + b\up1\smile d\up 1\cand.
\]
Since $(b\up1a\up1\cand)|_{G_K}=0$, we see that $b\up2\cand|_{G_K}= -2Z|_{G_K} - a\up1\cand|_{G_K}$, and the theorem follows.
\end{proof}

Notice that the theorem only defines cochains on $G_{\Q(\zeta_p)}$, whereas we will eventually want to work with cochains on $G_\Q$. Accounting for this will amount to keeping track of actions of $\Delta:=\Gal(\Q(\zeta_p)/\Q)$. 

We establish some notation. 
\begin{defn}
\label{defn: Delta isotypic} 
For a character $\psi: \Delta \to \F_p^\times$, let $\epsilon_\psi \in \F_p[\Delta]$ be the corresponding idempotent. For a $\F_p[\Delta]$-module $M$, let $M^{\Delta=\psi} = \epsilon_\psi M$. We call this the \emph{$\psi$-isotypic} summand of $M$. If $M$ is a Galois group $M = \Gal(F'/F)$ and $M$ is $\psi$-isotypic, then we call the extension $F'/F$ $\psi$-isotypic as well. 
\end{defn}

\begin{lem}\label{lem: Galois action a1}
Let $M$ be a $\F_p[\Gal(K/\Q)]$-module.
Suppose that $\theta \in M^{\Delta=\omega^i}$ for some $i \in \Z/(p-1)\Z$. Then $D_\sigma^1(\theta) \in M^{\Delta=\omega^{i-1}}$
\end{lem}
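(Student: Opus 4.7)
The plan is to track how $\Delta = \Gal(\Q(\zeta_p)/\Q)$ acts on $\sigma$ by conjugation inside $\Gal(K/\Q)$, then transfer that action through the Kolyvagin derivative. Pick a lift $\tilde\delta \in \Gal(K/\Q)$ of $\delta \in \Delta$; since $K = \Q(\zeta_p, \ell_1^{1/p})$ and $\sigma$ fixes $\zeta_p$ while sending $\ell_1^{1/p} \mapsto \zeta_p \ell_1^{1/p}$, a direct computation on the generators $\zeta_p$ and $\ell_1^{1/p}$ shows that $\tilde\delta \sigma \tilde\delta^{-1}$ acts trivially on $\zeta_p$ and sends $\ell_1^{1/p}$ to $\zeta_p^{\omega(\delta)} \ell_1^{1/p}$ (independently of the choice of lift, since changing $\tilde\delta$ by a power of $\sigma$ does not affect conjugation in a cyclic group). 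Thus
\[
\tilde\delta \sigma \tilde\delta^{-1} = \sigma^{\omega(\delta)} \quad \text{in } \Gal(K/\Q(\zeta_p)).
\]

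Next, I would apply this to the Kolyvagin operator $D^1_\sigma = \sum_{j=0}^{p-1} j \sigma^j \in \F_p[\langle\sigma\rangle]$. Conjugating termwise gives
\[
\tilde\delta \, D^1_\sigma \, \tilde\delta^{-1} = \sum_{j=0}^{p-1} j \sigma^{j\omega(\delta)} = \omega(\delta)^{-1} \sum_{k=0}^{p-1} k \sigma^{k} = \omega(\delta)^{-1} D^1_\sigma,
\]
where the middle equality reindexes by $k \equiv j\omega(\delta) \pmod p$ and uses $j \equiv k \omega(\delta)^{-1}$.

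Finally, for $\theta \in M^{\Delta=\omega^i}$, I would compute the $\Delta$-action on $D^1_\sigma(\theta)$ directly:
\[
\tilde\delta \cdot D^1_\sigma(\theta) = \bigl(\tilde\delta D^1_\sigma \tilde\delta^{-1}\bigr)\bigl(\tilde\delta \cdot \theta\bigr) = \omega(\delta)^{-1} D^1_\sigma\bigl(\omega^i(\delta) \theta\bigr) = \omega^{i-1}(\delta) \, D^1_\sigma(\theta),
\]
which is exactly the statement $D^1_\sigma(\theta) \in M^{\Delta=\omega^{i-1}}$. The only subtlety worth flagging is the well-definedness of conjugation by $\tilde\delta$, but this is harmless because $\langle\sigma\rangle$ is abelian, so inner automorphisms by elements of $\langle\sigma\rangle$ are trivial; hence the formula $\tilde\delta \sigma \tilde\delta^{-1} = \sigma^{\omega(\delta)}$ depends only on $\delta$, not on its lift.
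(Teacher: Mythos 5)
Your proof is correct and follows essentially the same route as the paper: establish the conjugation relation $\delta\sigma\delta^{-1}=\sigma^{\omega(\delta)}$, deduce $\delta D^1_\sigma \delta^{-1}=\omega^{-1}(\delta)D^1_\sigma$ by reindexing, and apply this to $\theta \in M^{\Delta=\omega^i}$. The extra details you supply (checking the relation on the generators $\zeta_p$, $\ell_1^{1/p}$ and the independence of the choice of lift) are exactly the ``simple computation'' the paper leaves implicit.
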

\begin{proof}
Note the conjugation action of $\Delta$ on $\Gal(K/\Q(\zeta_p))$ is through $\omega$, so $\delta \sigma \delta^{-1} = \sigma^{\omega(\delta)}$ for all $\delta \in \Delta$. Then a simple computation shows that $\delta D_{\sigma}^1 \delta^{-1} = \omega^{-1}(\delta) D_{\sigma}^1$. Hence we see that
\[
\delta \cdot D_\sigma^1(\theta) = \omega^{-1}(\delta) D_\sigma^1 (\delta \cdot \theta) = \omega^{i-1}(\delta) D_\sigma^1(\theta). \qedhere
\]
\end{proof}
In particular, if $\gamma$ in Theorem \ref{thm:comp of cands} is chosen to be an element in $(K^\times \otimes_\Z \F_p)^{\Delta=\omega^2}$, then $a\up1\cand|_{G_K} \in (K^\times \otimes_\Z \F_p)^{\Delta=\omega}$.

\section{Adjusting solutions to satisfy boundary conditions}
\label{sec:adjustments}
By Propositions \ref{prop: produce a1} and \ref{prop: produce b2}, we know that there exist solutions  
\[
a\up1 \in C^1(\Z[1/Np], \F_p), \quad b\up2 \in C^1(\Z[1/Np], \F_p(1))
\]
of the differential equations \eqref{eq:diffeq of a1} and \eqref{eq:diffeq for b2} that also satisfy local conditions delineated there. In this section, we explicitly construct these solutions and then prove the main result of this paper, Theorem \ref{thm: main}. We also include explicit algorithms for the main computations in the construction of $a\up1$ and $b\up2$.

\subsection{Set up for computations} By Theorem \ref{thm:comp of cands}, we already have constructions of 1-cochains 
\[
a\up1\cand : G_{\Q(\zeta_p)} \to \F_p, \qquad b\up2\cand: G_{\Q(\zeta_p)} \to \F_p(1)
\]
satisfying those same differential equations (see \eqref{eq:da1 and db2}), along with the explicit determination \eqref{eq:a1 and b2 as Ds} of the 1-cocycles $a\up1\cand|_{G_K}$ and $b\up2\cand|_{G_K}$. Moreover, Kummer theory, along with our choice of $\zeta_p$ that identifies $\F_p$ with $\F_p(1)$ and $\mu_p$, provides that the corresponding elements $a\up1|_{G_K}, b\up2|_{G_K} \in K^\times \otimes_\Z \F_p$ are uniquely determined. 

To satisfy the local conditions of Propositions \ref{prop: produce a1} and \ref{prop: produce b2}, we will add 1-cocycles to our explicitly constructed 1-cochains. Namely, we want to compute the following ``adjustment'' cocycles:
\begin{itemize}
    \item $a\up1\adj\in Z^1(\Q(\zeta_p),\F_p)$, which expresses the difference
    \[
    a\up1\adj := a\up1\vert_{G_{\Q(\zeta_p)}}-a\up1\cand,
    \] 
    \item $b\up2\adj\in Z^1(\Q(\zeta_p),\F_p(1))$, which expresses the difference 
    \[
    b\up2\adj := b\up2\vert_{G_{\Q(\zeta_p)}}-\tilde{b}\up2\cand,
    \] 
    where $\tilde{b}\up2\cand$ is constructed similarly to $b\up2\cand$ via Theorem \ref{thm:comp of cands} but also accounts for the dependence of (\ref{eq:da1 and db2}) on $a\up1$.
\end{itemize}

For each of these two adjustments, we establish a method for how to determine a corresponding element in $\Q(\zeta_p)^\times\otimes_\Z\F_p$ and then give algorithms that can be used for explicit computation. In particular, our algorithms involve extensive calculations within $S$-unit groups of Kummer extensions, so we recall two classical results from Kummer theory, without proof, that are quite useful in our setting. 

First, we implicitly use the following lemma when translating between the language of cocycles and $S$-units.
\begin{lem}[Kummer theory]
\label{lem: kummer dual}
Let $F$ be a number field that is Galois over $\Q$ and contains $\Q(\zeta_p)$, and let $F'/F$ be a $C_p$-extension, so that Kummer theory provides for the existence of some $h \in F^\times$ such that $F' = F(h^{1/p})$. Consider $\Delta = \Gal(\Q(\zeta_p)/\Q)$ to be a subset of $\Gal(F/\Q)$ under any section of the standard projection $\Gal(F/\Q) \rsurj \Delta$. Then $\Gal(F'/F)$ and $F^\times$ admit a natural $\Delta$-action that does not depend on the choice of section. The conjugation action of $\Delta$ on $\Gal(F'/F)$ is $\omega^i$-isotypic if and only if $h \otimes 1 \in F^\times \otimes_\Z \F_p$ is $\omega^{1-i}$-isotypic. 
\end{lem}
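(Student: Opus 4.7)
The plan is to derive the isotypic correspondence from the $\Gal(F/\Q)$-equivariance of the Kummer pairing
\[
\langle -,-\rangle : \Gal(F'/F) \times F^\times/(F^\times)^p \to \mu_p, \qquad \langle \sigma, g\rangle = \sigma(g^{1/p})/g^{1/p}.
\]
Its restriction to $\Gal(F'/F) \cong C_p$ on the left and to the $\F_p$-line $\F_p \cdot (h \otimes 1)$ on the right is a perfect pairing to $\mu_p$. The key calculation is the equivariance identity
\[
\langle \delta\sigma\delta^{-1}, \delta(g)\rangle = \delta\bigl(\langle \sigma, g\rangle\bigr) \qquad (\delta \in \Gal(F/\Q) \text{ lifted to } \Gal(F'/\Q)),
\]
which is immediate from the definition once one fixes $\delta(g)^{1/p} := \delta(g^{1/p})$ as the distinguished $p$-th root of $\delta(g)$.

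The next step is to justify the section-independence of the $\Delta$-action. For $\Gal(F'/F) \cong C_p$, conjugation by any lift of $\delta \in \Gal(F/\Q)$ is well-defined because the target is abelian; since $\Gal(F/\Q(\zeta_p))$ is a pro-$p$ group in all the settings where this lemma is applied (the ambient fields $F$ are $p$-extensions of $\Q(\zeta_p)$), any homomorphism from it to $\Aut(C_p) \cong \F_p^\times$ is trivial, so the conjugation action factors through $\Delta$ independently of any section. For the $\F_p$-line $\F_p \cdot (h \otimes 1) \subset F^\times \otimes \F_p$, the hypothesis that $F'/\Q$ is Galois implies that $\Gal(F/\Q)$ preserves this line and acts on it through an $\F_p^\times$-valued character, which is again trivial on $\Gal(F/\Q(\zeta_p))$ and hence descends to a well-defined $\Delta$-action.

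Finally, the isotypic correspondence itself is a direct computation. Assuming $\Gal(F'/F)$ is $\omega^i$-isotypic and $h \otimes 1$ is $\omega^j$-isotypic, I fix a generator $\sigma \in \Gal(F'/F)$ and set $\zeta := \langle \sigma, h\rangle \in \mu_p$. For each $\delta \in \Delta$, equivariance combined with bilinearity yields
\[
\zeta^{\omega(\delta)} = \delta(\zeta) = \langle \delta\sigma\delta^{-1}, \delta(h)\rangle = \langle \sigma^{\omega^i(\delta)}, h^{\omega^j(\delta)}\rangle = \zeta^{\omega^{i+j}(\delta)};
\]
since $\zeta$ has order $p$, this forces $\omega^{i+j} = \omega$, i.e., $j \equiv 1 - i \pmod{p-1}$, which is the claimed correspondence. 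The main obstacle is the bookkeeping around section-independence rather than any conceptual difficulty; the Kummer equivariance computation and the isotypic deduction are routine once the implicit pro-$p$ hypothesis on $\Gal(F/\Q(\zeta_p))$ has been invoked.
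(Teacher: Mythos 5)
The paper does not actually prove this lemma --- it is invoked as one of two ``classical results from Kummer theory, without proof'' --- so there is nothing to compare line-by-line; but your route (equivariance of the Kummer pairing $\Gal(F'/F)\times \langle h\rangle(F^\times)^p/(F^\times)^p \to \mu_p$, perfectness on the line, and the exponent bookkeeping $\zeta^{\omega(\delta)}=\zeta^{\omega^{i+j}(\delta)}$ forcing $j\equiv 1-i$) is exactly the standard argument the authors are implicitly citing, and that central computation is correct, including the correct normalization (a $\Gal(F'/F)\cong\F_p(i)$ conjugation action pairs with an $\omega^{1-i}$-isotypic Kummer line, matching the paper's use that $a^{(1)}|_{G_K}$, valued in $\F_p(0)$, corresponds to an $\omega$-isotypic element of $K^\times\otimes_\Z\F_p$).

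Two caveats on the surrounding bookkeeping. First, you invoke ``the hypothesis that $F'/\Q$ is Galois'' to get stability of the line $\F_p\cdot(h\otimes 1)$ under all of $\Gal(F/\Q)$. That is not a hypothesis of the lemma, and in the paper's principal applications it is false: $K'$ and $K''$ are explicitly \emph{not} Galois over $\Q$ (see the opening of \S\ref{sec: P2 ANT}), and indeed the line generated by $a^{(1)}|_{G_K}$ is not stable under $\Gal(K/\Q(\zeta_p))$. What your argument actually needs --- and what each side of the ``if and only if'' already supplies --- is only stability under the chosen lifts of $\Delta$: if the conjugation action of (a lifted) $\Delta$ on $\Gal(F'/F)$ is defined at all, then those lifts send $F'$ to itself, hence send $h$ into $\langle h\rangle(F^\times)^p$, so the line is $\Delta$-stable and carries some character $\omega^j$ of $\Delta$; conversely, isotypicity of $h\otimes 1$ gives $\Delta$-stability of $F'$ and hence a well-defined conjugation action. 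With that substitution your pairing computation closes the equivalence without any global Galois assumption. Second, your point that the literal ``independent of the choice of section'' claim needs $\Gal(F/\Q(\zeta_p))$ to be a $p$-group is a fair reading of an implicit hypothesis (it holds for every field $F$ to which the paper applies the lemma --- $\Q(\zeta_p)$, $K$, $M$, $M'$ are all $p$-extensions of $\Q(\zeta_p)$); but the descent-of-character argument you give for $F^\times$ again presupposes the full $\Gal(F/\Q)$-stability discussed above, so it should likewise be phrased in terms of the $\Delta$-stable line (or, for $\Gal(F'/F)$, the fact that the action of any lift factors through $\Aut(C_p)\cong\F_p^\times$, which has prime-to-$p$ order). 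With these adjustments the proof is complete and is the intended one.
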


Second, the theorem below allows us to compute the splitting behavior of primes in a Kummer extension $F'/F$ in terms of the arithmetic of the base field $F$; it is one of the main advantages of computing in Kummer extensions and makes it feasible to test local conditions in number fields of degree $p^2(p-1)\geq 100$. 

\begin{thm}[{\cite[Theorem 4.12]{LemmermeyerReciprocity}}]
\label{kummertheorem}
	Let $p$ be a prime and let $F$ be a number field containing a primitive $p$th root of unity. Let $\alpha \in F^\times$ be $p$-power-free, and let $F'=F(\sqrt[p]{\alpha})$. Let $\mathfrak{p}$ be a prime of $F$. 
	\begin{enumerate}
		\item if $\mathfrak{p} \mid \alpha$, then $\mathfrak{p}$ is ramified in $F'/F$,
		\item if $\mathfrak{p} \nmid \alpha$ and $\mathfrak{p} \nmid p$, the $\mathfrak{p}$ splits if $\alpha$ is a $p$th power mod $\mathfrak{p}$ and is inert otherwise.
		\item if $\mathfrak{p} \nmid \alpha$ and $\mathfrak{p} \mid p$, let $a$ be the highest power of $\mathfrak{p}$ dividing $1-\zeta_p$ in $F$. Then
		\[
		\begin{array}{lll}
		\mbox{$\mathfrak{p}$ splits} &\mbox{if} & \mbox{$\alpha$ is a $p$th power modulo $\mathfrak{p}^{ap+1}$} \\
		\mbox{$\mathfrak{p}$ ramifies} &\mbox{if} & \mbox{$\alpha$ is not a $p$th power modulo $\mathfrak{p}^{ap}$} \\
		\mbox{$\mathfrak{p}$ is inert} & & \mbox{otherwise} \\
		
		\end{array}
		\]
	\end{enumerate}
\end{thm}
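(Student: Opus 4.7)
The plan is to reduce each case to a computation in the completion $F_\mathfrak{p}$, exploiting the hypothesis $\zeta_p \in F$ so that $F_\mathfrak{p}$ also contains $\mu_p$ and the Kummer correspondence $F_\mathfrak{p}^\times/(F_\mathfrak{p}^\times)^p \cong \Hom(G_{F_\mathfrak{p}},\F_p(1))$ classifies degree-$p$ cyclic local extensions. The local splitting type of $\mathfrak{p}$ in $F'/F$ is then exactly dictated by whether the image of $\alpha$ in this quotient is trivial, a nontrivial unit class, or represented by a uniformizer.

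For case (1), I would use the relation $(\sqrt[p]{\alpha})^p = \alpha$ to compute, for any prime $\mathfrak{P}$ of $F'$ above $\mathfrak{p}$ with ramification index $e$,
\[
p \cdot v_\mathfrak{P}(\sqrt[p]{\alpha}) = v_\mathfrak{P}(\alpha) = e\cdot v_\mathfrak{p}(\alpha).
\]
Since $\alpha$ is $p$-power-free, $0 < v_\mathfrak{p}(\alpha) < p$, forcing $p \mid e$ and hence $e=p$, i.e., total ramification. For case (2), with $\mathfrak{p} \nmid \alpha p$, the polynomial $x^p - \alpha$ is separable over the residue field $k(\mathfrak{p})$, which contains $\mu_p$. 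If $\alpha$ is a $p$-th power modulo $\mathfrak{p}$, then $x^p - \alpha$ splits completely over $k(\mathfrak{p})$; by Hensel's lemma (whose hypothesis holds since $p x^{p-1}$ is a unit at each residue root), it splits over $F_\mathfrak{p}$, so $\mathfrak{p}$ splits completely in $F'/F$. Otherwise, $x^p - \alpha$ is irreducible over $k(\mathfrak{p})$ by the standard lemma on binomials over fields containing the $p$-th roots of unity, yielding an unramified degree-$p$ local extension, so $\mathfrak{p}$ is inert.

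For case (3), the main obstacle lies: here $\mathfrak{p} \mid p$ and the analysis of the $p$-th power map on the unit filtration $U^{(n)} = 1 + \mathfrak{p}^n\mathcal{O}_{F_\mathfrak{p}}$ is delicate. The plan is to use the standard estimate: if $u = 1 + \pi^n x \in U^{(n)}$ with $\pi$ a uniformizer of $F_\mathfrak{p}$, expanding $u^p$ shows that the dominant term is $p\pi^n x$ of valuation $n+(p-1)a$ when $n<a$, while the term $\pi^{np}x^p$ of valuation $np$ dominates for $n>a$. Using $v_\mathfrak{p}(p) = (p-1)a$ and iterating, one obtains the key inclusion $U^{(ap+1)} \subseteq (F_\mathfrak{p}^\times)^p$. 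Combined with the fact that every $\F_p$-line in $F_\mathfrak{p}^\times/(F_\mathfrak{p}^\times)^p$ outside the class of $U^{(ap)}$-elements is represented by a ramifying cocycle, the three alternatives become explicit: $\alpha \in (F_\mathfrak{p}^\times)^p$ iff $\alpha$ is a $p$-th power modulo $\mathfrak{p}^{ap+1}$ (splitting); $\alpha$ represents a unit class in $U^{(ap)}/U^{(ap+1)}$ whose image in the quotient lies in the unramified line iff $\alpha$ is a $p$-th power modulo $\mathfrak{p}^{ap}$ but not modulo $\mathfrak{p}^{ap+1}$ (inert); and the remaining classes correspond to ramified extensions.

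The hard part will be pinning down the inert-versus-ramified dichotomy in (3). I would handle this by identifying the one-dimensional unramified subspace of $H^1(F_\mathfrak{p},\F_p(1))$ explicitly under Kummer theory as the image of $\mathcal{O}_{F_\mathfrak{p}}^\times/U^{(ap+1)}(F_\mathfrak{p}^\times)^p$ lifted from the residue field's Frobenius, and then checking that this subspace is precisely the locus of units congruent to a $p$-th power modulo $\mathfrak{p}^{ap}$ but not necessarily modulo $\mathfrak{p}^{ap+1}$, using the fact that $(1+\zeta_p)$ and related units generate the jumps in the filtration in a controlled way. Once this identification is in hand, Theorem \ref{kummertheorem}(3) follows from reading off which class $\alpha \bmod (F_\mathfrak{p}^\times)^p$ falls into.
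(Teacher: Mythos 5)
You are proving a statement the paper itself does not prove: Theorem \ref{kummertheorem} is quoted from Lemmermeyer's book explicitly ``without proof,'' so the only meaningful comparison is with the classical local argument that reference contains, which is indeed the route you take. Your cases (1) and (2) are fine: the valuation count $p\,v_{\mathfrak{P}}(\sqrt[p]{\alpha})=e\,v_{\mathfrak{p}}(\alpha)$ with $0<v_{\mathfrak{p}}(\alpha)<p$ forces $e=p$, and away from $p$ the Hensel/irreducibility dichotomy for $x^p-\alpha$ over the residue field (which contains the reduction of $\zeta_p$ as a primitive $p$th root of unity) gives split versus inert.

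Case (3) is where the content lies, and there you have a genuine gap. A small slip first: your dominance statement is backwards --- for $u=1+\pi^n x$ the term $\pi^{np}x^p$ has the smallest valuation when $n<a$, and $p\pi^n x$ (valuation $n+(p-1)a$) dominates when $n>a$; the inclusion $U^{(ap+1)}\subseteq (F_{\mathfrak{p}}^\times)^p$ you want comes from the $n>a$ regime by successive approximation, so that conclusion survives. The serious issue is the inert-versus-ramified dichotomy, i.e.\ the identification of the unique unramified line in $F_{\mathfrak{p}}^\times/(F_{\mathfrak{p}}^\times)^p$ with $U^{(ap)}(F_{\mathfrak{p}}^\times)^p/(F_{\mathfrak{p}}^\times)^p$: you assert that every class outside the $U^{(ap)}$-classes ramifies, and your proposed justification does not work as stated. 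The unramified subspace is not ``the image of $\mathcal{O}_{F_{\mathfrak{p}}}^\times/U^{(ap+1)}(F_{\mathfrak{p}}^\times)^p$'': since $U^{(ap+1)}\subseteq(\mathcal{O}_{F_{\mathfrak{p}}}^\times)^p$, that image is all of $\mathcal{O}_{F_{\mathfrak{p}}}^\times(F_{\mathfrak{p}}^\times)^p/(F_{\mathfrak{p}}^\times)^p$, of order $p^{1+[F_{\mathfrak{p}}:\Q_p]}$, far larger than the single line of unramified classes, and the appeal to ``$(1+\zeta_p)$ and related units generate the jumps'' is not an argument. What is actually needed is something like: (i) for a unit $u\in U^{(ap)}$, the element $w=(\sqrt[p]{u}-1)/(\zeta_p-1)$ satisfies a monic polynomial whose reduction modulo $\mathfrak{p}$ is a separable Artin--Schreier-type polynomial (using $v_{\mathfrak{p}}(p)=(p-1)a$ and $p/(\zeta_p-1)^{p-1}\equiv -1$), so $F_{\mathfrak{p}}(\sqrt[p]{u})/F_{\mathfrak{p}}$ is unramified or trivial; and (ii) a counting step showing $U^{(ap)}(F_{\mathfrak{p}}^\times)^p/(F_{\mathfrak{p}}^\times)^p$ has order exactly $p$ (e.g.\ because $(F_{\mathfrak{p}}^\times)^p\cap U^{(ap)}$ reduces, modulo $U^{(ap+1)}$, to the image of an additive map $y\mapsto y^p+\bar{c}y$ on the residue field whose kernel contains the class of $\zeta_p$), whence it coincides with the order-$p$ group of unramified Kummer classes and every other unit class ramifies. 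Without some such step the trichotomy in (3) --- in particular the middle alternative --- is not established.
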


It remains to outline the data fixed in our set up for computing $a\up1\adj$ and $b\up2\adj$.  From a computational perspective, the pinning data in Definition \ref{defn: pinning} plays a much less salient role, with our algorithms depending directly only on our choices of $\zeta_p,\,\ell_p^{1/p}\in\overline{\Q}$. Our algorithms also require that we have fixed $S$-units, with $S$ taken to be the set of primes over $\ell_0$, for the following elements from Theorem \ref{thm:comp of cands}:
\begin{itemize} \item $c \in (\Q(\zeta_p)^\times \otimes_\Z \F_p)^{\Delta = \omega^2}$ such that $L=\Q(\zeta_p,\sqrt[p]{c})$,
    \item $\gamma \in (K^\times \otimes_\Z \F_p)^{\Delta=\omega^2}$ such that $\mathrm{Nm}_{K/\Q(\zeta_p)}(\gamma)=c$,
 \item $a\up1\cand|_{G_K} = D_\sigma^1(\gamma)\in (K^\times \otimes_{\Z}\F_p)^{\Delta=\omega}$,
\item $b\up2\cand|_{G_K} = D_\sigma^2(\gamma)^{-2} D_\sigma^1(\gamma)^{-1}\in (K^\times \otimes_{\Z}\F_p)$.

\end{itemize}
In computing these $S$-units, we must fix a choice of generators
\begin{itemize}
    \item $\sigma \in \Gal(K/\Q(\zeta_p))$ satisfying $\sigma(\ell^{1/p})=\zeta_p \ell^{1/p}$, 
    \item $\delta \in \Delta = \Gal(\Q(\zeta_p)/\Q)$.
\end{itemize} Note that throughout the algorithms in this section, we view $S$-units in any field $F^\times$ as elements in $F^\times\otimes_\Z\F_p$ by implicitly reducing modulo $p$-powers. 

A complete computational example is given in $\S$\ref{subsec: complete computation}; the reader might find it useful to work through this example alongside the remainder of this section.

\subsection{Adjustment for local conditions on $a\up1$}\label{subsec: a1 adjustments}
The cochains $a\up1$ and $a\up1\cand$ both satisfy the differential equation in \eqref{eq:diffeq of a1}. The difference is that $a\up1$ also satisfies the two local conditions of Proposition \ref{prop: produce a1}:
\begin{enumerate}[label=(\alph*)]
    \item $a\up1 |_{I_p} = - (b\up1 \cup x_{c})|_{I_p}$, and
    \item $a\up1|_{\ell_0}$ is on the line spanned by $\zeta_p \cup c_0|_{\ell_0}$.
\end{enumerate}

Recall the element $a_0 \in H^1(\Z[1/Np],\F_p)$, ramified only at $\ell_0$, that we selected in Definition \ref{defn: pinned cocycles}. In what follows, we will think of it as an element of $\Q(\zeta_p)^\times \otimes_\Z \F_p$ via
\[
a_0|_{G_{\Q(\zeta_p)}} \in H^1(\Q(\zeta_p),\F_p) \risom H^1(\Q(\zeta_p), \F_p(1)) \cong \Q(\zeta_p)^\times \otimes_\Z \F_p,
\]
where the isomorphism $H^1(\Q(\zeta_p),\F_p) \risom H^1(\Q(\zeta_p), \F_p(1))$ is drawn using our chosen primitive $p$th root of unity $\zeta_p$. 

We also want to point out to the reader that we evaluate the special element from Theorem \ref{thm:slope},
\[
\zeta_\mathrm{MT}' \lambda + \frac{1}{6} a_0|_{\ell_0} \in H^1(\Q_{\ell_0},\F_p) = \Hom(G_{\ell_0}, \F_p) = \Hom(G_{\ell_0}^\ab, \F_p),
\]
at elements of $K^\times \otimes_\Z \F_p$ by using the local Artin map
\begin{equation}
    \label{eq: localized Artin map at ell0}
    \mathrm{Art}_{\mathfrak{L}_0} : K^\times \to \Q_{\ell_0}^\times \buildrel{\mathrm{Art}}\over\to G_{\ell_0}^\ab 
\end{equation}
arising from the distinguished place $\mathfrak{L}_0$ of $K$ over the prime $\ell_0$. Specifically, under the local Artin map, we identify the basis $\{\lambda,a_0|_{\ell_0}\}$ of $H^1(\Q_{\ell_0},\F_p)$ with the basis $\{\ord_{\ell_0},\log_{\ell_0}\}$ of $\Hom(\Q_{\ell_0}^\times,\F_p)$, where $\ord_{\ell_0}$ is the usual $\ell_0$-valuation on $\Q_{\ell_0}^\times$, and $\log_{\ell_0}$ is the projection $\Q_{\ell_0}^\times\to\,\Z_{\ell_0}^\times\to\,\F_{\ell_0}^\times$ composed with the discrete logarithm $\F_{\ell_0}^\times \onto \F_p$ determined by $a_0|_{\ell_0}$. Then, for $y\in K^\times\otimes_\Z\F_p\hookrightarrow\Q_{\ell_0}^\times\otimes_\Z\F_p$, we obtain 
\begin{equation}\label{eq: mazur-tate line eq}
(\zeta_\mathrm{MT}' \lambda + \frac{1}{6} a_0|_{\ell_0})(\mathrm{Art}_{\mathfrak{L}_0}(y)) = \zeta_\mathrm{MT}'  \ord_{\ell_0}(y) + \log_{\ell_0} \left( \frac{y}{\ell_0^{\ord_{\ell_0}(y)}} \right)\in\F_p.
\end{equation}

We now determine $a\up1\adj = a\up1-a\up1\cand\in Z^1(\Q(\zeta_p),\F_p)$.

\begin{thm}\label{thm: a1 adjustments}
The element $a\up1\adj\in \Q(\zeta_p)^\times \otimes_\Z \F_p$ is given by $\zeta_p^i a_0|_{G_{\Q(\zeta_p)}}^j$, where 
\begin{itemize}
    \item $i \in \F_p$ is the unique element such that $K\left(\sqrt[p]{D_\sigma^1(\gamma)\zeta_p^i}\right)/K$ is unramified at $p$, and
    \item $j \in \F_p$ is the unique element such that the evaluation
    \[
    \left(\zeta_\mathrm{MT}' \lambda + \frac{1}{6} a_0|_{\ell_0} \right) \left(\mathrm{Art}_{\mathfrak{L}_0} \big(D_\sigma^1(\gamma)\zeta_p^i  a_0|_{G_{\Q(\zeta_p)}}^j\big)\right) =0 \in \F_p,
    \]
    where $\mathrm{Art}_{\mathfrak{L}_0}$ is as in \eqref{eq: localized Artin map at ell0}. 
\end{itemize}
\end{thm}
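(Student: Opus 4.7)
The plan is to identify $a\up1\adj := a\up1\vert_{G_{\Q(\zeta_p)}} - a\up1\cand$ as an element of an explicit 2-dimensional subspace of $\Q(\zeta_p)^\times \otimes_\Z \F_p$ spanned by $\zeta_p$ and $a_0\vert_{G_{\Q(\zeta_p)}}$, and then verify that the characterizations of $i$ and $j$ in the theorem are translations of the two local conditions in Proposition \ref{prop: produce a1}.

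For the first step, since both $a\up1$ (restricted) and $a\up1\cand$ solve $-dX = b\up1 \smile c\up1$ on $G_{\Q(\zeta_p)}$, the difference $a\up1\adj$ is a cocycle in $Z^1(G_{\Q(\zeta_p),Np}, \F_p)$. Using the pinned $\zeta_p$ to identify $\F_p$ with $\mu_p$ as $G_{\Q(\zeta_p)}$-modules, Kummer theory presents $a\up1\adj$ as an element of $\cO_{\Q(\zeta_p)}[1/Np]^\times \otimes \F_p$. I will argue this element is $\omega$-isotypic for the $\Delta$-action: $a\up1$ descends from a $G_\Q$-cochain with trivial coefficient action, so by Lemma \ref{lem: kummer dual} its Kummer translate is $\omega$-isotypic; and by Lemma \ref{lem: Galois action a1} applied to the $\omega^2$-isotypic $\gamma$, the cocycle $a\up1\cand\vert_{G_K} = D_\sigma^1(\gamma)$ is $\omega$-isotypic as well. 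A direct count via Dirichlet's unit theorem then identifies the $\omega$-isotypic component of $\cO_{\Q(\zeta_p)}[1/Np]^\times \otimes \F_p$ as 2-dimensional: $\mu_p$ contributes one dimension (spanned by $\zeta_p$), and the primes over $\ell_0$ contribute another (spanned by $a_0\vert_{G_{\Q(\zeta_p)}}$, using that $\ell_0$ splits completely in $\Q(\zeta_p)$); primes over $\ell_1$ contribute nothing because $\ell_1 \not\equiv 1 \pmod p$ makes $\omega$ non-trivial on the decomposition group (so the $\omega$-isotypic of the induced representation vanishes by Frobenius reciprocity), and primes over $p$ contribute nothing because $p$ is totally ramified. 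With the working input that the $\omega$-component of the class group of $\Q(\zeta_p)$ is trivial, this yields $a\up1\adj = \zeta_p^i (a_0\vert_{G_{\Q(\zeta_p)}})^j$ for a unique $(i,j) \in \F_p^2$.

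To pin down $i$: Proposition \ref{prop: produce a1}(a) is a finite-flat condition at $p$ that, after Kummer translation and absorbing the fixed contribution from $b\up1 \smile x_c$, becomes the requirement that $K(\sqrt[p]{-a\up1\vert_{G_K}})/K$ be unramified at $p$. By Theorem \ref{thm:comp of cands}, $-a\up1\vert_{G_K}$ corresponds to $D_\sigma^1(\gamma)\cdot \zeta_p^i \cdot a_0^j$ in $K^\times \otimes \F_p$. Since $a_0$ is unramified at primes over $p$, the extension $K(\sqrt[p]{a_0^j})/K$ is unramified at $p$, so this factor can be dropped. The extension $K(\sqrt[p]{\zeta_p})/K = K(\zeta_{p^2})/K$ is totally (wildly) ramified at the unique prime of $K$ over $p$, so exactly one $i \in \F_p$ cancels the $p$-ramification of $D_\sigma^1(\gamma)$, giving the stated characterization.

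To pin down $j$, fix $i$ and treat condition (b). It requires $a\up1\vert_{\ell_0}$ to lie on the line $\langle \zeta \cup c_0\vert_{\ell_0}\rangle \subset H^1(\Q_{\ell_0}, \F_p)$, which by Theorem \ref{thm:slope} equals $\langle \chi_0\rangle$ for $\chi_0 := \zeta_\mathrm{MT}' \lambda + \frac{1}{6} a_0\vert_{\ell_0}$. Because $\ell_1$ is a $p$-th power modulo $\ell_0$, the distinguished prime $\mathfrak{L}_0$ splits completely in $K$, giving $K_{\mathfrak{L}_0} = \Q_{\ell_0}$; hence the localization $y\vert_{\ell_0}$ of $y := D_\sigma^1(\gamma)\zeta_p^i a_0^j$ in $\Q_{\ell_0}^\times \otimes \F_p$ is precisely the Kummer element representing the character $-a\up1\vert_{\ell_0}$. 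Local Tate duality identifies $\langle \chi_0\rangle \subset H^1(\Q_{\ell_0}, \F_p)$ with a 1-dim subspace of $\Q_{\ell_0}^\times \otimes \F_p$; because the Hilbert symbol is alternating for $p \geq 5$, this subspace coincides with $\ker F_{\chi_0}$ where $F_{\chi_0}(z) := \chi_0(\mathrm{Art}_{\mathfrak{L}_0}(z))$. Thus condition (b) is equivalent to $F_{\chi_0}(y\vert_{\ell_0}) = 0$, which unpacks via \eqref{eq: mazur-tate line eq} to the evaluation in the theorem statement. Since $\ord_{\ell_0}(a_0\vert_{G_{\Q(\zeta_p)}}) \neq 0$ forces $a_0\vert_{G_{\Q(\zeta_p)}} \notin \ker F_{\chi_0}$ generically, varying $j$ shifts $F_{\chi_0}(y\vert_{\ell_0})$ linearly with non-zero coefficient, yielding the unique $j$. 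The main obstacles will be the isotypic dimension count (notably the class-group input ensuring no spurious $\omega$-components beyond $\zeta_p$ and $a_0\vert_{G_{\Q(\zeta_p)}}$) and the invocation of local Tate duality with the alternating property of the Hilbert symbol, which is what reduces condition (b) to a single Artin-evaluation equation; the remainder is bookkeeping.
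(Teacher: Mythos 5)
Your proposal is correct and follows essentially the same route as the paper's proof: both arguments first place $a\up1\adj$ in the two-dimensional $\omega$-isotypic part of the $Np$-unit group of $\Q(\zeta_p)$ spanned by $\zeta_p$ and $a_0|_{G_{\Q(\zeta_p)}}$, then determine $i$ from condition (a) of Proposition \ref{prop: produce a1} (using that $a_0$ is unramified at $p$ while the $\zeta_p$-Kummer extension is ramified there) and $j$ from condition (b) via Theorem \ref{thm:slope}. The extra inputs you spell out---triviality of the $\omega$-eigenspace of the class group of $\Q(\zeta_p)$, and local duality with the alternating Hilbert symbol making the line spanned by $\zeta_\mathrm{MT}' \lambda + \frac{1}{6} a_0|_{\ell_0}$ its own annihilator---are precisely what the paper leaves implicit, and your only hedge ("generically") is unnecessary: since $\zeta_\mathrm{MT}'\neq 0$ under Assumption \ref{assump: main} and the self-pairing of $a_0|_{\ell_0}$ with its own Kummer class vanishes by the alternating property, the evaluation at $a_0|_{G_{\Q(\zeta_p)}}$ equals $\zeta_\mathrm{MT}'\,\ord_{\ell_0}\big(a_0|_{G_{\Q(\zeta_p)}}\big)\neq 0$, so $j$ is always unique.
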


\begin{proof}
First note that, since $a\up1\vert_{G_K}, a\up1\cand|_{G_K} \in (K^\times \otimes_{\Z}\F_p)^{\Delta=\omega}$ by Lemma \ref{lem: kummer dual}, and since both $a\up1$ and $a\up1\cand$ are unramified outside $Np$,  $a\up1\adj$ enjoys the same properties. In particular, $a\up1\adj \in (\Z[\zeta_p,1/Np]^\times \otimes_\Z \F_p)^{\Delta=\omega}$, which is spanned by $\zeta_p$ and $a_0|_{G_{\Q(\zeta_p)}}$. Hence we have $a\up1\adj = \zeta_p^i a_0|_{G_{\Q(\zeta_p)}}^j$ for some $i$ and $j$.

This implies that $a\up1|_{G_K} = D^1_\sigma(\gamma)\zeta_p^i a_0|_{G_{\Q(\zeta_p)}}^j$. The first condition on $a\up1$ implies that $K\left(\sqrt[p]{a\up1|_{G_K}}\right)/K$ is unramified at $p$. Since $a_0$ is unramified at $p$, we see that this is equivalent to $K\left(\sqrt[p]{D_\sigma^1(\gamma)\zeta_p^i}\right)/K$ being unramified at $p$. This determines $i$.

By Theorem \ref{thm:slope}, the second condition on $a\up1$ is equivalent to
\[
\left(\zeta_\mathrm{MT}' \lambda + \frac{1}{6} a_0|_{\ell_0} \right) \left(\mathrm{Art}_{\mathfrak{L}_0} (a\up1|_{G_K})\right) =0.
\]
This determines $j$.
\end{proof}

\subsubsection*{Algorithms for computing $a\up1\adj$} 
To explicitly compute the first adjustment of $a\up1\cand$, we apply Theorem \ref{kummertheorem}(3), which states that if $a$ is the highest power of $\mathfrak{p}$ dividing $(1-\zeta_p)$ in $K$, then $\mathfrak{p}$ ramifies if the Kummer generator $\alpha$ is not a $p$th power modulo $\mathfrak{p}^{ap}$. The exponent $a$ depends on whether $p$ tamely or wildly ramifies in $K/\Q$, so we handle these cases separately in Algorithm \ref{alg:algorithm_a1_p}. 


\begin{algorithm}\label{alg:algorithm_a1_p}
\KwIn{$a\up1\cand|_{G_K}\in (K^\times \otimes_{\Z}\F_p)^{\Delta=\omega}$}
\KwOut{$i\in\F_p$ such that $a\up1\cand|_{G_K}\zeta_p^i$ generates a Kummer extension that is unramified at $p$}
\caption{Adjustment of $a^{(1)}\cand$ for condition at $p$ in Theorem \ref{thm: a1 adjustments}}
\begin{enumerate}[leftmargin=1.75em]
     \item If $p$ \textit{tamely} ramifies in $K$, let $i\in\F_p$ be such that $a\up1\cand|_{G_K}\zeta_p^i$ is congruent to a $(p-1)$st root of unity in $\mathcal{O}_K/\mathfrak{p}^p$ for each $\mathfrak{p}$ above $p$ in $K$.
     \item If $p$ \textit{wildly} ramifies in $K$:
    \begin{enumerate}
        \item Let $\mathfrak{p}$ denote the prime above $p$ in $K$. 
        \item Compute a set of representatives for the $p$th-powers in $\mathcal{O}_K/\mathfrak{p}^{p^2}$.
        \item Let $i\in\F_p$ be such that $a\up1\cand|_{G_K}\zeta_p^i$ is a $p$th power $\mathcal{O}_K/\mathfrak{p}^{p^2}$.
    \end{enumerate}
   
    \item Return $i$.
\end{enumerate}
\end{algorithm}

For the second adjustment of $a\up1\cand$, it would be straightforward to implement the method outlined in Theorem \ref{thm: a1 adjustments} provided that we have computationally identified the distinguished prime of $K$ over $\ell_0$. However, even after computing $a\up1$, it would be unclear how to compute the value of $\alpha$ in Definition \ref{defn: alpha}. So, for computational purposes, we take a different approach that involves changing the pinning data to arrange for $\alpha=0$; this is permitted because $\alpha^2 + \beta$ is independent of the pinning data (Part I, Theorem \ref{thm: main invariant}) and guaranteed to be possible by the following lemma.

\begin{lem}
\label{lem: choice pinning data}
There is a choice of pinning data such that $\alpha = 0$. Moreover, when $\alpha = 0$, the primes over $\ell_0$ that are split in the Kummer extension generated by $a\up1|_{G_K}$ are exactly the $\Delta$-orbit of the distinguished place of $K$ over $\ell_0$.
\end{lem}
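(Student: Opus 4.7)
The plan is to translate the condition $\alpha = 0$ into a splitting statement for the distinguished prime $\mathfrak{L}_0$ in the Kummer extension $K' := K(\sqrt[p]{a\up1|_{G_K}})$ of $K$, and then to analyze splitting of the remaining primes over $\ell_0$ via the $\Gal(K/\Q)$-action on the $S$-unit $a\up1|_{G_K} \in K^\times \otimes_\Z \F_p$. Since $\ell_0$ splits completely in $K/\Q$ (using $\ell_0 \equiv 1 \pmod{p}$ and the assumption that $\ell_1$ is a $p$th power modulo $\ell_0$), the completion $K_{\mathfrak{L}_0}$ is canonically $\Q_{\ell_0}$, so the image of $a\up1|_{G_K}$ in $K_{\mathfrak{L}_0}^\times \otimes \F_p$ corresponds under Kummer theory to $a\up1|_{\ell_0}$. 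This vanishes precisely when $\alpha = 0$, so $\alpha = 0$ if and only if $\mathfrak{L}_0$ splits in $K'/K$.

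The key computation is how $\sigma \in \Gal(K/\Q(\zeta_p))$ acts on $a\up1|_{G_K}$. Writing $a\up1 = a\up1\cand + a\up1\adj$ with $a\up1\adj \in \Q(\zeta_p)^\times \otimes \F_p$ (hence $\sigma$-invariant), it suffices to compute the $\sigma$-transform of $a\up1\cand|_{G_K} = -D_\sigma^1(\gamma)$. A direct calculation in the group ring $\F_p[\sigma]/(\sigma^p-1)$ shows that $\sigma \cdot D_\sigma^1 = D_\sigma^1 - N$, where $N$ is the norm element. Applying this to $\gamma$ and using $\Nm_{K/\Q(\zeta_p)}(\gamma) = c$ yields
\[
\sigma(a\up1|_{G_K}) = a\up1|_{G_K} + c \quad \text{in } K^\times \otimes_\Z \F_p,
\]
which iterates to $\sigma^i(a\up1|_{G_K}) = a\up1|_{G_K} + ic$. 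Because $L = \Q(\zeta_p, \sqrt[p]{c})$ is ramified at primes above $\ell_0$, the image of $c$ in $\Q_{\ell_0}^\times \otimes \F_p$ at $\mathfrak{L}_0$ is nonzero.

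For the existence claim, I would vary the $\oQ \hookrightarrow \oQ_{\ell_0}$ component of the pinning, cycling the distinguished prime through the $p$ primes $\sigma^i \mathfrak{L}_0$. The splitting criterion at $\sigma^i \mathfrak{L}_0$ is the vanishing (at $\mathfrak{L}_0$) of $\sigma^{-i}(a\up1|_{G_K}) = a\up1|_{G_K} - ic$, which evaluates to $a\up1|_{\ell_0} - i\, c|_{\mathfrak{L}_0}$. Since $c|_{\mathfrak{L}_0}$ is nonzero, this quantity takes $p$ distinct values on the line $\F_p \cdot c|_{\mathfrak{L}_0}$ as $i$ varies, and hence vanishes for exactly one $i$; selecting the embedding accordingly produces a pinning with $\alpha = 0$.

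For the moreover clause, under $\alpha = 0$ I would parameterize the primes over $\ell_0$ in $K$ as $\sigma^i \delta^j \mathfrak{L}_0$ for $\sigma^i\delta^j \in \Gal(K/\Q) = \langle\sigma\rangle \rtimes \Delta$, and test the splitting criterion $(\sigma^i\delta^j)^{-1}(a\up1|_{G_K})|_{\mathfrak{L}_0} = 0$. Using the $\omega$-isotypicity of $a\up1|_{G_K}$ and the $\omega^2$-isotypicity of $c$ under $\Delta$, the calculation yields
\[
(\sigma^i\delta^j)^{-1}(a\up1|_{G_K})\big|_{\mathfrak{L}_0} = \omega(\delta)^{-j}\, a\up1|_{\ell_0} \;-\; i\,\omega(\delta)^{-2j}\, c|_{\mathfrak{L}_0},
\]
which, assuming $\alpha = 0$, vanishes if and only if $i = 0$. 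Thus the split primes are exactly $\{\delta^j \mathfrak{L}_0 : j \in \Z/(p-1)\Z\}$, the $\Delta$-orbit of $\mathfrak{L}_0$. The main obstacle is the bookkeeping: verifying that $a\up1\adj$ is really $\sigma$-invariant when viewed in $K^\times \otimes \F_p$, and that varying the embedding component of the pinning does not alter the other ingredients (the $p$th roots, $c\up1$, etc.) in a way that changes the global identification of $a\up1|_{G_K}$ as an $S$-unit.
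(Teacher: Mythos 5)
Your reduction of $\alpha=0$ to the splitting of the distinguished prime $\mathfrak{L}_0$ in $K'/K$ matches the paper, and your treatment of the ``moreover'' clause via the explicit relations $\sigma(a\up1|_{G_K})=a\up1|_{G_K}+c$, the $\omega$- and $\omega^2$-isotypicity under $\Delta$, and $c|_{\mathfrak{L}_0}\neq 0$ (ramification of $L/\Q(\zeta_p)$ at $\ell_0$) is a correct and in fact more explicit alternative to the paper's argument, which instead uses that $K'/K$ is $\omega^0$-isotypic (so the whole $\Delta$-orbit splits) and that the Galois closure of $K'$ is $M'=K'L$ with $\ell_0$ ramified in $L$ (so nothing outside that orbit splits). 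For the existence half you genuinely diverge from the paper: the paper changes the decomposition group at $p$ and the $p$th root of $\ell_1$, replacing $b\up1$ by the cohomologous cocycle $b\up1+\alpha(\omega-1)$ and citing Part I's lemma on changing the $p$th root of $\ell_1$; you instead move the distinguished prime over $\ell_0$ through its $\langle\sigma\rangle$-orbit. Both mechanisms shift $\alpha$ by an arbitrary element of $\F_p$, and the paper itself uses your mechanism later (Lemma \ref{lem: splitting condition for a1}), so the route is viable.

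The one real issue is the step you defer as ``bookkeeping'': your computation of the new $\alpha$ from the old $a\up1$ presupposes that the cochain $a\up1$ (equivalently the $S$-unit $a\up1|_{G_K}$) produced by Proposition \ref{prop: produce a1} does not change when the distinguished prime of $K$ over $\ell_0$ is moved within its $\Gal(K/\Q(\zeta_p))$-orbit. This is not automatic, because condition (b) of Proposition \ref{prop: produce a1} is stated at the distinguished prime, so a priori the new pinning could select a different solution $a\up1$; one must check either that the normalizations ($\gamma_0$, $c\up1$, $a_0$) and condition (b) are preserved under such a move, or cite the result that does this, namely Part I, Lemma \ref{lem: change ell0}, exactly as the paper does in Lemma \ref{lem: splitting condition for a1}. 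With that invariance in hand your argument closes; note also that your ``exactly one $i$ works'' step silently uses that $a\up1|_{\mathfrak{L}_0}$ lies on the line $\F_p\cdot c|_{\mathfrak{L}_0}$, which is precisely condition (b) of Proposition \ref{prop: produce a1} (equivalently Definition \ref{defn: alpha}) and should be invoked explicitly.
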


\begin{proof} 
Given any choice of pinning data, define $\alpha\in\F_p(1)$ as in Definition \ref{defn: alpha}.  Then, change our choice of decomposition group at $p$ and $p$th root of $\ell_1$ so that the new choice corresponds to the Kummer cocycle in $Z^1(\Z[1/Np],\F_p(1))$ given by 
\[
b\up1+\alpha(\omega-1).
\] 
By Part I, Lemma \ref{lem: change pth root of ell1}, this gives $\alpha =0$ for the new choice of cocycle $a\up1$. 

Now, let $K'/K$ be the Kummer extension generated by $a\up1|_{G_K}$. By the definition of $\alpha$, if $\alpha =0$, then $a\up1\vert_{\ell_0} = 0$. So the distinguished prime of $K$ over $\ell_0$ splits in $K'/K$. Because $K'/K$ is $\omega^0$-isotypic (in the sense of Definition \ref{defn: Delta isotypic}), all places in the $\Delta$-orbit of the distinguished prime also split in $K'/K$. Because $\ell_0$ ramifies in $L/\Q(\zeta_p)$ and the Galois closure of $K'/\Q$ is $M'=K'L$, this $\Delta$-orbit consists of exactly those places of $K$ over $\ell_0$ that split in $K'/K$. Note that $M'/\Q$ is Galois since it is cut out by the homomorphism \eqref{eq: 3d abc}.
\end{proof}

In light of this observation, we assume that we have arranged for $\alpha=0$ for the remainder of the paper. Moreover, when $\alpha =0$, Lemma \ref{lem: choice pinning data} suggests an alternative method for computing the value of the adjustment $j\in\F_p$ in Theorem \ref{thm: a1 adjustments} that does not involve the distinguished prime of $K$ over $\ell_0$:

\begin{lem}
\label{lem: splitting condition for a1} 
When $\alpha=0$, the element $j\in\F_p$ determined in Theorem \ref{thm: a1 adjustments} is also the unique element such that $a\up1\cand|_{G_K}\zeta_p^ia_0|_{G_{\Q(\zeta_p)}}^j$ generates a Kummer extension that is split at exactly one $\Delta$-orbit of primes above $\ell_0$ in $K$.
\end{lem}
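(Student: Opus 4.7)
My approach will combine Lemma \ref{lem: choice pinning data}, which handles existence directly, with a short local computation at the distinguished prime $\mathfrak{L}_0$ to pin down uniqueness.

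For existence, I will invoke Lemma \ref{lem: choice pinning data} in the presence of $\alpha = 0$: the element $j \in \F_p$ determined by Theorem \ref{thm: a1 adjustments} is characterized by $a\up1|_{\mathfrak{L}_0} = 0$ in $H^1(\Q_{\ell_0}, \F_p)$, and Lemma \ref{lem: choice pinning data} then asserts that the primes of $K$ above $\ell_0$ splitting in the associated Kummer extension are precisely the $\Delta$-orbit of $\mathfrak{L}_0$---a single $\Delta$-orbit.

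For uniqueness, I will suppose another $j' \in \F_p$ has the splitting property, write $k = j' - j \in \F_p$, and use the identity
\[
a\up1\cand|_{G_K}\zeta_p^i a_0|_{G_{\Q(\zeta_p)}}^{j'} = a\up1|_{G_K} \cdot a_0|_{G_K}^{k}
\]
in $K^\times \otimes_\Z \F_p$. Restricting this at $\mathfrak{L}_0$ collapses to $a_0|_{\mathfrak{L}_0}^k$, since $a\up1|_{\mathfrak{L}_0} = 0$. Because $a_0$ was chosen so that its Kummer extension of $\Q(\zeta_p)$ ramifies above $\ell_0$, the element $a_0|_{\mathfrak{L}_0} \in \Q_{\ell_0}^\times \otimes_\Z \F_p$ has non-trivial $\ell_0$-valuation, hence is non-zero modulo $p$th powers. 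Theorem \ref{kummertheorem} then rules out splitting of $\mathfrak{L}_0$ (and thus of its entire $\Delta$-orbit) in the Kummer extension corresponding to $j'$, whenever $k \neq 0$.

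The main obstacle I anticipate is excluding the possibility that, for some $k \neq 0$, a different single $\Delta$-orbit of primes above $\ell_0$ steps in as the unique splitting orbit. My plan is to dispatch this using the Galois-theoretic characterization of $K'$ as a subfield of $M'$ from Proposition \ref{prop: characterize M'}: as $j'$ runs over $\F_p$, the fields $K(\sqrt[p]{a\up1\cand|_{G_K}\zeta_p^i a_0|_{G_{\Q(\zeta_p)}}^{j'}})$ traverse the $p$ distinct $C_p$-subextensions of the biquadratic compositum $K(\sqrt[p]{a\up1|_{G_K}}, \sqrt[p]{a_0|_{G_K}})/K$ that avoid $K(\sqrt[p]{a_0|_{G_K}})$. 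The splitting behavior identifying $K'$ among this pencil---namely that precisely the $\Delta$-orbit of $\mathfrak{L}_0$ splits over $\ell_0$---is uniquely matched by $j' = j$, so the ``exactly one orbit'' condition forces $k = 0$.
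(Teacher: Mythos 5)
Your existence argument (via Lemma \ref{lem: choice pinning data}) and the first half of your uniqueness argument are fine: for $k=j'-j\neq 0$ the restriction of $a\up1|_{G_K}a_0|_{G_{\Q(\zeta_p)}}^k$ at $\mathfrak{L}_0$ is $a_0^k$, which has nonzero valuation, so by Theorem \ref{kummertheorem} the $\Delta$-orbit of $\mathfrak{L}_0$ cannot split. But the step you yourself flag as the main obstacle is where the proposal has a genuine gap. Your claim that the splitting behavior ``is uniquely matched by $j'=j$'' is precisely the statement to be proved, and Proposition \ref{prop: characterize M'} does not deliver it: that proposition characterizes $M'$ among extensions of $M$ and identifies the isomorphism class of $K'$ among subextensions of $M'$, whose $\Gal(K/\Q)$-conjugates are cut out by $a\up1+t\,c\up1$; your pencil $a\up1 a_0^{k}$ lives in the compositum with $K(a_0^{1/p})$, not in $M'$, so there is a mismatch, and nothing you cite rules out that for some $k\neq 0$ the extension generated by $a\up1|_{G_K}a_0^k$ splits at exactly one other $\Delta$-orbit (necessarily the orbit of $\sigma^{-k}\mathfrak{L}_0$, by evaluating on the conjugated inertia generator). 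Indeed, whether that happens is a genuine arithmetic question: it occurs precisely when $a_0|_{\ell_0}$ is proportional to $c\up1|_{\ell_0}$ in $H^1(\Q_{\ell_0},\F_p)$, a degeneration excluded only by $\zeta_\mathrm{MT}'\neq 0$ (Theorem \ref{thm:slope}, i.e.\ Assumption \ref{assump: main}(4)) or, equivalently, by the uniqueness built into Proposition \ref{prop: produce a1}; your proof never invokes either input.

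The paper closes this gap differently and more cheaply: by Part I's change-of-distinguished-prime lemma (Lemma \ref{lem: change ell0} there), the construction of $a\up1$, hence the value of $j$, is unchanged if the distinguished prime of $K$ over $\ell_0$ is moved within its $\Gal(K/\Q(\zeta_p))$-orbit, and that orbit meets every $\Delta$-orbit. One therefore re-pins the distinguished prime into the split orbit $L_{\mathrm{split}}$ of the $j'$-candidate; after this change the $j'$-candidate satisfies condition (b) of Proposition \ref{prop: produce a1} trivially (its restriction at the new distinguished prime vanishes) as well as condition (a) and the differential equation, while $a\up1$ still satisfies (a) and (b) by the invariance. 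The uniqueness assertion of Proposition \ref{prop: produce a1} then forces the two cochains to coincide, i.e.\ $k\,a_0=0$, so $j=j'$. If you prefer not to use the re-pinning lemma, you could instead make your local computation at $\sigma^{-k}\mathfrak{L}_0$ explicit and kill the bad case by citing Theorem \ref{thm:slope} with $\zeta_\mathrm{MT}'\neq 0$; but some such input is indispensable, and as written your plan does not supply it.
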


\begin{proof} 
Let $j\in\F_p$ be determined as in Theorem \ref{thm: a1 adjustments}, and suppose that $j'\in\F_p$ is such that  
$a\up1\cand|_{G_K}\zeta_p^ia_0|_{G_{\Q(\zeta_p)}}^{j'}$ generates a Kummer extension that is split at exactly one $\Delta$-orbit of primes above $\ell_0$ in $K$. Denote this $\Delta$-orbit of primes by $L_{\mathrm{split}}$. 

By Part I, Lemma \ref{lem: change ell0}, the construction of $a\up1|_{G_K}$, and hence the value of $j\in\F_p$, is invariant if we change the distinguished prime of $K$ over $\ell_0$ within its $\Gal(K/\Q(\zeta_p))$-orbit. So, we can change the distinguished prime, if necessary, to be a prime of $L_{\mathrm{split}}$ without changing the value of $j\in\F_p$. In particular, after this change, we conclude that $j=j'$ by the uniqueness of $a\up1$ since both choices of $j$ and $j'$ satsify the conditions in Theorem \ref{prop: produce a1}.
\end{proof}

We implement the method outlined in Lemma \ref{lem: splitting condition for a1} in Algorithm \ref{alogorithm_a1_ell0}, which appears on the next page, 
using Theorem \ref{kummertheorem}(2) to check the splitting behavior of primes above $\ell_0$ in the various Kummer extensions of $K$. Note that once we compute $j\in\F_p$ via Lemma \ref{lem: splitting condition for a1}, we have computationally identified the $\Delta$-orbit of the distinguished prime of $K$ over $\ell_0$. Without loss of generality, we can fix one of the split primes in the Kummer extension generated by $a\up1|_{G_K}$ to be the distinguished prime of $K$ over $\ell_0$ for future computations. 

\begin{algorithm}\label{alogorithm_a1_ell0}
\KwIn{$a\up1\cand|_{G_K}\zeta_p^i\in (K^\times \otimes_{\Z}\F_p)^{\Delta=\omega}$}

\KwOut{$j\in\F_p$ such that $a\up1|_{G_K}=a\up1\cand|_{G_K}\zeta_p^ia_0|_{G_{\Q(\zeta_p)}}^j$}
\KwOut{$\mathfrak{L}_0$, the distinguished prime of $K$ over $\ell_0$}

\caption{Adjustment of $a^{(1)}\cand$ for condition at $\ell_0$ in Theorem \ref{thm: a1 adjustments}}
\begin{enumerate}[leftmargin=1.75em]
    \item Compute an $S$-unit for $a_0|_{G_{\Q(\zeta_p)}} \in \Q(\zeta_p)^\times \otimes_\Z \F_p$.
    \item For $t=0,1,2,\dots,p-1$:
    \begin{enumerate}
    \item Initialize $L_{\mathrm{split}}$ to the empty set $\{\}$.
    \item For each prime $\mathfrak{L}$ in $K$ above $\ell_0$:
    \begin{enumerate}[leftmargin=2em]
        \item If $a\up1\cand|_{G_K}\zeta_p^ia_0|_{G_{\Q(\zeta_p)}}^t$ is a $p$th power mod $\mathfrak{L}$, append $\mathfrak{L}$ to $L_{\mathrm{split}}$.
    \end{enumerate}
    \item If $L_{\mathrm{split}}$ contains exactly one $\Delta$-orbit of primes above $\ell_0$:
    \begin{enumerate}[leftmargin=2em]
        \item Let $j=t$ and $\mathfrak{L}_0$ be any prime in $L_{\mathrm{split}}$.
        \item Break for loop.
    \end{enumerate}
    \end{enumerate}

    \item Return $j,\,\mathfrak{L}_0$.
\end{enumerate}
\end{algorithm}

\subsection{Adjusting $b\up2\cand$ along with $a\up1\cand$}
\label{subsec: adjusting a1 and b2 simultaenously}

Having computed $a\up1$, we turn to $b\up2$. Recall that in Theorem \ref{thm:comp of cands}, the differential equation that $b\up2\cand$ satisfies depends on the choice of $a\up1\cand$. So, before we can make adjustments for the local conditions on $b\up2$, we must do a preliminary adjustment to $b\up2\cand$ to ensure it satisfies the correct differential equation, now depending on $a\up1$. In particular, by Proposition \ref{prop: produce b2}, this preliminary adjustment is possible only when $a\up1|_{\ell_1}=0$.
\begin{lem}\label{lem: b2 adjustment xi}
When $a\up1|_{\ell_1}=0$, there is an element $\xi \in K^\times \otimes_\Z \F_p$ such that $\mathrm{Nm}_{K/\Q(\zeta_p)}(\xi) =a\up1\adj$ in $\Q(\zeta_p)^\times \otimes_\Z \F_p$.

Moreover, there is a cocycle $\tilde{b}\up2\cand: G_{\Q(\zeta_p)} \to \F_p(1)$ satisfing
\[
d\tilde{b}\up2\cand = a\up1 \smile b\up1 +b\up1 \smile d\up1,
\]
where $d\up1=b\up1c\up1 -a\up1$ and such that
\begin{equation}\label{eq: b_tilde_cand}
\tilde{b}\up2\cand|_{G_K} = (D_\sigma^2(\gamma)D_\sigma^1(\xi))^{-2} a\up1|_{G_K}^{-1}.
\end{equation}
\end{lem}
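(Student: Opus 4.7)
The plan splits into the two claims of the lemma. For the first, the strategy is to invoke the Hasse norm theorem: since $K/\Q(\zeta_p)$ is a cyclic extension of prime degree $p$, the element $a\up1\adj \in \Q(\zeta_p)^\times \otimes_\Z \F_p$ lies in the image of $\Nm_{K/\Q(\zeta_p)}$ if and only if it is a local norm at every place of $\Q(\zeta_p)$, equivalently if the global cup product $[a\up1\adj] \cup [b\up1] \in H^2(\Q(\zeta_p), \F_p(1))$ vanishes. I would verify this place by place. At places outside $\{\ell_0, \ell_1, p\}$ the extension is unramified and the local cup vanishes automatically. At primes over $\ell_0$, $b\up1|_{\ell_0}$ is trivial by Assumption \ref{assump: main}(3), so the local cup vanishes. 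At the prime over $p$, the finite-flat condition of Proposition \ref{prop: produce a1}(a) together with Lemma \ref{lem: b1 ramification at p} (bifurcating on whether $\ell_1^{p-1} \equiv 1 \pmod{p^2}$) gives the local-norm compatibility; in the wildly ramified case I would invoke the explicit Kummer-theoretic description in Lemma \ref{lem: b1 ramified f-f test}. At the prime over $\ell_1$, the hypothesis $a\up1|_{\ell_1}=0$ implies $a\up1\adj|_{\ell_1} = -a\up1\cand|_{\ell_1}$, and using the explicit form $a\up1\cand|_{G_K} = D^1_\sigma(\gamma)$ from Theorem \ref{thm:comp of cands} together with the total ramification of $K/\Q(\zeta_p)$ at $\ell_1$ lets me check the local norm condition directly.

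For the second claim, I would apply Sharifi's Theorem \ref{thm:sharifi} to the triple $(s,t,\theta)=(\ell_1,\, a\up1\adj,\, \xi)$, which is a valid input precisely because $\Nm(\xi)=a\up1\adj$ ensures $[b\up1]\cup[a\up1\adj]=0$. This produces a $1$-cochain $X\in C^1(\Q(\zeta_p), \F_p)$ with $-dX = b\up1 \smile a\up1\adj$ and $X|_{G_K} = D^1_\sigma(\xi)$. Separately, Kummer theory on $\Q(\zeta_p)$ supplies a cocycle $\eta \in Z^1(\Q(\zeta_p), \F_p(1))$ representing $(a\up1\adj)^{-1}\in \Q(\zeta_p)^\times \otimes \F_p$, so that $\eta|_{G_K}$ corresponds to $\iota(a\up1\adj)^{-1}$ under Kummer theory on $K$, where $\iota:\Q(\zeta_p)^\times \hookrightarrow K^\times$ is the inclusion. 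I would then define
\[
\tilde{b}\up2\cand \;:=\; b\up2\cand \;+\; a\up1\adj \cdot b\up1 \;-\; 2X \;+\; \eta,
\]
where $a\up1\adj \cdot b\up1$ denotes the pointwise product of cochains.

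To verify the differential equation, I would invoke the graded-commutativity identity $-d(a\up1\adj \cdot b\up1) = a\up1\adj \smile b\up1 + b\up1 \smile a\up1\adj$ valid for $1$-cocycles, the computation $2\,dX = -2\, b\up1 \smile a\up1\adj$, and $d\eta=0$; combining these with the differential equation for $b\up2\cand$ from Theorem \ref{thm:comp of cands} yields $-d\tilde{b}\up2\cand = (a\up1\cand + a\up1\adj) \smile b\up1 + b\up1 \smile (d\up1\cand - a\up1\adj) = a\up1 \smile b\up1 + b\up1 \smile d\up1$, as required. For the restriction formula, $b\up1|_{G_K}=0$ (since $K$ contains $\ell_1^{1/p}$) kills the $a\up1\adj\cdot b\up1$ term, and assembling the known restrictions $b\up2\cand|_{G_K} = D^2_\sigma(\gamma)^{-2} D^1_\sigma(\gamma)^{-1}$, $X|_{G_K} = D^1_\sigma(\xi)$, $\eta|_{G_K}=\iota(\Nm(\xi))^{-1}$ together with the identity $a\up1|_{G_K} = D^1_\sigma(\gamma)\cdot \iota(\Nm(\xi))$ produces the claimed $(D^2_\sigma(\gamma)\,D^1_\sigma(\xi))^{-2} a\up1|_{G_K}^{-1}$.

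The hard part will be the local analysis of Part 1. Translating the cohomological hypothesis $a\up1|_{\ell_1}=0$ into local-norm compatibility at the completion over $\ell_1$ requires tracking the interplay between the Sharifi Kolyvagin-derivative expression for $a\up1\cand$ and the ramification of $K$. Meanwhile, at $p$ in the wildly ramified case $\ell_1^{p-1}\not\equiv 1\pmod{p^2}$, the reconciliation of the Kummer-theoretic finite-flatness criterion (Lemma \ref{lem: b1 ramified f-f test}) with the Hasse norm condition on $a\up1\adj$ is delicate and will likely be the most technical step.
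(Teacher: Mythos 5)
Your second half is essentially the paper's own argument and is correct: applying Theorem \ref{thm:sharifi} to the triple $(\ell_1, a\up1\adj, \xi)$, folding the resulting cochain into $b\up2\cand$ via the cochain-level commutativity identity (Lemma \ref{lem:commutativity}, equivalently Lemma \ref{lem:change proper def sys}), and using $b\up1\vert_{G_K}=0$ reproduces both the differential equation and the restriction formula \eqref{eq: b_tilde_cand}, up to the same sign/inverse conventions already present in Theorem \ref{thm:comp of cands}.

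The gap is in your first part, the existence of $\xi$. You propose to verify place by place that $a\up1\adj$ is a local norm from $K$, but at the two decisive places this verification is exactly the nontrivial content, and the tools you cite do not deliver it. At $\ell_1$, the hypothesis only gives $a\up1\adj\vert_{\ell_1}=-[a\up1\cand\vert_{\ell_1}]$, so you would need $[a\up1\cand\vert_{\ell_1}]\cup b\up1\vert_{\ell_1}=0$; knowing $a\up1\cand\vert_{G_K}=D^1_\sigma(\gamma)$ does not determine the value of the character $a\up1\cand\vert_{\ell_1}$ on $\mathrm{rec}_{\ell_1}(\ell_1)$ (restriction to $G_K$ forgets precisely this, since the local extension is totally ramified of degree $p$), and no purely local argument can decide this invariant --- its vanishing is a global fact, and trying to check it ``directly'' is circular because it is equivalent to the local-norm statement you are trying to prove at that place. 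At $p$, write $a\up1\adj=\zeta_p^i a_0^j$: the $a_0$-part pairs to zero with $b\up1\vert_p$, but $\zeta_p$ is \emph{not} a local norm from $K$ at $p$ when $p$ is wildly ramified in $K$ (the symbol $(\zeta_p,\ell_1)$ over $\Q_p(\zeta_p)$ is nontrivial exactly when $\ell_1^{p-1}\not\equiv 1 \bmod p^2$), so your check would additionally require proving $i=0$ in the wild case, which neither Proposition \ref{prop: produce a1}(a) nor Lemma \ref{lem: b1 ramified f-f test} gives. The paper's proof sidesteps all of this by arguing globally: since $a\up1\vert_{\ell_1}=0$, Proposition \ref{prop: produce b2} (imported from Part I) supplies a global solution $b\up2$ of \eqref{eq:diffeq for b2} for the defining data $(a\up1,d\up1)$; subtracting the equation satisfied by $b\up2\cand$ and using $-d(a\up1\adj\, b\up1)=a\up1\adj\smile b\up1+b\up1\smile a\up1\adj$ (i.e., Lemma \ref{lem:change proper def sys}) shows $2\,a\up1\adj\smile b\up1$ is a coboundary, hence $a\up1\adj\cup b\up1=0$ globally, hence locally everywhere, and only then does the Hasse norm theorem produce $\xi$. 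The places you flag as ``delicate'' are therefore not technicalities to be supplied later: they are where the hypothesis $a\up1\vert_{\ell_1}=0$ must enter through this global step, and your proposal as written does not close them.
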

\begin{proof}
When $a\up1|_{\ell_1}=0$, we know $a\up1\adj\cup b\up1=0$ in $H^2(\Z[1/Np],\F_p(1)$. The result then follows immediately from Lemma \ref{lem:change proper def sys} and Theorem \ref{thm:sharifi}. 
\end{proof}

Since it is straightfoward to compute an $S$-unit for $\tilde{b}\up2\cand|_{G_K}\in (K^\times \otimes_{\Z}\F_p)^{\Delta = \omega^0}$, we assume that we have done so. While there is no result analogous to Lemma \ref{lem: Galois action a1} for ensuring $\tilde{b}\up2\cand$ is in the proper $\omega$-isotypic class, we can to obtain the desired $\Delta$-action on $\tilde{b}\up2\cand$ by multiplying it by an appropriate $p$-power $S$-unit in $K$.

\subsection{Adjustment for local conditions on $b\up2$}
\label{subsec: adjusting b2_tilde} 
Now, assuming that we have constructed $\tilde{b}\up2\cand$, it satisfies the same differential equation as $b\up2$, but $b\up2$ also satisfies the two local conditions stipulated in Proposition \ref{prop: produce b2}:
\begin{enumerate}[label=(\alph*)]
    \item $b\up2|_{\ell_0}$ is on the line spanned by $\zeta' \cup c_0|_{\ell_0}$ for some basis $\zeta' \in H^0(\Q_{\ell_0}, \F_p(2))$, and 
    \item $b\up2|_p$ is finite-flat in the sense of Proposition \ref{prop: produce b2}(b); we may and do use the much simpler finite-flatness criterion of Proposition \ref{prop: b2 f-f unramified test}, thanks to the equivalence drawn in Proposition \ref{P2: prop: ref flat rho2 step 2}.
\end{enumerate}
Let $b\up2\adj = b\up2 - \tilde{b}\up2\cand \in Z^1(\Q(\zeta_p),\F_p(1))$.

\begin{thm}
\label{thm: b2 adjustments}
The element $b\up2\adj \in  \Q(\zeta_p)^\times \otimes_\Z \F_p$ is given by $p^k \ell_0^m$ where
\begin{itemize}
    \item $k \in \F_p$ is the unique element such that $\tilde{b}\up2\cand|_{G_K}p^k \in K^\times \otimes_\Z \F_p$ is $1$ modulo $\p^{2(p-1)}$, where $\p \subset K$ is a prime above $p$, 
    \item $m \in \F_p$ is the unique element such that  
    \[
    \left(\zeta_\mathrm{MT}' \lambda + \frac{1}{6} a_0|_{\ell_0} \right) \left(\mathrm{Art}_{\mathfrak{L}_0} (\tilde{b}\up2\cand|_{G_K}p^k\ell_0^m)\right) =0,
    \]
    where $\mathrm{Art}_{\mathfrak{L}_0}$ is as in \eqref{eq: localized Artin map at ell0}. 
\end{itemize}
\end{thm}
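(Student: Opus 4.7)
The argument parallels that of Theorem \ref{thm: a1 adjustments}: first place $b\up2\adj$ in an explicit two-parameter family, then impose the two local conditions of Proposition \ref{prop: produce b2} to pin down the two parameters.

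Because both $b\up2|_{G_{\Q(\zeta_p)}}$ and $\tilde b\up2\cand$ satisfy the same differential equation, by Proposition \ref{prop: produce b2} and Lemma \ref{lem: b2 adjustment xi}, the difference $b\up2\adj$ is a cocycle in $Z^1(\Q(\zeta_p), \F_p(1))$ that is unramified outside $Np$. The $\Delta$-isotypic normalization of $\tilde b\up2\cand$ arranged in \S\ref{subsec: adjusting a1 and b2 simultaenously}, together with the $\Delta$-equivariance of $b\up2$ inherited from its origin as the restriction of a global cochain on $G_{\Q,Np}$ and Lemma \ref{lem: kummer dual}, places the Kummer class of $b\up2\adj$ in the $\omega^0$-isotypic summand of $H^1(\Z[\zeta_p,1/Np], \F_p(1))$. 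Up to class-group contributions that are killed in our setting, this summand is the image of $\Z[1/Np]^\times \otimes \F_p$, spanned by the Kummer classes of $\ell_0$, $\ell_1$, and $p$. The torsor description at the end of Proposition \ref{prop: produce b2} permits shifting $b\up2$ by any multiple of $b\up1$, i.e., of the Kummer class of $\ell_1$; normalizing the $\ell_1$-coordinate of $b\up2\adj$ to zero then reduces us to $b\up2\adj = p^k \ell_0^m$ for some $k, m \in \F_p$.

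To identify $k$, I would impose the finite-flatness condition (b) of Proposition \ref{prop: produce b2}. By Proposition \ref{P2: prop: ref flat rho2 step 2}, this reduces to finite-flatness of $\eta_1|_p$, which by Proposition \ref{prop: b2 f-f unramified test} (together with Lemma \ref{lem: b1 ramified f-f test} when $b\up1$ is ramified at $p$, per Lemma \ref{lem: b1 ramification at p}) translates to an explicit Kummer-theoretic condition on $[b\up2|_{G_K}] \in K^\times \otimes \F_p$ at the distinguished prime $\p$ above $p$. A case-by-case inspection consolidating the two branches yields the uniform statement $\tilde b\up2\cand|_{G_K}\,p^k \equiv 1 \pmod{\p^{2(p-1)}}$; the $\ell_0^m$ contribution drops out because $\ell_0 \equiv 1 \pmod p$ already places $\ell_0^m$ in the principal-unit part compatible with finite-flatness. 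Uniqueness of $k$ follows because the image of $p$ in the relevant finite quotient of $K_\p^\times$ is nontrivial.

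To identify $m$, I would apply Theorem \ref{thm:slope} in its $\F_p(2)$-variant: condition (a) of Proposition \ref{prop: produce b2}, that $b\up2|_{\ell_0}$ lies on the line spanned by $\zeta'\cup c_0|_{\ell_0}$, is equivalent to
\[
\bigl(\zeta_\mathrm{MT}'\lambda + \tfrac{1}{6}a_0|_{\ell_0}\bigr)\bigl(\mathrm{Art}_{\mathfrak{L}_0}(b\up2|_{G_K})\bigr) = 0.
\]
Substituting $b\up2|_{G_K} = \tilde b\up2\cand|_{G_K}\,p^k\ell_0^m$ and applying \eqref{eq: mazur-tate line eq}, the $\ell_0^m$ contribution becomes $m\,\zeta_\mathrm{MT}'$; since $\zeta_\mathrm{MT}'\neq 0$ by Assumption \ref{assump: main}, $m \in \F_p$ is uniquely determined. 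The main obstacle lies in the step determining $k$: reconciling the two branches of Proposition \ref{prop: b2 f-f unramified test} and Lemma \ref{lem: b1 ramified f-f test}, each requiring a somewhat different Kummer-theoretic analysis of $K_\p^\times$, into the uniform congruence modulo $\p^{2(p-1)}$ that appears in the statement.
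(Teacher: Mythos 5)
Your proposal is correct and follows essentially the same route as the paper's proof: reduce $b\up2\adj$ to the span of $p$, $\ell_0$, $\ell_1$ and drop the $\ell_1$-part using the torsor statement at the end of Proposition \ref{prop: produce b2}, determine $k$ from the finite-flat criterion of Propositions \ref{P2: prop: ref flat rho2 step 2} and \ref{prop: b2 f-f unramified test} together with Lemma \ref{lem: b1 ramified f-f test}, and determine $m$ via Theorem \ref{thm:slope} exactly as in Theorem \ref{thm: a1 adjustments}. Your added observations (that $\ell_0\equiv 1 \pmod p$ keeps the $\ell_0^m$ factor from interfering with the congruence defining $k$, and that the $\ell_0^m$ term contributes $m\,\zeta_\mathrm{MT}'\neq 0$, forcing uniqueness of $m$) are accurate refinements of the same argument.
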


\begin{proof}
We follow the proof of Theorem \ref{thm: a1 adjustments}, with appropriate modifications. Indeed, since $b\up2\vert_{G_K}, \tilde{b}\up2\cand|_{G_K} \in (K^\times \otimes_{\Z}\F_p)^{\Delta=\omega^0}$, we have $b\up2\adj \in \Z[1/Np]^\times \otimes_\Z \F_p$. We can drop the $\ell_1$-part of $b\up2\adj$ and aim to compute the $k$ and $m$ in $b\up2\adj = p^k\ell_0^m$, because, by the last claim in Proposition \ref{prop: produce b2}, $b\up2$ is only well defined up to coboundaries and multiples of $\ell_1$. Indeed, only the $\ell_0$-local behavior of $b\up2$ matters, and, as the existence of $\beta$ proved in Proposition \ref{prop: produce b2} corroborates, adjustments by powers of $\ell_1$ are $\ell_0$-locally trivial. 

The stated formula for the $k \in \F_p$, which comprises the first adjustment for $b\up2$, follows directly from the finite-flatness criterion of Proposition \ref{prop: b2 f-f unramified test} as made explicit in Lemma \ref{lem: b1 ramified f-f test} and the discussion before it. Note these results are stated for $\Q(\ell_1^{1/p})$ rather than $K$, so the condition $1$ modulo $\mathfrak{p}^{2(p-1)}$ in Theorem \ref{thm: b2 adjustments} accounts for multiplication by the ramification degree $(p-1)$ of the prime $p$ in $\Q(\zeta_p)/\Q$. 

The method for computing the second adjustment for $b\up2$, which determines $m$, can be done by relying on the expression for the slope of $c\up1\vert_{\ell_0}$ in Theorem \ref{thm:slope}, exactly as in Theorem \ref{thm: a1 adjustments}. 
\end{proof}

\subsubsection*{Algorithms for computing $b\up2\adj$} 
To explicitly compute the adjustments for the local conditions on $b\up2$, we use straightforward implementations of the methods outlined in Theorem \ref{thm: b2 adjustments}, starting with the adjustment for the finite-flat condition.

As discussed in \S \ref{subsec: finite-flat}, the level of difficulty in computing the adjustment for the finite-flat condition on $b\up2$ is controlled by whether $b\up1|_{I_p}=0$ and $a\up1|_p=0$. Ideally, the algorithm for this adjustment would isolate the most difficult case, $b\up1|_{I_p}\neq0$ and $a\up1|_p\neq0$, but in practice, checking whether $a\up1|_p=0$ can be an infeasible computation. Therefore, Algorithm \ref{algorithm_b2_ff}, which appears below, considers two cases based on whether $b\up1|_{I_p}=0$:
\begin{itemize}
    \item When $b\up1|_{I_p}=0$, i.e., $p$ tamely ramifies in $K/\Q$, we can apply Proposition \ref{prop: b2 f-f unramified test} to see that $\tilde{b}\up2\cand|_{G_K}$ is finite-flat at $p$ as long as it is prime-to-$p$ as an $S$-unit, which is guaranteed by its construction.
    \item When $b\up1|_{I_p}\neq0$, i.e., $p$ wildly ramifies in $K/\Q$, we can apply Lemma \ref{lem: b1 ramified f-f test} to see that $\tilde{b}\up2\cand|_{G_K}$ is finite-flat at $p$ if it is congruent to 1 modulo $\mathfrak{p}^{2(p-1)},$ where $\mathfrak{p}$ is the prime above $p$ in $K$. 
\end{itemize}

\begin{algorithm}\label{algorithm_b2_ff}
\KwIn{$\tilde{b}\up2\cand|_{G_K}\in(K^\times \otimes_{\Z}\F_p)^{\Delta=\omega^0}$}

\KwOut{$k\in\F_p$ such that $\tilde{b}\up2\cand|_{G_K} p^k$ satisfies the finite-flat condition at $p$}
\caption{Adjustment of $\tilde{b}^{(2)}\cand$ for finite-flat condition in Theorem \ref{thm: b2 adjustments}}
\begin{enumerate}[leftmargin=1.75em]
      \item If $p$ \textit{tamely} ramifies in $K$, let $k=0$.
 \item If $p$ \textit{wildly} ramifies in $K$:
    \begin{enumerate}
        \item Let $\mathfrak{p}$ be the unique prime in $K$ above $p$.
        \item Let $k\in\F_p$ be such that $\tilde{b}\up2\cand|_{G_K}p^k\equiv 1\pmod{\mathfrak{p}^{2(p-1)}}$.
    \end{enumerate}
    \item Return $k$.
\end{enumerate}
\end{algorithm}

\begin{rem}
Ultimately, we want to determine $\beta$, which depends on the restriction of $b\up2$ to $\ell_0$. When $p$ is a $p$th power modulo $\ell_0$, the $p^k$ part of $b\up2\adj$ vanishes at $\ell_0$. That is, when $\log_{\ell_0}(p) = 0$, then $b_p\vert_{\ell_0} = 0$. Hence, when $p$ is a $p$th power modulo $\ell_0$, we do not need to run Algorithm \ref{algorithm_b2_ff} since the adjustment by $p^k$ is trivial at $\ell_0$. 
\end{rem}

To compute the second adjustment of $b\up2$, we follow the method outlined in Theorem \ref{thm: b2 adjustments}, which requires the distinguished prime $\mathfrak{L}_0$ identified in the output of Algorithm \ref{alogorithm_a1_ell0}. The computation of this last adjustment is given in Algorithm \ref{algorithm_b2_ell0}. 

\begin{algorithm}\label{algorithm_b2_ell0}
\KwIn{$\tilde{b}\up2\cand|_{G_K} p^k\in(K^\times \otimes_{\Z}\F_p)^{\Delta=\omega^0}$}
\KwIn{$\mathfrak{L}_0$, the distinguished prime of $K$ over $\ell_0$}
\KwOut{$m\in\F_p$ such that $b\up2|_{G_K}=\tilde{b}\up2\cand|_{G_K}p^k\ell_0^m$}
\caption{Adjustment of $\tilde{b}^{(2)}\cand$ for condition at $\ell_0$ in Theorem \ref{thm: b2 adjustments}}
\begin{enumerate}[leftmargin=1.75em]
    \item Compute $\zeta_\mathrm{MT}', \,\lambda$, and $a_0|_{\ell_0}$.
    \item Let $a= \left(\zeta_\mathrm{MT}' \lambda + \frac{1}{6} a_0|_{\ell_0} \right) \big(\mathrm{Art}_{\mathfrak{L}_0}\big(\tilde{b}\up2\cand|_{G_K}p^k\big)\big)$. 
    \item Let $b= \left(\zeta_\mathrm{MT}' \lambda + \frac{1}{6} a_0|_{\ell_0} \right) (\mathrm{Art}_{\mathfrak{L}_0}(\ell_0))$.
    \item Return $m=-ab^{-1}$.
\end{enumerate}
\end{algorithm}

\subsection{Deducing the main theorem}
\label{subsec: P2 deducing main}

Now that we have established a method to construct the elements $a\up1\vert_K,b\up2\vert_K \in K^\times \otimes_\Z \F_p$ when $a\up1$ and $b\up2$ exist, we can prove Theorem \ref{thm: main}. More precisely, we deduce Theorem \ref{thm: main} from the main theorem of Part I (restated in this paper as Theorem \ref{thm: main of part 1}). 

Both of these theorems express a criterion for $\dim_{\F_p} R/pR > 3$, and our work is to draw an equivalence between these criteria. The criterion of Theorem \ref{thm: main} is expressed in terms of certain twisted-Heisenberg extension of $\Q$, which we now set up. Assuming $a\up1$ and $b\up2$ both exist, we construct a lattice of $C_p$-extensions 
\[
\begin{tikzcd}[every arrow/.append style={-}]
K'\arrow[swap,"a\up1"]{dr}&K''\arrow["b\up2"]{d}\\
&K=\Q(\zeta_p, \ell_1^{1/p})\arrow["b\up1"]{dl}\\
\Q(\zeta_p)&
\end{tikzcd}
\]
in which each $C_p$-extension is generated by the $p$-th root of the $S$-unit labeling it. Note that while this constructive definition of $K'/K$ and $K''/K$ suffices for the purposes of this section, we give a number-theoretic characterization of these extensions in Propositions \ref{prop: characterize M'} and \ref{prop: characterize M''}, respectively. 

Having defined these fields, we state this paper's main theorem. 
\begin{thm}
\label{thm: main}
We have $\dim_{\F_p}R/pR>3$ if and only if (i) and (ii) hold:
    \begin{enumerate}[label=(\roman*)]
        \item all primes of $K$ over $\ell_1$ split in $K'/K$;
        \item there exists some prime of $K$ over $\ell_0$ that splits in both $K'/K$ and $K''/K$.
    \end{enumerate}
In particular, when $\dim_{\F_p}R/pR=3$, we have $R=\mathbb{T}$.
\end{thm}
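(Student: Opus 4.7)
The plan is to deduce Theorem \ref{thm: main} directly from Theorem \ref{thm: main of part 1} by translating the cohomological conditions $a\up1\vert_{\ell_1}=0$ and $\alpha^2+\beta=0$ into the splitting conditions $(i)$ and $(ii)$ via Kummer theory. Since Part I guarantees that $\alpha^2+\beta$ is independent of pinning data, and the characterizations of $K'/K$ and $K''/K$ in Section \ref{sec: P2 ANT} show these fields depend only on $p$ and $N$, I will invoke Lemma \ref{lem: choice pinning data} to fix a pinning with $\alpha=0$. Under this choice the second condition collapses to $\beta=0$, i.e., $b\up2\vert_{\ell_0}=0$.

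For the equivalence of condition $(i)$ with $a\up1\vert_{\ell_1}=0$: since $\zeta_p \in K$ and $K'=K(\sqrt[p]{a\up1\vert_{G_K}})$, a prime $\mathfrak{L}$ of $K$ over $\ell_1$ splits in $K'/K$ if and only if $a\up1\vert_{G_{\mathfrak{L}}}$ vanishes. For the distinguished prime $\mathfrak{L}_1$, the hypothesis $\ell_1 \not\equiv 1 \pmod{p}$ makes $\Hom(G_{\ell_1},\F_p)$ one-dimensional, spanned by the unramified character, so restriction from $G_{\ell_1}$ to $G_{\mathfrak{L}_1}$ is multiplication by the residue degree $f$ of $\mathfrak{L}_1$ over $\ell_1$; since $f$ divides $p-1$, this is invertible mod $p$ and hence injective. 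Thus $\mathfrak{L}_1$ splits in $K'/K$ if and only if $a\up1\vert_{\ell_1}=0$. Next, by Lemma \ref{lem: Galois action a1}, $a\up1\vert_{G_K}$ is $\omega$-isotypic in $K^\times \otimes_\Z \F_p$ under the $\Delta$-action, so splitting-at-$\mathfrak{L}$ is $\Delta$-equivariant; an elementary index count (using that $\ell_1$ is unramified in $\Q(\zeta_p)/\Q$ with residue degree $f$ and that $K/\Q(\zeta_p)$ is totally ramified at primes above $\ell_1$) shows that the $\Delta$-orbit of $\mathfrak{L}_1$ exhausts all $(p-1)/f$ primes of $K$ over $\ell_1$. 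Hence $\mathfrak{L}_1$ splitting already forces condition $(i)$ and conversely.

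For condition $(ii)$ under $(i)$ and $\alpha=0$: Lemma \ref{lem: choice pinning data} identifies the primes of $K$ over $\ell_0$ splitting in $K'/K$ with the $\Delta$-orbit of $\mathfrak{L}_0$, so $(ii)$ is equivalent to some prime in this orbit splitting in $K''/K$. Arguing $\Delta$-equivariantly as above, using that $b\up2\vert_{G_K}$ is $\omega^0$-isotypic in $K^\times\otimes_\Z \F_p$, all primes in this $\Delta$-orbit share the same splitting behavior in $K''/K$, so $(ii)$ reduces to $\mathfrak{L}_0$ itself splitting in $K''/K$. The assumptions $\ell_0 \equiv 1 \pmod{p}$ and that $\ell_1$ is a $p$th power modulo $\ell_0$ together force $\ell_0$ to split completely in $K/\Q$, so $G_{\mathfrak{L}_0}=G_{\ell_0}$; hence $\mathfrak{L}_0$ splits in $K''/K$ if and only if $b\up2\vert_{\ell_0}=0$, which by Definition \ref{defn: beta} is $\beta=0$.

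The principal technical point will be tracking how the various Galois actions interact with restrictions to decomposition groups; the argument at $\ell_1$ depends on the identification of the $\Delta$-orbit of $\mathfrak{L}_1$ with all primes of $K$ over $\ell_1$, while at $\ell_0$ the complete splitting of $\ell_0$ in $K/\Q$ makes things markedly simpler. Once these two equivalences are in place, the iff statement of Theorem \ref{thm: main} follows immediately from Theorem \ref{thm: main of part 1}, and the final claim that $\dim_{\F_p} R/pR = 3$ implies $R=\bT$ is subsumed by the Part I chain of equivalences $\dim_{\F_p} R/pR \le 3 \Longleftrightarrow \dim_{\F_p} R/pR = 3 \Longleftrightarrow R = \bT$ recalled in \S\ref{subsec: summary2}.
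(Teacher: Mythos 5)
Your proposal is correct and follows essentially the same route as the paper: both deduce the theorem from Theorem \ref{thm: main of part 1} by using Lemma \ref{lem: choice pinning data} to arrange $\alpha=0$ and then matching the conditions $a\up1\vert_{\ell_1}=0$ and $\alpha^2+\beta=0$ with the splitting conditions (i) and (ii), which is exactly the content of the paper's Lemma \ref{lem: main conditions equivalent}. The only differences are presentational: you spell out the equivalence at $\ell_1$ that the paper dismisses as ``standard,'' and for the converse direction at $\ell_0$ you invoke the $\omega^0$-isotypicity of $b\up2\vert_{G_K}$ (a fact the paper records in the proof of Theorem \ref{thm: b2 adjustments} via Lemma \ref{lem: kummer dual}) in place of the paper's explicit conjugation of the representation $\nu$ by $\sigma\in\Delta$ --- the same $\Delta$-equivariance argument in Kummer-theoretic form.
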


Theorem \ref{thm: main} follows directly from the following key lemma relating the criterion ``$a\up1\vert_{\ell_1} = 0$ and $\alpha^2 + \beta = 0$'' of Theorem \ref{thm: main of part 1} to the criterion of Theorem \ref{thm: main}. 

\begin{lem}
\label{lem: main conditions equivalent}
We have the following equivalences. 
\begin{enumerate}
    \item $a\up1\vert_{\ell_1} = 0$ if and only if all primes of $K$ over $\ell_1$ split in $K'/K$
    \item $\alpha^2 + \beta = 0$ if and only if there exists some prime of $K$ over $\ell_0$ that splits in both $K'/K$ and $K''/K$.
\end{enumerate}
\end{lem}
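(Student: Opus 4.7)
The plan is to prove both equivalences by translating the local vanishing conditions on $a\up1$ and $b\up2$ into splitting conditions via Kummer theory (Theorem \ref{kummertheorem}), making use of the $\Delta$-isotypic structures of the Kummer generators. First I would invoke Lemma \ref{lem: choice pinning data} to arrange $\alpha = 0$; by the pinning-invariance of $\alpha^2 + \beta$ (Part I, Theorem \ref{thm: main invariant}), this reduces $\alpha^2 + \beta = 0$ to $\beta = 0$, and simultaneously places the distinguished prime $\mathfrak{L}_0$ of $K$ over $\ell_0$ among those split in $K'/K$.

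For part (1), I would use that $a\up1|_{G_K}$ is $\omega$-isotypic under $\Delta$ (Lemma \ref{lem: Galois action a1}) and that $\Delta$ acts transitively on the primes of $K$ above $\ell_1$ (since these primes are totally ramified in $K/\Q(\zeta_p)$, with inertia $= \Gal(K/\Q(\zeta_p))$, and $\Delta$ acts transitively on primes of $\Q(\zeta_p)$ over $\ell_1$) to reduce the condition ``all primes of $K$ above $\ell_1$ split in $K'/K$'' to ``the distinguished prime $\mathfrak{L}_1$ splits,'' which by Kummer theory is $a\up1|_{G_{K_{\mathfrak{L}_1}}} = 0$. Since $c\up1|_{\ell_1} = 0$ by Definition \ref{defn: pinned cocycles}, the restriction $a\up1|_{\ell_1}$ is an honest homomorphism $G_{\ell_1} \to \F_p$, so what remains is to show the restriction map $\Hom(G_{\ell_1}, \F_p) \to \Hom(G_{K_{\mathfrak{L}_1}}, \F_p)$ is injective. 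Equivalently, $\Gal(K_{\mathfrak{L}_1}/\Q_{\ell_1})^{\ab}$ must have order prime to $p$. I would verify this by writing the decomposition group as a semidirect product of tame inertia (cyclic of order $p$, generated by $\gamma_1$) by cyclic Frobenius of order $f$, the multiplicative order of $\ell_1$ modulo $p$. The hypothesis $\ell_1 \not\equiv \pm 1 \pmod p$ forces $f > 1$ and makes Frobenius act on inertia by multiplication by $\ell_1 - 1 \not\equiv 0 \pmod p$, so the commutator subgroup contains all of inertia; the abelianization is cyclic of order $f \mid p-1$, hence prime to $p$.

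For part (2), under the reduction $\alpha = 0$, I need to show $\beta = 0$ iff some prime of $K$ above $\ell_0$ splits in both $K'/K$ and $K''/K$. By Definition \ref{defn: beta} and Kummer theory, $\beta = 0$ is equivalent to the distinguished prime $\mathfrak{L}_0$ splitting in $K''/K$; meanwhile, Lemma \ref{lem: choice pinning data} gives that $\mathfrak{L}_0$ splits in $K'/K$, so the $(\Rightarrow)$ direction is immediate. Conversely, if some prime $\mathfrak{L}_0'$ of $K$ above $\ell_0$ splits in both $K'/K$ and $K''/K$, then Lemma \ref{lem: choice pinning data} forces $\mathfrak{L}_0'$ to lie in the $\Delta$-orbit of $\mathfrak{L}_0$. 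Since $b\up2|_{G_K}$ is $\omega^0$-isotypic (as recorded in \S\ref{subsec: adjusting b2_tilde} and preserved under the subsequent adjustments by $p^k\ell_0^m$), the splitting behavior in $K''/K$ is constant on each $\Delta$-orbit of primes over $\ell_0$, so $\mathfrak{L}_0$ itself splits in $K''/K$, giving $\beta = 0$.

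The main obstacle I anticipate is the injectivity argument in part (1), which requires a careful analysis of the tame ramification structure of $K/\Q$ at $\ell_1$ and uses the hypothesis $\ell_1 \not\equiv \pm 1 \pmod p$ in an essential way (first to force the residue degree to exceed $1$, and then to force the Frobenius action on the tame inertia to have no fixed points). The remaining steps are mostly bookkeeping with Kummer theory and $\Delta$-equivariance.
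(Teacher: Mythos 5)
Your proposal is correct, and for part (2) it follows the paper's skeleton (arrange $\alpha=0$ via Lemma \ref{lem: choice pinning data} and the pinning-independence of $\alpha^2+\beta$, so the statement reduces to $\beta=0$; the forward direction is immediate; for the converse, Lemma \ref{lem: choice pinning data} places the split prime in the $\Delta$-orbit of the distinguished prime $\mathfrak{L}_0$), but the final step of the converse is executed differently. The paper conjugates the $4$-dimensional representation $\nu$ of \eqref{eq: 4d nu} by $\sigma\in\Delta$, notes that the conjugate's $b\up2$-coordinate is $\omega(\sigma^{-1})\cdot b\up2$ and vanishes at $\ell_0$, and concludes $b\up2\vert_{\ell_0}=0$, hence $\beta=0$; you instead invoke the $\omega^0$-isotypicity of the Kummer generator $b\up2\vert_{G_K}\in(K^\times\otimes_\Z\F_p)^{\Delta=\omega^0}$ (recorded in the proof of Theorem \ref{thm: b2 adjustments}) to conclude that splitting in $K''/K$ is constant on $\Delta$-orbits. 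These are two phrasings of the same equivariance fact; the paper's conjugation of $\nu$ is essentially the computation that justifies the isotypicity you cite (one must choose lifts of $\Delta$ killing $b\up1$ so that the correction terms from the differential equation for $b\up2$ — multiples of $a\up1\vert_{G_K}$ and $c\up1\vert_{G_K}$ — disappear), so your route is fine but leans on that recorded assertion rather than re-deriving it. One detail worth making explicit: passing from ``$\mathfrak{L}_0$ splits in $K''/K$'' back to ``$\beta=0$'' uses that $\ell_0$ splits completely in $K/\Q$, so the decomposition group at $\mathfrak{L}_0$ is all of $G_{\ell_0}$ and the splitting condition really gives $b\up2\vert_{\ell_0}=0$, not merely vanishing on a proper subgroup. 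For part (1), which the paper dismisses as standard, your argument is complete and correct: reduce to the distinguished prime by $\Delta$-transitivity (total ramification of $\ell_1$ in $K/\Q(\zeta_p)$) and isotypicity of $a\up1\vert_{G_K}$, then use injectivity of $\Hom(G_{\ell_1},\F_p)\to\Hom(G_{K_{\mathfrak{L}_1}},\F_p)$, which holds because $\Gal(K_{\mathfrak{L}_1}/\Q_{\ell_1})$ has prime-to-$p$ abelianization. Two small corrections there: Frobenius acts on tame inertia by raising to the $\ell_1$-th power, so the commutator with a generator is its $(\ell_1-1)$-th power — that, not ``multiplication by $\ell_1-1$,'' is where $\ell_1\not\equiv 1\pmod p$ enters; and only $\ell_1\not\equiv 1\pmod p$ is needed in this step, the condition $\ell_1\not\equiv -1\pmod p$ playing no role.
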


\begin{proof}[Proof of Lemma \ref{lem: main conditions equivalent}]
The first equivalence is standard. For the second equivalence, recall that we have arranged for $\alpha =0$ via our choice in pinning data. So, $\alpha^2 + \beta =0$ if and only if $\beta =0$. Thus, if $\alpha^2+\beta =0$, the distinguished prime of $K$ over $\ell_0$ satisfies the desired property: it splits in both $K'/K$ and $K''/K$. 

On the other hand, if we assume that there exists some place $\cL'_0$ of $K$ at $\ell_0$ that splits in both $K'/K$ and $K''/K$, our goal is to prove that $\beta=0$. By Lemma \ref{lem: choice pinning data}, we know that this place lies in the $\Delta$-orbit of the distinguised prime $\cL_0$ of $K$ over $\ell_0$, so let $\sigma \in \Delta$ such that $\sigma(\cL_0) = \cL'_0$ and consider the homomorphism 
\[
\nu = \begin{pmatrix}
\omega & b\up1 & \omega a\up1 & b\up2\\
0 & 1 & \omega c\up1 &  d\up1\\
0 & 0 & \omega & b\up1 \\
0 & 0 & 0 & 1
\end{pmatrix}.
\]
We see that the conjugate of $\nu$ by $\sigma$---that is, $\tau \mapsto \nu(\sigma^{-1}\tau\sigma)$---has its $b\up2$-coordinate vanishing at $\ell_0$. Its $b\up2$-coordinate would be given by $\omega(\sigma^{-1})\cdot b\up2$. Therefore, $b\up2$ also vanishes at $\ell_0$, which means $\beta = 0$. 
\end{proof}

\vspace{-0.5em}

\subsubsection*{Algorithms to verify conditions in Theorem \ref{thm: main}} 
To check condition $(i)$ in Theorem \ref{thm: main}, i.e, whether $a\up1|_{\ell_1}=0$, we apply Theorem \ref{kummertheorem}(2) in the Kummer extension $K'/K$ generated by $a\up1|_{G_K}\in K^\times \otimes_{\Z}\F_p$.  

\begin{algorithm}\label{alg:finalcheck1}
\KwIn{$a\up 1|_{G_K} \in(K^\times \otimes_{\Z}\F_p)^{\Delta=\omega}$}
\KwOut{True if $a\up1|_{\ell_1}=0$; False otherwise}
\caption{Verify condition $(i)$ in Theorem \ref{thm: main}}
\begin{enumerate}[leftmargin=1.75em]
    \item If $a\up1|_{G_K}$ is a $p$th power mod $\mathfrak{L}$ for each prime $\mathfrak{L}$ in $K$ above $\ell_1$, return True.
    \item Else, return False.
\end{enumerate}
\end{algorithm}

Assuming Algorithm \ref{alg:finalcheck1} returns True, we proceed to check condition $(ii)$ in Theorem \ref{thm: main}, i.e., whether $\beta =0$, by applying Theorem \ref{kummertheorem}(2) in the Kummer extension $K''/K$ generated by $b\up2|_{G_K}\in K^\times \otimes_{\Z}\F_p$. 

\begin{algorithm}
\label{alg:finalcheck}
\KwIn{$b\up 2|_{G_K} \in(K^\times \otimes_{\Z}\F_p)^{\Delta=\omega^0}$}
\KwIn{$\mathfrak{L}_0$, the distinguished prime of $K$ over $\ell_0$ (from Algorithm \ref{alogorithm_a1_ell0})}
\KwOut{True if $\beta = 0$; False otherwise}
\caption{Verify condition $(ii)$ in Theorem \ref{thm: main}}
\begin{enumerate}[leftmargin=1.75em]
    \item If $b\up2|_{G_K}$ is a $p$th power mod $\mathfrak{L}_0$, return True.
    \item Else, return False.
\end{enumerate}
\end{algorithm}
\begin{rem} While our goal in this section has been to verify whether $\alpha^2+\beta$ vanishes as an element in $\F_p(2)$, we could instead compute $\alpha^2+\beta$ as a canonical element in $\mu_p^{\otimes 2}$. (See  [Part I, \S\ref{sec: invariant is canonical}] for a precise explanation of what we mean by ``canonical" in this setting.) The key observation required to compute $\alpha^2+\beta$ as an element in $\mu_p^{\otimes 2}$ is that under the isomorphisms \[\F_p(1)\cong \mu_p,\;\;\;\F_p(-1)\cong\mu_p^{\otimes -1}\] induced by our choice of $\zeta_p\in\mu_p$, the ratio of $b_0(\gamma_0)$ and $c\up1(\gamma_0)$ is $\zeta_p\otimes\zeta_p\in\mu_p^{\otimes 2}$. Then, assuming that we have arranged for $\alpha=0$, we can compute $\alpha^2+\beta$ as $\zeta_p\otimes\zeta_p^e\in\mu_p^{\otimes 2}$, where $e$ is equal to $b\up2|_{\mathfrak{L}_0}(\gamma_0)$.
\end{rem}

\newpage

\section{Computed examples}
\label{sec:data} 

Using Sage \cite{SAGE}, we have computed whether the conditions in the main Theorem \ref{thm: main} hold for a wide selection of examples with $p=5,7$. To summarize our results, every example for which our algorithm completed within the allotted time is consistent with our conjecture that $R=\mathbb{T}$. Specifically, we either:
\begin{itemize}
    \item compute that $(i)$ and $(ii)$ of Theorem \ref{thm: main} are satisfied, and hence \[\dim_{\F_p}R/pR\geq 4,\] and independently compute that $\mathrm{rank}_{\Z_p}(\bT)\geq 4$, or
    \item compute that $(ii)$ of Theorem \ref{thm: main} is not satisfied, and hence $R = \bT$.
\end{itemize}
\begin{rem}
A particularly interesting computational observation is that condition $(i)$ in Theorem \ref{thm: main} has been satisfied in every example computed to date. We have been unable to explain why we might always expect $a\up1|_{\ell_1}=0$ from a theory perspective but hope to either do so in future work or find an example in which $a\up1|_{\ell_1}\neq 0$.
\end{rem}

\subsection{Scope of computations} Our program for checking the conditions in Theorem \ref{thm: main}, available online at \url{https://github.com/cmhsu2012/RR3}, is written for Sage Version 9.2 and implements the algorithms outlined in \S \ref{sec:adjustments} using the $S$-units interface to Pari/GP. All of our computations were carried out using either the Strelka Computer Cluster\footnote{The Strelka Computer Cluster is located at Swarthmore College. Its technical specifications can be found at \url{https://kb.swarthmore.edu/display/ACADTECH/Strelka+Computer+Cluster}.} or the SMP Cluster\footnote{The HTC Cluster is located at the Center for Research Computing at the University of Pittsburgh. Its technical specifications can be found at \url{https://crc.pitt.edu/resources}.} with an allotted computing time of 3 days per example. 

We have attempted to verify whether the conditions in Theorem \ref{thm: main} hold for the triples $(p,\ell_0,\ell_1)$ that satisfy Assumption \ref{assump: main} and are in the following ranges:
\begin{itemize}
\item $(5,\ell_0,\ell_1)$  with $\ell_0\leq 100$ and $\ell_1\leq 1000$, 
\item $(7,\ell_0,\ell_1)$ with $\ell_0\leq 50$ and $\ell_1\leq 500$. 
\end{itemize}
For computational convenience when reducing $S$-units modulo $p$-powers, our program also requires that $p$ does not divide the class number of $K = \Q(\zeta_p,\ell_1^{1/p})$. In these ranges, this additional assumption excludes only one triple, $(7,29, 347)$.

The most common Sage error that prevented the program from finishing–-other than the program simply timing out–-was the Sage interface for the Pari S-unit functionalities that led to high inefficiency or even memory overflow in some cases.\footnote{This problem is in the process of being fixed by Sage and Pari developers and is logged at \url{https://trac.sagemath.org/ticket/31327}.} When $p\geq 11$, our method for computing the $S$-unit $a_0|_{G_{\Q(\zeta_p)}}\in\Q(\zeta_p)^\times\otimes_\Z\F_p$ used in Algorithm \ref{alogorithm_a1_ell0} becomes infeasible for the 3-day time constraint. 

\subsection{A complete example.}\label{subsec: complete computation}

Let $p=5,\,\ell_0=11,$ and $\ell_1 = 23$. To set-up for our computations, we construct the number field $K = \Q(\zeta_5,\sqrt[5]{23})$. Taking $S$ denote the set of primes over $\ell_0$, we also construct $S$-unit groups $U_{\Q(\zeta_5),S}$ and $U_{K,S}$, which have respective ranks 5 and 29 as $\Z$-modules. In particular, in Sage, we represent lines in $U_{\Q(\zeta_5),S}\otimes_\Z\F_{5}$ and $U_{K,S}\otimes_\Z\F_5$ as elements of $\mathbb{P}^5(\F_5)$ and $\mathbb{P}^{29}(\F_5)$, respectively. Lastly, we check that $5$ wildly ramifies in $K/\Q$, which affects the method for computing the local adjustments in Algorithms \ref{alg:algorithm_a1_p} and \ref{algorithm_b2_ff}.

To construct $a\up1$ and $b\up2$, we start by computing $a_0|_{G_{\Q(\zeta_5)}}=(1,0,2,3,4,1)$ and $c=(3,4,4,4,1,1)$ in $U_{\Q(\zeta_5),S}\otimes_\Z\F_5$ through a brute-force check of the lines in $\mathbb{P}^5(\F_5)$, searching for the Kummer extensions of $\Q(\zeta_5)$ uniquely characterized by the definitions of $a_0$ and $c$. Next, we translate the conditions $\gamma\in (K^\times\otimes_\Z\F_5)^{\Delta=\omega^2}$ and $\Nm_{K/\Q(\zeta_5)}(\gamma)=c$ into a system of linear equations and solve to obtain 
\[\gamma=(2, 0, 1, 4, 0, 4, 1, 4, 2, 0, 0, 0, 0, 0, 4, 0, 4, 0, 0, 0, 0, 0, 0, 1,\dots, 0)\in U_{K,S}\otimes_\Z\F_5.\]
Using the formulas in Theorem \ref{thm:comp of cands}, we can now compute $a\up1\cand$ and $b\up2\cand$ and proceed to our computation of the required local adjustments. 

The diagram below follows the organization and notation of \S\ref{sec:adjustments}, giving the output at each step of our implementation. To ease notation in this diagram, we omit subscripts, such as ``$|_{G_K}$'', from the names of 1-cocycles. While subtleties of course arise, especially in making sure that the algorithms are compatible with each other, our implementation is largely straightforward, so we refer the reader to our Sage code for further details.

\newcommand{\outputnode}[3]{
\draw (#1) node [draw, thick,rounded corners,inner sep=7,outer sep=4,align=center,fill=yellow!08] (#2) {#3};
}
\tikzstyle{connector}=[black, thick,-stealth']
\tikzstyle{arrowlabel}=[black,right,align=left]
\vspace{.35cm}
\begin{center}
\scalebox{.85}{\begin{tikzpicture}

\outputnode{0,12}{0}{$\begin{aligned}c&=(3,4,4,4,1,1)\\
\gamma&=(2,0,1,4,0,4,\dots,0)
\end{aligned}$}

\outputnode{0,9}{1}{$\begin{aligned}a\up1\cand &= (4,1,2,1,3,0,\dots,1)\\
b\up2\cand &=(2,3,2,0,2,3,\dots,0)\end{aligned}$}

\outputnode{0,6}{2}{$\begin{aligned}a\up1\adj&=\zeta_p^{0}a_0^1\\&=(2,2,0,2,1,3,\dots,4)\end{aligned}$}

\outputnode{0,3}{3}{$\begin{aligned}a\up1&=a\up1\cand+a\up1\adj\\&=(1,3,2,3,4,3,\dots,0)\end{aligned}$}

\outputnode{0,0}{8}{$\begin{aligned}&a\up1|_{\ell_1}=0,\\\text{Theo}&\text{rem }\ref{thm: main}(i)\text{ holds}\end{aligned}$}

\outputnode{7.5,12}{4}{$\begin{aligned}\xi&=(2,0,2,1,0,0,\dots,0)\\&\text{ \;with } \mathrm{Nm}(\xi) = a\up1\adj\end{aligned}$}

\outputnode{7.5,9}{5}{$\tilde{b}\up2\cand=(1,1,2,2,2,3,\dots,2)$}

\outputnode{7.5,6}{6}{$\begin{aligned}b\up1\adj&=p^3\ell_0^{\,4}\\&=(0,1,2,0,3,3,\dots,4)\end{aligned}$ }

\outputnode{7.5,3}{7}{$\begin{aligned}b\up2&=\tilde{b}\up2\cand+b\up2\adj\\&=(1,2,4,2,0,1,\dots,1)\end{aligned}$}

\outputnode{7.5,0}{9}{$\begin{aligned}&\alpha^2+\beta\neq0,\\\text{Theorem }&\ref{thm: main}(ii)\text{ does not hold}\end{aligned}$}

\draw [connector] (0) -- node[arrowlabel,left,align=right] {Eq. (\ref{eq:a1 and b2 as Ds})} (1);
\draw [connector] (1) -- node[arrowlabel,left,align=right] {Algorithms \ref{alg:algorithm_a1_p} \& \ref{alogorithm_a1_ell0}} (2);
\draw [connector] (2) -- node[arrowlabel,left,align=right] {Theorem \ref{thm: a1 adjustments}} (3);
\draw [connector] (3) -- node[arrowlabel,left,align=right] {Algorithm \ref{alg:finalcheck1}} (8);

\draw [connector] (4) -- node[arrowlabel] {Eq. (\ref{eq: b_tilde_cand})} (5);
\draw [connector] (5) -- node[arrowlabel] {Algorithms \ref{algorithm_b2_ff} \& \ref{algorithm_b2_ell0}} (6);
\draw [connector] (6) -- node[arrowlabel] {Theorem \ref{thm: b2 adjustments}} (7);
\draw [connector] (7) -- node[arrowlabel] {Algorithm \ref{alg:finalcheck}} (9);

\draw (8.east)++(3,0) node[coordinate] (A) {};
\draw (4.west)++(-3,0) node[coordinate] (B) {};

\draw [black, dash pattern=on 4pt off 2pt, thick,-stealth'] (8.east) .. controls (A) and (B) .. (4.west);
\end{tikzpicture}}
\end{center}

\vspace{.35cm}
\noindent Since Theorem \ref{thm: main}$(ii)$ fails, we conclude $\dim_{\F_p}(R/pR)=3$, and hence, $R=\mathbb{T}$.

\newpage

\subsection{Tables}

For each choice of $(p,\ell_0,\ell_1)$ in Tables 1-6, we provide the following data related to the construction of $K'/K$ and $K''/K$:

\begin{itemize}
    \item The first three columns specify primes $(p, \ell_0, \ell_1)$ satisfying Assumption \ref{assump: main}. 
    \item The next two columns of ``$\beta$ difficulty factors'' indicate the two influences, outlined in \S\ref{subsec: finite-flat}, on the difficulty of the computations of the correct adjustments to $b\up2$ to find $\beta$; these computations are not strictly necessary to verify the conditions in Theorem \ref{thm: main} but can simplify the finite-flat adjustment computations significantly. Note that DNC stands for ``did not complete" within 3 days.  
        \item The next four columns are the adjustments for the local conditions on $a\up1$ and $b\up2$ discussed in \S \ref{sec:adjustments}: 
    \begin{itemize}
        \item $\zeta_p^i$ and $a_0^j = a_0|_{G_\Qz}^j$ give $a\up1\adj$, influencing $\alpha$;
        \item $p^k$ and $\ell_0^m$ give $b\up2\adj$, influencing $\beta$.
    \end{itemize}
    Note that we record the exponents $i,j,k,m$ of these adjustments in the tables.
    \item The second rightmost column says whether condition $(ii)$ in Theorem \ref{thm: main} holds. Since condition $(i)$ in Theorem \ref{thm: main} has been satisfied in all computed examples so far, we do not record this in our tables. As such, the second rightmost column gives the conclusion of Theorem \ref{thm: main}. 
    \item As a check on our main computations, the rightmost column gives the rank of $\mathbb{T}$, computed independently using modular symbols. Note that $``\geq 4^\ast"$ means that this computation did not complete within 3 days but allowed for the Hecke rank to be at least 4 after checking the first 20 Hecke operators.
\end{itemize}

Table 7 provides some additional examples in which large values of $\ell_1$ prevent a direct computation of the Hecke rank via modular symbols. So, although our computations have not been independently verified for these examples, we can conclude that the Hecke rank is 3 when condition $(ii)$ in Theorem \ref{thm: main} fails.

\begin{table}
\begin{center}

\caption{$p=5,\ell_0=11$}
{\footnotesize
\begin{tabularx}{4.5925in}{|c|c|c|c|c|c|c|c|c|c|c|}
\hline
    \multicolumn{3}{|c|}{\multirow{2}{*}{primes}} & \multicolumn{2}{c|}{\multirow{2}{*}{$\beta$ difficulty factors}} & \multicolumn{4}{c|}{local adjustments} & \multirow{2}{*}{conclusion} & \multirow{2}{*}{Hecke rank}
    \\ 
    \cline{6-9}
    \multicolumn{3}{|c|}{} & \multicolumn{2}{c|}{} & \multicolumn{2}{c|}{$a\up 1 \rightsquigarrow \alpha$} &\multicolumn{2}{c|}{$b\up2 \rightsquigarrow \beta$} & & 
    \\ \hline
      $p$ &$\ell_0$ &$\ell_1$ &$p \text{ in }K$ &$a^{(1)}|_p$ & $\zeta_p^i$ &$a_0^j$&$p^k$&$\ell_0^m$ &$\alpha^2+\beta = 0$? &$\mathrm{rk}(\mathbb{T})$ \\\hline \hline
    5&11 &23 &wild & $\neq 0$ &0 &1 &3&4 &no&3 \\\hline
    5&11 &43 &tame & $\neq 0$&3 &2  &0&4 &yes&$\geq 4$ \\\hline
  5&11 &67 &wild & $\neq 0$ &0 &0&1&3 & no&3 \\\hline
  5&11 &197 & wild & $\neq 0$ &0 &2& 1&4& yes&$\geq 4$  \\\hline
    5&11 &263  &wild & = 0&0 &2 &4&3&no&3\\\hline
    5&11 &307  &tame & = 0 &1&3 &0&0&no&3\\\hline
    5&11 &373  &wild & $\neq 0 $ &0&4 &0&3&no&3\\\hline
    5&11 &397 &wild & $\neq 0$ &0&4 &2&3&no&3\\\hline
    5&11 &593  &tame & = 0 &0&3 &0&2&no&3\\\hline
  5&11 &683 & wild & = 0 &0 &4& 3&0& yes&$\geq 4$  \\\hline
  5&11 &727 & wild & $\neq 0$ &0 &1& 1&3& yes&$\geq 4$  \\\hline
      5&11 &857 & tame & $\neq 0$ &2 &0& 0&4& no&3  \\\hline
    5&11 &967 & wild & $\neq 0$ &0 &0& 2&2& no&3  \\\hline
      5&11 &1013  &wild & $\neq 0$ &0&3 &3&1&no&3\\\hline
     
\end{tabularx}}
\end{center}
\end{table}

\begin{table}
\begin{center}

\caption{$p=7,\ell_0=29$}
{\footnotesize
\begin{tabularx}{4.5325in}{|c|c|c|c|c|c|c|c|c|c|c|}\hline
    \multicolumn{3}{|c|}{\multirow{2}{*}{primes}} & \multicolumn{2}{c|}{\multirow{2}{*}{$\beta$ difficulty factors}} & \multicolumn{4}{c|}{local adjustments} & \multirow{2}{*}{conclusion} & \multirow{2}{*}{Hecke rank}
    \\ 
    \cline{6-9}
    \multicolumn{3}{|c|}{} & \multicolumn{2}{c|}{} & \multicolumn{2}{c|}{$a\up1 \rightsquigarrow \alpha$} &\multicolumn{2}{c|}{$b\up2 \rightsquigarrow \beta$} & & 
    \\ \hline
      $p$ &$\ell_0$ &$\ell_1$ &$p \text{ in }K$ &$a^{(1)}|_p$ & $\zeta_p^i$ &$a_0^j$&$p^k$&$\ell_0^m$ &$\alpha^2+\beta = 0$? &$\mathrm{rk}(\mathbb{T})$ \\\hline \hline

    7&29 &17  & wild & DNC& 0&3 &0& 0&no&3\\\hline
    7&29 &157  &wild & DNC& 0& 6&4&4 &no&3\\\hline
    7&29 &521 & tame & $\neq 0$ & 3& 6& 0&2&yes&$\geq 4$ \\\hline
\end{tabularx}}

\end{center}
\end{table}

\begin{table}
\begin{center}

\caption{$p=5,\ell_0=41$}
{\footnotesize
\begin{tabularx}{4.5325in}{|c|c|c|c|c|c|c|c|c|c|c|}\hline
    \multicolumn{3}{|c|}{\multirow{2}{*}{primes}} & \multicolumn{2}{c|}{\multirow{2}{*}{$\beta$ difficulty factors}} & \multicolumn{4}{c|}{local adjustments} & \multirow{2}{*}{conclusion} & \multirow{2}{*}{Hecke rank}
    \\ 
    \cline{6-9}
    \multicolumn{3}{|c|}{} & \multicolumn{2}{c|}{} & \multicolumn{2}{c|}{$a\up1\rightsquigarrow \alpha$} &\multicolumn{2}{c|}{$b\up2\rightsquigarrow \beta$} & & 
    \\ \hline
      $p$ &$\ell_0$ &$\ell_1$ &$p \text{ in }K$ &$a^{(1)}|_p$ & $\zeta_p^i$ &$a_0^j$&$p^k$&$\ell_0^m$ &$\alpha^2+\beta = 0$? &$\mathrm{rk}(\mathbb{T})$ \\\hline \hline
      5&41 &73  &wild & $\neq 0$ &0&4&2&1&yes&$\geq$ 4\\\hline
5&41 &83  &wild & $\neq 0$ &0&4&3&3&no&3\\\hline
5&41 &137  &wild & $\neq 0$ &0&3&2&0&no&3\\\hline
5&41 &163 &wild & $\neq 0$ &0&2&3&1&no&3\\\hline    
5&41 &167 &wild & $\neq 0$&0&2&4&0&no&3\\\hline
5&41 &173 &wild & = 0 &0&0&3&2&no&3\\\hline
5&41 &383 &wild & = 0 &0&3&2&2&no&3\\\hline
5&41 &547 &wild & $\neq 0$ &0&3&0&0&no&3\\\hline
5&41 &577 &wild & $\neq 0$&0&0&2&0&yes&$\geq 4^\ast$\\\hline
5&41 &683 &wild & $\neq 0$ &0&2&3&0&no&3\\\hline
5&41 &983 &wild & $\neq 0$ &0&1&4&1&yes&$\geq 4^\ast$\\\hline
    
\end{tabularx}}

\end{center}
\end{table}

\begin{table}
\begin{center}

\caption{$p=5,\ell_0=61$}
{\footnotesize
\begin{tabularx}{4.5325in}{|c|c|c|c|c|c|c|c|c|c|c|}\hline
    \multicolumn{3}{|c|}{\multirow{2}{*}{primes}} & \multicolumn{2}{c|}{\multirow{2}{*}{$\beta$ difficulty factors}} & \multicolumn{4}{c|}{local adjustments} & \multirow{2}{*}{conclusion} & \multirow{2}{*}{Hecke rank}
    \\ 
    \cline{6-9}
    \multicolumn{3}{|c|}{} & \multicolumn{2}{c|}{} & \multicolumn{2}{c|}{$a\up1 \rightsquigarrow \alpha$} &\multicolumn{2}{c|}{$b\up2 \rightsquigarrow \beta$} & & 
    \\ \hline
      $p$ &$\ell_0$ &$\ell_1$ &$p \text{ in }K$ &$a^{(1)}|_p$ & $\zeta_p^i$ &$a_0^j$&$p^k$&$\ell_0^m$ &$\alpha^2+\beta = 0$? &$\mathrm{rk}(\mathbb{T})$ \\\hline \hline

5&61 &13  &wild & $\neq 0$ &0&2&2&3&no&3\\\hline
5&61 &47  &wild & $\neq 0$ &0&0&1&0&yes&$\geq$ 4\\\hline
5&61 &197  &wild & = 0 &0&3&4&3&no&3\\\hline
5&61 &257  &tame & $\neq 0$ &2&0&0&4&no&3\\\hline
5&61 &337  &wild & = 0 &0&0&1&1&no&3\\\hline
5&61 &353  &wild & $\neq 0$&0&3&4&3&no&3\\\hline
5&61 &367  &wild & $\neq 0$&0&1&4&2&no&3\\\hline
5&61 &487  &wild & $\neq$ 0 &0&4&4&3&yes&$\geq 4^\ast$\\\hline
5&61 &563 &wild & $\neq$ 0 &0&4&3&1&no&3\\\hline
5&61 &733 &wild & $\neq$ 0 &0&0&3&1&no&3\\\hline
5&61 &853 &wild & $\neq$ 0 &0&4&1&2&yes&$\geq 4^\ast$\\\hline
5&61 &977 &wild & $\neq$ 0 &0&3&2&2&yes&$\geq 4^\ast$\\\hline
\end{tabularx}}

\end{center}
\end{table}

\begin{table}
\begin{center}

\caption{$p=5,\ell_0=71$}
{\footnotesize
\begin{tabularx}{4.5325in}{|c|c|c|c|c|c|c|c|c|c|c|}\hline
    \multicolumn{3}{|c|}{\multirow{2}{*}{primes}} & \multicolumn{2}{c|}{\multirow{2}{*}{$\beta$ difficulty factors}} & \multicolumn{4}{c|}{local adjustments} & \multirow{2}{*}{conclusion} & \multirow{2}{*}{Hecke rank}
    \\ 
    \cline{6-9}
    \multicolumn{3}{|c|}{} & \multicolumn{2}{c|}{} & \multicolumn{2}{c|}{$a\up1 \rightsquigarrow \alpha$} &\multicolumn{2}{c|}{$b\up2 \rightsquigarrow \beta$} & & 
    \\ \hline
      $p$ &$\ell_0$ &$\ell_1$ &$p \text{ in }K$ &$a^{(1)}|_p$ & $\zeta_p^i$ &$a_0^j$&$p^k$&$\ell_0^m$ &$\alpha^2+\beta = 0$? &$\mathrm{rk}(\mathbb{T})$ \\\hline \hline

5&71 &23  &wild & $\neq 0$ &0&0&0&2&no&3\\\hline
5&71 &37  &wild & $\neq 0$ &0&4&2&1&no&3\\\hline
5&71 &97  &wild & = 0 &0&3&2&3&no&3\\\hline
5&71 &103  &wild & $\neq 0$ &0&2&2&3&no&3\\\hline
5&71 &193  &tame & $\neq 0$ &2&2&0&3&no&3\\\hline
5&71 &233  &wild & = 0 &0&3&1&2&yes&$\geq 4^\ast$\\\hline
5&71&283&wild&$\neq$ 0&0&1&3&0&no&3\\\hline
5&71&307&tame&$\neq$ 0&3&2&0&3&no&3\\\hline
5&71&463&wild&$\neq$ 0&0&4&0&0&no&3\\\hline
5&71 &853  &wild & $\neq$ 0 &0&2&0&4&no&3\\\hline
\end{tabularx}}

\end{center}
\end{table}

\begin{table}
\begin{center}

\caption{$p=7,\ell_0=43$}
{\footnotesize
\begin{tabularx}{4.6225in}{|c|c|c|c|c|c|c|c|c|c|c|}\hline
    \multicolumn{3}{|c|}{\multirow{2}{*}{primes}} & \multicolumn{2}{c|}{\multirow{2}{*}{$\beta$ difficulty factors}} & \multicolumn{4}{c|}{local adjustments} & \multirow{2}{*}{conclusion} & \multirow{2}{*}{Hecke rank}
    \\ 
    \cline{6-9}
    \multicolumn{3}{|c|}{} & \multicolumn{2}{c|}{} & \multicolumn{2}{c|}{$a\up1 \rightsquigarrow \alpha$} &\multicolumn{2}{c|}{$b\up2 \rightsquigarrow \beta$} & & 
    \\ \hline
      $p$ &$\ell_0$ &$\ell_1$ &$p \text{ in }K$ &$a^{(1)}|_p$ & $\zeta_p^i$ &$a_0^j$&$p^k$&$\ell_0^m$ &$\alpha^2+\beta = 0$? &$\mathrm{rk}(\mathbb{T})$ \\\hline \hline

7&43 &37  & wild & DNC& 0&5 &\phantom{aa}0\phantom{aa}& 1&yes&$\geq$ 4\\\hline
7&43 &79  & tame & $\neq 0$& 2&4 &\phantom{aa}0\phantom{aa}& 3&no&3\\\hline
\end{tabularx}}

\end{center}

\end{table}

\begin{table}
\begin{center}

\caption{Examples without an independent computation of Hecke rank}
{\footnotesize
\begin{tabularx}{3.8175in}{|c|c|c|c|c|c|c|c|c|c|}
\hline
    \multicolumn{3}{|c|}{\multirow{2}{*}{primes}} & \multicolumn{2}{c|}{\multirow{2}{*}{$\beta$ difficulty factors}} & \multicolumn{4}{c|}{local adjustments} & \multirow{2}{*}{conclusion} 
    \\  
    \cline{6-9}
    \multicolumn{3}{|c|}{} & \multicolumn{2}{c|}{} & \multicolumn{2}{c|}{$a\up 1 \rightsquigarrow \alpha$} &\multicolumn{2}{c|}{$b\up2 \rightsquigarrow \beta$} & 
    \\ \hline
      $p$ &$\ell_0$ &$\ell_1$ &$p \text{ in }K$ &$a^{(1)}|_p$ & $\zeta_p^i$ &$a_0^j$&$p^k$&$\ell_0^m$ &$\alpha^2+\beta = 0$? \\\hline \hline

5&41 &653 &wild & = 0 &0&2&1&0&yes\\\hline

5&41 &823 &wild & = 0 &0&0&1&3&no\\\hline

5&61 &743 &tame & = 0 &2&0&0&4&no\\\hline

5&61 &883 &wild & $\neq$ 0 &0&0&4&2&no\\\hline

5&61 &997 &wild & $\neq$ 0 &0&0&3&2&no\\\hline

5&71 &613  &wild & = 0 &0&4&2&1&no\\\hline
5&71 &673  &wild & $\neq$ 0 &0&2&2&1&no\\\hline
5&71 &733  &wild & $\neq$ 0 &0&4&1&3&no\\\hline
\end{tabularx}}
\end{center}
\end{table}

\clearpage


\section{Algebraic number theory} 
\label{sec: P2 ANT}

While the $C_p$-extensions $K'/K$ and $K''/K$ in Theorem \ref{thm: main} are defined in terms of the cochains $a\up1$ and $b\up2$, an intrinsic characterization is desirable. These fields are not Galois over $\Q$; accordingly, they depend on choices, such as the pinning data. In this section, we will characterize the Galois closures (over $\Q$) $M'$ of $K'$ and $M''$ of $K''$, and characterize the isomorphism class of subfields of $M''$ to which $K'$ and $K''$ belong. This establishes the descriptions of $K'$ and $K''$ used to state the main theorem in the introduction (Theorem \ref{thm: main intro}). Additionally, this allows us to rephrase the conditions of the main Theorem \ref{thm: main} in order to refer to the decomposition subgroups of $\Gal(M''/\Q)$ for primes over $\ell_0$. This translation appears in Theorem \ref{thm: main ANT} as well as in another form in Theorem \ref{thm: HL P2}.

\subsection{The extensions $M'/M$ and $K'/K$} 

To describe $M'/M$ and $K'/K$, the 3-dimensional $G_{\Q,Np}$-representation of \eqref{eq: 3d abc} is a convenient tool; it is 
\begin{equation}
    \label{eq: 3d abc again}
\upsilon = \begin{pmatrix}
\omega & b\up1 & \omega a\up1\cand \\
0 & 1 & \omega c\up1 \\
0 & 0 & \omega
\end{pmatrix} : G_{\Q,Np} \to \GL_3(\F_p), 
\end{equation}
where we view $a\up1\cand$ as a general choice of matrix entry making $\upsilon$ a homomorphism. We recall that $K/\Q(\zeta_p)$ is cut out by $b\up1$, $L/\Q(\zeta_p)$ is cut out by $c\up1$, $M=KL/\Q(\zeta_p)$ is cut out by $(b\up1,c\up1)$, $K'/K$ is cut out by $a\up1$, and $M'/\Q$ is cut out by the entire representation $\upsilon$ when we let $a\up1\cand = a\up1$. 

To state the proposition, some notation and terminology is needed. Let $F/M$ be its maximal extension that is ramfied only at primes dividing $Np$, abelian, of exponent $p$, and Galois over $\Q$. We also call a number field extension $A'/A$ \emph{$\ell_0$-split} to briefly say that all primes of $A$ over $\ell_0$ are split in $A'/A$. 
\begin{prop}
\label{prop: characterize M'}
$M'/M$ is the unique unramified and $\ell_0$-split $C_p$-extension contained in $F$ whose Galois group is coinvariant under the conjugation action of $\Gal(M/\Q)$ on $\Gal(F/M)$. The isomorphism class of $K'/K$ is characterized by being a $C_p$-extension contained in $M'$ and not equal to $M/K$.
\end{prop}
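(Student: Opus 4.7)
The proof has three parts: establishing the properties of $M'/M$; proving uniqueness inside $F$; and characterizing the isomorphism class of $K'/K$ inside $M'$. The key structural input is that the image of $\upsilon$ restricted to $G_{\Q(\zeta_p), Np}$ is the full Heisenberg subgroup of order $p^3$ in $\GL_3(\F_p)$. Indeed, the image lies in the upper unipotent $U$ since the diagonal $(\omega,1,\omega)$ of $\upsilon$ is trivial on $G_{\Q(\zeta_p)}$, and it is all of $U$ because $a\up1|_{G_{\Q(\zeta_p)}}$ is a cochain whose differential $-b\up1\smile c\up1$ is a nonzero $2$-cochain. Consequently $\Gal(M'/M)$ is canonically the center of this Heisenberg group, hence cyclic of order $p$. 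Centrality gives a trivial $\Gal(M/\Q(\zeta_p))$-conjugation action on $\Gal(M'/M)$, and the $(1,3)$-coordinate of $\upsilon$ transforms by $\omega\cdot\omega^{-1}=1$ under $\Delta=\Gal(\Q(\zeta_p)/\Q)$; combining, the full $\Gal(M/\Q)$-action on $\Gal(M'/M)$ is trivial, so $\Gal(M'/M)$ is a $\Gal(M/\Q)$-coinvariant quotient of $\Gal(F/M)$.

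\textbf{Local properties of $M'/M$.} Unramifiedness of $M'/M$ at primes of $M$ above $Np$ reduces to verifying that the projection to the center $\Gal(M'/M)$ of the image of each inertia group $I_q$ ($q=\ell_0,\ell_1,p$) under $\upsilon$ is trivial. At $\ell_0$, $b\up1|_{I_{\ell_0}}=0$ (since $\ell_1\neq\ell_0$) and $a\up1|_{I_{\ell_0}}=0$ (the latter because $\alpha=0$ forces $a\up1|_{\ell_0}$ to be a coboundary, hence trivial on inertia acting trivially on $\F_p$). At $\ell_1$, $a\up1|_{\ell_1}=0$ is to be assumed per the setup of the theorem, and $c\up1|_{\ell_1}=0$ by construction. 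At $p$, the finite-flat identity $a\up1|_{I_p} = -(b\up1\smile x_c)|_{I_p}$ of Proposition \ref{prop: produce a1}(a) together with the shape $c\up1|_p = dx_c$ and the action of $\omega|_{I_p}$ controls the image of inertia in $\upsilon$ precisely enough to show its intersection with the center is trivial. For the $\ell_0$-split property, $\alpha=0$ ensures the distinguished prime of $M$ above $\ell_0$ splits in $M'/M$, and the trivial $\Gal(M/\Q)$-action on $\Gal(M'/M)$ propagates splitting throughout the single $\Gal(M/\Q)$-orbit of such primes.

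\textbf{Uniqueness.} By global class field theory, the $\F_p$-space of $C_p$-extensions of $M$ inside $F$ that are unramified, $\ell_0$-split, and have trivial $\Gal(M/\Q)$-action on their Galois group is Pontryagin dual to the $\Gal(M/\Q)$-coinvariants $(\Cl(\cO_M[1/\ell_0])/p)_{\Gal(M/\Q)}$ of the $\ell_0$-modified class group. I would compute this coinvariant module via the ambiguous class number formula on the tower $M/\Q$, together with Assumption \ref{assump: main}(4) (Merel's criterion for the unique Eisenstein-congruent cusp form at level $\ell_0$), to pin the $\F_p$-rank down to exactly $1$. Combined with the existence of $M'/M$ above, this yields uniqueness. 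I expect this step to be the main technical obstacle.

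\textbf{Characterization of $K'$.} $\Gal(M'/K)$ is the preimage in $\Gal(M'/\Q(\zeta_p))$ of $\Gal(M/K)=\Gal(L/\Q(\zeta_p))$; it contains the central element $z$ generating $\Gal(M'/M)$ and a lift $v$ of the $\Gal(L/\Q(\zeta_p))$-generator, which commute, so $\Gal(M'/K)\cong C_p\times C_p$. Its $p+1$ subgroups of order $p$ yield $p+1$ degree-$p$ subfields of $M'$ over $K$: the subgroup $\langle z\rangle$ gives the Galois subfield $M$, while the remaining $p$ subgroups $\langle v z^k\rangle$ for $k=0,\dots,p-1$ are cyclically permuted by conjugation by a lift $u\in\Gal(M'/\Q(\zeta_p))$ of a generator of $\Gal(K/\Q(\zeta_p))$, via the Heisenberg commutator relation $[u,v]=z$. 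Hence these $p$ subfields form a single isomorphism class over $K$. The extension $K'/K$, being a degree-$p$ subfield of $M'$ over $K$ distinct from $M$ (because the full Heisenberg image of $\upsilon|_{G_{\Q(\zeta_p)}}$ ensures $a\up1|_{G_K}$ and $c\up1|_{G_K}$ are $\F_p$-linearly independent), falls in this second orbit.
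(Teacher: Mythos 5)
Your verification of the stated properties for the specific extension cut out by $a\up1$, and your characterization of the isomorphism class of $K'/K$ via the $p+1$ order-$p$ subgroups of $\Gal(M'/K)\cong C_p\times C_p$ (one fixed, with fixed field $M$, the remaining $p$ a single conjugation orbit by the Heisenberg commutator relation), are essentially the paper's argument for those parts. But the heart of the proposition, the \emph{uniqueness} of $M'/M$, is not proved in your proposal: you reduce it to showing that the $\Gal(M/\Q)$-coinvariants of the $\ell_0$-modified $p$-class group of $M$ have rank exactly one, propose to get this from the ambiguous class number formula plus Assumption \ref{assump: main}(4), and then explicitly leave it as ``the main technical obstacle.'' That is a genuine gap, and the proposed route is itself doubtful as stated: the ambiguous class number formula is a statement about cyclic extensions, whereas $M/\Q$ is non-abelian of degree $p^2(p-1)$, so a genus-theory/reflection argument of real substance would be needed, and it is not clear Merel's criterion enters in the way you suggest. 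The paper avoids any class-group computation: it first shows (using the semidirect product decomposition of $\Gal(M/\Q)$ and a computation of the $\Delta$-invariant part of $H^2(\Gal(M/\Q(\zeta_p)),\Gal(M'\cand/M))$) that \emph{every} $C_p$-extension $M'\cand/M$ inside $F$ with coinvariant Galois group is cut out by some solution $a\up1\cand$ of $-da\up1\cand=b\up1\smile c\up1$, and then proves that $M'\cand/M$ is unramified and $\ell_0$-split if and only if $a\up1\cand$ satisfies the two local conditions of Proposition \ref{prop: produce a1}; uniqueness then follows from the uniqueness of $a\up1$ already established in Part I. This candidate-to-cochain bijection is exactly the step your proposal lacks.

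A second, fixable error: to get unramifiedness of $M'/M$ above $\ell_1$ you assume $a\up1|_{\ell_1}=0$ ``per the setup of the theorem.'' Proposition \ref{prop: characterize M'} carries no such hypothesis; the vanishing $a\up1|_{\ell_1}=0$ is condition (i) of Theorem \ref{thm: main}, precisely what the extension $K'/K$ is constructed to detect, so it cannot be presupposed here (it is only assumed in the $M''$ statement). The correct argument is that $b\up1\vert_{\ell_1}=0$ forces $a\up1\vert_{\ell_1}$ to be a cocycle, i.e.\ a homomorphism $G_{\ell_1}\to\F_p$, which is automatically unramified since $\ell_1\not\equiv 1\pmod p$. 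Your treatment of the prime $p$ is also only a gesture (``controls the image of inertia \ldots precisely enough''); the paper's proof gives an actual argument, passing to the degree-$p$ subfield $L_c\subset L$ and an alternate choice of decomposition group at $p$, and, more importantly, proves the \emph{equivalence} of unramifiedness at $p$ with condition (a) of Proposition \ref{prop: produce a1} for an arbitrary candidate, which is what the uniqueness argument requires.
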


\begin{proof}
Let $M'\cand/M$ be a $C_p$-extension contained in $F/M$ whose Galois group is coinvariant under the conjugation action of $\Gal(M/\Q)$ on $\Gal(F/M)$. A central extension sequence  arises from $M'\cand$,
\[
1 \to \Gal(M'\cand/M) \to \Gal(M'\cand/\Q) \to \Gal(M/\Q) \to 1. 
\]
Using the semi-direct product decomposition
\[
\Gal(M/\Q) = \Gal(M/\Q(\zeta_p)) \rtimes \Gal(M/\Q(\ell_1^{1/p}, c\up1) =: \Gal(M/\Q(\zeta_p)) \rtimes \Delta, 
\]
and the description of group cohomology of a semi-direct product of \cite{tahara1972}, we find that the element of $H^2(\Gal(M/\Q), \Gal(M'\cand/M))$ determined by $\Gal(M'\cand/\Q)$ arises from $H^2(\Gal(M\cand/\Q(\zeta_p), \Gal(M'\cand/M))^\Delta$. We use the natural isomorphism $\Delta \cong \Gal(\Q(\zeta_p)/\Q)$ and represent its actions on $\F_p$-vector spaces according the usual notation $\F_p(i)$ for $i \in \Z$. Since $\Gal(M/\Q(\zeta_p)) \simeq \F_p(-1) \times \F_p(1)$ and $\Gal(M'\cand/M) \simeq \F_p(0)$, standard calculations yield that 
\[
H^2(\Gal(M\cand/\Q(\zeta_p), \Gal(M'/M))^\Delta
\]
is $1$-dimensional. Since the class of $\Gal(M'\cand/\Q)$ as a central extension is non-trivial, as is the extension generated by the group $\Gal(M/\Q)$, they are equal up to a scalar and therefore isomorphic. The upshot is that we obtain a faithful matrix representation of $\Gal(M'\cand/\Q)$, just like the faithful representation \eqref{eq: 3d abc again} of $\Gal(M'/\Q)$. 

The upper right coordinate of this matrix representation yields a new $a\up1\cand : G_{\Q,Np} \to \F_p$ such that $-da\up1\cand = b\up1\smile c\up1$.  The conclusion of what we have argued so far is that $C_p$-extensions of $M$ contained in $F/M$ whose Galois groups that are coinvariant for the action of $\Gal(M/\Q)$ correspond with solutions $a\up1\cand$ to $-da\up1\cand = b\up1\smile c\up1$. This correspondence is bi-directional, and it is not necessary to refine this statement in order to obtain a bijection.

Let us fix a corresponding pair $M'\cand$ and $a\up1\cand$, also letting $K'\cand/K$ be the $C_p$-extension of $K$ cut out by $a\up1\cand\vert_{G_K}$, and review the local conditions characterizing $a\up1$ that are stated in Proposition \ref{prop: produce a1}. In order to complete the proof using this correspondence, we claim that $a\up1\cand$ satisfies the local conditions of Proposition \ref{prop: produce a1} if and only if $M'\cand/M$ is unramified. Then the uniqueness of $M'/M$, claimed here, will follow from the uniqueness of $a\up1$ proved in Proposition \ref{prop: produce a1}. 

We first prove that $M'\cand/M$ is unramified and $\ell_0$-split if and only if the local conditions of Proposition \ref{prop: produce a1} hold true for $a\up1\cand$. We will use the following implication of the fact that both $M'$ and $M$ are Galois over $\Q$ throughout the argument: $M'/M$ being unramified at a single prime of $M$ over a rational prime $q$ is equivalent to $M'/M$ being unramified at all primes of $M$ over $q$. A similar statement applies to the $\ell_0$-split condition. We also implicitly use the fact that $a\up1\cand\vert_{G_M}$ cuts out $M'/M$. 

\noindent
\textbf{Unconditionally, $M'/M$ is unramified at all primes of $M$ over $\ell_1$.} Becuase $a\up1\cand\vert_{\ell_1} : G_{\ell_1} \to \F_p$ is a cocycle (since $b\up1\vert_{\ell_1} = 0$), it is automatically unramified. Thus $a\up1\cand\vert_{G_M}$ is unramified at $\ell_1$. 

\noindent 
\textbf{$M'/M$ is $\ell_0$-split if and only if condition (c) holds.} Condition (c) of Proposition \ref{prop: produce a1}, imposed on $a\up1\cand$, is equivalent to the two non-zero homomorphisms
\[
c\up1\vert_{\ell_0} : G_{\ell_0} \to \F_p(-1), \quad a\up1\vert_{\ell_0} : G_{\ell_0} \to \F_p
\]
having identical kernels. Since $M/K$ is cut out by $c\up1\vert_{G_K}$, we deduce that $a\up1\vert_{G_M}$ vanishes at the distinguished prime over $\ell_0$ if and only if $M'/M$ is $\ell_0$-split.

\noindent
\textbf{$M'/M$ is unramified at all primes of $M$ over $p$ if and only if condition (a) holds.} 
Condition (a) of \ref{prop: produce a1} reads that $(a\up1\cand + b\up1 \smile x_c)\vert_{I_p} = 0$. Let $L_c \subset L$ be the subfield fixed by $\F_p^\times$ under the isomorphism
\[
\Gal(L/\Q) \isoto \F_p^\times \ltimes \F_p \xrightarrow[\sm{1}{\omega c\up1}{0}{\omega}]{\sim} \ttmat{1}{*}{0}{*} \subset \GL_2(\F_p). 
\]
This $L_c/\Q$ has degree $p$ and its Galois closure is $L/\Q$. It is a brief exercise to check that $a\up1\vert_{G_p'}$ is a cocycle and condition (a) is equivalent to $a\up1\cand\vert_{I_p'} = 0$, where $I_p' \subset G_p' \subset G_{\Q,Np}$ is an alternate choice of decomposition group and inertia group at $p$ such that the alternate distinguished prime has trivial ramification degree in $L_c/\Q$. Using the facts about $M'/M$ listed above, we conclude that $a\up1\cand\vert_{I_p'} = 0$ if and only if $M'/M$ is unramified at all primes of $M$ over $p$.

Finally, we establish the claimed characterization of $K'/K$ as a $C_p$-extension contained in $M'/K$. Under the isomorphism $\upsilon$, the subgroup $\Gal(M'/K) \subset \Gal(M'/\Q)$ is identified with the subgroup of $\GL_3(\F_p)$ isomorphic to $\F_p \oplus \F_p$ that differs from the identity matrix in the $b\up1$ and $a\up1$-coordinates. Consider the action of $\Gal(K/\Q)$ by conjugation on the $p+1$ subgroups of $\Gal(M'/K)$ of order $p$. One of them is fixed (the one concentrated in the $a\up1$-coordinate), and has fixed field $M$. The remaining $p$ are a single orbit. Therefore, any of their fixed fields are isomorphic, and one of them is $K'$. 
\end{proof}

\subsection{The extensions $M''/M'$ and $K''/K$}

Recall that the cochain $b\up2$ satisfying differential equation \eqref{eq:diffeq for b2} exists if and only if $a\up1\vert_{\ell_1} = 0$. When $b\up2$ does not exist, we consider $K''$ undefined and let $M'' = M'$. 

For the rest of this section, we assume that $b\up2$ does exist. Let $K''/K$ be the extension cut out by $b\up2\vert_{G_K}$ as usual, and let $M''$ denote the Galois closure of $K''$ over $\Q$. Our goal is to describe these extensions using the 4-dimensional $G_{\Q,Np}$-representation of \eqref{eq: 4d abcd}, 
\begin{equation}
    \label{eq: 4d nu}
    \nu := 
\begin{pmatrix}
\omega & b\up1 & \omega a\up1 & b\up2\\
0 & 1 & \omega c\up1 &  d\up1 \\
0 & 0 & \omega & b\up1 \\
0 & 0 & 0 & 1
\end{pmatrix} : G_{\Q,Np} \to \GL_4(\F_p). 
\end{equation}
Note that $d\up1 = a\up1-b\up1 c\up1$, so that $M'' = \oQ^{\ker \nu}$ is a $C_p$-extension of $M' = \oQ^{\ker \upsilon}$ cut out by $b\up2\vert_{G_{M'}}$. 
In addition to satisfying the differential equation required to make $\nu$ a homomorphism, we recall that $b\up2 \in C^1(\Z[1/Np], \F_p(1))$ is characterized by the local conditions of Proposition \ref{prop: produce b2}, but only up to addition by the subspace of cocycles spanned by $\{b\up1, dx\}$ for some choice of non-zero $x \in \F_p(1)$.  However, since $b\up1\vert_{G_K} = 0$, such changes to $b\up2$ do not change the kernel of $\nu$. We therefore may and do regard the Galois extension $M''/\Q$ as well-defined. 

To state the proposition, let $F'/M'$ be its maximal extension that is ramfied only at primes dividing $Np$, abelian, of exponent $p$, and Galois over $\Q$. Let $\m^\fl$ be the modulus of $M'$ (in the sense of ray class field theory) defined as the product of all squares of primes of $M'$ over $p$; that is, 
\[
\m^\fl := \prod_{\p \mid (p)} \p^2. 
\]
Finally, when $V$ is an irreducible $\F_p$-linear representation of $\Gal(M'/\Q)$, we say that an intermediate field $F''$, $F'\supset F'' \supset M$, is \emph{$V$-equivariant} when $\Gal(F'/M) \otimes_{\F_p} V \rsurj \Gal(F''/M) \otimes_{\F_p} V$ factors through the coinvariants of the $\Gal(M/\Q)$-action (by conjugation in $\Gal(F/\Q)$). 
\begin{prop}
\label{prop: characterize M''}
Assume that $a\up1\vert_{\ell_1} = 0$. Then $M''/M'$ is the unique $C_p$-extension contained in $F'$ such that it has conductor $\m^\fl$, it is $\ell_0$-split, and its Galois group's $\Gal(M'/\Q)$-action is $\F_p(1)$-equivariant. The isomorphism class of $K''/K$ is characterized by being a $C_p$-extension contained in $M''$, not contained in $M'$, and having cardinality $p$. 
\end{prop}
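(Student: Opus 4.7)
The plan is to mirror the proof of Proposition \ref{prop: characterize M'}, adapting it to the $4$-dimensional setting governed by $\nu$ of \eqref{eq: 4d nu}.

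First, I establish a correspondence between $C_p$-extensions $M''\cand/M'$ inside $F'$ satisfying the three stated properties and solutions $b\up2\cand \in C^1(\Z[1/Np],\F_p(1))$ of the differential equation \eqref{eq:diffeq for b2}. The $\F_p(1)$-equivariance condition is precisely what is needed for $M''\cand/M'$ to fit as the top-right ``$b\up2$''-slot of a 4-dimensional upper-triangular representation $\nu\cand : G_{\Q,Np} \to \GL_4(\F_p)$ extending the $\upsilon$ of \eqref{eq: 3d abc} that already cuts out $M'$. The requirement that $\nu\cand$ be a homomorphism is exactly \eqref{eq:diffeq for b2}, and existence of $b\up2\cand$ is guaranteed by $a\up1|_{\ell_1} = 0$ via Proposition \ref{prop: produce b2}.

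Second, I translate the local conditions. The $\ell_0$-split condition for $M''\cand/M'$ corresponds to the vanishing of $b\up2\cand|_{G_{M'_v}}$ for places $v$ over $\ell_0$; since $c\up1|_{\ell_0}$ already vanishes upon restriction to $M$, this matches condition (a) of Proposition \ref{prop: produce b2}. The condition that the conductor divides $\m^\fl$ is the class-field-theoretic translation of the finite-flatness condition (b), as made explicit in \S\ref{subsec: finite-flat}. Uniqueness of $M''/M'$ then follows from the uniqueness of $b\up2$ modulo $\langle b\up1, B^1(\Z[1/Np],\F_p(1))\rangle$ (Proposition \ref{prop: produce b2}): neither ambiguity alters the resulting Kummer extension, since $b\up1|_{G_{M'}} = 0$ and coboundaries cut out the trivial extension.

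Third, for the characterization of $K''/K$, I study $\Gal(M''/K)$ via $\nu|_{G_K}$. Using $b\up1|_{G_K}=0$ and $d\up1 = b\up1 c\up1 - a\up1 = -a\up1$ on $G_K$, the image of $\nu|_{G_K}$ is an abelian unipotent subgroup of $\GL_4(\F_p)$ isomorphic to $\F_p^3$, with coordinates $(a\up1, c\up1, b\up2)|_{G_K}$. Thus $C_p$-subextensions of $K$ in $M''$ correspond to lines in the dual $(\F_p^3)^\vee$, and isomorphism classes correspond to $\Gal(K/\Q)$-orbits (the abelian $\Gal(M''/K)$ stabilizes every such line). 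On the dual, $\Delta$ acts diagonally with weights $(\omega^0, \omega, \omega^{-1})$. A direct matrix calculation -- conjugating by any lift $\tau \in G_\Q$ of the generator of $\Gal(K/\Q(\zeta_p))$ with $\omega(\tau)=1$ and $b\up1(\tau)=1$ -- shows that $\tau$ acts on the dual coordinates $(a,c,b)$ as $(a,c,b) \mapsto (a+2b,\,-a+c-b,\,b)$, and the unspecified entries of the lift all cancel.

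Finally, I classify orbits of lines with $b \neq 0$ (i.e., not contained in $M'$). Normalizing $b=1$, the quantity $I_\sigma := c + a^2/4$ is preserved by $\sigma$ and transforms under $\Delta$ as $I_\sigma \mapsto \omega^2 I_\sigma$. The orbits are therefore indexed by the $(\F_p^\times)^2$-orbits of $I_\sigma$ in $\F_p$: the class $I_\sigma=0$ yields a single orbit of size $p$ containing $K''=(0:0:1)$, while the nonzero squares and nonzero nonsquares each give a single orbit of size $p(p-1)/2$. This singles out $K''$ as the unique isomorphism class of cardinality $p$. I expect the main technical obstacle to be the careful matching, in the second step, of the conductor and $\ell_0$-split conditions with the cohomological conditions (a) and (b) of Proposition \ref{prop: produce b2}, and in confirming that the $\F_p(1)$-equivariance condition really picks out the torsor class of $b\up2$ rather than a larger or smaller subspace.
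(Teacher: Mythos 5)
Your overall architecture matches the paper's: translate the $\F_p(1)$-equivariance into the existence of a homomorphism of the shape $\nu$ (so that candidates $M''\cand/M'$ correspond to solutions $b\up2\cand$ of \eqref{eq:diffeq for b2}), match the local conditions with (a) and (b) of Proposition \ref{prop: produce b2}, invoke the well-definedness of $M''$ modulo the torsor ambiguity, and then classify $\Gal(K/\Q)$-orbits of index-$p$ subgroups of $\Gal(M''/K)\cong\F_p^3$. Your treatment of the last step is correct and is a mildly different route from the paper's: the paper identifies $\Gal(M''/K)$ with the trace-zero matrices $V$ carrying the adjoint action of the Borel and classifies $2$-dimensional subspaces via the trace-pairing duality (Lemmas \ref{lem: 2d orbits}, \ref{lem: 1d orbits}), whereas you compute the conjugation action directly in $\GL_4$ (your matrix $(a,c,b)\mapsto(a+2b,-a+c-b,b)$ on dual coordinates checks out, and the lift-dependent entries do cancel) and classify dual lines via the invariant $I_\sigma=c+a^2/4$, which transforms by $\omega^2$ under $\Delta$. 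This is essentially the same discriminant-type invariant underlying orbit (c) of Lemma \ref{lem: 1d orbits}, so the two classifications agree; yours is more computational, the paper's more structural. Note you should record that the orbit of size $p$ is the \emph{only} class of cardinality $p$ among those not contained in $M'$ precisely because $p\geq 5$ (for $p=3$ one has $p(p-1)/2=p$), which the paper points out explicitly.

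The genuine gap is the step you yourself flag as the "main technical obstacle": the equivalence between the conductor condition ($\mathrm{Cond}(M''\cand/M')$ dividing $\m^\fl$) and the finite-flat condition (b) of Proposition \ref{prop: produce b2}. This is \emph{not} "made explicit in \S\ref{subsec: finite-flat}": that section gives Kummer-theoretic tests for finite-flatness over $\Q_{p^p}$ and $\Q_p((1+p)^{1/p})$, not a conductor statement over $M'$. In the paper this equivalence requires two nontrivial inputs. For the forward direction one needs Lemma \ref{lem: conductor of finite-flat extension}: Fontaine's bound $v_p(\mathrm{Diff}(F/\Q_p))<p/(p-1)$ for extensions cut out by finite-flat group schemes, combined with multiplicativity of the different in towers, the lower bound for wildly ramified abelian extensions, and the conductor-discriminant formula, to pin the local conductor of $M''_{v''}/M'_{v'}$ down to $(\pi_{M'_{v'}})^2$ or $1$. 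For the converse one must show that a conductor-$\m^\fl$ extension can only arise from $b\up2\cand$ in the coset $b\up2+Z^1(\Z[1/Np],\F_p(1))^\fl$: this uses the computation $Z^1(\Z[1/Np],\F_p(1))^\fl=\langle dx,\,b\up1,\,b_0\up1\rangle$, the check that the remaining generator $b_p$ produces a local extension violating the conductor bound, and a final adjustment of the $a\up2,c\up2,d\up2$-coordinates (using the torsor structure from Part I) to realize the given $b\up2\cand$ inside a finite-flat $\rho_2$. Without these arguments the characterization of $M''/M'$ as the \emph{unique} extension with the stated conductor is not established, so your proposal as written does not yet constitute a proof of the first assertion of the proposition.
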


We can rephrase the proposition in terms of a ray class group. Let $C$ denote the maximal quotient of the ray class group of $M'$ of conductor $\m^\fl$ such that it has exponent $p$ and such that the images of prime ideals over $\ell_0$ vanish. We use $C_\omega$ to denote the maximal quotient of $C$ whose $\Gal(M'/\Q)$-action is $\omega$-isotypic. 
\begin{cor}
\label{cor: aup1 vanishing}
The group $C_\omega$ has order $p$ if $a\up1\vert_{\ell_1} = 0$ and has order $1$ if $a\up1\vert_{\ell_1} \neq 0$. The extension of $M'$ associated to $C_\omega$ is $M''$. 
\end{cor}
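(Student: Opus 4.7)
The plan is to identify $C_\omega$ with the Galois group of the maximal abelian exponent-$p$ extension of $M'$ satisfying the hypotheses of Proposition \ref{prop: characterize M''}, and then deduce the corollary in both directions from that proposition together with the existence criterion for $b\up2$ (Proposition \ref{prop: produce b2}).

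First, I would invoke global class field theory. By the defining conditions on $C$, the extension $M'(C_\omega)/M'$ corresponding via class field theory to $C_\omega$ is the maximal subextension of $F'/M'$ that is abelian of exponent $p$, has conductor dividing $\m^\fl$, is $\ell_0$-split, and whose Galois group is $\omega$-isotypic as a $\Gal(M'/\Q)$-module. Being $\omega$-isotypic in the sense of Definition \ref{defn: Delta isotypic} is equivalent, after choosing a lift of the cyclotomic character on $\Delta$, to the $\F_p(1)$-equivariance condition in Proposition \ref{prop: characterize M''}, because the $\Gal(M'/\Q)$-action factors through the abelianization and $\Gal(M'/\Q)^\ab \onto \Delta$ (on $\omega$-isotypic components, the kernel of this projection contributes trivially after passage to coinvariants). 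Hence $C_\omega$ is canonically isomorphic to the maximal quotient of $\Gal(F'/M')$ corresponding to $C_p$-extensions of the type characterized in Proposition \ref{prop: characterize M''}.

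Next, I would treat the two cases. When $a\up1\vert_{\ell_1} = 0$, Proposition \ref{prop: produce b2} provides a cochain $b\up2$ solving the differential equation \eqref{eq:diffeq for b2}; using $b\up2$ as the upper-right entry of the representation $\nu$ in \eqref{eq: 4d nu}, the field $M'' = \oQ^{\ker \nu}$ is a $C_p$-extension of $M'$ that, by Proposition \ref{prop: characterize M''}, satisfies all the conditions defining $C_\omega$. Uniqueness in Proposition \ref{prop: characterize M''} then forces $C_\omega$ to be cyclic of order $p$ with associated field $M''$. Conversely, when $a\up1\vert_{\ell_1} \neq 0$, I would argue by contradiction: any nontrivial $C_p$-quotient of $C_\omega$ would yield a field $M''\cand/M'$ with the same three properties (conductor dividing $\m^\fl$, $\ell_0$-split, $\omega$-isotypic Galois group over $\Gal(M'/\Q)$). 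Lifting the resulting exact sequence of Galois groups
\[
1 \to \Gal(M''\cand/M') \to \Gal(M''\cand/\Q) \to \Gal(M'/\Q) \to 1
\]
to a matrix representation extending $\upsilon$ as in \eqref{eq: 4d nu} would produce a cochain in the role of $b\up2$ satisfying the differential equation \eqref{eq:diffeq for b2} together with the local conditions (the conductor condition translates to finite-flatness at $p$ via Proposition \ref{prop: b2 f-f unramified test}, and $\ell_0$-splitness gives condition (a) of Proposition \ref{prop: produce b2}); this contradicts the first claim of Proposition \ref{prop: produce b2} that such a $b\up2$ exists only when $a\up1\vert_{\ell_1} = 0$.

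The main obstacle is the careful bookkeeping in the second case: one must verify that the central extension of $\Gal(M'/\Q)$ by $\Gal(M''\cand/M')$ is realized by a $4$-dimensional representation of the precise form \eqref{eq: 4d nu} (with $b\up1$ repeating on the superdiagonal and $d\up1$ in the prescribed spot), so that its upper-right entry honestly satisfies the differential equation \eqref{eq:diffeq for b2}. This requires a $\Delta$-equivariant cohomology computation analogous to the one carried out in the proof of Proposition \ref{prop: characterize M'}, using that the relevant $H^2$ is one-dimensional and that the extension class is forced to match the one arising from $\nu$ up to a scalar. Once that identification is made, translation between local Galois-cohomological conditions and ray class conductor conditions closes the argument.
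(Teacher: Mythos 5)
Your proposal is correct and follows essentially the same route as the paper: there the corollary is presented as a class-field-theoretic rephrasing of Proposition \ref{prop: characterize M''}, whose proof already contains exactly the chain you use — extensions of the stated type correspond (via the $\Delta$-equivariant $H^2$ argument borrowed from Proposition \ref{prop: characterize M'}) to representations of the form \eqref{eq: 4d nu}, hence to solutions of \eqref{eq:diffeq for b2}, whose existence is governed by Proposition \ref{prop: produce b2}. Both of your directions (uniqueness forcing $C_\omega$ of order $p$ with associated field $M''$ when $a\up1\vert_{\ell_1}=0$, and nonexistence of any such $C_p$-extension forcing $C_\omega$ trivial when $a\up1\vert_{\ell_1}\neq 0$) are the paper's implicit argument.
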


To prove Proposition \ref{prop: characterize M''}, first we prove the following local lemma, which justifies calling $\m^\fl$ the ``finite-flat modulus.'' We write ``$v_p$'' for the normalized valuation on $\oQ_p$, that is, the valuation $v_p : \oQ_p^\times \to \Q$ such that $v_p(p) = 1$. Also, write $\pi_A$ for the uniformizer of a finite extension $A/\Q_p$. In this lemma, we allow $p \geq 3$, in contrast with our usual assumption that $p \geq 5$. 
\begin{lem}
\label{lem: conductor of finite-flat extension}
Let $p$ be an odd prime. Let $F/\Q_p$ be a Galois extension of degree $p^d(p-1)$ that contains $\Q_p(\zeta_p)$ and is contained in a finite extension of $\Q_p$ cut out by the Galois action on the $\bar\Q_p$-points of a finite-flat group scheme over $\Z_p$ of exponent $p$. Let $F \supset H \supset \Q_p$ such that $[F : H] = p$, $H/\Q_p$ is Galois, and $F/H$ is totally ramified. Then the conductor of $F/H$ is $(\pi_H)^2$. 
\end{lem}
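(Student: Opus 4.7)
The proof is a direct Kummer-theoretic computation combined with a ramification bound arising from the finite-flatness hypothesis. Since $[F : H] = p$ and $[\Q_p(\zeta_p) : \Q_p] = p-1$ are coprime, the inclusion $\Q_p(\zeta_p) \subset F$ forces $\Q_p(\zeta_p) \subset H$. Kummer theory then yields $F = H(\alpha^{1/p})$ for some $\alpha \in H^\times$. Using that the residue field of $H$ is perfect, we iteratively multiply $\alpha$ by $p$-th powers to arrange $\alpha = 1+w$ with $m := v_H(w)$ coprime to $p$. Since $F/H$ is totally ramified and non-trivial, we must have $1 \leq m < pe_H/(p-1)$.

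A direct computation pins down the unique break of the lower-numbering ramification filtration of the cyclic group $\mathrm{Gal}(F/H)$. Writing $y = \alpha^{1/p} = 1+z$ and expanding $y^p = \alpha$, we get $v_F(z) = m$; then $\sigma(z) - z = (\zeta_p - 1)(1+z)$ for a generator $\sigma$ satisfying $\sigma(y) = \zeta_p y$, so $v_F(\sigma(z) - z) = v_F(\zeta_p - 1) = pe_H/(p-1)$. Hence the unique break is
\[
s = pe_H/(p-1) - m,
\]
and the Artin conductor of $F/H$ has exponent $s + 1$. It therefore suffices to prove $s = 1$.

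Since quotients of finite-flat group schemes of exponent $p$ over $\Z_p$ remain in that category, the field $F$ itself arises from such a scheme. Fontaine's ramification bound then asserts that the upper-numbering ramification filtration of $\mathrm{Gal}(F/\Q_p)$ vanishes above level $1/(p-1)$. Translating this bound through the tower $\Q_p \subset \Q_p(\zeta_p) \subset H \subset F$ by Herbrand's transition functions, and using that lower numbering is compatible with passage to the subgroup $\mathrm{Gal}(F/H) \subset \mathrm{Gal}(F/\Q_p)$, yields the inequality $s \leq 1$. Combined with the lower bound $s \geq 1$ from the Kummer normalization above, we conclude $s = 1$ and hence the conductor of $F/H$ equals $\pi_H^{s+1} = \pi_H^2$.

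The main obstacle lies in transferring Fontaine's bound from $\mathrm{Gal}(F/\Q_p)$ down to the subgroup $\mathrm{Gal}(F/H)$: upper numbering is well-behaved under quotients but not under restriction to subgroups, so the argument must carefully compose Herbrand's $\psi$-functions along the tower while exploiting that every intermediate field (in particular $H$) inherits finite-flatness over $\Z_p$. An alternative route would use Raynaud's structure theorem to identify the Kummer class of $\alpha$ directly with the image in $H^\times/(H^\times)^p$ of an element descending from a finite-flat $\mu_p$-torsor over $\Z_p$, from which $v_H(\alpha-1) = pe_H/(p-1) - 1$ could be verified by explicit calculation.
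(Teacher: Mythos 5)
Your proposal is correct in strategy but takes a genuinely different route from the paper's, and the one step you defer is exactly where the content lies. The paper never uses Kummer theory or upper-numbering filtrations: it cites only the different form of Fontaine's bound, $v_p(\mathrm{Diff}(F/\Q_p))<p/(p-1)$, and runs an induction on the wild ramification exponent to compute $v_p(\mathrm{Diff}(F/\Q_p))$ exactly (formula \eqref{eq: different formula}); the inductively known different of $H/\Q_p$, the standard lower bound for wildly ramified differents, and the conductor--discriminant formula (with $p$ odd forcing $(p-1)\mid m$ and $p\le m\le 2p-1$, hence $m=2(p-1)$) then pin the conductor. Your route reduces everything to the single break $s$ of $\Gal(F/H)$ and the sharp upper-numbering form of Fontaine's theorem, avoiding the induction altogether --- and it does work, but not quite as you sketch it. Composing Herbrand functions ``along the tower'' and invoking finite-flatness of $H$ is misdirected: to bound $\mathrm{Diff}(F/H)=\mathrm{Diff}(F/\Q_p)-\mathrm{Diff}(H/\Q_p)$ from above one needs a \emph{lower} bound on the ramification of $H/\Q_p$, which Fontaine does not provide (that is precisely why the paper computes it by induction). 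The direct way to close your argument needs neither the tower nor $H$: since $\Q_p(\zeta_p)\subseteq F$ and $[F:\Q_p]=p^d(p-1)$, the tame quotient $G_0/G_1$ of the inertia of $G=\Gal(F/\Q_p)$ has order exactly $p-1$, so $\varphi_{F/\Q_p}(1)=1/(p-1)$; if $s\ge 2$, then compatibility of lower numbering with subgroups gives $1\neq\Gal(F/H)\subseteq G_2$, hence $\varphi_{F/\Q_p}(s)\ge\varphi_{F/\Q_p}(2)=1/(p-1)+|G_2|/|G_0|>1/(p-1)$, so $G^{(u)}\neq 1$ for some $u>1/(p-1)$, contradicting the bound you quote. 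Thus $s\le 1$, and with $s\ge 1$ the conductor is $(\pi_H)^2$.

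Two smaller repairs. Your justification that Fontaine applies to $F$ (``quotients of finite-flat group schemes remain in that category'') is faulty: a quotient of $\Gal(L/\Q_p)$, where $L$ is the splitting field of the group scheme, need not correspond to a quotient group scheme; but the needed fact is immediate anyway, since upper numbering passes to quotients, so the bound for $\Gal(L/\Q_p)$ descends to $\Gal(F/\Q_p)$. Also, your Kummer normalization tacitly assumes $v_H(\alpha)\equiv 0\pmod p$ (otherwise no multiplication by $p$-th powers makes $\alpha$ a unit; that case has the maximal break $pe_H/(p-1)$ and is only excluded afterwards by the Fontaine step); in fact the whole Kummer computation can be dropped, since for a totally (hence wildly) ramified $C_p$-extension the break is automatically $\ge 1$ and the conductor exponent equals $s+1$. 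In sum: your approach is shorter and avoids the induction, at the cost of needing the upper-numbering statement of Fontaine's theorem rather than just the different bound the paper cites, while the paper's induction yields the exact different formula as a byproduct.
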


\begin{rem}
As the proof will explain, the statement of the lemma is equivalent to the following formula for the valuation of the different of $F/\Q_p$. If the ramification degree of $F/\Q_p$ is written $p^e(p-1)$, then 
\begin{equation}
    \label{eq: different formula}
v_p(\mathrm{Diff}(F/\Q_p)) = \frac{p^{e+1}-2}{p^e(p-1)}.
\end{equation}
\end{rem}

\begin{proof}
The key input is Fontaine's upper bound on the different of an extension $F/\Q_p$ cut out by the action on a finite-flat group scheme: as a particular case of \cite[\S0.1, Corollaire, pg.\ 516]{fontaine1985}, we find that
\[
v_p(\mathrm{Diff}(F/\Q_p)) < \frac{p}{p-1}. 
\]
Because differents are multiplicative in towers, this bound on the different also applies to subextensions of $F/\Q_p$. 

We prove \eqref{eq: different formula} by induction. The base case $e=0$ follows from the standard calculation that $\mathrm{Disc}(\Q_p(\zeta_p)/\Q_p) = (p^{p-2})$; it follows that $\mathrm{Diff}(\Q_p(\zeta_p)/\Q_p) = ((\zeta_p-1)^{p-2})$, which has absolute valuation $(p-2)/(p-1)$ as desired. 

Now we deduce the truth of \eqref{eq: different formula} for $e'=e+1$ in place of $e$ from its truth as written. Let $F/\Q_p$ be as in the lemma, with ramification degree $p^{e'}(p-1)$. Since $\Gal(F/\Q_p)$ is solvable and we can always decompose an extension into an unramified extension followed by a totally ramified extension, we may choose an intermediate field $H$ such that $F \supset H \supset \Q_p(\zeta_p)$, $F/H$ is of degree $p$ and ramified, and $\Gal(F/H) \subset \Gal(F/\Q_p)$ is a normal subgroup. Therefore we can apply \eqref{eq: different formula} to $H$ and conclude that 
\[
v_p(\mathrm{Diff}(H/\Q_p)) = \frac{p^{e+1}-2}{p^e(p-1)}.
\]
Because the different is multiplicative in towers, Fontaine's bound on $\mathrm{Diff}(F/\Q_p)$ implies that
\begin{align*}
&v_p(\mathrm{Diff}(F/H)) = \\
&v_p(\mathrm{Diff}(F/\Q_p)) - v_p(\mathrm{Diff}(H/\Q_p)) < \frac{p}{p-1} - \frac{p^{e+1}-2}{p^e(p-1)} = \frac{2p}{p^{e'}(p-1)}. 
\end{align*}
On the other hand, because $F/H$ is abelian and ramified, a standard result bounding the possible differents of wildly ramified extensions (see e.g.\ \cite[Thm.\ 2.6, Ch.\ III]{neukirch1999}) states that 
\[
v_p(\mathrm{Diff}(F/H)) \geq \frac{p}{p^{e'}(p-1)}.
\]
Altogether, letting $m$ be the integer satisfying $\mathrm{Diff}(F/H) = (\pi_F)^m$, the bounds above dictate that
\[
p \leq m \leq 2p - 1
\]

To determine $m$, we apply the conductor-discriminant formula for $F/H$, which states that $\mathrm{Cond}(F/H)^{p-1} = \mathrm{Disc}(F/H)$. Note also that $\mathrm{Disc}(F/H) = (\pi_H)^m$. Therefore $(p-1) \mid m$ as well. Because $p$ is odd, the bounds on $m$ imply that $m=2(p-1)$. 

Applying the calculation of $m$,  \eqref{eq: different formula} follows by calculating
\begin{align*}
v_p(\mathrm{Diff}(F/\Q_p)) = v_p(\mathrm{Diff}(H/\Q_p)) +  v_p(\mathrm{Diff}(F/H)) \\ = \frac{p^{e+1}-2}{p^e(p-1)} + \frac{2p-2}{p^{e'}(p-1)} = \frac{p^{e'+1}-2}{p^{e'}(p-1)}
\end{align*}
and $\mathrm{Cond}(F/H) = (\pi_F)^2$, as desired. 
\end{proof}

\begin{proof}[{Proof of Proposition \ref{prop: characterize M''}}]
An argument similar to the one appearing in the beginning of the proof of Proposition \ref{prop: characterize M'} proves that a $C_p$-extension of $M'$ contained in $F'$ and with the $\F_p(1)$-coinvariance property of Proposition \ref{prop: characterize M''} exists if and only if a matrix representation of $G_{\Q,Np}$ the form $\nu$ cuts it out. Proposition \ref{prop: produce b2} proves that $\nu$ exists if and only if $a\up1\vert_{\ell_1} = 0$, and that, in that case, there exists a choice of its $b\up2$-coordinate with the properties (a) and (b) of Proposition \ref{prop: produce b2}. 

What we will prove is that properties (a) and (b) of a candidate solution $b\up2\cand$ to differential equation \eqref{eq:diffeq for b2} listed in Proposition \ref{prop: produce b2} hold true if and only if the extension $M''\cand/M'$ cut out by $b\up2\cand\vert_{G_{M'}}$ satisfies the properties listed in Proposition \ref{prop: characterize M''}. We have already observed that, while there is a torsor of possibilities for $b\up2\cand$, the extension $M''/M'$ cut out by $b\up2\vert_{G_{M'}}$ is nonetheless well-defined. Therefore, the uniqueness of $M''/M'$ will follow. 

\noindent
\textbf{Unconditionally, $M''\cand/M'$ is unramified at all primes of $M'$ over $\ell_1$.} 
Under our assumption that $a\up1\vert_{\ell_1} = 0$, which implies that $d\up1\vert_{\ell_1}=0$, the cochain $b\up2\cand\vert_{\ell_1} : G_{\ell_1} \to \F_p$ is a cocycle, since $-db\up2$ equals $b\up1 \smile d\up1 + a\up1 \smile b\up1$. Therefore, $b\up2\cand\vert_{\ell_1}$ is in the span of $b\up1$ up to 1-coboundaries. Since 1-coboundaries on $G_{\ell_1}$ valued in $\F_p(1)$ vanish on inertia and $b\up1\vert_{G_{M'}} = 0$, we conclude that $M''\cand/M'$ is unramified at the distinguished prime over $\ell_1$. Because both $M'$ and $M''\cand$ are Galois over $\Q$, it follows that all primes of $M'$ over $\ell_1$ are unramified in $M''\cand$. 

\noindent
\textbf{$M''\cand/M'$ is $\ell_0$-split if and only if condition (a) of Proposition \ref{prop: produce b2} holds if and only if there exists a $\rho_{2,\mathrm{cand}} : G_{\Q,Np} \to E_2^\times$ as in \eqref{eq:rho2} with $b\up2\cand$ as its $b\up2$-coordinate.}
A very similar argument to the case of $M'\cand/M$ argued in the proof of Proposition \ref{prop: characterize M'} applies to prove the first equivalence. The second equivalence follows directly from [Part I, Lem.\ \ref{P1: lem: exists 2nd-order 1-reducible}(2)]. 

\noindent
\textbf{$M''\cand/M'$ has conductor dividing $\m^\fl$ and is $\ell_0$-split if and only if $\rho_{2,\mathrm{cand}}$ as in \eqref{eq:rho2} exists and also satisfies condition (b) of Proposition \ref{prop: produce b2}.} Condition (b) states that there exists some $\rho_2$ as in \eqref{eq:rho2} such that $\rho_2\vert_p$ is finite-flat and has $b\up2\cand$ as its $b\up2$-coordinate. Using the previous $\ell_0$-local claim, we assume that the corresponding pair $(b\up2\cand, M''\cand)$ occurs as the $b\up2$-coordinate of some $\rho_{2,\mathrm{cand}}$, so that it only remains to address the $p$-local conditions

Assume condition (b) is true. At the distinguished prime $v''$ of $M''\cand$ over $p$, $(M''\cand)_{v''}/\Q_p$ is contained in $\oQ_p^{\ker\rho_{2,\mathrm{cand}}\vert_p}$. Condition (b) implies that $\oQ_p^{\ker\rho_{2,\mathrm{cand}}\vert_p}/\Q_p$ is cut out by the $G_p$-action on the $\oQ_p$-points of a finite-flat group scheme over $\Z_p$ with exponent $p$. The same statement applies to the subfield $M' \subset M''$ with distinguished prime $v'$ over $p$. 
Therefore, by Lemma \ref{lem: conductor of finite-flat extension}, the conductor of $(M''\cand)_{v''}/M'_{v'}$ is either $v'^2$ or $1$. Because both $M''\cand/\Q$ and $M'/\Q$ are Galois extensions, this local conductor calculation applies to all primes of $M'$ over $p$. In other words, $\mathrm{Cond}(M''\cand/M') \mid \m^\fl$, as desired.  

We next prove the converse: assume that $M''\cand/M'$ has conductor dividing $\m^\fl$, having been cut out by $b\up2\cand\vert_{G_{M'}}$ where the only assumption on $b\up2\cand$ is that it is an element of $C^1(\Z[1/Np], \F_p(1))$ satisfying the differential equation \eqref{eq:da1 and db2}. The set of solutions $b\up2\cand$ is a torsor under $Z^1(\Z[1/Np], \F_p(1))$, which has basis $\{dx, b_p, b\up1, b_0\up1\}$. By \cite[Lem.\ C.4.1]{WWE3}, the set of solutions $b\up2\cand$ making $\nu\vert_p$ finite-flat are a torsor under the subspace $Z^1(\Z[1/Np], \F_p(1))^\fl = \langle dx, b\up1, b_0\up1\rangle$ computed in [Part I, Lem.\ \ref{lem: global Kummer theory}], namely,
\[
b\up2 + Z^1(\Z[1/Np], \F_p(1))^\fl. 
\]
Lemma \ref{lem: conductor of finite-flat extension} implies that the $C_p$-extensions of $M'_{v'}$ cut out by $b\vert_{G_{M_{v'}'}}$ for any $b\in b\up2 + Z^1(\Z[1/Np], \F_p(1))^\fl$ satisfies the conductor bound stated in the Lemma. Conversely, one can calculate that the conductor of $b_p\vert_{G_{\Q_p(\zeta_p)}}$ does not cut out a $C_p$-extension satisfying the conductor bound, which implies the same result for the $C_p$-extension of $M'_{v'}$ cut out by $b\vert_{G_{M'_{v'}}}$ for any 
\[
b \in \left[b\up2 + Z^1(\Z[1/Np], \F_p(1))\right] \smallsetminus \left[b\up2 + Z^1(\Z[1/Np], \F_p(1))^\fl\right]. 
\]
Therefore, because $M''\cand/M'$ has conductor bounded by $\m^\fl$, $b\up2\cand \in b\up2 + Z^1(\Z[1/Np], \F_p(1))^\fl$, which is equivalent to $\nu\vert_p$ being finite-flat. According to the torsor structures on $\Pi_2^{\det}$ and $\Pi_2^{\det,p}$ described in Part I, Lemma \ref{lem: rho2 constant determinant} and Part I, Proposition \ref{P1: prop: exists unique beta}, one can adjust $\rho_2$ only in its $a\up2$, $c\up2$, and $d\up2$-coordinates to produce a $\rho_{2,\mathrm{cand}}'$ that is finite-flat at $p$ with $b\up2$-coordinate $b\up2\cand$. This completes the claimed equivalence. 

It only remains to prove the claimed characterization of $K''/K$. Under the embedding $\nu : \Gal(M''/\Q) \rinj \GL_4(\F_p)$ of \eqref{eq: 4d nu}, the abelian subgroup $\Gal(M''/K)$ admits an isomorphism
\begin{equation}
\label{eq: M''/K to V}
\left.\ttmat{a\up1}{b\up2}{c\up1}{-a\up1}\right\vert_{G_K} : \Gal(M''/K) \isoto V \subset M_2(\F_p)
\end{equation}
to the subspace of trace $0$ matrices $V \subset M_2(\F_p)$. Likewise, $\nu$ produces an isomorphism
\[
\ttmat{\omega}{b\up1}{0}{1} : \Gal(K/\Q) \isoto B := \ttmat{*}{*}{0}{1} \subset \GL_2(\F_p). 
\]
It follows from the shape of $\nu$ in \eqref{eq: 4d nu} that the natural conjugation action of $\Gal(K/\Q)$ on $\Gal(M''/K)$ matches the usual adjoint action of $B$ on $V$ via these isomorphisms. 

Our characterization of $K''/K$ will follow from the following description of orbits of the action of $\Gal(K/\Q)$ by conjugation on the $p^2 + p +1$ subgroups of $\Gal(M''/K)$ isomorphic to $C_p \times C_p$, one of which has fixed field equal to $K''$. Under the isomorphisms above, the following description of five orbits  of this action, labeled (a)-(e), matches the listing labeled (a)-(e) in Lemma \ref{lem: 2d orbits}.

\begin{enumerate}[label=(\alph*)]
\item One orbit is a singleton: the one concentrated in the $a\up1$ and $b\up2$-coordinates. Its fixed field is $M$. 
\item There is an orbit of cardinality $p$, one of which is the subgroup $\Gal(M''/K') \subset \Gal(M''/K)$, which is concentrated in the $c\up1$ and $b\up2$-coordinates. The fixed fields of its conjugates are the subfields of $M''$ isomorphic to $K'$, all of which are contained in their common Galois closure, $M'$.
\item Another orbit of cardinality $p$ includes the subgroup $\Gal(M''/K'')$ that is concentrated in the $c\up1$ and $a\up1$-coordinates. The fixed fields of its conjugates are the subfields of $M''$ isomorphic to $K''$; none of these are contained in $M'$.
\item An orbit of cardinality $p(p-1)/2$. 
\item Another orbit of cardinality $p(p-1)/2$. 
\end{enumerate}
Because of the running assumption that $p \geq 5$, we conclude that the isomorphism class of $K''$ is the only isomorphism class of $C_p$-extensions of $K$ contained in $M''$ that has cardinality $p$ and also is not contained in $M'$. 
\end{proof}

\subsection{Orbits in Grassmannians of the trace zero adjoint representation of the conjugation action of the Borel subgroup}

While there is usually a running assumption that $p \geq 5$, in this section only we also allow $p=3$. Consider the adjoint action of the matrix subgroup 
\[
B = \F_p^\times \ltimes \F_p \cong \ttmat{\ast}{\ast}{0}{1} \subset \GL_2(\F_p)
\]
on the $3$-dimensional $\F_p$-vector subspace $V \subset M_2(\F_p)$ consisting of matrices of trace $0$. This induces an action of $B$ on the set of two-dimensional subspaces of $V$, which we now describe. We will use the expression in coordinates $a,b,c \in \F_p$ according to the basis
\[
V \ni v = b \cdot \ttmat{0}{1}{0}{0} + a \cdot 
\ttmat{1}{0}{0}{-1} + c \cdot \ttmat{0}{0}{1}{0} = (b,a,c).
\]
We will also use the self-adjointness of the trace pairing on $V$. 
\begin{lem}
\label{lem: 2d orbits}
There are 5 orbits of the adjoint action of $B$ on the set of two-dimensional subspaces of $V$. A complete list of orbits and their cardinalities, along with at least one representative of the orbit, follows. 
\begin{enumerate}[label=(\alph*)]
\item $\langle (1,0,0), (0,1,0)\rangle$, the upper-triangular matrices, comprises an orbit of cardinality $1$
\item $\langle (1,0,0), (0,a,1)\rangle$ for $a \in \F_p$, the 2-dimensional subspaces that contain $\sm{0}{1}{0}{0}$ but are not contained in the upper-triangular matrices, together comprise an orbit of size $p$
\item $\langle (0,a, 1), (4a, 1, 0)\rangle$ for $a \in \F_p$, which together comprise an orbit of size $p$ 
\item $\langle (0,1,0), (-1, 0, 1)\rangle$ is a representative of an orbit of size $p(p-1)/2$
\item $\langle (0,1,0), (-1, 0, c)\rangle$, where $c \in \F_p^\times$ is not a square, is a representative of an orbit of size $p(p-1)/2$
\end{enumerate}
When $p \geq 5$, the orbit (c) is the only orbit of cardinality $p$ that has no representative containing $\sm{0}{1}{0}{0}$. 
\end{lem}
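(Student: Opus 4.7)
The plan is to exploit the non-degenerate $B$-equivariant symmetric bilinear form $\langle v, w\rangle := \mathrm{tr}(vw)$ on $V$. Taking orthogonal complements then furnishes a $B$-equivariant bijection between the set of $2$-dimensional subspaces of $V$ and the set of lines in $V$, reducing the problem to computing $B$-orbits on $\mathbb{P}(V) \cong \mathbb{P}^2(\F_p)$.

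In coordinates $(b, a, c)$, the adjoint action of the diagonal torus $T$ scales with weights $(2, 0, -2)$, and the unipotent subgroup acts by $\sm{1}{x}{0}{1} \cdot (b, a, c) = (b - 2xa - x^2 c,\ a + xc,\ c)$. The stratum $\{c = 0\}$ is $B$-stable and gives two orbits on lines: the unique fixed line $(1, 0, 0)$, and a single $U$-orbit consisting of the $p$ lines $(b, 1, 0)$. On the complementary stratum $\{c \neq 0\}$ I would normalize $c = 1$ and apply the unipotent with $x = -a$ to send $(b, a, 1)$ to $(b + a^2, 0, 1)$, so that every orbit has a representative $(b, 0, 1)$.

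To separate these remaining orbits I would invoke the $B$-invariant quadratic form $Q(v) := a^2 + bc = -\det v$. Because $Q(\lambda v) = \lambda^2 Q(v)$, the square class of $Q$ is a well-defined $B$-invariant function on $\{[v] \in \mathbb{P}(V) : Q(v) \neq 0\}$, and distinguishes three types within $\{c = 1\}$: $Q = 0$ (nilpotent), $Q$ a nonzero square, and $Q$ a nonsquare. A direct stabilizer calculation shows that the stabilizer of the line $(b, 0, 1)$ in $B$ for $b \neq 0$ consists of $\{(s, y) \in \F_p^\times \ltimes \F_p : y = 0,\ s^2 = 1\}$, of order $2$, giving orbits of size $p(p-1)/2$; the stabilizer of $(0, 0, 1)$ is the full torus, giving an orbit of size $p$. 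The sum $1 + p + p + p(p-1)/2 + p(p-1)/2 = p^2 + p + 1 = |\mathbb{P}^2(\F_p)|$ confirms no orbit has been missed.

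To finish, I would take orthogonal complements of the computed line orbits and match the resulting $2$-plane orbits to the representatives (a)--(e) stated in the lemma. For the final sentence, when $p \geq 5$ one has $p(p-1)/2 > p$, so (b) and (c) are the only orbits of cardinality $p$; an easy check shows $(1, 0, 0) = \sm{0}{1}{0}{0}$ lies in a representative of (b) but not of (c). The main technical obstacle I anticipate is bookkeeping: confirming that $Q$ is genuinely constant on orbits (so that $T$ and $U$ do not combine to mix square classes), and matching the sign conventions of the trace pairing so that the explicit $2$-plane representatives orthogonal to the identified line orbits coincide with those recorded in the lemma.
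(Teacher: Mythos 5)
Your proposal is correct and takes essentially the same route as the paper: the paper likewise deduces the classification of $2$-planes from the $B$-orbits on lines in $V$ via the perfect self-duality of $V$ under the trace pairing, and proves the line classification by unipotent normalization, orbit--stabilizer computations, and the count $p^2+p+1$, leaving the final orthocomplement bookkeeping implicit just as you do. The only differences are cosmetic: the paper separates the two orbits of size $p(p-1)/2$ by noting that only the diagonal torus preserves the family $\langle (b,0,1)\rangle$ and it cannot mix square and nonsquare $b$, where you invoke the invariant $Q=-\det$; also note that for $B=\sm{s}{*}{0}{1}$ the torus weights on $(b,a,c)$ are $(1,0,-1)$ rather than $(2,0,-2)$, which is in fact what your stabilizer condition $s^2=1$ (hence orbit size $p(p-1)/2$) uses.
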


Lemma \ref{lem: 2d orbits} follows directly from the following lemma characterizing the orbits of the $B$-action on $1$-dimensional subspaces of $V$, along with the perfect self-duality of $V$ with respect to the trace pairing. We omit the proof deducing Lemma \ref{lem: 2d orbits} from the following lemma, but list the orbits there in the same order that their duals were listed in Lemma \ref{lem: 2d orbits}.  
\begin{lem}
\label{lem: 1d orbits}
There are 5 orbits of the adjoint action of $B$ on the set of one-dimensional subspaces of $V$. A complete list of orbits and their cardinalities, along with at least one representative of the orbit, follows. 
\begin{enumerate}[label=(\alph*)]
\item $\langle (1,0,0)\rangle$, the strictly upper-triangular matrices, comprise an orbit of cardinality $1$
\item $\langle (b,1,0)\rangle$ for $b \in \F_p$, the lines that are upper-triangular but not strictly upper-triangular, together comprise an orbit of size $p$
\item $\langle (-a^2,a, 1)\rangle$ for $a \in \F_p$, which together comprise an orbit of size $p$
\item $\langle (1,0,1)\rangle$ is a representative of an orbit of size $p(p-1)/2$
\item $\langle (b,0,1)\rangle$, where $b \in \F_p^\times$ is not a square, is a representative of an orbit of size $p(p-1)/2$
\end{enumerate}
When $p \geq 5$, the orbit (c) is the only orbit of cardinality $p$ whose members do not consist entirely of upper-triangular matricies. 
\end{lem}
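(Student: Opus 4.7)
\textbf{Proof plan for Lemma \ref{lem: 1d orbits}.} I would begin by fixing the basis $\bigl\{e_+ = \sm{0}{1}{0}{0},\ h = \sm{1}{0}{0}{-1},\ e_- = \sm{0}{0}{1}{0}\bigr\}$ of $V$, writing a general element as $v = b e_+ + a h + c e_-$ with coordinates $(b,a,c)$, and explicitly computing the conjugation action of $\sm{d}{e}{0}{1} \in B$. A direct matrix calculation yields
\[
(b,a,c) \longmapsto \bigl(db - 2ea - e^2 c d^{-1},\ a + ec d^{-1},\ c d^{-1}\bigr).
\]
Since the $c$-coordinate is scaled independently, the locus $c=0$ is $B$-stable, and I would split the analysis accordingly.

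In the $c=0$ regime the action restricts to $(b,a,0) \mapsto (db - 2ea,\, a,\, 0)$. The line $\langle(1,0,0)\rangle$ is visibly fixed, giving orbit (a). For $a \neq 0$ I would rescale to $a=1$, then note that for any target $b' \in \F_p$ one can solve $db - 2e = b'$ in the $B$-variables, so all lines $\langle (b,1,0)\rangle$ form a single orbit of size $p$, which is orbit (b). Together these account for the $p+1$ lines in the upper-triangular subspace.

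In the $c \neq 0$ regime I would rescale representatives so that $c = 1$ and, after the $B$-action, multiply back through by $d$ to re-normalize the third coordinate. The formula above becomes
\[
(b,a,1) \longmapsto \bigl(d^2 b - 2dea - e^2,\ da+e,\ 1\bigr).
\]
The key observation is the algebraic identity $b' + (a')^2 = d^2\bigl(b + a^2\bigr)$, where $(b',a',1)$ denotes the image of $(b,a,1)$. Hence the quantity $b + a^2 \in \F_p$ is a $B$-invariant modulo squares in $\F_p^\times$, partitioning the $p^2$ lines with $c \neq 0$ into three classes: $b+a^2 = 0$, $b+a^2$ a nonzero square, and $b+a^2$ a non-square. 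To promote these into orbits I would verify transitivity within each class by solving for $(d,e)$ that carry a chosen representative to any other with the same invariant --- this is straightforward because $a' = da+e$ runs over $\F_p$ as $e$ varies, and then $d$ adjusts the scalar square class.

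This produces orbit (c) from $b+a^2 = 0$ (parameterized by $a \in \F_p$ via $(-a^2,a,1)$, size $p$), and orbits (d), (e) from the two non-trivial square classes (each of size $p(p-1)/2$, since the fiber of the map $(b,a)\mapsto b+a^2$ over any constant has cardinality $p$, and each nonzero square class has $(p-1)/2$ values). The totals $1 + p + p + p(p-1)/2 + p(p-1)/2 = p^2 + p + 1 = |\mathbb{P}(V)|$ confirm the list is exhaustive. For the final assertion, the orbits of cardinality exactly $p$ are (b) and (c); when $p \geq 5$ we have $p(p-1)/2 > p$ so (d) and (e) are strictly larger, and orbit (b) is the one whose members are upper-triangular, leaving (c) uniquely characterized as claimed. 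The only mild obstacle is the transitivity verification in the $c \neq 0$ case, but it reduces to solving a linear-in-$e$ equation after $d$ has been chosen to fix the square class of $b + a^2$.
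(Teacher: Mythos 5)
Your proposal is correct, and it takes a genuinely different route from the paper. The paper argues orbit-by-orbit via the orbit-stabilizer lemma: it observes that conjugation preserves (strict) upper-triangularity, computes the stabilizers of $\langle(0,1,0)\rangle$ and $\langle(0,0,1)\rangle$ (the diagonal torus) and of $\langle(b,0,1)\rangle$ (the subgroup $\sm{\pm1}{}{}{1}$), distinguishes (d) from (e) by the residual diagonal action, and then needs the final count $1+p+p+p(p-1)/2+p(p-1)/2=p^2+p+1$ to certify that the list of orbits is complete. You instead write the action in coordinates and classify by the invariant $b+a^2$ (which is $-\det$ of the chosen representative, well-defined on lines up to multiplication by squares, since your formula gives $b'+(a')^2=d^2(b+a^2)$), splitting the $c\neq 0$ lines into the classes zero/square/non-square and then checking $B$-transitivity within each class directly by solving for $(d,e)$. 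Your argument has the advantage that exhaustiveness is built in (the case split $c=0$ versus $c\neq 0$ and the three invariant classes cover everything by construction, so the cardinality check is only a confirmation), and the invariant makes the separation of (c), (d), (e) transparent; the paper's stabilizer computations are shorter to state but lean on the counting step to rule out missed orbits. Your transitivity verifications are sound: for $b+a^2=0$ take $d=1$, $e=a'-a$; for a fixed nonzero square class choose $d$ with $d^2(b+a^2)=b'+(a')^2$ and then $e=a'-da$, exactly as you indicate. The concluding remark about $p\geq 5$ (so that $p(p-1)/2\neq p$) matches the paper.
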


\begin{proof}
The orbit listed in (a) is, indeed, an orbit, because conjugation by $B$ preserves the properties ``upper-triangular'' and ``strictly upper-triangular.'' For this reason, along with the fact that the stabilizer of $\langle (0,1,0)\rangle \subset V$ is the diagonal subgroup $\F_p^\times$ of $B$, we apply the orbit-stabilizer lemma to deduce that there is an orbit as listed in (b). 

The diagonal subgroup $\F_p^\times$ of $B$ fixes $\langle (0,0,1)\rangle \subset V$, but one easily checks that its normal subgroup $\F_p \subset B$ acts faithfully on the orbit of $\langle (0,0,1)\rangle$. By the orbit-stabilizer lemma, we have the orbit (c). 

One readily checks that the stabilizer of $\langle(b,0,1)\rangle$, for any $b \in \F_p^\times$, is the subgroup $\sm{\pm 1}{}{}{1} \subset B$. In addition, the maximal subgroup of $B$ preserving the set of all lines of the form $\langle (b,0,1)\rangle$, for $b \in \F_p^\times$, is  the diagonal subgroup $\F_p^\times \subset B$. Because the representatives listed in (d) and (e) are not in the same orbit under the adjoint action of the diagonal subgroup of $B$, we deduce from the orbit-stabilizer lemma that they each are representatives of orbits of size $p(p-1)/2$. 

Because the total number of lines in $V$ is $p^2+p + 1$, which equals the sum of the cardinalities of orbits listed in (a)-(e),  the list of orbits is complete. 
\end{proof}

\subsection{The value and vanishing of $\alpha^2+\beta$ in terms of algebraic number theory}
In line with the long tradition of theorems relating congruences between cusp forms and Eisenstein series with an algebraic number-theoretic condition that started with Ribet's converse to Herbrand's theorem, we would like to rephrase the conditions for $\bT$ to have minimal rank, proved in the main Theorem \ref{thm: main}, in terms of algebraic number theory. This has precedent, for example, the main theorem of \cite{WWE3} that shows that $\bT_{\ell_0}$ has minimal rank if and only if a certain class group has $p$-torsion of minimal rank. 

We have already seen in Proposition \ref{prop: characterize M''} that condition (i) of Theorem \ref{thm: main} is equivalent to the existence of $p$-torsion in a ray class group of $M'$. It remains to interpret condition (ii), the vanishing of $\alpha^2+\beta$, in terms of algebraic number theory. This is already partially complete in this expression of condition (ii) of Theorem \ref{thm: main} in terms of prime decomposition in the extensions $K'/K$ and $K''/K$ cut out by $a\up1\vert_{G_K}$ and $b\up2\vert_{G_K}$, respectively. Therefore our goal is to ``remove $a\up1$ and $b\up2$'' from this expression by substituting universal characterizations of $K', K''$ as completed in Propositions \ref{prop: characterize M'} and \ref{prop: characterize M''}. Additionally, we characterize whether or not $\alpha^2+\beta \in \mu_p^{\otimes 2}$ is a square, where \emph{squares} have the form $\zeta \otimes \zeta$ for some $\zeta \in \mu_p$. 

We begin with the following description of the $C_p \times C_p$-extension $K'K''/K$, a corollary of the proof of Proposition \ref{prop: characterize M''}. 
\begin{cor}
The composite $K'K'' \subset M''$ is a member of the unique isomorphism class of $C_p \times C_p$-extensions of $K$ contained in $M''$ that has cardinality $p$ and does not contain $M$.
\end{cor}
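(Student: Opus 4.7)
The plan is to deduce the corollary from essentially the same Galois-theoretic bookkeeping that was used to characterize $K''$ in Proposition \ref{prop: characterize M''}, but applied to one-dimensional subspaces of $V$ instead of two-dimensional ones. In particular, I will invoke Lemma \ref{lem: 1d orbits} (the companion of Lemma \ref{lem: 2d orbits} already used in the proof of the proposition) and read off the orbit containing $\Gal(M''/K'K'')$.

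Concretely, first I would recall the isomorphism \eqref{eq: M''/K to V}, which identifies $\Gal(M''/K)$ with $V \subset M_2(\F_p)$ compatibly with the $\Gal(K/\Q)$-action by conjugation and the adjoint $B$-action on $V$. By Galois theory, $C_p\times C_p$-extensions $F/K$ with $F \subset M''$ correspond to index-$p^2$ subgroups of $\Gal(M''/K)$, i.e.\ lines in $V$, and isomorphism classes of such $F$ correspond to $B$-orbits on these lines. In particular, the cardinality of the isomorphism class equals the cardinality of the corresponding $B$-orbit.

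Next I would compute the line corresponding to $K'K''$. By Galois theory, $\Gal(M''/K'K'') = \Gal(M''/K') \cap \Gal(M''/K'')$, and under the identifications used in the proof of Proposition \ref{prop: characterize M''}, these are the $a\up1$-hyperplane and the $b\up2$-hyperplane in $V$, so their intersection is the line in the $c\up1$-direction, which corresponds to $\langle (0,0,1)\rangle$ in the $(b,a,c)$-coordinates of Lemma \ref{lem: 1d orbits}. This line lies in orbit (c) of that lemma, which has cardinality $p$, giving (b) the $C_p\times C_p$ structure and (c) the isomorphism-class cardinality statement.

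Finally I would verify the ``does not contain $M$'' characterization and its uniqueness. The condition $M \subset F$ is equivalent to $\Gal(M''/F) \subset \Gal(M''/M)$, and the latter subgroup corresponds under \eqref{eq: M''/K to V} to the upper-triangular subspace of $V$ (the vanishing locus of the $c$-coordinate), since $M/K$ is cut out by $c\up1\vert_{G_K}$. Inspecting the five orbits of Lemma \ref{lem: 1d orbits}, only orbits (a) and (b) lie entirely in the upper-triangular subspace, while orbits (c), (d), (e) do not; among those three, only (c) has cardinality $p$ (using $p \geq 5$, so that $p(p-1)/2 \neq p$). Hence orbit (c) is the unique orbit of cardinality $p$ whose members do not contain $M$, and $K'K''$ belongs to it. The main (though minor) obstacle is just keeping the $(b,a,c)$-coordinates straight when translating between subgroups of $\Gal(M''/K)$ and lines in $V$, but once that dictionary is in place the statement is immediate from Lemma \ref{lem: 1d orbits}.
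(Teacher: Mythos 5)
Your proposal is correct and follows essentially the same route as the paper: identify $\Gal(M''/K'K'')$ as the order-$p$ subgroup of $\Gal(M''/K)$ concentrated in the $c\up1$-coordinate (the paper reads this off directly from $\nu$, you obtain it as the intersection of the hyperplanes fixing $K'$ and $K''$, which is the same computation) and then appeal to orbit (c) of Lemma \ref{lem: 1d orbits}, noting that for $p \geq 5$ it is the unique orbit of cardinality $p$ whose members are not upper-triangular, i.e.\ whose fixed fields do not contain $M$. Your explicit check that containment of $M$ corresponds to vanishing of the $c$-coordinate is exactly the (tacit) content of the paper's final sentence.
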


\begin{proof}
Under the homomorphism $\nu$ of \eqref{eq: 4d nu}, $K'K'' \subset M''$ is the fixed field of the cyclic order $p$ subgroup of $\Gal(M''/\Q)$ concentrated in the $c\up2$-coordinate. This subgroup belongs to the conjugacy class listed as item (c) in Lemma \ref{lem: 1d orbits}. As we see in that lemma, this is the only conjugacy class of cardinality $p$ that has members that are non-trivial in the $c\up2$-coordinate, and therefore does not fix $M$. 
\end{proof}

\begin{thm}
\label{thm: main ANT}
The following algebraic number theoretic conditions characterize $\alpha^2+\beta \in \mu_p^{\otimes 2}$. 
\begin{enumerate}
\item $\alpha^2 + \beta$ vanishes if and only if the isomorphism class consisting of decomposition subfields of $M''/\Q$ at primes over $\ell_0$ has cardinality $p$ if and only if this isomorphism class equals the isomorphism class of $K'K''$. 
\item $\alpha^2 + \beta \in \mu_p^{\otimes 2}$ is a non-vanishing square if and only if the isomorphism class of decomposition subfields of $M''/\Q$ at the prime $\ell_0$ is equal to the fixed fields of the order $p$ subgroups of $\Gal(M''/K)$ given in part (d) of Lemma \ref{lem: 1d orbits} under the isomorphism $V \cong \Gal(M''/K)$. 
\item $\alpha^2 + \beta \in \mu_p^{\otimes 2}$ is a non-square if and only if the isomorphism class of decomposition subfields of $M''/\Q$ at the prime $\ell_0$ is equal to the fixed fields of the order $p$ subgroups of $\Gal(M''/K)$ given in part (e) of Lemma \ref{lem: 1d orbits} under the isomorphism $V \cong \Gal(M''/K)$. 
\end{enumerate}
\end{thm}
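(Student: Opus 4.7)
The plan is to compute the decomposition subgroup at a prime of $M''$ over $\ell_0$ explicitly using the matrix representation $\nu$ of \eqref{eq: 4d nu}, identify its image as a line in $V$ under the isomorphism \eqref{eq: M''/K to V}, and then classify the resulting $B$-orbit using Lemma \ref{lem: 1d orbits}. Since $\alpha^2 + \beta \in \mu_p^{\otimes 2}$ is canonical by Part I, and $M''/\Q$ along with its subfields $K'$ and $K''$ are defined independently of the pinning data, both sides of the equivalences being proved make sense intrinsically. I may freely work with a fixed pinning but will carry $\alpha$ through the computation rather than setting it to zero, since that makes the orbit calculation cleaner.

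First, I would restrict $\nu$ to $G_{\ell_0}$. Because $\ell_0 \equiv 1 \pmod p$ and $\ell_1$ is a $p$-th power mod $\ell_0$, we have $\omega|_{\ell_0} = 1$ and $b\up1|_{\ell_0} = 0$ as cocycles (the latter because $B^1(\Q_{\ell_0}, \F_p(1)) = 0$ under trivial Galois action). The Definitions \ref{defn: alpha} and \ref{defn: beta} of $\alpha$ and $\beta$ give, at the cocycle level, the identities $a\up1|_{\ell_0}(\sigma) = \alpha \cdot c\up1|_{\ell_0}(\sigma)$ and $b\up2|_{\ell_0}(\sigma) = \beta \cdot c\up1|_{\ell_0}(\sigma)$ for all $\sigma \in G_{\ell_0}$, again because the relevant $B^1$'s vanish. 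Setting $t := c\up1|_{\ell_0}(\sigma)$, the matrix $\nu|_{\ell_0}(\sigma)$ becomes upper-triangular with diagonal entries all $1$ and with its strictly upper entries being linear in $t$. Because $c\up1$ is ramified at $\ell_0$, its restriction surjects onto $\F_p$, so the image of $\nu|_{\ell_0}$ is a cyclic subgroup of order $p$ sitting inside $\Gal(M''/K)$. Under the isomorphism $\Gal(M''/K) \isoto V$, this decomposition subgroup is identified with the line $\langle (\beta, \alpha, 1)\rangle$ in $(b,a,c)$-coordinates.

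The key computation is then to determine the $B$-orbit of $\langle (\beta, \alpha, 1)\rangle$. A direct calculation of adjoint action of $\sm{u}{v}{0}{1} \in B$ on $V$ yields the coordinate formula $(b,a,c) \mapsto (ub - 2va - u^{-1}v^2 c,\; a + u^{-1}vc,\; u^{-1}c)$. Applying this to $(\beta, \alpha, 1)$, rescaling so the third coordinate is $1$, and substituting $s := u\alpha + v$, the orbit rewrites as
\[
\{\langle (u^2(\alpha^2 + \beta) - s^2,\, s,\, 1)\rangle : u \in \F_p^\times,\, s \in \F_p\},
\]
which depends only on $\gamma := \alpha^2 + \beta$. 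Comparing with Lemma \ref{lem: 1d orbits}: when $\gamma = 0$ the orbit is $\{\langle (-s^2, s, 1)\rangle\}$, which is orbit (c); when $\gamma = w^2$ is a nonzero square, setting $r = uw$ produces representatives $(r^2 - s^2, s, 1)$ including $(1,0,1)$, so the orbit is (d); and when $\gamma$ is a non-square, the same analysis places the orbit at (e). For part (1), it remains to identify the fixed field of the order-$p$ subgroup $\langle(0,0,1)\rangle$ with $K'K''$: this subgroup annihilates both the $a\up1$ and $b\up2$ coordinates under the isomorphism, so its fixed field contains $K' = K(\sqrt[p]{a\up1|_{G_K}})$ and $K'' = K(\sqrt[p]{b\up2|_{G_K}})$, and has degree $p^2$ over $K$, matching $[K'K'':K]$. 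This, together with the orbit identification, yields (1), while (2) and (3) are read off directly.

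The main obstacles in the proof are mostly bookkeeping in nature: keeping track of the various identifications $\F_p(i) \cong \F_p$ via the fixed $\zeta_p$, ensuring that the scalar $\beta$ appearing in the matrix entry $b\up2|_{\ell_0}$ is precisely the same $\beta$ whose squareness is being tested, and verifying that the decomposition subgroup computation on the distinguished prime transports correctly to other primes over $\ell_0$ via the $\Gal(M/\Q)$-conjugation action that induces the $B$-action on $V$.
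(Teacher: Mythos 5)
Your proposal is correct and follows essentially the same route as the paper: restrict to a decomposition group at $\ell_0$, use the definitions of $\alpha$ and $\beta$ (valid at the cocycle level since the local coboundaries vanish) together with the isomorphism \eqref{eq: M''/K to V} to realize $D_{\ell_0}$ as a line in $V$, and classify its conjugacy class via Lemma \ref{lem: 1d orbits}, noting that conjugation factors through $\Gal(K/\Q)\cong B$. The only differences are cosmetic: the paper invokes Lemma \ref{lem: choice pinning data} and the canonicity of $\alpha^2+\beta$ to conjugate to $\alpha=0$ and read off the representative $\sm{0}{\alpha^2+\beta}{1}{0}$, and it deduces part (1) directly from Theorem \ref{thm: main} and the preceding corollary, whereas you carry $\alpha$ through an explicit $B$-orbit computation of $\langle(\beta,\alpha,1)\rangle$ (which independently shows the orbit depends only on $\alpha^2+\beta$) and identify the fixed field of $\langle(0,0,1)\rangle$ with $K'K''$ by hand.
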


\begin{proof}
Characterization (1) of the vanishing of $\alpha^2+\beta$ follows directly from Theorem \ref{thm: main}. Indeed, if there exists a prime of $M''$ over $\ell_0$ such that the prime of $K$ lying below it splits in both $K'/K$ and $K''/K$, then the decomposition subgroup of this prime fixes $K'K''$. (Recall that $\ell_0$ splits completely in $K/\Q$.) 

Let $D_{\ell_0} \subset \Gal(M''/\Q)$ denote a decomposition subgroup arising from the choice of a prime of $M''$ over $\ell_0$. It has order $p$. In order to prove characterizations (2) and (3), determining the image of $\alpha^2+\beta \in \mu_p^{\otimes 2}\smallsetminus \{1 \otimes 1\}$ in the doubleton set $(\mu_p^{\otimes 2}\smallsetminus \{1 \otimes 1\})/\Gal(\Q_p(\zeta_p)/\Q)$, we simply need to review definitions of $\alpha, \beta$, and the isomorphism \eqref{eq: M''/K to V}. Putting these together, isomorphism \eqref{eq: M''/K to V} simplifies when restricted to $D_{\ell_0}$ according to
\[
\left.\ttmat{a\up1}{b\up2}{c\up1}{-a\up1}\right\vert_{D_{\ell_0}} = 
\left.\ttmat{\alpha c\up1}{\beta c\up1}{c\up1}{-\alpha c\up1}\right\vert_{D_{\ell_0}}.
\]
The existence of a choice of prime over $\ell_0$ such that $a\up1\vert_{\ell_0} = 0$ and equivalently $\alpha=0$, discussed in Lemma \ref{lem: choice pinning data}, implies that we can replace $D_{\ell_0} \subset \Gal(M''/\Q)$ by a conjugate subgroup such that 
\[
\left.\ttmat{a\up1}{b\up2}{c\up1}{-a\up1}\right\vert_{D_{\ell_0}} = 
\left.\ttmat{0}{(\alpha^2+\beta) c\up1}{c\up1}{0}\right\vert_{D_{\ell_0}}.
\]
Therefore, because $c\up1\vert_{\ell_0} \neq 0$, there is a generator of $D_{\ell_0}$ whose image in $V$ is $\sm{0}{\alpha^2+\beta}{1}{0}$. Looking at the list of conjugacy classes of order $p$ subgroups of $\Gal(M''/K)$ listed in Lemma \ref{lem: 1d orbits}, one can read off characterizations (2) and (3). 
\end{proof}

\subsection{A terminal result of Part I and II}

In this final section, we present the preceding results, especially Corollary \ref{cor: aup1 vanishing} and Theorem \ref{thm: main ANT}(1), in a form that allows us to deduce a ``terminal result'' of this pair of papers, which is stated below as Theorem \ref{thm: HL P2}. 

Let $N_{\ell_0}$ be the normalizer of a decomposition group $D_{\ell_0} \subset \Gal(M''/\Q)$, and let $n_{\ell_0}$ denote the order of $N_{\ell_0}$. This integer $n_{\ell_0}$ is independent of the choice of $D_{\ell_0}$. Here is a complete description of $n_{\ell_0}$. Recall that $M''=M'$ if and only if $a\up1\vert_{\ell_1} = 0$. 
\begin{prop}
\label{prop: cases for n_ell_0}
If $M''=M'$, then $n_{\ell_0} = p^2(p-1)$. If $M''/M'$ has degree $p$, then $n_{\ell_0}$ is described by the following two cases. 
\begin{itemize}
    \item $n_{\ell_0} = 2p^3$ if and only if the conjugacy class of the subgroup $D_{\ell_0} \subset \Gal(M''/\Q)$ has order $p(p-1)/2$. 
    \item $n_{\ell_0} = p^3(p-1)$ if and only if the conjugacy class of the subgroup $D_{\ell_0} \subset \Gal(M''/\Q)$ has order $p$. 
\end{itemize}
\end{prop}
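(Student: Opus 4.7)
The plan is to reduce the computation of $|N_{\ell_0}|$ to an orbit-stabilizer count for the conjugation action of $B \cong \Gal(K/\Q)$ on the abelian group $\Gal(M''/K)$, then to read off the three cases from Lemma~\ref{lem: 1d orbits} (together with an analogous but simpler analysis in the case $M'' = M'$). First I would observe that $\ell_0$ splits completely in $K/\Q$: since $\ell_0 \equiv 1 \pmod{p}$ it splits completely in $\Q(\zeta_p)/\Q$, and since $\ell_1$ is a $p$th power modulo $\ell_0$ it splits completely in $K/\Q(\zeta_p)$. Therefore any decomposition subgroup at a prime of $M''$ over $\ell_0$ is contained in $\Gal(M''/K)$, so $D_{\ell_0} \subseteq \Gal(M''/K)$. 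Because $\zeta_p \in K$, the field $K/\Q$ is Galois, so $\Gal(M''/K)$ is a normal abelian subgroup of $\Gal(M''/\Q)$ that contains $D_{\ell_0}$, and thus sits inside $N_{\ell_0}$. We then have
\[
|N_{\ell_0}| = |\Gal(M''/K)| \cdot |\mathrm{Stab}_B(D_{\ell_0})|,
\]
and the $\Gal(M''/\Q)$-conjugacy class of $D_{\ell_0}$ as a subgroup coincides with the $B$-orbit of $D_{\ell_0}$ as a line in $\Gal(M''/K)$, since the normal abelian subgroup $\Gal(M''/K)$ acts trivially on $D_{\ell_0}$ by conjugation.

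In the case $[M''\colon M'] = p$, one has $|\Gal(M''/K)| = p^3$, and the conjugation action of $B$ on $\Gal(M''/K)$ is the adjoint action on the trace-zero subspace $V \subset M_2(\F_p)$ recorded in the proof of Proposition~\ref{prop: characterize M''}. Because $c\up1|_{\ell_0} \neq 0$, a generator of $D_{\ell_0}$ has nonzero $C$-coordinate, so $D_{\ell_0}$ lies in orbit (c), (d), or (e) of Lemma~\ref{lem: 1d orbits}. Theorem~\ref{thm: main ANT} then identifies orbit (c) (size $p$, stabilizer size $p-1$) with the vanishing of $\alpha^2+\beta$, and orbits (d) or (e) (size $p(p-1)/2$, stabilizer size $2$) with its non-vanishing. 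Multiplying through gives $|N_{\ell_0}|$ equal to $p^3(p-1)$ or $2p^3$ in the two cases, and by the coincidence of conjugacy-class size with $B$-orbit size noted above, this translates into the proposition's characterization of the two sub-cases by conjugacy-class sizes $p$ and $p(p-1)/2$.

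The remaining case $M'' = M'$ runs on the same template but with $\Gal(M'/K)$, abelian of order $p^2$, playing the role of $\Gal(M''/K)$. In the $3\times 3$ model $\upsilon$ of \eqref{eq: 3d abc again}, $\Gal(M'/K)$ is parametrized by the pair of matrix coordinates $(a\up1, c\up1)$, on which $\Delta$ acts with weights $(0, -1)$ and the unipotent part of $B$ acts by $(a, c) \mapsto (a + bc, c)$. Adjusting pinning so that $\alpha = 0$ (Lemma~\ref{lem: choice pinning data}), the line $D_{\ell_0}$ becomes $\{(0, C) : C \in \F_p\}$; a brief calculation shows it is fixed by all of $\Delta$ but no nontrivial unipotent element, giving stabilizer order $p-1$ and hence $|N_{\ell_0}| = p^2(p-1)$. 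The main subtlety throughout is organizational: one must make sure the orbit labeling of Lemma~\ref{lem: 1d orbits} is consistent with the parametrization of $D_{\ell_0}$ provided by Theorem~\ref{thm: main ANT}, but with that bookkeeping in place, all three cases drop out of a single stabilizer calculation.
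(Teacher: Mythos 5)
Your overall strategy is the same as the paper's: reduce to the conjugation action of $\Gal(K/\Q)\cong B$ on $\Gal(M''/K)\cong V$, invoke Lemma \ref{lem: 1d orbits}, and finish with orbit--stabilizer (you count $n_{\ell_0}=|\Gal(M''/K)|\cdot|\mathrm{Stab}_B(D_{\ell_0})|$ where the paper equivalently computes $n_{\ell_0}=[M'':\Q]/|\mathrm{orbit}|$; your identification of the $\Gal(M''/\Q)$-conjugacy class with the $B$-orbit, and your weight computation in the $M''=M'$ case, are correct). The detour through Theorem \ref{thm: main ANT} is unnecessary: once $D_{\ell_0}$ is known to lie in orbit (c), (d), or (e), the orbit sizes alone give both directions of the two ``if and only if'' statements, and the proposition itself makes no reference to $\alpha^2+\beta$.

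There is, however, one genuine gap: in the case $[M'':M']=p$ you never show that $D_{\ell_0}$ has order exactly $p$, i.e.\ that it is a \emph{line} in $V$, yet the whole argument (``a generator of $D_{\ell_0}$ has nonzero $C$-coordinate,'' placement into the orbits of Lemma \ref{lem: 1d orbits}) presupposes this. Complete splitting of $\ell_0$ in $K/\Q$ only gives the containment $D_{\ell_0}\subseteq\Gal(M''/K)$; since $\ell_0\equiv 1\pmod p$, both the Frobenius and the tame inertia at $\ell_0$ can have nontrivial, independent images in an abelian exponent-$p$ quotient, so a priori $D_{\ell_0}$ could be $2$-dimensional. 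What rules this out is precisely the paper's first step: $\ell_0$ is ramified in $M/K$ (because it ramifies in $L/\Q(\zeta_p)$), while the primes over $\ell_0$ are \emph{split} in $M'/M$ and in $M''/M'$ by Propositions \ref{prop: characterize M'} and \ref{prop: characterize M''} (equivalently, $a\up1\vert_{\ell_0}$ and $b\up2\vert_{\ell_0}$ are multiples of $c\up1\vert_{\ell_0}$), so the local degree of $M''/K$ at $\ell_0$ is $p$ and $D_{\ell_0}$ is cyclic of order $p$. Your treatment of the case $M''=M'$ implicitly supplies this (with $\alpha=0$ the image of $G_{\ell_0}$ lands in, and by $c\up1\vert_{\ell_0}\neq 0$ spans, the line $\langle(0,1)\rangle$), but the analogous step is missing where it matters most, in the $3$-dimensional case; adding it makes the proof complete and essentially identical to the paper's.
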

\begin{proof}
First we claim that $D_{\ell_0}$ has order $p$ and is contained in $\Gal(M''/K)$. The claim about the order follows from the facts that 
\begin{itemize}
    \item $\ell_0$ splits completely in $K/\Q$ (see [Part I, Lem.\ \ref{lem: log ell1 is zero}])
    \item the primes over $\ell_0$ are ramified in $M/K = KL/K$ because the same is true for $L/\Q(\zeta_p)$ (see the description of $L/\Q(\zeta_p)$ in \S\ref{subsec: P2 main results}) 
    \item the primes over $\ell_0$ are split in $M'/M$ and $M''/M'$ according to Propositions \ref{prop: characterize M'} and \ref{prop: characterize M''}.
\end{itemize}

According to the claim, in the case $[M'' : M'] = p$, $D_{\ell_0} \subset \Gal(M''/K)$ is a line in a 3-dimensional $\F_p$-vector space. Moreover, its conjugacy class in $\Gal(M''/\Q)$ is described by Lemma \ref{lem: 1d orbits} under the isomorphism \eqref{eq: M''/K to V} from $\Gal(M''/K)$ to $V$. Because $[M'' : \Q] = p^4(p-1)$ and the normalizer $N_{\ell_0}$ is the stablizer of the conjugacy action of $\Gal(M''/\Q)$ on $D_{\ell_0}$, the proposition's claims in the case $[M'' : M']=p$ follow from the orbit-stabilizer lemma. 

In the case $M''=M'$, we use the representation $\upsilon : \Gal(M'/\Q) \to \GL_3(\F_p)$ of \eqref{eq: 3d abc again} to draw an isomorphism
\[
(a\up1, c\up1)\vert_{G_K} : \Gal(M'/K) \to \F_p \oplus \F_p.
\]
The claim implies that each choice of decomposition group $D_{\ell_0} \subset \Gal(M'/K)$ is a line. Because of the form of $\upsilon$, a much simpler analysis than Lemma \ref{lem: 1d orbits} yields that the conjugacy classes of order $p$ subgroups of $\Gal(M'/\Q)$ contained in $\Gal(M'/K)$ are described as follows in terms of $\upsilon$.
\begin{itemize}
    \item The subgroup concentrated in the $a\up1$-coordinate comprises a singleton orbit.
    \item The subgroups containing elements with non-zero $c\up1$-coordinate comprise an orbit of order $p$. 
\end{itemize}
Because $c\up1\vert_{\ell_0} \neq 0$, the conjugacy class containing $D_{\ell_0} \subset \Gal(M'/\Q)$ has cardinality $p$. Therefore, the proposition's claim in the case $M''=M'$ follows from the fact that $[M':\Q] = p^3(p-1)$ and an applicaiton of the orbit-stabilizer lemma. 
\end{proof}

\begin{thm}
\label{thm: HL P2}
If $n_{\ell_0} = p^2(p-1)$ or $n_{\ell_0} = 2p^3$, then the $\Z_p$-rank of $R$ is $3$, there is a unique newform of level $N$ congruent to the Eisenstein series, and $R \cong \bT$. If $n_{\ell_0} = p^3(p-1)$, then $\dim_{\F_p} R/pR > 3$.
\end{thm}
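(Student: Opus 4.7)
My plan is to combine Corollary \ref{cor: aup1 vanishing}, Proposition \ref{prop: cases for n_ell_0}, Theorem \ref{thm: main ANT}, and the main Theorem \ref{thm: main} to translate each value of $n_{\ell_0}$ into a condition on $\dim_{\F_p} R/pR$. The three possible values of $n_{\ell_0}$ identified in Proposition \ref{prop: cases for n_ell_0} are mutually exclusive, so I would handle them case by case.

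First I would handle the case $n_{\ell_0} = p^2(p-1)$. By Proposition \ref{prop: cases for n_ell_0} this occurs precisely when $M'' = M'$, which by Corollary \ref{cor: aup1 vanishing} is equivalent to $a\up1\vert_{\ell_1} \neq 0$. Hence condition (i) of Theorem \ref{thm: main} fails, so $\dim_{\F_p} R/pR \leq 3$, and invoking Theorem \ref{thm: main of part 1} gives $R \cong \bT$ with $\Z_p$-rank $3$.

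Next I would treat the two remaining cases together, under the hypothesis $[M'':M']=p$. Here condition (i) of Theorem \ref{thm: main} holds (by the same Corollary \ref{cor: aup1 vanishing}), so everything depends on whether $\alpha^2+\beta = 0$. The key observation is that since $c\up1\vert_{\ell_0} \neq 0$, the subgroup $D_{\ell_0} \subset \Gal(M''/K)$ has non-trivial $c\up1$-coordinate under the isomorphism of \eqref{eq: M''/K to V}, placing it in one of orbits (c), (d), or (e) of Lemma \ref{lem: 1d orbits}. Matching with the orbit-size dichotomy of Proposition \ref{prop: cases for n_ell_0}: the case $n_{\ell_0} = 2p^3$ corresponds to orbit (d) or (e), so by Theorem \ref{thm: main ANT}(2),(3) we have $\alpha^2+\beta \neq 0$, condition (ii) of Theorem \ref{thm: main} fails, and again $R \cong \bT$ with rank $3$; whereas the case $n_{\ell_0} = p^3(p-1)$ must correspond to orbit (c), the unique order-$p$ orbit with non-zero $c\up1$-coordinate, so by Theorem \ref{thm: main ANT}(1) we get $\alpha^2+\beta = 0$, both conditions of Theorem \ref{thm: main} hold, and $\dim_{\F_p} R/pR > 3$.

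Finally, for the uniqueness-of-newform assertion in the first two cases: once $R \cong \bT$ has $\Z_p$-rank $3$, one combines Assumption \ref{assump: main}(4) (a unique Eisenstein-congruent cusp form at level $\ell_0$) with the standard Atkin--Lehner decomposition of the level-$N$ Hecke algebra into its Eisenstein, $\ell_1$-old, and new components. The hard part here is really the bookkeeping: one must verify that the Eisenstein piece and the contribution of the $\ell_1$-oldform from level $\ell_0$ account for exactly two units of $\Z_p$-rank in $\bT$, so that the remaining unit of rank corresponds to a unique new cuspform at level $N$. This is the only place where something beyond pure manipulation of the results of Part I and the preceding sections of this paper is required, but it follows from standard modular-forms theory given the setup.
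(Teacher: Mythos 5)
Your argument is correct and follows essentially the same route as the paper: the paper reduces Theorem \ref{thm: HL P2} to the equivalence ``$n_{\ell_0}=p^3(p-1)$ if and only if conditions (i) and (ii) of Theorem \ref{thm: main} hold,'' using exactly Corollary \ref{cor: aup1 vanishing}, Theorem \ref{thm: main ANT}(1), and Proposition \ref{prop: cases for n_ell_0}, which is your case analysis in disguise (your appeal to Theorem \ref{thm: main ANT}(2),(3) in the $n_{\ell_0}=2p^3$ case is a harmless variant of using part (1) alone). The only real difference is that the paper simply inherits the rank-$3$ and unique-newform conclusions from Theorem \ref{thm: main} and Part I, so your closing Atkin--Lehner bookkeeping, while reasonable in spirit, is not needed here.
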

\begin{proof}
The conclusions of this theorem are the same as those of Theorem \ref{thm: main}, so we only need to check that the condition ``$n_{\ell_0} = p^3(p-1)$'' is equivalent to the condition that ``both conditions (i) and (ii) of Theorem \ref{thm: main} are true.'' Indeed, Proposition \ref{prop: cases for n_ell_0} also tells us that $n_{\ell_0} = p^2(p-1)$ or $n_{\ell_0} = 2p^3$ when $n_{\ell_0} \neq p^3(p-1)$.  

Corollary \ref{cor: aup1 vanishing} draws an equivalence between condition (i) and the condition that $[M'': M']=p$. Given that (i) is true, Theorem \ref{thm: main ANT}(1) draws an equivalence between condition (ii) and the condition that the conjugacy class of subgroups of $\Gal(M''/\Q)$ containing (any choice of) $D_{\ell_0}$ has cardinality $p$. By Proposition \ref{prop: cases for n_ell_0}, we conclude that condition (ii) is equivalent to $n_{\ell_0} = p^3(p-1)$ upon the assumption that (i) is true. And (i) is also necessarily true when $n_{\ell_0} = p^3(p-1)$, according to Proposition \ref{prop: cases for n_ell_0}. 
\end{proof}



\vspace{.5cm}
\bibliographystyle{alpha}
\bibliography{CWEbib-2020-RR3}

\end{document}